\newcommand{\arxiv}[1]{\href{http://arxiv.org/abs/#1}{\tt arXiv:\nolinkurl{#1}}}
\newcommand{\arXiv}[1]{\href{http://arxiv.org/abs/#1}{\tt arXiv:\nolinkurl{#1}}}
\newcommand{\googlebooks}[1]{(preview at \href{http://books.google.com/books?id=#1}{google books})}
\definecolor{dark-red}{rgb}{0.7,0.25,0.25}
\definecolor{dark-blue}{rgb}{0.15,0.15,0.55}
\definecolor{medium-blue}{rgb}{0,0,.8}
\definecolor{DarkGreen}{RGB}{0,150,0}
\definecolor{rho}{named}{red}
\theoremstyle{plain}
\newtheorem{thm}{Theorem}[section]
\newtheorem*{thm*}{Theorem}
\newtheorem{thmalpha}{Theorem}
\newtheorem{cor}[thm]{Corollary}
\newtheorem*{cor*}{Corollary}
\newtheorem{conj}[thm]{Conjecture}
\newtheorem*{conj*}{Conjecture}
\newtheorem{lem}[thm]{Lemma}
\newtheorem{prop}[thm]{Proposition}
\newtheorem*{quest*}{Question}
\newtheorem*{claim*}{Claim}
\theoremstyle{definition}
\newtheorem{construction}[thm]{Construction}
\newtheorem{defn}[thm]{Definition}
\newtheorem{nota}[thm]{Notation}
\newtheorem{rem}[thm]{Remark}
\newtheorem{warn}[thm]{Warning}
\DeclareMathOperator{\coev}{coev}
\DeclareMathOperator{\End}{End}
\DeclareMathOperator{\ev}{ev}
\DeclareMathOperator{\Forget}{Forget}
\DeclareMathOperator{\Hom}{Hom}
\DeclareMathOperator{\mate}{mate}
\DeclareMathOperator{\op}{op}
\DeclareMathOperator{\id}{id}
\DeclareMathOperator{\Irr}{Irr}
\DeclareMathOperator{\Tr}{Tr}
\DeclareMathOperator{\tr}{tr}
\newcommand{\comment}[1]{}
\newcommand{\be}{\begin{enumerate}[label=(\arabic*)]}
\newcommand{\ee}{\end{enumerate}}
\newcommand{\cZ}{\mathcal{Z}}
\newcommand{\cD}{\mathcal{D}}
\newcommand{\cM}{\mathcal{M}}
\newcommand{\cC}{\mathcal{C}}
\newcommand{\bTr}{\mathbf{Tr}}
\newcommand{\Vect}{\mathsf{Vect}}
\newcommand{\Cat}{\mathsf{Cat}}
\def\semicolon{;}
\def\applytolist#1{
    \expandafter\def\csname multi#1\endcsname##1{
        \def\multiack{##1}\ifx\multiack\semicolon
            \def\next{\relax}
        \else
            \csname #1\endcsname{##1}
            \def\next{\csname multi#1\endcsname}
        \fi
        \next}
    \csname multi#1\endcsname}
\def\calc#1{\expandafter\def\csname c#1\endcsname{{\mathcal #1}}}
\def\bbc#1{\expandafter\def\csname bb#1\endcsname{{\mathbb #1}}}
\def\bfc#1{\expandafter\def\csname bf#1\endcsname{{\mathbf #1}}}
\def\sfc#1{\expandafter\def\csname s#1\endcsname{{\sf #1}}}
\newcommand{\APA}{\mathsf{APA}}
\newcommand{\UAPA}{\mathsf{UAPA}}
\newcommand{\ModTens}{\mathsf{ModTens}}
\newcommand{\UModTens}{\mathsf{UModTens}}
\newcommand{\Fun}{{\sf Fun}}
\newcommand{\Bim}{{\sf Bim}}
\newcommand{\Hilb}{{\sf Hilb}}
\newcommand{\fdHilb}{{\sf Hilb_{fd}}}
\newcommand{\noshow}[1]{}
\newcommand{\MR}[1]{}
\tikzset{
	super thick/.style={line width=3pt}
}
\tikzstyle{shaded}=[fill=red!10!blue!20!gray!30!white]
\tikzstyle{unshaded}=[fill=white]
\tikzstyle{empty box}=[circle, draw, thick, fill=white, opaque, inner sep=2mm]
\tikzstyle{annular}=[scale=.7, inner sep=1mm, baseline]
\tikzstyle{rectangular}=[scale=.75, inner sep=1mm, baseline=-.1cm]
\tikzstyle{mid>}=[decoration={markings, mark=at position 0.5 with {\arrow{>}}}, postaction={decorate}]
\tikzstyle{mid<}=[decoration={markings, mark=at position 0.5 with {\arrow{<}}}, postaction={decorate}]
\tikzstyle{over}=[double, draw=white, super thick, double=]
\tikzstyle{knot}=[preaction={super thick, white, draw}]
\newcommand{\roundNbox}[6]{
	\draw[rounded corners=5pt, very thick, #1] ($#2+(-#3,-#3)+(-#4,0)$) rectangle ($#2+(#3,#3)+(#5,0)$);
	\coordinate (ZZa) at ($#2+(-#4,0)$);
	\coordinate (ZZb) at ($#2+(#5,0)$);
	\node at ($1/2*(ZZa)+1/2*(ZZb)$) {#6};
}
\newcommand{\ncircle}[5]{
	\draw[very thick, #1] #2 circle (#3);
	\node at #2 {#5};
	\filldraw[red] ($#2+(#4:#3cm)$) circle (.05cm);
}
  \newcommand{\tikzmath}[2][]
     {\vcenter{\hbox{\begin{tikzpicture}[#1]#2
                     \end{tikzpicture}}}
     }
\newcommand{\plane}[3]{
	\pgfmathsetmacro{\planeWidth}{#2};
	\pgfmathsetmacro{\planeDepth}{#3};
	\draw[thick] ($ #1 + (-\planeDepth,\planeDepth) $) -- #1 -- ($ #1 + (\planeWidth,0) $) -- ($ #1 + (\planeWidth,0) + (-\planeDepth,\planeDepth) $) -- ($ #1 + (-\planeDepth,\planeDepth) $);
}
\newcommand{\CMbox}[6]{
	\coordinate (#1) at #2;
	\pgfmathsetmacro{\boxWidth}{#3};
	\pgfmathsetmacro{\boxHeight}{#4};
	\pgfmathsetmacro{\boxDepth}{#5};
	\draw[unshaded, very thick] ($(#1) + (\boxWidth,\boxHeight) $) -- ($(#1) + (\boxWidth,\boxHeight) - (\boxDepth,-\boxDepth) $) -- ($(#1) + (0,\boxHeight) - (\boxDepth,-\boxDepth) $) -- ($(#1) - (\boxDepth,-\boxDepth) $);
	\draw[unshaded, very thick] ($(#1) + (0,\boxHeight) $)  -- ($(#1) + (\boxWidth,\boxHeight) $) -- ($(#1) + (\boxWidth,0) $) -- (#1) -- ($(#1) - (\boxDepth,-\boxDepth) $);
	\draw[very thick] ($(#1) + (0,\boxHeight) - (\boxDepth,-\boxDepth) $) -- ($(#1) + (0,\boxHeight) $) -- (#1);
	\node at ($(#1) + 1/2*(\boxWidth,\boxHeight) $) {#6}; 
}
\newcommand{\straightTubeWithString}[4]{
	\coordinate (ZZq) at #1;
	\pgfmathsetmacro{\tubeLength}{#3};
	\pgfmathsetmacro{\tubeWidth}{#2};
	\pgfmathsetmacro{\buffer}{.05};	
	\fill[unshaded] ($ (ZZq) + (-\tubeLength,0) + 2*(0,-\buffer) $) -- ($ (ZZq) + 2*(0,-\buffer) $) arc(-90:90:{\tubeWidth+\buffer} and {2*(\tubeWidth+\buffer)}) -- ($ (ZZq) + (-\tubeLength,0) + 4*(0,\tubeWidth) + 2*(0,\buffer) $) arc(90:270:{\tubeWidth+\buffer} and {2*(\tubeWidth+\buffer)}) ;
	\draw[unshaded, thick]  ($ (ZZq) + (-\tubeLength,0) $) -- (ZZq) arc(-90:90:{\tubeWidth} and {2*\tubeWidth}) -- ($ (ZZq) + (-\tubeLength,0) + 4*(0,\tubeWidth) $) ;
	\draw[thick] ($ (ZZq) + (-\tubeLength,0) $) arc(-90:90:{\tubeWidth} and {2*\tubeWidth}) arc(90:270:{\tubeWidth} and {2*\tubeWidth});
	\draw[#4] ($(ZZq) + (\tubeWidth,0) + 2*(0,\tubeWidth) $) -- ($ (ZZq) + (-\tubeLength,0) + 2*(0,\tubeWidth) + (\tubeWidth,0)$);
}
\newcommand{\halfDottedEllipse}[3]{
	\draw[thick] #1 arc(-180:0:{#2} and {#3});
	\draw[thick, dotted] ($ #1 + 2*(#2,0)$) arc(0:180:{#2} and {#3});
}
\newcommand{\pairOfPants}[1]{
	\draw[thick] #1 .. controls ++(90:.8cm) and ++(270:.8cm) .. ($ #1 + (.7,1.5) $);
	\draw[thick] ($ #1 + (2,0) $) .. controls ++(90:.8cm) and ++(270:.8cm) .. ($ #1 + (2,0) + (-.7,1.5) $);
	\draw[thick] ($ #1 + (.6,0) $).. controls ++(90:.8cm) and ++(90:.8cm) .. ($ #1 + (1.4,0) $); 
	\halfDottedEllipse{($ #1 + (.7,1.5) $)}{.3}{.1}
	\halfDottedEllipse{#1}{.3}{.1}
	\halfDottedEllipse{($ #1 + (1.4,0) $)}{.3}{.1}
}
\newcommand{\topPairOfPants}[1]{
	\draw[thick] #1 .. controls ++(90:.8cm) and ++(270:.8cm) .. ($ #1 + (.7,1.5) $);
	\draw[thick] ($ #1 + (2,0) $) .. controls ++(90:.8cm) and ++(270:.8cm) .. ($ #1 + (2,0) + (-.7,1.5) $);
	\draw[thick] ($ #1 + (.6,0) $).. controls ++(90:.8cm) and ++(90:.8cm) .. ($ #1 + (1.4,0) $); 
	\draw[thick] ($ #1 + (1,1.5) $) ellipse (.3cm and .1cm);
	\halfDottedEllipse{#1}{.3}{.1}
	\halfDottedEllipse{($ #1 + (1.4,0) $)}{.3}{.1}
}
\newcommand{\straightTubeWithCap}[3]{
	\coordinate (ZZq) at #1;
	\pgfmathsetmacro{\tubeLength}{#3};
	\pgfmathsetmacro{\tubeWidth}{#2};
	\pgfmathsetmacro{\buffer}{.05};	

	\fill[unshaded] ($ (ZZq) + (-\tubeLength,0) + 2*(0,-\buffer) $) -- ($ (ZZq) + 2*(0,-\buffer) $) arc(-90:90:{\tubeWidth+\buffer} and {2*(\tubeWidth+\buffer)}) -- ($ (ZZq) + (-\tubeLength,0) + 4*(0,\tubeWidth) + 2*(0,\buffer) $) arc(90:270:{\tubeWidth+\buffer} and {2*(\tubeWidth+\buffer)}) ;
	\draw[unshaded, thick]  ($ (ZZq) + (-\tubeLength,0) $) -- (ZZq) arc(-90:90:{\tubeWidth} and {2*\tubeWidth}) -- ($ (ZZq) + (-\tubeLength,0) + 4*(0,\tubeWidth) $) ;
	\draw[thick] ($ (ZZq) + (-\tubeLength,0) $) arc(270:90:{2*\tubeWidth});
}
\newcommand{\straightTubeNoString}[3]{
	\coordinate (ZZq) at #1;
	\pgfmathsetmacro{\tubeLength}{#3};
	\pgfmathsetmacro{\tubeWidth}{#2};
	\pgfmathsetmacro{\buffer}{.05};	

	\fill[unshaded] ($ (ZZq) + (-\tubeLength,0) + 2*(0,-\buffer) $) -- ($ (ZZq) + 2*(0,-\buffer) $) arc(-90:90:{\tubeWidth+\buffer} and {2*(\tubeWidth+\buffer)}) -- ($ (ZZq) + (-\tubeLength,0) + 4*(0,\tubeWidth) + 2*(0,\buffer) $) arc(90:270:{\tubeWidth+\buffer} and {2*(\tubeWidth+\buffer)}) ;
	\draw[unshaded, thick]  ($ (ZZq) + (-\tubeLength,0) $) -- (ZZq) arc(-90:90:{\tubeWidth} and {2*\tubeWidth}) -- ($ (ZZq) + (-\tubeLength,0) + 4*(0,\tubeWidth) $) ;
	\draw[thick] ($ (ZZq) + (-\tubeLength,0) $) arc(-90:90:{\tubeWidth} and {2*\tubeWidth}) arc(90:270:{\tubeWidth} and {2*\tubeWidth});
}
\newcommand{\tensor}[6]{
	\draw[rounded corners=5pt, very thick, unshaded] ($ #1 - (#2,#2) $) rectangle ($ #1 + (#2,#2) $);
	\draw ($ #1 + 14/23*(0,#2) - 1/3*(#2,0) $) -- ($ #1 - 14/23*(0,#2) - 1/3*(#2,0) $);
	\draw ($ #1 + 14/23*(0,#2) + 1/3*(#2,0) $) -- ($ #1 - 14/23*(0,#2) + 1/3*(#2,0) $);
	\draw[thick, red] ($ #1 - 1/3*(#2,0) - 1/5*(#2,0) $) -- ($ #1 - 5/6*(#2,0) $);
	\draw[thick, red] ($ #1 + 1/3*(#2,0) - 1/5*(#2,0) $) .. controls ++(180:.2cm) and ++(0:.2cm) .. ($ #1 - 1/3*(#2,0) + 2/5*(0,#2) $) .. controls ++(180:.2cm) and ++(0:.2cm) .. ($ #1 - 5/6*(#2,0) $);
	\draw[very thick] #1 ellipse ( {5/6*#2} and {2/3*#2});
	\filldraw[very thick, unshaded] ($ #1 + 1/3*(#2,0) $) circle (1/5*#2);
	\filldraw[very thick, unshaded] ($ #1 - 1/3*(#2,0) $) circle (1/5*#2);
	\node at ($ #1 + (.2,0) - 1/3*(#2,0) - 1/3*(0,#2) $) {\scriptsize{$#3$}};
	\node at ($ #1 + (.2,0) - 1/3*(#2,0) + 1/3*(0,#2) $) {\scriptsize{$#4$}};
	\node at ($ #1 + (.2,0) + 1/3*(#2,0) - 1/3*(0,#2) $) {\scriptsize{$#5$}};
	\node at ($ #1 + (.2,0) + 1/3*(#2,0) + 1/3*(0,#2) $) {\scriptsize{$#6$}};
}
\newcommand{\evaluationMap}[3]{
	\draw[rounded corners=5pt, very thick, unshaded] ($ #1 - (#2,#2) $) rectangle ($ #1 + (#2,#2) $);
	\draw ($ #1 - 2/3*(#2,0) - 1/2*(0,#2) $) .. controls ++(90:{#2}) and ++(90:{#2}) .. ($ #1 + 2/3*(#2,0) - 1/2*(0,#2) $);
	\draw[very thick] #1 circle ({5/6*#2});
	\filldraw[red] ($ #1 - 5/6*(#2,0) $) circle ({1/10*#2});
	\node at ($ #1 - (0,.1) $) {\scriptsize{$#3$}};
}
\newcommand{\coevaluationMap}[3]{
	\draw[rounded corners=5pt, very thick, unshaded] ($ #1 - (#2,#2) $) rectangle ($ #1 + (#2,#2) $);
	\draw ($ #1 - 2/3*(#2,0) + 1/2*(0,#2) $) .. controls ++(270:{#2}) and ++(270:{#2}) .. ($ #1 + 2/3*(#2,0) + 1/2*(0,#2) $);
	\draw[very thick] #1 circle ({5/6*#2});
	\filldraw[red] ($ #1 - 5/6*(#2,0) $) circle ({1/10*#2});
	\node at ($ #1 + (0,.1) $) {\scriptsize{$#3$}};
}
\newcommand{\identityMap}[3]{
	\draw[rounded corners=5pt, very thick, unshaded] ($ #1 - (#2,#2) $) rectangle ($ #1 + (#2,#2) $);
	\draw ($ #1 + 5/6*(0,#2) $) -- ($ #1 - 5/6*(0,#2) $);
	\draw[very thick] #1 circle ({5/6*#2});
	\filldraw[red] ($ #1 - 5/6*(#2,0) $) circle ({1/10*#2});
	\node at ($ #1 + (.2,0) $) {\scriptsize{$#3$}};
}
\newcommand{\curvedTubeNoString}[4]{
	\filldraw[white, super thick, fill=white] #1 arc (-90:90:.2cm) arc (270:180:{#2 - .2}) -- ($#1 -(#2,0) - (.2,0) + (0,#2) + (0,.2) $) arc (180:270:{#2+.2});
	\filldraw[unshaded, thick] #1 arc (-90:90:.2cm) arc (270:180:{#2 - .2}) -- ($#1 -(#2,0) - (.2,0) + (0,#2) + (0,.2) $) arc (180:270:{#2+.2});
	\draw ($ #1 + (.18,.3) $) -- ($ #1 + (0,.3) $) arc (270:180:{#2 - .1});
	\draw ($ #1+ (.18,.1) $) -- ($ #1+ (0,.1) $) arc (270:180:{#2+.1});
	\roundNbox{unshaded}{($#1 - (#2,0) + (0,#2) + (0,.6) $)}{.4}{#3}{#3}{#4};
}
\newcommand{\multiplication}[5]{
	\draw[rounded corners=5pt, very thick, unshaded] ($ #1 - (#2,#2) $) rectangle ($ #1 + (#2,#2) $);
	\draw ($ #1 + 5/6*(0,#2) $) -- ($ #1 - 5/6*(0,#2) $);
	\draw[thick, red] ($ #1 + 1/3*(0,#2) - 1/5*(#2,0) $) -- ($ #1 - 2/3*(#2,0) $);
	\draw[thick, red] ($ #1 - 1/3*(0,#2) - 1/5*(#2,0) $) -- ($ #1 - 2/3*(#2,0) $);
	\draw[very thick] #1 ellipse ({2/3*#2} and {5/6*#2});
	\filldraw[very thick, unshaded] ($ #1 + 1/3*(0,#2) $) circle (1/5*#2);
	\filldraw[very thick, unshaded] ($ #1 - 1/3*(0,#2) $) circle (1/5*#2);
	\node at ($ #1 + (.2,0) + 5/8*(0,#2)$) {\scriptsize{$#5$}};
	\node at ($ #1 + (.2,0) $) {\scriptsize{$#4$}};
	\node at ($ #1 + (.2,0) - 5/8*(0,#2)$) {\scriptsize{$#3$}};
}
\newcommand{\tensorLeftIdEv}[4]{
	\draw[rounded corners=5pt, very thick, unshaded] ($ #1 - (#2,#2) $) rectangle ($ #1 + (#2,#2) $);
	\draw ($ #1 + 14/23*(0,#2) - 1/3*(#2,0) $) -- ($ #1 - 14/23*(0,#2) - 1/3*(#2,0) $);
	\draw ($ #1 + 1/3*(#2,0) $) -- ($ #1 - 14/23*(0,#2) + 1/3*(#2,0) $);
	\draw[thick, red] ($ #1 + 1/3*(#2,0) - 1/5*(#2,0) $) -- ($ #1 - 5/6*(#2,0) $);
	\draw[very thick] #1 ellipse ( {5/6*#2} and {2/3*#2});
	\filldraw[very thick, unshaded] ($ #1 + 1/3*(#2,0) $) circle (1/5*#2);
	\node at ($ #1 - (.2,0) - 1/3*(#2,0) + (0,.2) $) {\scriptsize{$#3$}};
	\node at ($ #1 - (.2,0) + 1/3*(#2,0) - 1/3*(0,#2) $) {\scriptsize{$#4$}};
}
\newcommand{\tensorRightIdCoev}[4]{
	\draw[rounded corners=5pt, very thick, unshaded] ($ #1 - (#2,#2) $) rectangle ($ #1 + (#2,#2) $);
	\draw ($ #1 + 14/23*(0,#2) - 1/3*(#2,0) $) -- ($ #1 - 1/3*(#2,0) $);
	\draw ($ #1 + 14/23*(0,#2) + 1/3*(#2,0) $) -- ($ #1 - 14/23*(0,#2) + 1/3*(#2,0) $);
	\draw[thick, red] ($ #1 - 1/3*(#2,0) - 1/5*(#2,0) $) -- ($ #1 - 5/6*(#2,0) $);
	\draw[very thick] #1 ellipse ( {5/6*#2} and {2/3*#2});
	\filldraw[very thick, unshaded] ($ #1 - 1/3*(#2,0) $) circle (1/5*#2);
	\node at ($ #1 + (.2,0) - 1/3*(#2,0) + 1/3*(0,#2) $) {\scriptsize{$#3$}};
	\node at ($ #1 + (.2,0) + 1/3*(#2,0) $) {\scriptsize{$#4$}};
}
\begin{document}
\title{Unitary anchored planar algebras}
\author{Andr\'e Henriques, David Penneys, and James Tener}
\date{}
\maketitle
\begin{abstract}
In our previous article [arXiv:1607.06041], we established an equivalence between pointed pivotal module tensor categories 
and anchored planar algebras.
This article introduces the notion of unitarity for both module tensor categories and anchored planar algebras, and establishes the unitary analog of the above equivalence.
Our constructions use Baez's 2-Hilbert spaces (i.e., semisimple $\rm C^*$-categories equipped with unitary traces), the unitary Yoneda embedding, and the notion of unitary adjunction for dagger functors between 2-Hilbert spaces.
\end{abstract}

\tableofcontents

\section{Introduction}

Planar algebras were introduced by Vaughan Jones in \cite{MR4374438}.
A planar algebra $\cP$ is a collection of vector spaces $\cP[n]$, called \emph{box-spaces}, indexed by the nonnegative integers, along with a linear map
$$
Z(T): \cP[n_1]\otimes \cdots \otimes \cP[n_k]\longrightarrow \cP[n_0]
$$
for every planar tangle $T$. 
These maps are required to be compatible with the operation of composition of tangles in the sense that
$
Z(S\circ_i T)
=
Z(S)\circ (\id\otimes\ldots\otimes\id \otimes Z(T)\otimes\id\otimes\ldots\otimes\id)$.
See \S\ref{sec: Planar algebras} for more details.

In \cite{MR4528312}, we internalised the notion of planar algebra to the context of a pivotal braided tensor category $\cV$, and called the resulting notion an \emph{anchored planar algebra}.
The box-spaces $\cP[n]$ of an anchored planar algebra are now objects of our ambient category $\cV$, and we have morphisms
$$
Z(T): \cP[n_1]\otimes \cdots \otimes \cP[n_k]\longrightarrow \cP[n_0]
$$
for every anchored planar tangle. For example:
$$
Z\left(
\begin{tikzpicture}[baseline =-.1cm]
\pgftransformyscale{-1}
	\coordinate (a) at (0,0);
	\coordinate (b) at ($ (a) + (1.4,1) $);
	\coordinate (c) at ($ (a) + (.6,-.6) $);
	\coordinate (d) at ($ (a) + (-.6,.6) $);
	\coordinate (e) at ($ (a) + (-.8,-.6) $);
	\ncircle{}{(a)}{1.6}{89}{}
	\draw[thick, red] (d) .. controls ++(225:1.2cm) and ++(275:2.6cm) .. ($ (a) + (89:1.6) $);
	\draw[thick, red] ($ (a) + (89:1.6) $) to[out=-60, in=40] (.9,-.3);
	\draw (60:1.6cm) arc (150:300:.4cm);
	\draw ($ (c) + (0,.4) $) arc (0:90:.8cm);
	\draw ($ (c) + (-.4,0) $) circle (.25cm);
	\draw ($ (d) + (0,.88) $) -- (d) -- ($ (d) + (-.88,0) $);
	\draw ($ (c) + (0,-.88) $) -- (c) -- ($ (c) + (.88,0) $);
	\draw (e) circle (.25cm);
	\ncircle{unshaded}{(d)}{.4}{235}{}
	\ncircle{unshaded}{(c)}{.4}{225+180}{}
	\node[blue] at (c) {\small 2};
	\node[blue] at (d) {\small 1};
\end{tikzpicture}
\right)
:
\cP[3]\otimes \cP[5] \to \cP[6].
$$
The full definition is given in \S\ref{sec:APA}. 

We then proceeded to establish
an equivalence of categories
\[
\left\{\,\text{\rm Anchored planar algebras in $\cV$}\left\}
\,\,\,\,\cong\,\,
\left\{\,\parbox{4.5cm}{\rm Pointed pivotal module tensor categories over $\cV$}\,\right\}\right.\right.\!\!
\]
between the category $\APA$ of anchored planar algebras in $\cV$, and the 
category $\ModTens_*$ of pointed\footnote{Here, ``pointed'' means equipped with a distinguished symmetrically self-dual object that generates the module tensor category.} pivotal module tensor categories over $\cV$ whose action functor admits a right adjoint.
We refer the reader to \S\ref{sec:ModTens} below for definitions of these notions.

In this paper we further specialise/generalise the notion of anchored planar algebra to the context when $\cV$ is a \emph{unitary} pivotal braided tensor category, and we prove an analog of the above result.
This is motivated by enriched subfactor theory \cite{MR3623255}, bicommutant categories \cite{MR3747830,MR3663592,MR4581741}, topological orders \cite{MR4640433},
and higher unitary categories \cite{MR4419534,2208.14992}. 

Let $\cV$ be a braided unitary tensor category equipped with a unitary dual functor (the latter is the unitary analog of a pivotal structure \cite{MR2091457,MR2767048,MR4133163}; see \S\ref{sec:UDF and SphericalStates} below for more details).
A unitary module tensor category $\cC$ is called \emph{pivotal} if it is equipped with a unitary dual functor, compatibly with the action of $\cV$.
Our main result is the unitary analog of Theorem \cite[Thm.~A]{MR4528312}:

\begin{thmalpha}
\label{thm:MainUAPA}
There is an equivalence of categories\footnote{
The 2-category of unitary pointed module tensor categories is equivalent to a 1-category. Moreover, the unique 2-morphism between two 1-morphisms, when it exists, is always unitary.}
\[
\left\{\,\parbox{4.5cm}{\rm Unitary anchored planar algebras in $\cV$}\left\}
\,\,\,\,\cong\,\,
\left\{\,\parbox{7.2cm}{\rm Unitary pointed pivotal module tensor\footnotemark\;\! categories over $\cV$, with chosen state}\,\right\}\right.\right.\!\!.
\]
Moreover,
\footnotetext{We do not require the unit of our module tensor category to be simple. 
So, strictly speaking, it is a unitary module \emph{multitensor} category.}\!\!
when $\cV$ is ribbon,
spherical unitary anchored planar algebras correspond under this equivalence to unitary module tensor categories whose chosen state is spherical.
\end{thmalpha}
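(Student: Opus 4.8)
The plan is to bootstrap from the non-unitary equivalence $\APA \cong \ModTens_*$ of \cite[Thm.~A]{1607.06041}. Every unitary anchored planar algebra has an underlying anchored planar algebra, every unitary pointed pivotal module tensor category with chosen state has an underlying pointed pivotal module tensor category, and in the unitary setting the right-adjoint hypothesis of \cite[Thm.~A]{1607.06041} is automatic, since the relevant action functor is a dagger functor between semisimple $\mathrm{C}^*$-categories and hence admits a unitary right adjoint. So it suffices to show that, along the non-unitary equivalence, the \emph{extra} unitary data on the two sides correspond bijectively and functorially; mutual inverseness then descends from the non-unitary statement. The first step is to record that data. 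On the categorical side: $\cC$ is a unitary (multi)tensor category with a unitary dual functor compatible with the $\cV$-action, the distinguished generator $X$ is symmetrically self-dual via a chosen unitary, and the chosen state is a faithful state $\psi$ on $\End_\cC(\mathbf{1}_\cC)$. On the planar-algebra side: each box space $\cP[n]$, an object of the unitary category $\cV$, carries a conjugate-linear involution realized as a $\cV$-morphism $\overline{\cP[n]} \to \cP[n]$ intertwined with the reflection of anchored planar tangles, together with a faithful state on the $0$-box space $\cP[0]$ and a positivity axiom for the pairing into $\cP[0]$ obtained by stacking a diagram on its reflection.

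For the forward direction, start with unitary $(\cC, X, \psi)$ and recall that $\cP[n] = \underline{\Hom}_\cC(\mathbf{1}_\cC, X^{\otimes n})$ is the internal hom computed through the right adjoint of the action functor. Taking that right adjoint to be the \emph{unitary} adjoint --- available by the theory of unitary adjunctions between 2-Hilbert spaces --- the $*$-operation of $\cV$, the $\dagger$-operation of $\cC$, and the self-duality unitary of $X$ combine to equip each $\cP[n]$ with a conjugate-linear involution $\overline{\cP[n]} \to \cP[n]$. Compatibility with the reflection of anchored planar tangles is then a diagrammatic verification, using that the evaluation, coevaluation, and braiding morphisms of $\cV$ are unitary; and positivity of the associated pairing is positivity of the $\mathrm{C}^*$-inner products of $\cC$ post-composed with $\psi$. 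This produces the functor from the categorical side to the planar-algebra side.

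For the backward direction one reconstructs $\cC$ from a unitary $\cP$ as the additive idempotent completion of the category with objects $\N$ and morphism objects assembled from the box spaces; the non-unitary theorem already identifies this as a pointed pivotal module tensor category, so the work is to put a compatible $\mathrm{C}^*$-structure on it. The Hom spaces acquire sesquilinear forms from the involution together with the partition function of $\cP$, and the positivity axiom is exactly what makes these inner products; one then checks that dagger-idempotents split within $\mathrm{C}^*$-categories, that the unitary dual functor of $\cV$ descends to one on $\cC$, and that $\psi$ is recovered by evaluating the empty diagram. The \emph{unitary Yoneda embedding} is the tool that realizes the reconstructed $\cC$ as a full dagger subcategory of a 2-Hilbert space of $\cV$-module functors, ensuring that one genuinely lands in semisimple $\mathrm{C}^*$-categories (rather than abstract $*$-categories) with the module structure unitary. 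I expect this to be the main obstacle: upgrading the abstract positivity axiom on $\cP$ to honest positive-definiteness of all Hom-space inner products in $\cC$ after idempotent completion, while keeping the module and pivotal structures unitary, is where the graphical calculus, the unitary dual functor, the unitary adjunction, and the 2-Hilbert space machinery must all be deployed together.

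It remains to treat morphisms and the ``moreover'' clause. A 1-morphism of unitary pointed pivotal module tensor categories is a dagger module tensor functor preserving the generator and intertwining the states, and one checks it induces, and is induced by, a morphism of unitary anchored planar algebras --- a family of $\cV$-morphisms intertwining all tangle actions and commuting with the involutions; the 2-categorical subtleties of the footnote reduce to rigidity of the pointing (a module natural isomorphism fixing the symmetrically self-dual generator is forced to be unique) together with the observation that such an isomorphism, being compatible with the cup and cap on the generator, is automatically unitary on $X$ and hence everywhere. When $\cV$ is ribbon, both sides carry a notion of sphericality --- for $\cP$, invariance of the partition function when the outer boundary and its anchored string are rotated around the sphere; for $\psi$, equality of $\psi \circ \tr_L$ and $\psi \circ \tr_R$ for the left and right partial traces $\tr_L, \tr_R \colon \End_\cC(X^{\otimes n}) \to \End_\cC(\mathbf{1}_\cC)$ --- and a diagrammatic computation, using the ribbon twist of $\cV$ to transport the anchor string, matches these conditions under the equivalence, so it restricts to the asserted correspondence between spherical unitary anchored planar algebras and unitary module tensor categories with spherical state.
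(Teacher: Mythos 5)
Your overall strategy is the one the paper follows: extend the two functors $\Lambda$ and $\Delta$ of the non-unitary equivalence by transporting the dagger/state data across the unitary adjunction $\Phi\dashv^\dag\bTr_\cV$, verify the axioms \ref{P:Reflection} and \ref{P:PositiveDefinite} on generating tangles, establish $\rm C^*$-ness of the reconstructed category, and match sphericality diagrammatically. Three points, however, are wrong or unjustified as written.

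First, the claim that ``the right-adjoint hypothesis is automatic, since the relevant action functor is a dagger functor between semisimple $\rm C^*$-categories'' is false: Theorem~\ref{thm: unitary right adjoints} only upgrades an \emph{existing} adjoint to a unitary one via formula \eqref{eq: formula for G}, whose direct sum need not exist when $\cV$ has infinitely many simples. The hypothesis must be carried along, not discharged. Second, ``mutual inverseness then descends from the non-unitary statement'' skips a genuine verification: the comparison isomorphism $\Psi:\Delta\circ\Lambda\Rightarrow\id$ must be shown to be a $1$-morphism of \emph{unitary} module tensor categories, i.e.\ its underlying functor must be a dagger functor. The footnote about automatic unitarity of $2$-morphisms does not help here, since it presupposes one already has a $1$-morphism in the unitary $2$-category; the paper closes this by an explicit norm-preservation computation $\|f\|_{\cV}=\|\Psi(f)\|_{\cC}$. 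Third, and most substantively, your sketch never isolates the identity that makes both positivity verifications work, namely Proposition~\ref{prop:FormulaForEvTr}: $\coev^\dag_{\bTr_\cV(c)}=i^\dag\circ\bTr_\cV(\coev_c^\dag)\circ\mu_{c,\overline{c}}\circ(\id\otimes(\chi^{\bTr_\cV}_c)^{-1})$. This is the precise sense in which ``positivity of the pairing is positivity of the $\rm C^*$-inner products post-composed with $\psi$,'' and it is also what ties the chosen state on $\End_\cC(1_\cC)$ to $i^\dag$ on $\cP[0]$ (Lemma~\ref{lem:IdentifyStatesOnGroundC*Alg}); without it the forward construction of \ref{P:PositiveDefinite} and the backward identification of inner products are both assertions rather than proofs. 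Relatedly, for the backward direction the paper does not invoke the unitary Yoneda embedding to get $\rm C^*$-ness; it exhibits a faithful positive functional on each $2\times 2$ linking algebra of $\cC_0$ built from $\psi_\cP$ and the planar pairing. Your proposed Yoneda route would need the same positivity input to know the target is a $2$-Hilbert space and the embedding isometric, so it does not circumvent the computation.
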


\subsection{Motivations for this article}
\label{sec:Motivations}

\subsubsection{Enriched subfactor theory}

The development of planar algebras \cite{MR4374438} is intimately linked to subfactor theory. 
We expect a similar relation to hold between anchored planar algebras and \emph{enriched} subfactor theory.
In their paper \cite{MR3687214}, Corey Jones and the second author introduced a notion of $\rm W^*$-algebra object $A$ internal to a unitary tensor category $\cV$.
One could imagine formulating an analog of the notion of $\rm II_1$ factor internal to $\cV$ (perhaps just the condition that the neutral part of $A$, i.e. $\cV(1\to A)$, is a $\rm II_1$ factor) and, similarly, a notion of subfactor internal to $\cV$.

We conjecture that, in this context, the correct analog of the standard invariant is that of a unitary anchored planar algebra:


\begin{conj}
The enriched standard invariant of a subfactor internal to $\cV$ is a 2-shaded unitary anchored planar algebra in $Z(\cV)$.
Finite index, finite depth hyperfinite $\rm II_1$ subfactors internal to $\cV$ are classified by their enriched standard invariants\footnote{The relevant anchored planar algebras should satisfy $\cP[0]=I(1)$, where $I:\cV\to Z(\cV)$ is the adjoint of the forgetful functor $\cZ(\cV)\to \cV$.}.
\end{conj}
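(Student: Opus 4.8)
The plan is to transport Jones's construction of the planar algebra of a subfactor \cite{MR4374438} into the internal world of $\cV$, and then to run an internal version of Popa's reconstruction theorem and his classification of amenable subfactors. Throughout I would work with $\rm W^*$-algebra objects in a unitary tensor category in the sense of \cite{MR3687214}, together with whatever notion of ``$\rm II_1$ factor object'' and ``subfactor object'' one ultimately adopts; for concreteness, say $N\subseteq M$ are $\rm W^*$-algebra objects in $\cV$ with $\cV(1\to N)$ and $\cV(1\to M)$ genuine $\rm II_1$ factors, the inclusion has finite index, and everything is $\cV$-equivariant.

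For the first assertion, I would first build the internal Jones tower $N\subseteq M\subseteq M_1\subseteq M_2\subseteq\cdots$ by iterating the basic construction inside $\cV$; the internal trace and internal conditional expectations of \cite{MR3687214} supply internal Jones projections $e_i$ obeying the Temperley--Lieb relations and the Markov property. The $2$-shaded box spaces are the internal relative commutants $\cP[2n]=N'\cap M_{n-1}$ and $\cP[2n+1]=M'\cap M_n$ (the two shadings recording which factor sits at the outer boundary), each again a $\rm W^*$-algebra object in $\cV$, and the action of an anchored planar tangle is assembled from internal multiplication, the Jones projections, the conditional expectations, and the internal Markov trace exactly as in the classical shaded case --- the red anchor strings merely bookkeep the $\cV$-module structure, so the tangle-composition axioms of \S\ref{sec:APA} reduce to Temperley--Lieb calculus plus coherence already available in $\cV$. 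The reason the output lifts to $\cZ(\cV)$ is that each box space carries a canonical half-braiding: a $\cV$-strand running along the internal tower can be pushed through a relative commutant from either side, and the braided discrepancy between the two ways is the half-braiding datum; one checks that $\cP[0]$ comes out to be $I(1)$ for $I:\cV\to\cZ(\cV)$ the adjoint of the forgetful functor, and that the internal Markov trace makes the anchored planar algebra spherical. Feeding this into Theorem~\ref{thm:MainUAPA} over $\cZ(\cV)$ then identifies the enriched standard invariant with a unitary pointed pivotal module tensor category over $\cZ(\cV)$ carrying a distinguished spherical state.

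For the classification statement one runs the construction backwards: given a $2$-shaded spherical unitary anchored planar algebra $\cP$ in $\cZ(\cV)$ with $\cP[0]=I(1)$, form the graded $*$-algebra $\bigoplus_n\cV(1\to\cP[2n])$ (an internal Guionnet--Jones--Shlyakhtenko-type construction), complete it in the GNS representation of the Markov trace to obtain a tower of $\rm II_1$ factor objects in $\cV$, and extract the subfactor $M_0\subseteq M_1$, whose enriched standard invariant is $\cP$ by an internal Popa-type reconstruction lemma. Finite depth of $\cP$ forces finite index and finite depth of the subfactor, and an internalization of the Popa--Ocneanu classification of amenable subfactors --- where hyperfiniteness is used --- shows such a subfactor is determined up to isomorphism by its enriched standard invariant, yielding the claimed bijection.

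The main obstacle is analytic, not combinatorial. The tangle bookkeeping and half-braiding computations above should be routine once \S\ref{sec:APA} and Theorem~\ref{thm:MainUAPA} are in hand; the genuine difficulty is to develop internal von Neumann algebra theory --- workable notions of $\rm II_1$ factor, amenability, and hyperfiniteness internal to $\cV$, an internal basic construction, and an internal pushdown/reconstruction lemma --- and then to internalize Popa's classification of amenable subfactors, whose proof relies on intertwining-by-bimodules and approximation arguments that do not transport formally through the enrichment. I expect the first assertion to be accessible with present tools, while the classification half would need substantial new operator-algebraic input.
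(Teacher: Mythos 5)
The statement you are addressing is a \emph{conjecture} in the paper, not a theorem: the authors offer no proof, and indeed they do not even fix the definitions of ``$\rm II_1$ factor internal to $\cV$'' or ``subfactor internal to $\cV$'' (they explicitly say one ``could imagine formulating'' such notions). So there is no argument in the paper to compare yours against, and your text should be read as a research program rather than a proof. As a program it is the natural one --- internalize the Jones tower, take relative commutants as box objects, and hope for an internal Popa reconstruction/classification --- but it does not close the conjecture, and you concede as much: the entire analytic core (internal basic construction, internal notions of amenability and hyperfiniteness, an internal intertwining-by-bimodules argument) is exactly the content that is missing, and asserting that it ``should'' internalize is not a proof. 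Two further steps you present as routine are not: the claim that each relative commutant carries a canonical unitary half-braiding landing the whole structure in $\cZ(\cV)$, and the claim that $\cP[0]=I(1)$, are both asserted without construction; the latter appears in the paper only as a \emph{condition} singling out which anchored planar algebras should arise, which suggests the authors do not regard it as automatic.

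There is also a concrete flaw in your route for the classification half. A Guionnet--Jones--Shlyakhtenko-type construction, even granting its internalization, produces ambient factors that are generically interpolated free group factors, not hyperfinite ones; so GNS-completing the graded algebra $\bigoplus_n\cV(1\to\cP[2n])$ cannot by itself realize $\cP$ as the standard invariant of a \emph{hyperfinite} subfactor, which is what the conjecture classifies. In the classical finite-depth setting one instead uses Popa's reconstruction theorem or the iterated-basic-construction/commuting-square method to land in the hyperfinite $\rm II_1$ factor, and uniqueness is Popa's classification of strongly amenable standard invariants; both would need genuine internal analogues. Until those exist, the second assertion remains exactly as open as the paper leaves it.
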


As an example, in \cite[Rem.~6.1]{MR3623255},
Jaffe and Liu construct a subfactor from the inductive limit of parafermion algebras, and  obtain the parafermion \emph{subfactor planar para algebra} from its `graded standard invariant.'
Planar para algebras are anchored planar algebras internal to $\cV=\Vect(\bbZ/N)$ with a particular braiding (see \cite[Examples 3.7 and 3.8]{MR4528312}), and the parafermion planar para algebra corresponds to a Tambara-Yamagami module tensor category over $\cV$.


There is a classification of $\rm II_1$ subfactors with index less than 4 \cite{MR0696688,MR999799,MR1308617,MR1193933,MR1313457} in terms of ADE Coxeter-Dynkin diagrams (where $D_{\text{odd}}$ and $E_{7}$ do not occur, and $E_6$ and $E_8$ occur twice).
For subfactors enriched over super vector spaces, the non-simply laced Coxeter-Dynkin diagrams $C_{\text{even}}$ and $F_4$ also appear \cite[\S6-7]{1709.01941}.



\subsubsection{Internal structure of bicommutant categories}

Bicommutant categories were introduced in \cite{MR3747830}, as higher categorical analogs of von Neumann algebras. 
The simplest example of a bicommutant category is $\Bim(M)$, the tensor category of all bimodules over a von Neumann algebra $M$.
Examples corresponding to unitary fusion categories were constructed in \cite{MR3663592}, and further studied in \cite{MR4581741}.
Examples corresponding to conformal nets were constructed in \cite{1701.02052}.

When $M=R$ is a hyperfinite $\rm II_1$, $\rm II_\infty$, or $\rm III_1$ factor, 
there is a well-known correspondence \cite{MR1055708,MR1339767,MR4374438,MR4236062,2010.01067} between conjugacy classes of finite depth $R$-$R$ bimodules and isomorphism classes of unitary finite depth planar algebras\footnote{Unoriented planar algebras correspond to symmetrically self-dual bimodules. 
Arbitrary finite depth bimodules are classified by planar algebras whose strands are oriented.}:
\[
\left\{
\parbox{5.1cm}{
Finite depth $R$-$R$-bimodules
}
\!\left\}\left/\text{conj.}
\,\leftrightarrow\,\,\,
\left\{
\parbox{4.5cm}{
Connected finite depth \centerline{unitary planar algebras}
}
\right\}\right.\right.\right.
\!\!\bigg/\text{iso.}
\]
We conjecture that, under suitable assumptions, conjugacy clases of finite depth objects in a bicommutant category $\cT$ are in bijective correspondence with finite depth unitary anchored planar algebras in $Z(\cT)$.

\begin{conj}\label{conjIntro}
Let $\cT$ be a bicommutant category whose Drinfeld center $Z(\cT)$ is fusion. Then, under suitable assumptions, there exists a natural bijective correspondence
\[
\left\{
\parbox{4.6cm}{
\rm Finite depth objects of $\cT$
}
\!\left\}\left/\text{\rm conj.}
\,\leftrightarrow\,\,\,
\left\{
\parbox{6.2cm}{
\rm Connected finite depth unitary \centerline{\rm anchored planar algebras in $\cZ(\cT)$}
}
\right\}\right.\right.\right.
\!\!\bigg/\text{\rm iso.}
\]
\end{conj}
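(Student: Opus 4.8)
The plan is to reduce Conjecture~\ref{conjIntro} to a \emph{realization} problem for unitary fusion module tensor categories inside $\cT$, by factoring the desired bijection through the equivalence of Theorem~\ref{thm:MainUAPA}, and then to attack that realization problem by an internal (``enriched'') analog of Popa's reconstruction theorem for standard $\lambda$-lattices.

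First I would decode the right-hand side. Write $\cV := Z(\cT)$, which is fusion by hypothesis. Given a connected, finite depth unitary anchored planar algebra $\cP$ in $\cV$, Theorem~\ref{thm:MainUAPA} produces a unitary pointed pivotal module (multi)tensor category $(\cC,X,\psi)$ over $\cV$ with chosen state $\psi$, generated by the symmetrically self-dual object $X$. Connectedness of $\cP$ (i.e. $\cV(1\to\cP[0])=\C$, where $\cP[0]=I(1)$ for $I$ the right adjoint of the action functor) forces the unit of $\cC$ to be simple, while finite depth of $\cP$ forces $\cC$ to have finitely many simples; hence $\cC$ is a \emph{unitary fusion} module tensor category. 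Such a $\cC$ carries a canonical spherical state, namely the categorical trace attached to its unique faithful tracial weight, and one checks this is the state extracted from $\psi$, so the ``chosen state'' is not extra data in the fusion case. Thus, via Theorem~\ref{thm:MainUAPA}, the right-hand side of Conjecture~\ref{conjIntro} is identified with the set of equivalence classes of unitary pointed pivotal fusion module tensor categories over $\cV$.

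Next, the forward map from the left-hand side. Given a finite depth object $X$ of $\cT$ (symmetrically self-dual, or else replaced by $X\oplus\bar X$ — the general oriented case would require the oriented variant of anchored planar algebras, to be run in parallel), let $\cC_X\subseteq\cT$ be the full unitary fusion subcategory generated by $X$ together with the image of the forgetful functor $F:Z(\cT)\to\cT$. The half-braiding carried by $F$ makes $\cC_X$ a module tensor category over $Z(\cT)$, and the canonical unitary dual functor on $\cT$ — which a bicommutant category carries, just as $\Bim(M)$ does — restricts to a pivotal structure on $\cC_X$ compatible with the $Z(\cT)$-action. This is precisely a unitary pointed pivotal fusion module tensor category over $Z(\cT)$ with chosen generator $X$, and conjugate objects of $\cT$ visibly give equivalent data with corresponding generators, so the assignment descends to conjugacy classes. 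Composing with the inverse of Theorem~\ref{thm:MainUAPA} returns a connected finite depth unitary anchored planar algebra in $Z(\cT)$ — concretely the ``enriched standard invariant'' with box-spaces $\cP_X[n]=I(X^{\otimes n})$, so that $\cP_X[0]=I(1)$ as anticipated.

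The assertion of the conjecture is then that this forward map is a bijection, equivalently that every unitary pointed pivotal fusion module tensor category $(\cC,X)$ over $Z(\cT)$ embeds into $\cT$ via a fully faithful pivotal module tensor functor sending $X$ to a finite depth object, and does so uniquely up to conjugacy. This \emph{internal realization} step is where all the difficulty lies; it is the anchored analog of Popa's theorem that every connected finite depth unitary planar algebra is the standard invariant of a subfactor of the hyperfinite $\mathrm{II}_1$ factor, together with uniqueness of that realization in the amenable setting, and it is the reason for the ``under suitable assumptions'' clause. I would attack it by mimicking that mechanism: equip the unitary fusion module category $\cC$ with its canonical $2$-Hilbert space structure, build from it an inductive tower of commuting squares internal to $\cT$ (using that $\cT$ is a bicommutant category — the analog of a von Neumann algebra — and that $Z(\cT)$ is fusion), pass to the internal weak closure, and show the resulting object of $\cT$ is finite depth with principal data $\cC$; a hyperfiniteness/amenability hypothesis on $\cT$ (for instance $\cT$ assembled from $\Bim(R)$ with $R$ hyperfinite) should supply both the existence and the uniqueness-up-to-conjugacy needed to match ``modulo conjugacy'' on the left with ``modulo isomorphism'' on the right. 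Proving this enriched reconstruction theorem — and pinning down the correct amenability hypothesis — is the main obstacle. Once it is available, compatibility of the canonical spherical state on $\cC$ with the categorical trace of the realized object yields the sphericality refinement automatically, in parallel with the ribbon/spherical clause of Theorem~\ref{thm:MainUAPA}.
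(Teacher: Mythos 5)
The statement you are addressing is stated in the paper as a \emph{conjecture} (Conjecture~\ref{conjIntro}); the paper offers no proof of it, only the expectation (recorded immediately after the conjecture) that the ``suitable assumptions'' should amount to $\cT$ admitting an absorbing object whose endomorphism algebra is a hyperfinite $\mathrm{II}_\infty$ or $\mathrm{III}_1$ factor. So there is no argument of the authors to compare yours against, and the only question is whether your proposal actually proves the statement. It does not. Your first two steps --- decoding the right-hand side via Theorem~\ref{thm:MainUAPA} (connected $\Rightarrow$ simple unit, so the state is no extra data; finite depth $\Rightarrow$ fusion) and constructing the forward map $X\mapsto \cP_X$ with $\cP_X[n]=\bTr_\cV(X^{\otimes n})$ from the module tensor category generated by $X$ and the image of $Z(\cT)\to\cT$ --- are reasonable and consistent with the authors' framework (modulo the unaddressed need for an oriented/self-dual reduction and a verification that a bicommutant category really carries the required unitary dual functor and that $\cC_X$ is honestly fusion, i.e.\ that ``finite depth'' gives finitely many simples in the enriched sense). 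But all of this only reformulates the conjecture.

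The genuine gap is the step you yourself flag: the ``internal realization'' claim that every unitary pointed pivotal fusion module tensor category over $Z(\cT)$ embeds into $\cT$, fully faithfully and compatibly with the $Z(\cT)$-actions, with image generated by a finite depth object, \emph{uniquely up to conjugacy}. That claim is not a lemma you can defer; it is precisely the mathematical content of Conjecture~\ref{conjIntro} (both surjectivity and injectivity of your forward map live there). Saying one would ``mimic Popa'' by building commuting squares internal to $\cT$, taking an internal weak closure, and imposing an amenability hypothesis is a research program, not a proof: none of the ingredients (towers of commuting squares inside a bicommutant category, internal weak closures, an enriched notion of amenability, and the analog of Popa's uniqueness theorem) currently exist, and identifying the correct hypotheses is exactly what the paper leaves open. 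So the proposal should be read as a plausible reduction of the conjecture to an equally open ``enriched Popa reconstruction theorem,'' not as a proof; to make progress you would need to actually construct the realization (e.g.\ starting from the expected hypothesis of an absorbing object with hyperfinite $\mathrm{II}_\infty$ or $\mathrm{III}_1$ endomorphism factor) and prove the uniqueness-up-to-conjugacy statement that matches conjugacy on the left with isomorphism on the right.
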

\noindent
For bicommutant categories coming from fusion categories, Conjecture~\ref{conjIntro} was proven in our recent paper \cite{2307.13822}.

\subsubsection{(2+1)D topological orders}

Topological order is a phenomenon in (theoretical) condensed matter physics beyond Landau's symmetry breaking paradigm. 
In $(2$+$1)$ dimensions, the low energy effective field theory of a topologically ordered phase of matter is a topological quantum field theory, and the low energy localised excitations form a unitary modular tensor category (UMTC).


\begin{conj}
Let $\cX$ and $\cY$ be $(2+1)$D topological orders with the same anomaly, let $\cM$ be a topological domain wall between them, and let $m$ be a symmetrically self-dual point-like excitation that lives on $\cM$.
Then the collection of objects
\[
\begin{tikzpicture}
\fill[fill=blue!40, draw = blue!60] (-2.6,-1.3)
arc(-105:-75:1) arc(105:75:1)
arc(-105:-75:1) arc(105:75:1)
arc(-105:-75:1) arc(105:75:1)
arc(-105:-75:1) arc(105:75:1)
arc(-105:-75:1) arc(105:75:1)
arc(-15:15:1) arc(195:165:1)
arc(-15:15:1) arc(195:165:1)
arc(-15:15:1)
arc(-75:-105:1) arc(75:105:1)
arc(-75:-105:1) arc(75:105:1)
arc(-75:-105:1) arc(75:105:1)
arc(-75:-105:1) arc(75:105:1)
arc(-75:-105:1) arc(75:105:1)
arc(15:-15:1) arc(165:195:1)
arc(15:-15:1) arc(165:195:1)
arc(15:-15:1)
;
\node[scale =.9] at (-1.7,1) {\rm Material};
\node at            (-1.7,.6) {$\cX$};
\filldraw[fill=pink] circle (1) ;
\foreach \angl in {20,40,70,95, 120, 150, 170, -10, -30, -70, -95, -125, -155, -175}
{\fill[fill=red] (\angl:1) circle (.05) ; }
\node[scale =.9] at (0,.2) {\rm Material};
\node at (0,-.2) {$\cY$};
\end{tikzpicture}
\]
(where the red dots represent $m$)
in the unitary modular tensor category associated to $\cX$ are the box-spaces of a unitary anchored planar algebra.
\end{conj}

More generally, if $\cU$ is any unitary 3-category (a notion yet to be defined) and $c:v\to w$ is a dualizable 1-morphism between dualizable objects,
then $\cV:=\End(1_{v})$ should be a unitary modular ribbon category and $\cC:=\End(c)$ should be a unitary $\cV$-module multifusion category.
Moreover, for $m:c\Rightarrow c$ symmetrically self-dual, the objects $\underline{\Hom}(1_c, m^{\otimes k})\in\cV$ should form the box-spaces of a unitary anchored planar algebra.

Indeed, it should be the case that (2+1)D topological orders with the a given anomaly form a unitary 3-category whose 1-morphisms are topological domain walls and whose higher morphisms correspond to topological defects of higher codimension. 
More precisely, \cite{MR4640433} posits that for a UMTC $\cA$ whose Witt class represents an anomaly, the 3-category of $\cA$-enriched unitary fusion categories describes this putative 3-category of (2+1)D topological orders.

\subsubsection{Unitary higher categories}

Over the past decades, a theory of higher linear algebra has
emerged from work of many authors, e.g., \cite{MR1355899,MR1448713,MR2555928,MR3971584}.
The higher $n$-category $n\Vect$ of $n$-\emph{vector spaces} may be used as target for $n$-dimensional topological quantum field
theories, where the fully dualizable objects give fully extended
theories by the corbordism hypothesis. 
Until recently, definitions of
$n\Vect$ were bespoke, chosen to agree with well-known state-sum models. 
The recent breakthrough \cite{1905.09566} provides a uniform framework to
construct (the fully dualizable part of) $n\Vect$ via a formal inductive procedure starting from just the complex numbers. 

Starting with $\bbC$, a commutative algebra,
we can deloop and complete to obtain the 1-category $\mathsf{Vect_{fd}}$.
Since $\Vect$ is a symmetric monoidal tensor category,
we can deloop and complete again to obtain the 2-category $\mathsf{Alg_{fd}^{sep}}$ of finite dimensional separable algebras, bimodules, and intertwiners.
Assigning to an algebra its category of modules gives an equivalence to $2\Vect$, the 2-category of finite semisimple categories, linear functors, and natural transformations.
Since $2\Vect$ is a symmetric monoidal 2-category, we may deloop and complete again to obtain the 3-category of (separable) multifusion categories \cite{MR4444089}, which is equivalent to the 3-category of semisimple 2-categories \cite{1812.11933,MR4372801}.

\begin{equation}
\label{eq:HigherVectorSpaces}
\bbC
\xrightarrow{\Sigma}
\mathsf{Vect_{fd}}
\xrightarrow{\Sigma}
\mathsf{Alg_{fd}^{sep}}
\simeq
\mathsf{LinCat_{fin}^{ss}}
\xrightarrow[\text{\cite{MR4444089}}]{\Sigma}
\mathsf{MultFusCat}
\underset{\text{\cite{MR4372801}}}{\simeq}
\mathsf{Lin2Cat_{fin}^{ss}}
\xrightarrow{\Sigma}
\cdots
\end{equation}

For physical applications in topologically ordered phases of matter, it is important to have a version of the above construction that incorporates unitary structures at all levels.
However, extending this construction to the unitary setting is quite delicate. 
Whereas dualizability is a \emph{property} in the non-unitary setting, it is additional \emph{structure} in the unitary setting \cite{MR2091457,MR4133163}.
Thus $n\Hilb$ should come with canonical involutions corresponding to duals at various levels.

Writing this article has allowed us to clarify these notions for $2\Hilb$. Here, a \emph{2-Hilbert space} \cite{MR1448713} is a linear category (enriched in finite dimensional vector spaces) equipped with Hilbert space structures on hom spaces satisfying
$$
\langle f , h \circ g^\dag \rangle_{b\to c}
=
\langle f\circ g , h \rangle_{a\to c}
=
\langle g , f^\dag \circ h \rangle_{a\to b}
\qquad\qquad
\forall\,
g:a\to b,\,
f:b\to c,\,
h: a\to c.
$$
As pointed out in \cite[Rem.~3.6.1]{MR4598730}, 2-Hilbert spaces are the natural setting for defining the unitary Yoneda embedding \cite{MR4079745}.
In turn, we have a well-behaved notion of \emph{unitary adjunction} for dagger functors, which we describe in \S\ref{sec:UnitaryYoneda} below.
In Proposition \ref{prop:UnitaryAdjointsGivesUDF} (see also Remark \ref{rem:UnitaryAdjointsGivesUDF}) we prove that unitary adjunction gives a canonical unitary dual functor on $2\Hilb$.
We expect this to be a key ingredient in defining higher dagger idempotents, for the unitary analog of \eqref{eq:HigherVectorSpaces}.




\subsection{Outline}

In \S\ref{sec:Preliminaries}, we provide the preliminary background for this article, together with some new contributions for unitary and involutive categories.
We discuss the unitary Yoneda embedding and unitary adjunction in \S\ref{sec:UnitaryYoneda}, 
and we review unitary dual functors and spherical states in \S\ref{sec:UDF and SphericalStates}.
In \S\ref{sec:InvolutiveFunctors}, 
we use the graphical calculus for the 2-category $\Cat$ to transport involutive structures through adjunctions.

In \S\ref{sec:UMTC}, we review the notion of pivotal module tensor category over a braided pivotal category before introducing the unitary counterpart in \S\ref{sec:Unitary module tensor categories}.
We briefly review the graphical calculus of strings on tubes for the categorified trace from \cite{MR3578212}, and we study the adjoint of the unit map in \S\ref{sec:idag}.
In the spherical setting, we prove unitarity of the traciator in \S\ref{sec:UnitaryTraciator}.

In \S\ref{sec:UAPA}, we review the notions of planar algebra and anchored planar algebra before introducing the unitary counterparts in \S\ref{subsec:UAPA}. 
We show that for a spherical unitary anchored planar algebra, the adjoint is compatible with the inside-out reflection of tangles in Proposition~\ref{prop: inside out reflection}.
Finally, in \S\ref{sec:Equivalence}, we prove our main Theorem \ref{thm:MainUAPA}.

\subsection{Acknowledgements}

The authors would like to thank
Peter Huston
and
David Reutter
for helpful conversations.
David Penneys was supported by NSF DMS 1500387/1655912, 1654159, and 2154389.
James Tener was supported by Australian Research Council Discovery Project DP200100067, as well as by the Max Planck Institute for Mathematics, Bonn, during the initial stages of this work.


\section{Preliminaries}
\label{sec:Preliminaries}

Our standard references for (unitary) tensor categories include \cite{MR1444286,MR808930,MR2767048,MR3242743,MR3578212,MR4133163}.

Let $\Cat_\bbC$ be the 2-category of $\bbC$-linear categories, linear \emph{and} anti-linear functors, and natural transformations. 
We only allow natural transformations between functors if they are either both linear or both anti-linear.
In this section, we shall often use the graphical calculus for $\Cat_\bbC$, denoting linear categories by two-dimensional regions, functors by strands, and natural transformations by junctures.
As in \cite{MR4369356}, we may identify a category $\cA$ with the category of $\bbC$-linear functors $\Vect\to \cA$.
Given a functor $F: \cA\to \cB$ and objects $a\in \cA$ and $b\in \cB$,
this allows us to give a graphical representation e.g. for a morphism $f:F(a)\to b$, as follows:
$$
\tikzmath{
\begin{scope}
\clip[rounded corners=5pt] (-.9,-.7) rectangle (.9,.7);
\fill[gray!50] (0,.7) -- (0,.3) -- (-.3,-.3) -- (-.3,-.7) -- (-.9,-.7) -- (-.9,.7);
\fill[gray!30] (-.3,-.7) rectangle (.3,-.3);
\end{scope}
\draw (0,.3) --node[right]{$\scriptstyle b$} (0,.7);
\draw (.3,-.3) --node[right]{$\scriptstyle a$} (.3,-.7);
\draw (-.3,-.3) --node[left]{$\scriptstyle F$} (-.3,-.7);
\roundNbox{fill=white}{(0,0)}{.3}{.2}{.2}{$f$}
}
\qquad\qquad
\tikzmath{
\fill[gray!50, rounded corners = 5pt] (0,0) rectangle (.6,.6);
}
=
\cB
\qquad\qquad
\tikzmath{
\fill[gray!30, rounded corners = 5pt] (0,0) rectangle (.6,.6);
}
=
\cA
\qquad\qquad
\tikzmath{
\draw[rounded corners=5, thin, dotted] (0,0) rectangle (.6,.6);
}
=
\Vect.
$$
Given an adjunction $F\dashv G$ between two functors $F: \cA\to \cB$ and $G: \cB\to \cA$,
recall that $f: F(a) \to b$ and $g:A\to G(b)$ are called \emph{mates} if they are mapped to one another under the natural isomorphism
\begin{align*}
\cB\big(F(a) \to b\big)
&
\,\,\cong\,\,
\cA\big(a \to G(b)\big) 
\\
f \qquad&\,\,\leftrightarrow\,\,\,\,\,\,\, \mate(f)
\\
\mate(g) \,\,\,\,&\,\,\leftrightarrow\,\,\qquad g.
\end{align*}
Mates are represented in the graphical calculus as follows:
$$
\tikzmath{
\begin{scope}
\clip[rounded corners=5pt] (-.9,-.7) rectangle (.9,.7);
\fill[gray!50] (0,.7) -- (0,.3) -- (-.3,-.3) -- (-.3,-.7) -- (-.9,-.7) -- (-.9,.7);
\fill[gray!30] (-.3,-.7) rectangle (.3,-.3);
\end{scope}
\draw (0,.3) --node[right]{$\scriptstyle b$} (0,.7);
\draw (.3,-.3) --node[right]{$\scriptstyle a$} (.3,-.7);
\draw (-.3,-.3) --node[left]{$\scriptstyle F$} (-.3,-.7);
\roundNbox{fill=white}{(0,0)}{.3}{.2}{.2}{$f$}
}
\leftrightarrow\,\,
\tikzmath{
\begin{scope}
\clip[rounded corners=5pt] (-1.4,-.7) rectangle (.9,.7);
\fill[gray!50] (0,.7) -- (0,.3) -- (-.3,-.3) arc (0:-180:.3cm) -- (-.9,.7);
\fill[gray!30] (-.3,-.3) arc (0:-180:.3cm) -- (-.9,.7) -- (-1.4,.7) -- (-1.4,-.7) -- (.3,-.7) -- (.3,-.3);
\end{scope}
\draw (0,.3) --node[right]{$\scriptstyle b$} (0,.7);
\draw (.3,-.3) --node[right]{$\scriptstyle a$} (.3,-.7);
\draw (-.3,-.3) arc (0:-180:.3cm) --node[left]{$\scriptstyle G$} (-.9,.7);
\roundNbox{fill=white}{(0,0)}{.3}{.2}{.2}{$f$}
}
\qquad\text{and}\qquad
\tikzmath[yscale=-1]{
\begin{scope}
\clip[rounded corners=5pt] (-.9,-.7) rectangle (.9,.7);
\fill[gray!30] (0,.7) -- (0,.3) -- (-.3,-.3) -- (-.3,-.7) -- (-.9,-.7) -- (-.9,.7);
\fill[gray!50] (-.3,-.7) rectangle (.3,-.3);
\end{scope}
\draw (0,.3) --node[right]{$\scriptstyle a$} (0,.7);
\draw (.3,-.3) --node[right]{$\scriptstyle b$} (.3,-.7);
\draw (-.3,-.3) --node[left]{$\scriptstyle G$} (-.3,-.7);
\roundNbox{fill=white}{(0,0)}{.3}{.2}{.2}{$g$}
}
\leftrightarrow\,\,
\tikzmath[yscale=-1]{
\begin{scope}
\clip[rounded corners=5pt] (-1.4,-.7) rectangle (.9,.7);
\fill[gray!30] (0,.7) -- (0,.3) -- (-.3,-.3) arc (0:-180:.3cm) -- (-.9,.7);
\fill[gray!50] (-.3,-.3) arc (0:-180:.3cm) -- (-.9,.7) -- (-1.4,.7) -- (-1.4,-.7) -- (.3,-.7) -- (.3,-.3);
\end{scope}
\draw (0,.3) --node[right]{$\scriptstyle a$} (0,.7);
\draw (.3,-.3) --node[right]{$\scriptstyle b$} (.3,-.7);
\draw (-.3,-.3) arc (0:-180:.3cm) --node[left]{$\scriptstyle F$} (-.9,.7);
\roundNbox{fill=white}{(0,0)}{.3}{.2}{.2}{$g$}
}\,,
$$
where the cup and cap represent the
unit and counit of the adjunction.

The operations of taking the mate are natural with respect to pre-composition and post-composition by another morphism:
\begin{enumerate}[label=(M\arabic*)]
\item
\label{Mate:LeftMateIdentity}
$\mate(f_2\circ f_1) = G(f_2) \circ \mate(f_1)$
for all
$f_1:F(a)\to b_1$ and $f_2: b_1\to b_2$.
\item
\label{Mate:RightMateIdentity}
$\mate(g_2\circ g_1) = \mate(g_2) \circ F(g_1)$
for all
$g_1: a_1\to a_2$ and $g_2:a_2\to G(b)$.
\end{enumerate}

\subsection{The unitary Yoneda lemma and unitary adjunctions}
\label{sec:UnitaryYoneda}

In this section, we work with semisimple $\rm C^*$ categories. (Note that every Cauchy complete $\rm C^*$ category with finite dimensional hom spaces is semisimple \cite[\S3.1.1]{MR4598730}.)

\begin{defn}\label{def: 2.1}
A unitary trace on a semisimple $\rm C^*$ category $\cA$ is a collection of linear maps $\Tr_a: \cA(a\to a) \to \bbC$, for all $a\in \cA$, satisfying 
\begin{itemize}
\item 
$\Tr_a(f\circ g)= \Tr_b(g\circ f)$ for all $f: a\to b$ and $g: b\to a$, and
\item 
The sesquilinear forms $\langle f, g\rangle_{a\to b}:=\Tr_a(g^\dag \circ f)$ are positive definite.
\end{itemize}

\end{defn}

The above inner products satisfy
\begin{equation}
\label{eq:2HilbertSpaceEqualities}
\langle f , h \circ g^\dag \rangle_{b\to c}
=
\langle f\circ g , h \rangle_{a\to c}
=
\langle g , f^\dag \circ h \rangle_{a\to b}
\qquad\quad
\forall\,
g:a\to b,\,
f:b\to c,\,
h: a\to c,
\end{equation}
equivalently
\begin{equation}
\label{eq:2HilbertSpaceEqualities BIS}
(-\circ g)^\dagger=-\circ g^\dagger
\qquad\text{and}\qquad
(f\circ -)^\dagger=f^\dagger \circ -,
\end{equation}
and thus equip $\cA$ with the structure of a \emph{2-Hilbert space} in the sense of \cite{MR1448713}.
Conversely, one may recover the trace from the 2-Hilbert space structure 
by the formula $\Tr_a(f):=\langle f, \id_a\rangle_{a\to a}$.
The notion of semisimple $\rm C^*$ category equipped with a unitary trace is thus equivalent to the notion of $2$-Hilbert space. 

The first condition in \eqref{eq:2HilbertSpaceEqualities BIS} implies that for all $c\in \cA$, the functor $\cA(-\to c): \cA^{\op} \to \Hilb$ is a $\dagger$-functor, and the second equality implies that the \emph{unitary Yoneda embedding}
\begin{equation}\label{eq: Y Functor}
\cA\,\,\,\hookrightarrow\,\,\, \Fun^\dag(\cA^{\op}\to \Hilb)
\end{equation}
is a $\dagger$-functor (where $\Fun^\dag(\cA^{\op}\to \Hilb)$ denotes the $\dagger$-category of $\dagger$-functors $\cA^{\op}\to \Hilb$).

The above facts were first observed in \cite[Rem.~3.61 and footnote]{MR4598730}, and
we shall refer to them collectively as the \emph{unitary Yoneda lemma}.
Note that the essential image of \eqref{eq: Y Functor} is the same as its unitary essential image (using polar decomposition in $\Fun^\dag(\cA^{\op}\to \Hilb)$), so a $\dagger$-functor $\cA^{\op} \to \Hilb$ is unitarily representable if and only if the underlying functor is representable.
Given a $\dagger$-functor $F\in \Fun^\dag(\cA^{\rm op}\to \Hilb)$ in the essential image of \eqref{eq: Y Functor}, a representing object is given by
\begin{equation}
\label{eq:CoendRepresentingObject}
c := \bigoplus_{a\in \Irr(\cA)} d_a^{-1}F(a)\otimes a,
\end{equation}
where $d_a:=\Tr^\cA_a(\id_a)$, $\Irr(\cA)$ is a basis\footnote{A \emph{basis} of a semisimple linear category is a collection of representatives of each isomorphism class of simple objects.} of $\cA$, and we use the notation $\lambda H$ to denote the Hilbert space with the same underlying vector space as $H$ and inner product $\lambda\langle \,\cdot\,,\,\cdot\,\rangle_H$. 
The rescaling of the inner product of $F(a)$ ensures  the unitarity of the isomorphism $\cA(-\to c)\cong F$ given by
\begin{equation}
\label{eq:UnitaryRepresentability}
\cA\left(b\to \bigoplus_{a\in \Irr(\cA)} d_a^{-1}F(a)\otimes a\right)
\cong
d_b^{-1}F(b)\otimes \cA(b\to b)
\xrightarrow{\id_{F(b)}\otimes(\id_b\mapsto 1)}
F(b)
\end{equation}
for $b\in \Irr(\cA)$.



\begin{rem}
If $\cA$ is finitely semisimple and $\Hilb$ is taken to mean finite dimensional Hilbert spaces, then \eqref{eq: Y Functor} is a $\dag$-equivalence.
\end{rem}

\begin{defn}\label{def: anti+unitary adjunction}
Let $(\cA,\Tr^\cA)$ and $(\cB,\Tr^\cB)$ be semisimple $\rm C^*$ categories equipped with unitary traces.
A \emph{unitary adjunction}, denoted $F\dashv^\dag G$, consists of linear $\dagger$-functors $F: \cA \to \cB$ and $G: \cB \to \cA$ and a family of unitary and natural isomorphisms
\begin{equation}\label{eq: (anti)unitary adj}
\psi_{a,b}:\cB(F(a) \to b) \xrightarrow{\,\,\cong\,\,} \cA(a\to G(b)).
\end{equation}
If $F$ and $G$ are instead antilinear $\dagger$-functors, and
\eqref{eq: (anti)unitary adj} is antiunitary, then adjunction
is called an \emph{antiunitary adjunction}.
\end{defn}

\begin{rem}
In the above definition, if we had merely required $F$ and $G$ to be (anti)linear functors, they would nevertheless automatically be $\dagger$-functors.
We prove that $G$ is a dagger functor (the argument for $F$ is similar).
For any $f \in \cB(b_1\to b_2)$, the following diagram commutes:
$$
\begin{tikzcd}[column sep = 4em]
\cA(a\to G(b_1)) \arrow[d,"\psi_{a,b_1}^{-1}","\cong"'] \arrow[r,"G(f) \circ -"]& \cA(a\to G(b_2))
\\
\cB(F(a)\to b_1) \arrow[r, "(f\circ -)"] & \cA(F(a)\to b_2). \arrow[u,"\psi_{a,b_2}"',"\cong"]
\end{tikzcd}
$$
Applying $\dagger$ to all the arrows, by \eqref{eq:2HilbertSpaceEqualities BIS}, 
the following diagram also commutes:
$$
\begin{tikzcd}[column sep = 4em]
\cA(a\to G(b_1))  & \cA(a\to G(b_2)) \arrow[l,"G(f)^\dag \circ -"']  \arrow[d,"\psi_{a,b_2}^{-1}", "\cong"']
\\
\cB(F(a)\to b_1) \arrow[u,"\psi_{a,b_1}"', "\cong"] & \cA(F(a)\to b_2). \arrow[l, "(f^\dagger\circ -)"']
\end{tikzcd}
$$
As $G(f^\dagger)\circ-$ also makes that second diagram commute, 
$G(f^\dagger)\circ- = G(f)^\dagger\circ-$,
and hence $G(f^\dag)=G(f)^\dag$.

Similarly,
whenever $F\dashv G$ are adjoint functors between linear categories,
if \eqref{eq: (anti)unitary adj} is (anti)linear,
then $F$ and $G$ are automatically (anti)linear functors.

The above facts are reminiscent of the well-known fact that if $H,K$ are Hilbert spaces and $S: H \to K$ and $T: K\to H$ are two functions such that $\langle S\eta, \xi\rangle = \langle \eta, T\xi\rangle$ for all $\eta\in H$ and $\xi \in K$, then $S$ and $T$ are automatically bounded linear maps.
\end{rem}

\begin{lem}\label{lem: unitary right adjoints}
Let $(\cA,\Tr^\cA)$ and $(\cB,\Tr^\cB)$ be semisimple $\rm C^*$ categories with unitary traces, and let $F: \cA\to \cB$ be a linear $\dagger$-functor that has a right adjoint. Then $F$ also has a unitary right adjoint. 
The unitary right adjoint $G$ is unique up to unique unitary natural isomorphism, 
and given by
\begin{equation}\label{eq: formula for G}
G(b)\,= \bigoplus_{a\in\Irr(\cA)} d_a^{-1}\cB\big(F(a)\to b\big) \otimes a.
\end{equation}
The same holds true for antilinear $\dagger$-functors.
\end{lem}
\begin{proof}
The unitary right adjoint $G$, if it exists, sends $b\in\cB$ to the object representing the $\dagger$-functor
\begin{equation}\label{eq: a mapsto B(F(a)-> b)}
a\mapsto \cB\big(F(a)\to b\big).
\end{equation}
By the unitary Yoneda Lemma, such a representing object, if it exists, is unique up to unique unitary isomorphism.
We get \eqref{eq: formula for G} by 
substituting \eqref{eq: a mapsto B(F(a)-> b)} into
\eqref{eq:CoendRepresentingObject}.
If $F$ admits a right adjoint, the functors \eqref{eq: a mapsto B(F(a)-> b)} are representable, hence unitarily representable, hence $F$ admits a unitary right adjoint.

Finally, antilinear $\dagger$-functors are the same thing as linear $\dagger$-functors $\cA\to\overline{\cB}$, so the same results hold true for antilinear $\dagger$-functors.
\end{proof}

\begin{lem}\label{lem: F-|G versus G-|F}
If $\coev$ and $\ev$ are the unit and counit of an (anti-)unitary adjunction $F\dashv^\dag G$, then $\ev^\dag$ and $\coev^\dag$ are the unit and counit of an (anti-)unitary adjunction $G\dashv^\dag F$.
\end{lem}
\begin{proof}
The adjunction is given by

\begin{equation}\label{eq:3antiunitaries}
\cA(G(b)\to a)
\underset{\text{anti-unitary}}{\cong}
\cA(a\to G(b))
\underset{\text{(anti-)unitary}}{\cong}
\cA(F(a)\to b)
\underset{\text{anti-unitary}}{\cong}
\cA(b\to F(a)).
\end{equation}
The first equivalence is anti-unitary as
$$
\langle f, g\rangle_{G(b)\to a} 
=
\Tr^\cA(g^\dag \circ f)
=
\Tr^\cA(f \circ g^\dag)
=
\langle g^\dag, f^\dag\rangle_{a\to G(b)},
$$
and similarly for the third one.

If we set $a=G(b)$, the image of $\id_{G(b)}$ under \eqref{eq:3antiunitaries} is $\ev^\dagger$. So $\ev^\dagger$ is the counit of the adjunction \eqref{eq:3antiunitaries}. Similarly, setting $b=F(a)$, we see that $\coev^\dagger$ is the unit of the adjunction.
\end{proof}

\begin{lem}\label{lem: Tr as a loop}
Let $(\cA,\Tr^\cA)$ be a semisimple $\rm C^*$ category with a unitary trace.
Let $a\in\cA$ be an object, viewed as linear $\dagger$-functor $a:\Hilb\to\cA$, and let $a^*:\cA\to \Hilb$ be its unitary adjoint. Then
\[
\Tr_a^{\cA}(f)\,=
\tikzmath{
\fill[gray!30] (.8,.3) -- (.8,1.3) arc (0:180:.4cm) -- (0,.3) arc (-180:0:.4cm) ; 
\draw (.8,.3) -- (.8,1.3) node[right, xshift=-.1cm, yshift=.2cm]{$\scriptstyle a$} arc (0:180:.4cm) node[left, yshift=2mm, xshift=1.5mm]{$\scriptstyle a^*$} -- (0,.3);
\draw (0,.3) arc (-180:0:.4cm) node[right, xshift=-.1cm, yshift=-.25cm]{$\scriptstyle a$} -- +(0,.7);
\roundNbox{fill=white}{(.8,.8)}{.3}{.1}{.1}{$f$}
\node[scale=.9] at (.45,-.24) {$\scriptstyle \ev_a^\dagger$};
\node[scale=.9] at (.4,1.85) {$\scriptstyle \ev_a$};
}
\qquad\qquad
\tikzmath{
\fill[gray!30, rounded corners = 5pt] (0,0) rectangle (.6,.6);
}
=
\cA
\qquad
\tikzmath{
\draw[rounded corners=5, thin, dotted] (0,0) rectangle (.6,.6);
}
=
\Hilb,
\]
where $\ev_a$ is the mate of $\id_a$ under the unitary adjunction
$
\Hilb(a^*(a)\to \bbC)
\cong
\cA(a\to a(\bbC))
$.
\end{lem}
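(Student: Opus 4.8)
The plan is to compute the morphism on the right-hand side explicitly using the formula for the unitary adjoint $a^*$ coming from Theorem~\ref{thm: unitary right adjoints}, and to identify the composite $\ev \circ (f \circ -) \circ \ev^\dagger$ with the scalar $\Tr^\cA_a(f)$ regarded as an endomorphism of the tensor unit of $\Hilb$, namely $\bbC$. Recall that viewing $a \in \cA$ as a linear functor $a \colon \Hilb \to \cA$ means $a(\bbC) = a$ and more generally $a(H) = H \otimes a$, and a unitary right adjoint $a^* \colon \cA \to \Hilb$ exists (the functor $a$ is a dagger functor since $\Hilb$ has only one simple object up to isomorphism, and any such $a$ automatically has a right adjoint by semisimplicity). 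By formula \eqref{eq: formula for G} applied with $F = a$ and ``$\cB$'' $= \cA$, ``$\cA$'' $= \Hilb$, we get $a^*(b) = \cA(a \to b)$ with inner product $\langle \phi, \psi \rangle = \Tr^\cA_a(\psi^\dagger \circ \phi)$ — this is exactly the 2-Hilbert space inner product, so $a^*(b) = \cA(a \to b)$ as Hilbert spaces. The unit $\ev^\dagger \colon \bbC \to a^*(a) = \cA(a\to a)$ and counit $\ev \colon a \otimes a^*(b) \to b$, i.e. $\ev_b \colon \cA(a \to b)\otimes a \to b$, are the structure maps of this adjunction.

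First I would pin down these structure maps concretely. The counit $\ev_b \colon \cA(a\to b) \otimes a \to b$ is evaluation, $\phi \otimes \xi \mapsto \phi(\xi)$ — more precisely, since $a \otimes a^*(b)$ means $a^*(b) \otimes a = \cA(a\to b)\otimes a$ with $a$ acting by tensoring the Hilbert-space factor, the counit is the canonical evaluation natural transformation of the Yoneda-type adjunction. The unit $\ev^\dagger \colon \bbC \to \cA(a \to a)$, being the adjoint (hence the mate) of the counit, is determined by the requirement that $\ev^\dagger(1) = \id_a$ up to the inner-product normalization; in fact since the inner product on $\cA(a\to a)$ is the unitary-trace one, the zig-zag identity forces $\ev^\dagger(1)$ to be $\id_a$. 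Then the composite picture is: start with $1 \in \bbC$, apply $\ev^\dagger$ to get $\id_a \in \cA(a\to a) = a^*(a)$; this sits inside $a \otimes a^*(a)$; apply $f \circ -$ on the $a^*(a)$-slot (here $f\colon a\to a$, and $f\circ-\colon \cA(a\to a)\to\cA(a\to a)$ is a morphism in $\Hilb$, tensored with $\id_a$) to obtain $f \in a^*(a)$ inside $a\otimes a^*(a)$; finally apply $\ev_a \colon \cA(a\to a)\otimes a \to a$. But this last composite $\ev_a \circ (\text{stuff})$ lands back in a morphism $\bbC \to \bbC$ once we read the outer strand correctly — the outer region being $\Hilb$, the whole diagram is an endomorphism of $\bbC$, i.e. a scalar.

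Second, I would evaluate that scalar. The cleanest route is to recognize the right-hand side as the partial trace / categorical trace of $f\circ-$ regarded as an endomorphism, and to use the general fact that for a dagger functor $a\colon\Hilb\to\cA$ with unitary right adjoint $a^*$, the composite $\ev_a \circ (f\circ - \otimes \id_a) \circ \ev_a^\dagger$ equals $\langle f, \id_a\rangle_{a\to a}$. Concretely: unwinding the identifications, the outer strand carries the copy of $a$ that gets created by $\ev^\dagger$ and destroyed by $\ev$, and the ``bead'' labeled $f$ post-composes; pairing the created $\id_a$ (after post-composition by $f$, giving $f$) against the identity via the counit-then-unit sandwich produces precisely $\Tr^\cA_a(\id_a^\dagger \circ f) = \Tr^\cA_a(f)$. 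Alternatively, decompose $a = \bigoplus_{x\in\Irr(\cA)} \cA(x\to a)\otimes x$ and check the claim on each isotypic component, where it reduces to the scalar identity $\sum_x \dim\cA(x\to a)\, d_x^{-1}\cdot(\text{matrix entry of }f) = \Tr^\cA_a(f)$ by definition of the unitary trace; the normalization $d_x^{-1}$ in \eqref{eq: formula for G} is exactly what makes this work out.

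The main obstacle I anticipate is bookkeeping rather than mathematics: correctly tracking which strand in the graphical calculus is the ``outer'' $\cA$-region versus the ``inner'' $\Hilb$-region, and making sure the normalization of $\ev^\dagger$ (the factor $d_a^{-1}$ hidden in the rescaled inner products of \eqref{eq:CoendRepresentingObject}) is accounted for so that $\ev^\dagger(1) = \id_a$ exactly, with no stray scalar. Once the zig-zag identities are used to fix $\ev$ and $\ev^\dagger$ unambiguously, the identification with $\Tr^\cA_a(f)$ is immediate from $\Tr_a^\cA(f) = \langle f, \id_a\rangle_{a\to a}$.
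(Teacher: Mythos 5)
Your proposal is correct and, at its core, runs on the same identity as the paper's proof: both reduce the loop to $\langle f,\id_a\rangle_{a\to a}=\Tr^\cA_a(f)$. The paper gets there in one line without any explicit model of $a^*$: the loop is literally $\mate(f)\circ\mate(\id_a)^\dag=\langle \mate(f),\mate(\id_a)\rangle$ computed in $\Hilb(a^*(a)\to\bbC)$, and unitarity of the adjunction isomorphism transfers this to $\langle f,\id_a\rangle$ in $\cA(a\to a)$. You instead unwind everything through the explicit formula \eqref{eq: formula for G} for $a^*$; this works, and your anticipated normalization obstacle evaporates because the rescaling there is by $d_\bbC^{-1}=1$ (the sum is over $\Irr(\Hilb)$, not $\Irr(\cA)$). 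One point you should tighten: the top cap of the diagram is \emph{not} the counit $a(a^*(b))\to b$ of $a\dashv a^*$ (a morphism in $\cA$), but the morphism $\ev\colon a^*(a)\to\bbC$ in $\Hilb$ --- the dagger of the unit, sending $\phi\mapsto\langle\phi,\id_a\rangle$. The whole composite therefore lives in $\Hilb$ as $\bbC\to\cA(a\to a)\to\cA(a\to a)\to\bbC$, and your intermediate detour through ``$a\otimes a^*(a)$'' and the other adjunction's counit is a red herring (which you half-notice when you remark that the result must be a scalar). Once that is fixed, your computation $\ev^\dag(1)=\id_a$, then $a^*(f)(\id_a)=f$, then $\ev(f)=\langle f,\id_a\rangle=\Tr^\cA_a(f)$ is exactly right, and your alternative check on isotypic components is a valid but unnecessary fallback.
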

\begin{proof}
\[
\tikzmath{
\fill[gray!30] (.8,.3) -- (.8,1.3) arc (0:180:.4cm) -- (0,.3) arc (-180:0:.4cm) ; 
\draw (.8,.3) -- (.8,1.3) node[right, xshift=-.1cm, yshift=.2cm]{$\scriptstyle a$} arc (0:180:.4cm) node[left, yshift=2mm, xshift=1.5mm]{$\scriptstyle a^*$} -- (0,.3);
\draw (0,.3) arc (-180:0:.4cm) node[right, xshift=-.1cm, yshift=-.25cm]{$\scriptstyle a$} -- +(0,.7);
\roundNbox{fill=white}{(.8,.8)}{.3}{.1}{.1}{$f$}
\node[scale=.9] at (.45,-.24) {$\scriptstyle \ev_a^\dagger$};
\node[scale=.9] at (.4,1.85) {$\scriptstyle \ev_a$};
}
=
\langle \mate(f),\mate(\id_a)\rangle_{\Hilb(a^*(a)\to \bbC)}
=
\langle f, \id_a\rangle_{\cA(a\to a)}
=
\Tr_a^\cA(f).
\qedhere
\]
\end{proof}

Anticipating the notion of unitary dual functor (see \S\ref{sec:UDF and SphericalStates} below), we have the following result:

\begin{prop}
\label{prop:UnitaryAdjointsGivesUDF}
Let $(\cA,\Tr^\cA)$ be a semisimple $\rm C^*$ category equipped with a unitary trace, and let $\End^\dag_d(\cA)$ be its category of dualizable linear dagger endofunctors. 
Then the operation which sends a dagger functor $F:\cA\to\cA$ to its unitary adjoint defines a unitary dual functor on $\End^\dag_d(\cA)$.

The same holds true for the category of linear or antilinear $\dagger$-functors from $\cA$ to itself.
\end{prop}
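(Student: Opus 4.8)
The plan is to verify directly that the assignment $F \mapsto F^*$ (the unitary adjoint of Theorem~\ref{thm: unitary right adjoints}, applied to endofunctors) satisfies the axioms of a unitary dual functor on the $\mathrm{C}^*$-tensor category $\End^\dag_d(\cA)$, where the monoidal structure is composition of functors and the dagger is the one induced by $\Fun^\dag$. First I would recall the precise list of axioms a unitary dual functor must satisfy: it assigns to each dualizable object $F$ a dual $F^*$ together with evaluation and coevaluation morphisms exhibiting $F^*$ as a dual of $F$ (so that $\End^\dag_d(\cA)$ is rigid), and this data must be \emph{unitary} in the sense that the canonical ``pivotal'' structure it determines is compatible with $\dagger$ — concretely, that the double-dual natural isomorphism is unitary, or equivalently (in the formulation of \cite{MR4133163}) that for every morphism $\eta: F\to G$ one has $(\eta^*)^\dagger = (\eta^\dagger)^*$, where $\eta^*$ is the mate/transpose built from the evaluation and coevaluation.

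The key steps, in order: (1) For $F\dashv^\dag F^*$ a unitary adjunction with unit $\coev_F: \id_\cA \to F^* F$ and counit $\ev_F: F F^* \to \id_\cA$, observe that by Remark~\ref{rem: F-|G versus G-|F} the adjoint data $\ev_F^\dagger$ and $\coev_F^\dagger$ exhibit a unitary adjunction $F^* \dashv^\dag F$; feeding these two adjunctions into the standard zig-zag identities shows $(F^*,\coev_F,\ev_F)$ is honest duality data making $F^*$ a two-sided dual of $F$, so $\End^\dag_d(\cA)$ is rigid. (2) Check the monoidal compatibility: for $F, G$ dualizable, $(G\circ F)^* \cong F^* \circ G^*$ via the tensorator built from $\ev_F, \ev_G$ (and its inverse from $\coev_F,\coev_G$), and this isomorphism is unitary — this follows because a composite of unitary adjunctions is a unitary adjunction and unitary adjoints are unique up to unique unitary isomorphism by Theorem~\ref{thm: unitary right adjoints}. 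Similarly $\id_\cA^* \cong \id_\cA$ unitarily. (3) The crucial point: verify the dagger-compatibility axiom. Given $\eta: F\to G$ in $\End^\dag_d(\cA)$, its transpose $\eta^*: G^*\to F^*$ is $(\ev_F \circ (\id_F \otimes \text{--}) )$-type mate of $\eta$, i.e. $\eta^* = (\id_{F^*} \otimes \ev_G)\circ(\id_{F^*}\otimes \eta \otimes \id_{G^*})\circ(\coev_F\otimes\id_{G^*})$. Applying $\dagger$ and using \eqref{eq:2HilbertSpaceEqualities BIS} together with $\ev_F^\dagger = $ the coevaluation of $F^*\dashv^\dag F$ and $\coev_F^\dagger = $ its evaluation (Remark~\ref{rem: F-|G versus G-|F}), one rewrites $(\eta^*)^\dagger$ as exactly the transpose of $\eta^\dagger: G\to F$ computed with respect to the adjunctions $F^*\dashv^\dag F$ and $G^*\dashv^\dag G$; but that is by definition $(\eta^\dagger)^*$. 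So $(\eta^*)^\dagger = (\eta^\dagger)^*$, which is precisely unitarity of the dual functor. It is cleanest to run step (3) in the graphical calculus for $\Cat$ introduced above: $\dagger$ reflects a diagram top-to-bottom, and Remark~\ref{rem: F-|G versus G-|F} says this reflection turns the cups/caps of $F\dashv^\dag G$ into the cups/caps of $G\dashv^\dag F$, so the reflected diagram for $\eta^*$ is literally the diagram for $(\eta^\dagger)^*$.

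The main obstacle I anticipate is \emph{bookkeeping the variances and conjugations consistently}, not any deep difficulty: one must be careful that $\End^\dag_d(\cA)$ is a $\mathrm{C}^*$-tensor category whose $\dagger$ is the adjoint of natural transformations (which involves both the $\dagger$ on $\Hilb$-valued hom's and the structure of $\cA$), that ``unitary adjunction'' $F\dashv^\dag F^*$ is genuinely self-dual-compatible in the sense needed (Remark~\ref{rem: F-|G versus G-|F} is exactly the lemma that makes this go), and that the definition of unitary dual functor one is matching is the one from \cite{MR4133163} rather than a variant. Once the translation in step (3) is set up graphically, the verification is a one-line picture reflection. I would also note in passing that uniqueness (up to unique monoidal unitary natural isomorphism) of this unitary dual functor follows from the uniqueness clause of Theorem~\ref{thm: unitary right adjoints}, though that is not strictly demanded by the statement.
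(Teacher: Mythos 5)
Your steps (1) and (2) are fine and coincide with the paper's argument: the tensorator $F^*G^*\Rightarrow (GF)^*$ is unitary because the composite of unitary adjunctions is again a unitary adjunction and unitary adjoints are unique up to unique unitary isomorphism (Theorem~\ref{thm: unitary right adjoints}).

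The gap is in step (3). Applying $\dagger$ to the formula for $\eta^*$ and using Remark~\ref{rem: F-|G versus G-|F} does reflect the diagram, but the reflected cups and caps are the unit and counit of the \emph{opposite} adjunctions $F^*\dashv^\dag F$ and $G^*\dashv^\dag G$. So what you obtain is the \emph{left} mate of $\eta^\dagger$ (the transpose computed with $F^*,G^*$ regarded as left duals, i.e.\ built from $\coev_F^\dagger$ and $\ev_G^\dagger$), whereas $(\eta^\dagger)^*$ is by definition the \emph{right} mate, computed from the original data $\ev_F,\coev_F,\ev_G,\coev_G$. These are two different composites, and their equality is not ``literal'' or ``by definition'': it is exactly the content of the condition $f^{\vee\dag}=f^{\dag\vee}$ being proved, and it genuinely fails for a general two-sided duality datum (e.g.\ in $\fdHilb$, conjugating a standard solution of the conjugate equations by a non-scalar positive operator preserves the zig-zags but makes left and right transposes differ). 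Closing the gap requires input from the trace, which pins down the unitary adjoint via the normalization $d_a^{-1}$ in \eqref{eq: formula for G}: the paper pairs $f\circ\theta^{\dag*}_a$ against an arbitrary $g$, rewrites both sides as $\Tr^\cA_a$ of a closed loop using Lemma~\ref{lem: Tr as a loop} together with Remark~\ref{rem: F-|G versus G-|F}, slides $\theta^\dag$ around the loop using traciality, and concludes by nondegeneracy of the pairing. A purely diagrammatic top-to-bottom reflection cannot see this normalization, so your ``one-line picture reflection'' does not suffice.
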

\begin{proof}
Given a dualizable $\dagger$-functor $F:\cA\to\cA$, we 
write $F^*$ for its unitary adjoint (which is unique up to unique unitary isomorphism).
We must show that 
the canonical isomorphism $F^* G^* \Rightarrow (G F)^*$ is unitary, and that for all $\theta:F\Rightarrow G$ we have $\theta^{*\dag}=\theta^{\dag*}: F^*\Rightarrow G^*$.
For the first statement, note that 
\[
\ev_F\circ\,(1_{F^*} \ev_G 1_{F})=
\tikzmath{
\fill[gray!30, rounded corners = 5pt] (0,0) rectangle (1,.6);
\draw[] (.2,0) -- +(0,.1) arc (180:0:.3cm) -- +(0,-.1);
\draw[] (.35,0) -- +(0,.09) arc (180:0:.15cm) -- +(0,-.09);
\useasboundingbox;
\node[scale=.9] at (.85,-.15) {$\scriptscriptstyle F$};
\node[scale=.9] at (.62,-.15) {$\scriptscriptstyle G$};
}
\qquad\text{and}\qquad
(1_{G} \coev_F 1_{G^*})\circ\coev_G=
\tikzmath{
\fill[gray!30, rounded corners = 5pt] (0,0) rectangle (1,.6);
\draw[] (.2,.6) -- +(0,-.1) arc (-180:0:.3cm) -- +(0,.1);
\draw[] (.35,.6) -- +(0,-.09) arc (-180:0:.15cm) -- +(0,.09);
\useasboundingbox;
\node[scale=.9] at (1-.85,.7) {$\scriptscriptstyle G$};
\node[scale=.9] at (1-.62,.7) {$\scriptscriptstyle F$};
}
\]
exhibit $F^* G^*$ as a unitary adjoint of $G F$, as
$$
\cA(a\to F^*(G^*(b)))
\underset{\text{(anti-)unitary}}{\cong}
\cA(F(a)\to G^*(b))
\underset{\text{(anti-)unitary}}{\cong}
\cA(G(F(a))\to b).
$$
By the uniqueness statement in Lemma~\ref{lem: unitary right adjoints}, the isomorphism 
$F^* G^* \Rightarrow (G F)^*$ is therefore unitary.

For the second statement,
we show that $\theta_a^{*\dag}=\theta_a^{\dag*}$ for all $a\in\cA$.
For all $f\in \cA(G^*(a)\to b)$ and $g\in \cA(F^*(a)\to b)$, we have:
\begin{align*}
\langle f\circ \theta^{\dag*}_a ,g\rangle
&=
\Tr^\cA_{F^*(a)}(g^\dag\circ f\circ \theta^{\dag*}_a )
\\&=
\Tr^\cA_{a}\left(\,\,
\tikzmath{
\begin{scope}
\clip[rounded corners=6pt] (-.5,-1.65) rectangle (1.6,2);
\fill[gray!30] (-.5,-1.65) -- (1.2,-1.65) -- (1.2,0) -- (1,0) -- (1,1) -- (1.2,1) -- (1.2,2) -- (-.5,2);
\fill[gray!30] (.8,.3) -- (1,.3) -- (1,.7) -- (.8,1.3) arc (0:180:.4cm) -- (0,-1) arc (-180:0:.4cm);
\end{scope}
\draw (.8,1.3) node[right, xshift=-.15cm, yshift=.24cm]{$\scriptstyle F^*$} arc (0:180:.4cm) node[left, yshift=2mm, xshift=1mm]{$\scriptstyle F$} -- (0,.3);
\draw (0,.3) -- (0,-1) arc (-180:0:.4cm) node[right, xshift=-.15cm, yshift=-.25cm]{$\scriptstyle F^*$} -- +(0,.7);
\draw (1.2,-1.65) --node[right, yshift=-4mm]{$\scriptstyle a$} (1.2,-.3);
\draw (1,.3) --node[right]{$\scriptstyle b$} (1,.7);
\draw (1.2,1.3) --node[right]{$\scriptstyle a$} (1.2,2);
\roundNbox{fill=white}{(1,1)}{.3}{.1}{.1}{$g^\dag$}
\roundNbox{fill=white}{(.75,-.8)}{.3}{0}{0}{$\theta^{\dag *}$}
\roundNbox{fill=white}{(1,0)}{.3}{.1}{.1}{$f$}
\node[scale=.8] at (.4,-1.54) {$\scriptstyle \coev$};
\node[scale=.8] at (.45,1.88) {$\scriptstyle \coev^\dagger$};
}
\right)
=   
\Tr^\cA_{a}\left(\,\,
\tikzmath{
\begin{scope}
\clip[rounded corners=6pt] (-.5,-1) rectangle (1.6,2);
\fill[gray!30] (-.5,-1) -- (1.2,-1) -- (1.2,0) -- (1,0) -- (1,1) -- (1.2,1) -- (1.2,2) -- (-.5,2);
\fill[gray!30] (1,.3) -- (1,.7) -- (.8,1.3) arc (0:180:.4cm) -- (0,-.3) arc (-180:0:.4cm);
\end{scope}
\draw (.8,1.3) node[right, xshift=-.15cm, yshift=.24cm]{$\scriptstyle F^*$} arc (0:180:.4cm) node[left, yshift=2mm, xshift=1mm]{$\scriptstyle F$} -- (0,.3);
\draw (0,.3) -- (0,-.3) node[left, yshift=-.2cm, xshift=1mm]{$\scriptstyle G$} arc (-180:0:.4cm) node[right, xshift=-.15cm, yshift=-.23cm]{$\scriptstyle G^*$};
\draw (1.2,-1) --node[right]{$\scriptstyle a$} (1.2,-.3);
\draw (1,.3) --node[right]{$\scriptstyle b$} (1,.7);
\draw (1.2,1.3) --node[right]{$\scriptstyle a$} (1.2,2);
\roundNbox{fill=white}{(1,1)}{.3}{.1}{.1}{$g^\dag$}
\roundNbox{fill=white}{(0,.5)}{.3}{0}{0}{$\theta^\dag$}
\roundNbox{fill=white}{(1,0)}{.3}{.1}{.1}{$f$}
\node[scale=.8] at (.4,-.84) {$\scriptstyle \coev$};
\node[scale=.8] at (.45,1.88) {$\scriptstyle \coev^\dagger$};
}
\right)
=   
\Tr^\cA_{a}\left(\,\,
\tikzmath{
\begin{scope}
\clip[rounded corners=6pt] (-.5,-1) rectangle (1.6,2.7);
\fill[gray!30] (-.5,-1) -- (1.2,-1) -- (1.2,0) -- (1,0) -- (1,1) -- (1.2,1) -- (1.2,2.7) -- (-.5,2.7);
\fill[gray!30] (1,.3) -- (1,.7) -- (.8,.8) -- (.8,1.3+.7) arc (0:180:.4cm) -- (0,-.3) arc (-180:0:.4cm);
\end{scope}
\draw (.8,.8) -- (.8,1.3+.7) node[right, xshift=-.15cm, yshift=.3cm]{$\scriptstyle G^*$} arc (0:180:.4cm) node[left, yshift=2.5mm, xshift=1mm]{$\scriptstyle G$} -- (0,.3);
\draw (0,.3) -- (0,-.3)  arc (-180:0:.4cm) node[right, xshift=-.15cm, yshift=-.23cm]{$\scriptstyle G^*$};
\draw (1.2,-1) --node[right]{$\scriptstyle a$} (1.2,-.3);
\draw (1,.3) --node[right]{$\scriptstyle b$} (1,.7);
\draw (1.2,1.3) --node[right, yshift=.2cm]{$\scriptstyle a$} (1.2,2.7);
\roundNbox{fill=white}{(1,1)}{.3}{.1}{.1}{$g^\dag$}
\roundNbox{fill=white}{(.75,1.8)}{.3}{0}{0}{$\theta^{*\dag}$}
\roundNbox{fill=white}{(1,0)}{.3}{.1}{.1}{$f$}
\node[scale=.8] at (.4,-.84) {$\scriptstyle \coev$};
\node[scale=.8] at (.45,2.58) {$\scriptstyle \coev^\dagger$};
}
\right)
\\&=
\Tr^\cA_{G^*(a)}(\theta_a^{*\dag}\circ g^\dag \circ f)
\\&=
\Tr^\cA_{F^*(a)}(g^\dag \circ f\circ \theta_a^{*\dag})
\\&=
\langle f\circ \theta_a^{*\dag}, g\rangle,
\end{align*}
where the second and fifth equalities hold by Lemmas~\ref{lem: F-|G versus G-|F} and~\ref{lem: Tr as a loop}.
By the non-degeneracy of the pairing, we conclude that $\theta_a^{\dag*}=\theta_a^{*\dag}$.
\end{proof}

\begin{rem}
\label{rem:UnitaryAdjointsGivesUDF}
More generally, unitary adjoints provide a canonical unitary dual functor on the $\rm C^*$ 2-category of semisimple $\rm C^*$ categories, dualizable dagger functors, and bounded natural transformations.
\end{rem}

\subsection{Unitary dual functors and spherical states}
\label{sec:UDF and SphericalStates}

Let $\cC$ be a unitary multitensor category (aka semisimple rigid tensor C* category).
We recall the following definition from \cite{MR4133163}:

\begin{defn}
A \emph{unitary dual functor} on $\cC$ is a choice of dual $(c^\vee, \ev_c,\coev_c)$ for each object $c\in \cC$ (where $\ev_c:c^\vee\otimes c\to 1$, and $\coev_c:1\to c \otimes c^\vee$ satisfy the zigzag identities), such that the corresponding functor $\vee: \cC\to \cC^{\rm mop}$ is a dagger tensor functor:
$f^{\vee \dag} = f^{\dag \vee}$, and $\nu_{a,b}: a^\vee \otimes b^\vee \to (b\otimes a)^\vee$ is unitary.
\end{defn}

\begin{rem}
Unlike dual functors, unitary dual functors are not unique -- see \cite{MR4133163}.
\end{rem}

By \cite[Lem.~7.5]{MR2767048}, \cite[Cor.~3.10]{MR4133163},
a unitary dual functor induces a pivotal structrue on $\cC$ by
\begin{equation}
\label{eq:CanonicalUnitaryPivotalStructure}
\varphi_c:=(1\otimes \ev_c)\circ(\ev_{c^\vee}^\dagger\otimes 1) = (\coev_c^\dagger\otimes 1)\circ(1\otimes \coev_{c^\vee}):c\to c^{\vee\vee},
\end{equation}
and $\varphi_c$ is unitary for all $c\in\cC$.

A unitary dual functor $\vee$ on a unitary multitensor category $\cC$ gives two $\End_\cC(1_\cC)$-valued traces $\tr_{L}$ and $\tr_{R}$ on the underlying $\rm C^*$ category.

\begin{defn}\label{def:spherical state}
A \emph{spherical state} on a unitary multitensor category with unitary dual functor $(\cC,\vee)$ is a state $\psi$ on $\End_\cC(1_\cC)$ such that $\psi\circ \tr_{L}(f) = \psi \circ \tr_{R}(f)$ for every $f\in\End_\cC(c)$.
Given a spherical state $\psi$, we write $\Tr^\cC:= \psi \circ \tr_L$.
\end{defn}


\begin{lem}
\label{lem:UDFsWithSphericalStates}
Suppose $\cC$ is an indecomposable unitary multitensor category.
\begin{enumerate}[label=(\arabic*)]
\item A unitary dual functor admits at most one spherical state.
\item For each state $\psi$ on $\End_\cC(1_\cC)$, there is a unique unitary dual functor with respect to which $\psi$ is spherical.
\end{enumerate}
We thus have a bijection
$$
\left\{
\parbox{5.5cm}{\rm \centerline{Unitary dual functors $\vee$ which} \centerline{admit a spherical state $\psi$}}
\right\}
\cong
\left\{
\parbox{2.4cm}{\rm \centerline{States $\psi$} \centerline{on $\End_\cC(1_\cC)$}}
\right\}.
$$
\end{lem}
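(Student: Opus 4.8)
The statement to prove is Lemma~\ref{lem:UDFsWithSphericalStates}, which asserts (1) each unitary dual functor admits at most one spherical state, (2) each state on $\End_\cC(1_\cC)$ arises from a unique unitary dual functor making it spherical, and hence the map $(\vee,\psi)\mapsto\psi$ is a bijection onto states. The paper tells us this "follows from \cite[Thm.~A, Lem.~5.14, and Thm.~D]{MR4133163}", so the plan is essentially to assemble those inputs.

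The plan is as follows. First I would recall from \cite{MR4133163} the parametrization of unitary dual functors on a fixed unitary multitensor category $\cC$: by Thm.~A there, unitary dual functors are in bijection with... and the key structure is that for each unitary dual functor $\vee$, the two traces $\tr_L,\tr_R$ differ by the action of a positive element (a ``pivotal element'' or, in their language, the data recording how $\vee$ deviates from the canonical spherical one on each simple summand). For the uniqueness in part (1): given a unitary dual functor $\vee$, if $\psi_1,\psi_2$ are both spherical states, then both satisfy $\psi_i\circ\tr_L = \psi_i\circ\tr_R$ on all $\End_\cC(c)$. I would use the fact that $\tr_L$ restricted to a suitable collection of objects (e.g. the simples) together with the multitensor structure pins down $\psi$ up to the constraint; more precisely, by Lem.~5.14 of \cite{MR4133163}, sphericality forces $\psi$ to take prescribed values on the minimal central projections of $\End_\cC(1_\cC)$ (determined by the dimensions $d_c$ of the unitary dual functor), hence $\psi$ is unique.

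For part (2), the existence-and-uniqueness of a unitary dual functor making a given state $\psi$ spherical, I would invoke Thm.~D of \cite{MR4133163}: one builds the dual functor by rescaling a fixed reference unitary dual functor using the ``quantum dimensions'' dictated by $\psi$ — concretely, choosing on each simple object the balancing so that $\psi\circ\tr_L=\psi\circ\tr_R$ holds, which amounts to solving a positivity-constrained system that \cite{MR4133163} shows has a unique solution. Combining (1) and (2), the map $(\vee,\psi)\mapsto\psi$ from the set of pairs (unitary dual functor admitting a spherical state, that spherical state) to states on $\End_\cC(1_\cC)$ is well-defined by (1) (the $\psi$ is unique given $\vee$, so the domain is really just a set of dual functors with a canonically attached state), surjective by (2) (existence), and injective by (2) (uniqueness of the dual functor realizing a given $\psi$). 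This gives the displayed bijection.

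The main obstacle I expect is simply correctly extracting and citing the precise content of Thm.~A, Lem.~5.14, and Thm.~D of \cite{MR4133163} and checking that the notion of ``spherical state'' in Definition~\ref{def:spherical state} matches the formulation used there (in particular that their ``spherical'' traces correspond to our $\psi\circ\tr_L=\psi\circ\tr_R$, and that the multitensor — as opposed to tensor — setting, where $\End_\cC(1_\cC)$ need not be $\bbC$, is handled; this is exactly why the statement is phrased in terms of states rather than a single normalization). Since this is a citation-driven lemma, there is no deep new argument — the work is in the bookkeeping of matching conventions, so in the actual proof I would write a short paragraph translating each cited result into the present notation and then state that the three combine to give (1), (2), and the bijection.
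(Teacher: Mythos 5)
Your proposal is correct and follows essentially the same route as the paper: both parts rest on the classification of unitary dual functors from \cite[Thm.~A]{MR4133163} together with the observation that sphericality rigidly couples the state $\psi$ to the dual functor $\vee$. The paper makes this coupling explicit rather than leaving it entirely to the citations --- applying $\psi$ to the balance equation for the groupoid homomorphism $\pi$ classifying $\vee$ gives $\pi_{\operatorname{gr}(c)}=\psi(p_i)/\psi(p_j)$, which combined with the normalization $\sum_i \psi(p_i)=1$ determines $\psi$ from $\pi$ (part (1)) and $\pi$ from $\psi$ (uniqueness in part (2)), with existence in part (2) verified by a one-line computation --- so when writing this up you should record that formula explicitly rather than only gesturing at Lem.~5.14 and Thm.~D.
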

\begin{proof}
Let $\cU$ be the universal grading groupoid of $\cC$.
By \cite[Thm.~A]{MR4133163}, unitary dual functors $\vee$ on $\cC$ are classified by 
groupoid homomorphisms $\pi: \cU\to \bbR_{>0}$,
where $\vee$ corresponds to $\pi$ if
\begin{equation}
\label{eq:BalanceByGroupoidHom}
\exists \lambda\in \bbC,
\,\,\,\,
\coev_c^\dag\circ (f \otimes \id_{c^{\vee_\pi}})\circ \coev_c
=\lambda p_i,
\qquad
\ev_c \circ (\id_{c^{\vee_\pi}}\otimes f) \circ \ev_c^\dag
=
\pi_{\operatorname{gr}(c)} \cdot
\lambda p_j
\end{equation}
for all homogeneous $c\in 1_i\otimes \cC\otimes 1_j$ (homogeneous w.r.t.~the $\cU$-grading) and $f: c\to c$.
Here, $p_1,\ldots,p_r\in\End_\cC(1_\cC)$ are the projections onto the simple summands of $1_\cC$, and $\operatorname{gr}_c\in\cU$ is the $\cU$-grading of $c$.

If $\vee$ admits a spherical state $\psi$, then $\pi$ factors through the `matrix groupoid' $\cM_r$ with $r$ objects and a unique isomorphism between any two objects.
Indeed, applying the spherical state $\psi$ to \eqref{eq:BalanceByGroupoidHom}, we have
$\lambda \psi(p_i) = \pi_{\operatorname{gr}(c)} \lambda \psi(p_j)$,
and thus 
\begin{equation}
\label{eq:FactorThroughMatrixGroupoid}
\pi_{\operatorname{gr}(c)} = \psi(p_i)/\psi(p_j) =: \pi_{ij}
\end{equation}

Since $\sum_i\psi(p_i)=1$, \eqref{eq:FactorThroughMatrixGroupoid} completely determines $\psi$ in terms of $\pi$. 
This proves part (1) of the lemma.
Equation \eqref{eq:FactorThroughMatrixGroupoid} also determines $\pi$ in terms of $\psi$, so there is at most one unitary dual functor $\vee$ for which a given state $\psi$ can be spherical.
It remains to verify that $\psi$ is spherical for the unitary dual functor associated to the homomrphism $\pi$ defined in~\eqref{eq:FactorThroughMatrixGroupoid}:
$$
\psi(\tr_L(f))
=
\lambda \pi_{ij}\psi(p_j)
=
\lambda \frac{\psi(p_i)}{\psi(p_j)}\psi(p_j)
=
\lambda \psi(p_i)
=
\psi(\tr_R(f)).
$$
This proves part (2) of the lemma.
\end{proof}

Observe that (2) of Lemma \ref{lem:UDFsWithSphericalStates} holds even when $\cC$ is decomposable.
Indeed, we can just restrict (and rescale) $\psi$ to each indecomposable summand and then apply the lemma.

\begin{defn}
A unitary multitensor category with unitary dual functor $(\cC,\vee)$ is called \emph{spherical} if it admits a spherical state.
\end{defn}

\begin{warn}
\label{warn:TwoNotionsOfSphericality}
There exists an alternative possible definition of sphericality
for a unitary multitensor category $\cC$, that we will not be using in this article:
the unitary dual functor corresponding to $\pi=1$ in \eqref{eq:BalanceByGroupoidHom}.
This is the \emph{balanced}, or \emph{minimal} unitary dual functor studied in \cite{MR3342166}.
\end{warn}

\begin{defn}\label{def.pivotalfunctor.chi}
Suppose $(\cC,\varphi^\cC)$ and $(\cD,\varphi^\cD)$ are pivotal categories.
A monoidal functor $F: \cC\to \cD$ is called \emph{pivotal} if the canonical isomorphism
$$
\chi_c:=
\tikzmath{
\draw (-.5,-1) -- node[left]{$\scriptstyle F(c^\vee)$} (-.5,-.3);
\draw (.5,-.3) node[left, yshift=-.3cm, xshift=.15cm]{$\scriptstyle F(v)$} arc (-180:0:.3cm) -- node[right]{$\scriptstyle F(c)^\vee$} (1.1,.5);
\roundNbox{fill=white}{(0,0)}{.3}{.5}{.5}{$F(\ev_c)$}
}
$$
(where we have suppressed the tensorator of $F$)
satisfies $(\chi_{c})^\vee \circ \varphi^\cD_{F(c)} = \chi_{c^\vee}\circ F(\varphi^\cC_c)$.
\end{defn}

By \cite[Prop.~3.40]{MR4133163}, when $\cC$ and $\cD$ are unitary multitensor categories equipped with unitary dual functors, pivotality of $F$ is equivalent to the canonical isomorphisms $\chi_c$
being unitary.

Recall that a unitary tensor category is a unitary multitensor category with simple unit.

\begin{lem}\label{lem: C-->D and D spherical}
Let
$(\cC,\vee_\cC)$ and $(\cD,\vee_\cD)$ be unitary multitensor categories equipped with unitary dual functors, with $1_\cC$ simple (i.e. $\cC$ is a tensor category), and $\cD$ non-zero.
If $\cD$ is spherical and $F: \cC\to \cD$ is a pivotal dagger tensor functor, then $\cC$ is spherical.
\end{lem}
\begin{proof}
Let $c\in\cC$ be any object.
By the pivotality of $F$ and \cite[Lem.~2.14]{MR4133163},
\begin{equation}
\label{eq:PivotalFunctorPeservesDimensions} 
F(\tr_L^\cC(\id_c))
=
\tr^\cD_L(\id_{F(c)})
\qquad\text{and}\qquad
F(\tr_R^\cC(\id_c))
=
\tr^\cD_R(\id_{F(c)}).
\end{equation}
Since $\End_\cC(1_\cC)$ is one-dimensional, $\tr_L^\cC(\id_c)$ and $\tr_R^\cC(\id_c)$ are scalars. So the two quantities in \eqref{eq:PivotalFunctorPeservesDimensions} are scalar multiples of $\id_{1_\cD}$.
Let $\psi$ be the spherical state of $\cD$.
The above two scalars are unchanged under applying $\psi$, so
$$
\tr^\cD_L(\id_{F(c)})
=
(\psi\circ \tr^\cD_L)(\id_{F(c)}) 
= 
(\psi\circ \tr^\cD_R)(\id_{F(c)}) 
=
\tr^\cD_R(\id_{F(c)}).
$$
It follows that $\tr_L^\cC(\id_c)=\tr_R^\cC(\id_c)$.
\end{proof}

We end this section by explaining how a unitary dual functor on a unitary multitensor category $\cC$ induces a unitary dual functor on its unitary Drinfeld center $Z^\dag(\cC)$ (the full subcategory of the ordinary Drinfeld center $Z(\cC)$ where all half-braidings are unitary):

\begin{lem}
Let $\vee=(\vee,\ev,\coev)$ be a unitary dual functor on $\cC$.
Then $\vee$ canonically induces a unitary dual functor 
on $Z^\dag(\cC)$.
\end{lem}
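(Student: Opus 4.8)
The plan is to transport the unitary dual functor $\vee$ on $\cC$ to $Z^\dag(\cC)$ along the forgetful functor $\Forget: Z^\dag(\cC)\to\cC$, which is a dagger tensor functor. For an object $(c,\beta_c)$ of $Z^\dag(\cC)$, where $\beta_c$ is the (unitary) half-braiding, I would define the underlying object of the dual to be $c^\vee$, and equip it with the half-braiding obtained in the standard way from $\beta_c$ together with the evaluation and coevaluation: namely, the half-braiding $\beta_{c^\vee}$ on $c^\vee$ has components
$$
\beta_{c^\vee}(x):=(\ev_c\otimes\id)\circ(\id\otimes\beta_c(x)^{-1}\otimes\id)\circ(\id\otimes\coev_c),
$$
up to associators, which is the usual formula making $c^\vee$ a dual object in $Z(\cC)$ (this is the dual in the ordinary Drinfeld center, so the zigzag identities for $\ev_c,\coev_c$ as morphisms in $Z(\cC)$ are automatic once one checks these are morphisms in $Z(\cC)$, which is again standard). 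The first real point is that $\beta_{c^\vee}(x)$ is \emph{unitary}: this follows because $\beta_c(x)$ is unitary, $\vee$ is a dagger dual functor on $\cC$ (so $\ev_c^\dag$ and $\coev_c$ are related by the zigzags and $f^{\vee\dag}=f^{\dag\vee}$), and the graphical manipulation expressing $\beta_{c^\vee}(x)^\dag$ as $\beta_{c^\vee}(x)^{-1}$ is the same rotation argument used to show $\vee$ is a dagger functor on $\cC$. Hence the dual of an object of $Z^\dag(\cC)$ again lies in $Z^\dag(\cC)$.

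Next I would check that this assignment is a unitary dual functor on $Z^\dag(\cC)$ in the sense of \S\ref{sec:UDF and SphericalStates}: that the induced functor $\vee: Z^\dag(\cC)\to Z^\dag(\cC)^{\mathrm{mop}}$ is a dagger tensor functor, i.e.\ $f^{\vee\dag}=f^{\dag\vee}$ for morphisms $f$ of $Z^\dag(\cC)$, and that the tensorator $\nu_{a,b}\colon a^\vee\otimes b^\vee\to(b\otimes a)^\vee$ is unitary. Both are inherited directly from the corresponding statements on $\cC$ because $\Forget$ is faithful, dagger, and strong monoidal, and it intertwines the dual-functor data: $\Forget(f^\vee)=\Forget(f)^\vee$, $\Forget(\ev^{Z^\dag})=\ev^\cC$, $\Forget(\coev^{Z^\dag})=\coev^\cC$, and $\Forget(\nu^{Z^\dag}_{a,b})=\nu^\cC_{\Forget a,\Forget b}$. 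So an equation between morphisms of $Z^\dag(\cC)$ holds iff it holds after applying $\Forget$, and unitarity of a morphism in $Z^\dag(\cC)$ is equivalent to unitarity of its image under $\Forget$ (since $\Forget$ is a dagger functor). Each required identity thus reduces to the already-known identity on $\cC$.

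The main obstacle — and the only step requiring genuine care rather than bookkeeping — is verifying that $\ev_c$ and $\coev_c$ are morphisms in $Z^\dag(\cC)$, i.e.\ that they are compatible with the half-braidings (this is where the formula for $\beta_{c^\vee}$ is forced), and simultaneously that the resulting $\beta_{c^\vee}$ is unitary; everything downstream is formal transport along $\Forget$. I would dispatch this with the graphical calculus for half-braidings, exactly as in the classical construction of duals in the Drinfeld center, observing at each step that unitarity of $\beta_c$ and the dagger-compatibility of $(\vee,\ev,\coev)$ on $\cC$ are preserved. One should also remark that if $\vee$ on $\cC$ admits a spherical state $\psi$ (with $\End_{Z^\dag(\cC)}(1)=\End_\cC(1)$ since $Z^\dag(\cC)$ has the same unit endomorphisms when $\cC$ is a unitary tensor category), then $\psi$ is again spherical for the induced dual functor, because $\tr_L$ and $\tr_R$ on $Z^\dag(\cC)$ are computed by $\ev,\coev,\ev^\dag,\coev^\dag$, which $\Forget$ carries to the corresponding traces on $\cC$; but this is an addendum, not part of the stated lemma.
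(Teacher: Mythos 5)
Your proposal is correct and follows essentially the same route as the paper: define the dual of $(X,\sigma_X)$ to be $X^\vee$ equipped with the half-braiding $(\ev_X\otimes\id\otimes\id)\circ(\id\otimes\sigma_{X,Y}^{-1}\otimes\id)\circ(\id\otimes\id\otimes\coev_X)$, check that the (co)evaluations are morphisms in the center and that this half-braiding is unitary, and then observe that since the (co)evaluation data in $Z^\dag(\cC)$ coincide with those of the underlying objects in $\cC$, all the unitary-dual-functor axioms are inherited from $\cC$. Your additional framing via transport along the faithful dagger forgetful functor is just a more explicit phrasing of the paper's final sentence.
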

\begin{proof}
Given an object $(X,\sigma_X)\in Z^\dag(\cC)$, we define $(X,\sigma_X)^\vee:=(X^\vee,\sigma_{X^\vee})$ where  
$$
\sigma_{X^\vee,Y}:=
(\ev_X \otimes \id_Y \otimes \id_{X^\vee})
\circ
(\id_{X^\vee}\otimes \sigma_{X,Y}^{-1}\otimes \id_{X^\vee})\circ (\id_{X^\vee} \otimes \id_Y \otimes \coev_X).
$$
The half-braiding $\sigma_{X^\vee,Y}$ is unitary.
The evaluation and coevaluation are morphisms in $\cZ(\cC)$ hence in $\cZ^\dagger(\cC)$.
Finally, since the (co)evaluations for $(X,\sigma_X)$ are the same as those of the underlying object $X$, the dual functor $(X,\sigma_X)^\vee := (X^\vee, \sigma_{X^\vee})$ is unitary.
\end{proof}

\subsection{Involutive functors}
\label{sec:InvolutiveFunctors}

An \emph{involutive structure} on a category $\cA$ is an anti-linear functor $\overline{\,\cdot\,}: \cA\to \cA$ together with a coherence natural isomorphism $\varphi: \id_\cA \Rightarrow \overline{\overline{\,\cdot\,}}$ satisfying $\overline{\varphi_a} = \varphi_{\overline{a}}$ for all $a\in \cA$.
In the graphical calculus for $\Cat_\bbC$, we denote the anti-linear functor $\overline{\,\cdot\,}$ by a thick red strand, $\varphi$ by a cap, and $\varphi^{-1}$ by a cup.
$$
\tikzmath{
\fill[gray!30, rounded corners = 5pt] (0,0) rectangle (.6,.6);
\draw[thick, red] (.3,0) -- (.3,.6);
}
=
\overline{\,\cdot\,}: \cA\to \cA
\qquad\qquad
\tikzmath{
\fill[gray!30, rounded corners = 5pt] (0,0) rectangle (1,.6);
\draw[thick, red] (.2,.6) arc (-180:0:.3cm);
}
=
\varphi
\qquad\qquad
\tikzmath{
\fill[gray!30, rounded corners = 5pt] (0,0) rectangle (1,.6);
\draw[thick, red] (.2,0) arc (180:0:.3cm);
}
=
\varphi^{-1}
$$
The condition $\overline{\varphi_a} = \varphi_{\overline{a}}$ becomes
$$
\tikzmath{
\fill[gray!30, rounded corners = 5pt] (-.1,0) rectangle (1.4,.6);
\draw[thick, red] (.2,.6) arc (-180:0:.3cm);
\draw[thick, red] (1.1,.6) -- (1.1,0);
}
=
\tikzmath[xscale=-1]{
\fill[gray!30, rounded corners = 5pt] (-.1,0) rectangle (1.4,.6);
\draw[thick, red] (.2,.6) arc (-180:0:.3cm);
\draw[thick, red] (1.1,.6) -- (1.1,0);
}
\,,
$$
which is equivalent to $(\overline{\,\cdot\,},\varphi,\varphi^{-1})$ being an \emph{adjoint equivalence}, i.e., it is equivalent to the zig-zag axioms
$$
\tikzmath{
\fill[gray!30, rounded corners = 5pt] (-.9,-.6) rectangle (.9,.6);
\draw[thick, red] (-.6,.6) -- (-.6,0) arc (-180:0:.3cm) arc (180:0:.3cm) -- (.6,-.6);
}
=
\tikzmath{
\fill[gray!30, rounded corners = 5pt] (-.3,-.6) rectangle (.3,.6);
\draw[thick, red] (0,.6) -- (0,-.6);
}
=
\tikzmath[xscale=-1]{
\fill[gray!30, rounded corners = 5pt] (-.9,-.6) rectangle (.9,.6);
\draw[thick, red] (-.6,.6) -- (-.6,0) arc (-180:0:.3cm) arc (180:0:.3cm) -- (.6,-.6);
}\,.
$$

\begin{defn}\label{defn:RealStructure}
A \emph{conjugate-linear} morphism from $a$ to $b$, denoted $f: a\rightharpoonup b$, is a morphism $a\to \overline b$.

A \emph{real structure} on an object $a\in\cA$ is the data of a conjugate-linear morphism $r: a\rightharpoonup a$ such that $\overline r\circ r = \varphi_{a}$.
\end{defn}

\begin{defn}\label{def: invol funct}
Let $\cA$ and $\cB$ be involutive categories.
An involutive functor $(F,\chi)$ from $\cA$ to $\cB$ is a functor $F:\cA\to\cB$ equipped with a family of isomorphisms $\chi_a : F(\overline{a}) \to \overline{F(a)}$, for $a\in \cA$, satisfying
\begin{itemize}
\item
(involutive) 
$\overline{\chi_a} \circ \chi_{\overline{a}} \circ F(\varphi^\cA_a) = \varphi_{F(a)}^\cB$
\item
(conjugate natural)
$\overline{F(f)} \circ \chi_{a_1} = \chi_{a_2}\circ F(\overline{f})$ for all $f: a_1\to a_2$.
\end{itemize}
A natural transformation $\theta$ between two involutive functors $(F,\chi^F),(G,\chi^G): \cA \to \cB$ is called \emph{involutive} if $\chi^G_{a}\circ\theta_{\overline{a}} = \overline{\theta_a}\circ \chi^F_a : F(\overline{a})\to \overline{G(a)}$ for all $a\in \cA$.
\end{defn}

In the graphical calculus for $\Cat_\bbC$, we denote $\chi^F$ by a crossing:
$$
\tikzmath{
\begin{scope}
\clip[rounded corners=5pt] (-.7,-.4) rectangle (.7,.4);
\fill[gray!30] (-.4,-.4) .. controls ++(90:.4cm) and ++(270:.4cm) .. (.4,.4) -- (.7,.4) -- (.7,-.4);
\fill[gray!50] (-.4,-.4) .. controls ++(90:.4cm) and ++(270:.4cm) .. (.4,.4) -- (-.7,.4) -- (-.7,-.4);
\end{scope}
\draw[thick, red] (.4,-.4) node[below]{$\scriptstyle \overline{\,\cdot\,}$} .. controls ++(90:.4cm) and ++(270:.4cm) .. (-.4,.4) node[above]{$\scriptstyle \overline{\,\cdot\,}$};
\draw (-.4,-.4) node[below]{$\scriptstyle F$} .. controls ++(90:.4cm) and ++(270:.4cm) .. (.4,.4) node[above]{$\scriptstyle F$};
}
=\chi^F.
$$
The involutive and conjugate-natural axioms are denoted graphically by
$$
\tikzmath{
\begin{scope}
\clip[rounded corners=5pt] (-.7,-1) rectangle (1.3,.4);
\fill[gray!30] (-.4,-1) .. controls ++(90:.4cm) and ++(270:.4cm) .. (.8,.4) -- (1.3,.4) -- (1.3,-1);
\fill[gray!50] (-.4,-1) .. controls ++(90:.4cm) and ++(270:.4cm) .. (.8,.4) -- (-.7,.4) -- (-.7,-1);
\end{scope}
\draw[thick, red] (.4,-.4) .. controls ++(90:.4cm) and ++(270:.4cm) .. (-.4,.4) node[above]{$\scriptstyle \overline{\,\cdot\,}$};
\draw[thick, red] (.4,-.4) arc (-180:0:.3cm) (1,-.4) .. controls ++(90:.4cm) and ++(270:.4cm) .. (.2,.4) node[above]{$\scriptstyle \overline{\,\cdot\,}$};
\draw (-.4,-1) node[below]{$\scriptstyle F$} .. controls ++(90:.4cm) and ++(270:.4cm) .. (.8,.4) node[above]{$\scriptstyle F$};
}
=
\tikzmath{
\begin{scope}
\clip[rounded corners=5pt] (-.9,-1) rectangle (.8,.4);
\fill[gray!30] (.5,-1) rectangle (.8,.4);
\fill[gray!50] (.5,-1) rectangle (-.9,.4);
\end{scope}
\draw[thick, red] (-.6,.4) node[above]{$\scriptstyle \overline{\,\cdot\,}$} -- (-.6,0) arc (-180:0:.3cm) -- (0,.4) node[above]{$\scriptstyle \overline{\,\cdot\,}$};
\draw (.5,-1) node[below]{$\scriptstyle F$} -- (.5,.4) node[above]{$\scriptstyle F$};
}
\qquad\text{and}\qquad
\tikzmath{
\begin{scope}
\clip[rounded corners=5pt] (-.7,-.8) rectangle (1.3,.6);
\fill[gray!30] (-.4,-.8) .. controls ++(90:.4cm) and ++(270:.4cm) .. (.4,0) -- (.4,.6) -- (1,.6) -- (1,-.8);
\fill[gray!50] (-.4,-.8) .. controls ++(90:.4cm) and ++(270:.4cm) .. (.4,0) -- (.4,.6) -- (-.7,.6) -- (-.7,-.8);
\end{scope}
\draw[thick, red] (.4,-.8) node[below]{$\scriptstyle \overline{\,\cdot\,}$} .. controls ++(90:.4cm) and ++(270:.4cm) .. (-.4,0) -- (-.4,.6) node[above]{$\scriptstyle \overline{\,\cdot\,}$};
\draw (-.4,-.8) node[below]{$\scriptstyle F$} .. controls ++(90:.4cm) and ++(270:.4cm) .. (.4,0) -- (.4,.6) node[above]{$\scriptstyle F$};
\draw (1,-.8) node[below]{$\scriptstyle a_1$} -- (1,.6)node[above]{$\scriptstyle a_2$};
\roundNbox{fill=white}{(1,.1)}{.3}{0}{0}{$f$}
}
\,\,=
\tikzmath{
\begin{scope}
\clip[rounded corners=5pt] (-.7,-.6) rectangle (1.3,.8);
\fill[gray!30] (-.4,-.6) -- (-.4,0) .. controls ++(90:.4cm) and ++(270:.4cm) .. (.4,.8) -- (1,.8) -- (1,-.6);
\fill[gray!50] (-.4,-.6) -- (-.4,0) .. controls ++(90:.4cm) and ++(270:.4cm) .. (.4,.8) -- (-.7,.8) -- (-.7,-.6);
\end{scope}
\draw[thick, red] (.4,-.6) node[below]{$\scriptstyle \overline{\,\cdot\,}$} -- (.4,0) .. controls ++(90:.4cm) and ++(270:.4cm) .. (-.4,.8) node[above]{$\scriptstyle \overline{\,\cdot\,}$};
\draw (-.4,-.6) node[below]{$\scriptstyle F$} -- (-.4,0) .. controls ++(90:.4cm) and ++(270:.4cm) .. (.4,.8) node[above]{$\scriptstyle F$};
\draw (1,-.6) node[below]{$\scriptstyle a_1$} -- (1,.8)node[above]{$\scriptstyle a_2$};
\roundNbox{fill=white}{(1,-.1)}{.3}{0}{0}{$f$}
}\,.
$$
Composing the first identity with a red $\varphi$ cap on the top left and a red $\varphi^{-1}$ cup on the bottom right, we obtain the equivalent identity
$$
\tikzmath[xscale=-1, yscale=-1]{
\begin{scope}
\clip[rounded corners=5pt] (-.7,-1) rectangle (1.3,.4);
\fill[gray!50] (-.4,-1) .. controls ++(90:.4cm) and ++(270:.4cm) .. (.8,.4) -- (1.3,.4) -- (1.3,-1);
\fill[gray!30] (-.4,-1) .. controls ++(90:.4cm) and ++(270:.4cm) .. (.8,.4) -- (-.7,.4) -- (-.7,-1);
\end{scope}
\draw[thick, red] (.4,-.4) .. controls ++(90:.4cm) and ++(270:.4cm) .. (-.4,.4) node[below]{$\scriptstyle \overline{\,\cdot\,}$};
\draw[thick, red] (.4,-.4) arc (-180:0:.3cm) (1,-.4) .. controls ++(90:.4cm) and ++(270:.4cm) .. (.2,.4) node[below]{$\scriptstyle \overline{\,\cdot\,}$};
\draw (-.4,-1) node[above]{$\scriptstyle F$} .. controls ++(90:.4cm) and ++(270:.4cm) .. (.8,.4) node[below]{$\scriptstyle F$};
}
\,=\,
\tikzmath[xscale=-1, yscale=-1]{
\begin{scope}
\clip[rounded corners=5pt] (-.9,-1) rectangle (.8,.4);
\fill[gray!50] (.5,-1) rectangle (.8,.4);
\fill[gray!30] (.5,-1) rectangle (-.9,.4);
\end{scope}
\draw[thick, red] (-.6,.4) node[below]{$\scriptstyle \overline{\,\cdot\,}$} -- (-.6,0) arc (-180:0:.3cm) -- (0,.4) node[below]{$\scriptstyle \overline{\,\cdot\,}$};
\draw (.5,-1) node[above]{$\scriptstyle F$} -- (.5,.4) node[below]{$\scriptstyle F$};
}\,.
$$
The involutivity of a natural transformation $\theta$ between involutive functors is represented graphically by
$$
\tikzmath{
\begin{scope}
\clip[rounded corners=5pt] (-1,-1.4) rectangle (1,.4);
\fill[gray!30] (-.4,-1.4) -- (-.4,-.4) .. controls ++(90:.4cm) and ++(270:.4cm) .. (.4,.4) -- (1,.4) -- (1,-1.4);
\fill[gray!50] (-.4,-1.4) -- (-.4,-.4) .. controls ++(90:.4cm) and ++(270:.4cm) .. (.4,.4) -- (-1,.4) -- (-1,-1.4);
\end{scope}
\draw[thick, red] (.4,-1.4) node[below]{$\scriptstyle \overline{\,\cdot\,}$} -- (.4,-.4) .. controls ++(90:.4cm) and ++(270:.4cm) .. (-.4,.4) node[above]{$\scriptstyle \overline{\,\cdot\,}$};
\draw (-.4,-.4) node[left, yshift=.2cm]{$\scriptstyle G$} 
.. controls ++(90:.4cm) and ++(270:.4cm) .. (.4,.4) node[above]{$\scriptstyle G$};
\roundNbox{fill=white}{(-.4,-.7)}{.3}{0}{0}{$\theta$}
\draw (-.4,-1) -- (-.4,-1.4) node[below]{$\scriptstyle F$};
}
\,=\,
\tikzmath{
\begin{scope}
\clip[rounded corners=5pt] (-1,1.4) rectangle (1,-.4);
\fill[gray!30] (-.4,-.4) .. controls ++(90:.4cm) and ++(270:.4cm) .. (.4,.4) -- (.4,1.4) -- (1,1.4) -- (1,-.4);
\fill[gray!50] (-.4,-.4) .. controls ++(90:.4cm) and ++(270:.4cm) .. (.4,.4) -- (.4,1.4) -- (-1,1.4) -- (-1,-.4);
\end{scope}
\draw[thick, red] (.4,-.4) node[below]{$\scriptstyle \overline{\,\cdot\,}$} .. controls ++(90:.4cm) and ++(270:.4cm) .. (-.4,.4) -- (-.4,1.4) node[above]{$\scriptstyle \overline{\,\cdot\,}$};
\draw (-.4,-.4) node[below]{$\scriptstyle F$} 
.. controls ++(90:.4cm) and ++(270:.4cm) .. (.4,.4) node[right, yshift=-.2cm]{$\scriptstyle F$};
\roundNbox{fill=white}{(.4,.7)}{.3}{0}{0}{$\theta$}
\draw (.4,1) -- (.4,1.4) node[above]{$\scriptstyle G$};
}\,.
$$

The graphical proof of the following proposition is straightforward and left to the reader.

\begin{prop}
\label{prop:AdjointInvolutive}
Suppose $(F,\chi^F): \cA \to \cB$ is an involutive functor which admits a right adjoint $G: \cB \to \cA$.
Then
$$
\tikzmath{
\begin{scope}
\clip[rounded corners=5pt] (-.7,-.4) rectangle (.7,.4);
\fill[gray!50] (-.4,-.4) .. controls ++(90:.4cm) and ++(270:.4cm) .. (.4,.4) -- (.7,.4) -- (.7,-.4);
\fill[gray!30] (-.4,-.4) .. controls ++(90:.4cm) and ++(270:.4cm) .. (.4,.4) -- (-.7,.4) -- (-.7,-.4);
\end{scope}
\draw[thick, red] (.4,-.4) node[below]{$\scriptstyle \overline{\,\cdot\,}$} .. controls ++(90:.4cm) and ++(270:.4cm) .. (-.4,.4) node[above]{$\scriptstyle \overline{\,\cdot\,}$};
\draw (-.4,-.4) node[below]{$\scriptstyle G$} .. controls ++(90:.4cm) and ++(270:.4cm) .. (.4,.4) node[above]{$\scriptstyle G$};
}
:=
\chi^G :=
\tikzmath{
\begin{scope}
\clip[rounded corners=5pt] (-1.9,-1.2) rectangle (1.9,1.2);
\fill[gray!50] (-1,1.2) -- (-1,-.4) arc(-180:0:.3cm) .. controls ++(90:.4cm) and ++(270:.4cm) .. (.4,.4) arc (180:0:.3cm) -- (1,-1.2)  -- (1.9,-1.2) -- (1.9,1.2);
\fill[gray!30] (-1,1.2) -- (-1,-.4) arc(-180:0:.3cm) .. controls ++(90:.4cm) and ++(270:.4cm) .. (.4,.4) arc (180:0:.3cm) -- (1,-1.2) -- (-1.9,-1.2) -- (-1.9,1.2);
\end{scope}
\draw[thick, red] (-1.6,1.2) node[above]{$\scriptstyle \overline{\,\cdot\,}$} -- (-1.6,-.4) .. controls ++(270:.8cm) and ++(270:.8cm) .. (.4,-.4) .. controls ++(90:.4cm) and ++(270:.4cm) .. (-.4,.4) .. controls ++(90:.8cm) and ++(90:.8cm) .. (1.6,.4) -- (1.6,-1.2) node[below]{$\scriptstyle \overline{\,\cdot\,}$};
\draw (-1,1.2) node[above]{$\scriptstyle G$} -- (-1,-.4) arc(-180:0:.3cm) node[right]{$\scriptstyle F$} .. controls ++(90:.4cm) and ++(270:.4cm) .. (.4,.4) node[left]{$\scriptstyle F$} arc (180:0:.3cm) -- (1,-1.2) node[below]{$\scriptstyle G$};
}
\qquad\text{and}\qquad
\tikzmath[yscale=-1]{
\begin{scope}
\clip[rounded corners=5pt] (-.7,-.4) rectangle (.7,.4);
\fill[gray!50] (-.4,-.4) .. controls ++(90:.4cm) and ++(270:.4cm) .. (.4,.4) -- (.7,.4) -- (.7,-.4);
\fill[gray!30] (-.4,-.4) .. controls ++(90:.4cm) and ++(270:.4cm) .. (.4,.4) -- (-.7,.4) -- (-.7,-.4);
\end{scope}
\draw[thick, red] (.4,-.4) node[above]{$\scriptstyle \overline{\,\cdot\,}$} .. controls ++(90:.4cm) and ++(270:.4cm) .. (-.4,.4) node[below]{$\scriptstyle \overline{\,\cdot\,}$};
\draw (-.4,-.4) node[above]{$\scriptstyle G$} .. controls ++(90:.4cm) and ++(270:.4cm) .. (.4,.4) node[below]{$\scriptstyle G$};
}
:=
(\chi^G)^{-1}=
\tikzmath{
\begin{scope}
\clip[rounded corners=5pt] (-1.3,-1) rectangle (1.3,1);
\fill[gray!50] (-1,1) -- (-1,-.4) arc(-180:0:.3cm) .. controls ++(90:.4cm) and ++(270:.4cm) .. (.4,.4) arc (180:0:.3cm) -- (1,-1)  -- (1.3,-1) -- (1.3,1);
\fill[gray!30] (-1,1) -- (-1,-.4) arc(-180:0:.3cm) .. controls ++(90:.4cm) and ++(270:.4cm) .. (.4,.4) arc (180:0:.3cm) -- (1,-1) -- (-1.3,-1) -- (-1.3,1);
\end{scope}
\draw[thick, red] (.4,-1) node[below]{$\scriptstyle \overline{\,\cdot\,}$} -- (.4,-.4) .. controls ++(90:.4cm) and ++(270:.4cm) .. (-.4,.4) -- (-.4,1) node[above]{$\scriptstyle \overline{\,\cdot\,}$};
\draw (-1,1) node[above]{$\scriptstyle G$} -- (-1,-.4) arc(-180:0:.3cm) node[right]{$\scriptstyle F$} .. controls ++(90:.4cm) and ++(270:.4cm) .. (.4,.4) node[left]{$\scriptstyle F$} arc (180:0:.3cm) -- (1,-1) node[below]{$\scriptstyle G$};
}
$$
endow $G$ with the structure of an involutive functor.
Moreover the unit and counit of the adjunction $F\dashv G$ satisfy
$$
\tikzmath[xscale=-1]{
\begin{scope}
\clip[rounded corners=5pt] (-.7,-1) rectangle (1.3,.4);
\fill[gray!50] (-.4,.4) .. controls ++(270:.4cm) and ++(90:.4cm) .. (.4,-.4) arc (-180:0:.3cm) .. controls ++(90:.4cm) and ++(270:.4cm) .. (.2,.4) ;
\fill[gray!30] (-.4,.4) .. controls ++(270:.4cm) and ++(90:.4cm) .. (.4,-.4) arc (-180:0:.3cm) .. controls ++(90:.4cm) and ++(270:.4cm) .. (.2,.4) -- (1.3,.4) -- (1.3,-1) -- (-.7,-1) -- (-.7,.4);
\end{scope}
\draw (.4,-.4) .. controls ++(90:.4cm) and ++(270:.4cm) .. (-.4,.4) node[above]{$\scriptstyle F$};
\draw (.4,-.4) arc (-180:0:.3cm) (1,-.4) .. controls ++(90:.4cm) and ++(270:.4cm) .. (.2,.4) node[above]{$\scriptstyle G$};
\draw[thick, red] (-.4,-1) node[below]{$\scriptstyle \overline{\,\cdot\,}$} .. controls ++(90:.4cm) and ++(270:.4cm) .. (.8,.4) node[above]{$\scriptstyle \overline{\,\cdot\,}$};
}
=
\tikzmath[xscale=-1]{
\begin{scope}
\clip[rounded corners=5pt] (-.9,-1) rectangle (.8,.4);
\fill[gray!30] (-.6,.4) -- (-.6,0) arc (-180:0:.3cm) -- (0,.4) -- (.8,.4) -- (.8,-1) -- (-.9,-1) -- (-.9,.4);
\fill[gray!50] (-.6,.4) -- (-.6,0) arc (-180:0:.3cm) -- (0,.4);
\end{scope}
\draw (-.6,.4) node[above]{$\scriptstyle F$} -- (-.6,0) arc (-180:0:.3cm) -- (0,.4) node[above]{$\scriptstyle G$};
\draw[thick, red] (.5,-1) node[below]{$\scriptstyle \overline{\,\cdot\,}$} -- (.5,.4) node[above]{$\scriptstyle \overline{\,\cdot\,}$};
}
\qquad\text{and}\qquad
\tikzmath[yscale=-1]{
\begin{scope}
\clip[rounded corners=5pt] (-.7,-1) rectangle (1.3,.4);
\fill[gray!30] (-.4,.4) .. controls ++(270:.4cm) and ++(90:.4cm) .. (.4,-.4) arc (-180:0:.3cm) .. controls ++(90:.4cm) and ++(270:.4cm) .. (.2,.4) ;
\fill[gray!50] (-.4,.4) .. controls ++(270:.4cm) and ++(90:.4cm) .. (.4,-.4) arc (-180:0:.3cm) .. controls ++(90:.4cm) and ++(270:.4cm) .. (.2,.4) -- (1.3,.4) -- (1.3,-1) -- (-.7,-1) -- (-.7,.4);
\end{scope}
\draw (.4,-.4) .. controls ++(90:.4cm) and ++(270:.4cm) .. (-.4,.4) node[below]{$\scriptstyle F$};
\draw (.4,-.4) arc (-180:0:.3cm) (1,-.4) .. controls ++(90:.4cm) and ++(270:.4cm) .. (.2,.4) node[below]{$\scriptstyle G$};
\draw[thick, red] (-.4,-1) node[above]{$\scriptstyle \overline{\,\cdot\,}$} .. controls ++(90:.4cm) and ++(270:.4cm) .. (.8,.4) node[below]{$\scriptstyle \overline{\,\cdot\,}$};
}
=
\tikzmath[yscale=-1]{
\begin{scope}
\clip[rounded corners=5pt] (-.9,-1) rectangle (.8,.4);
\fill[gray!50] (-.6,.4) -- (-.6,0) arc (-180:0:.3cm) -- (0,.4) -- (.8,.4) -- (.8,-1) -- (-.9,-1) -- (-.9,.4);
\fill[gray!30] (-.6,.4) -- (-.6,0) arc (-180:0:.3cm) -- (0,.4);
\end{scope}
\draw (-.6,.4) node[below]{$\scriptstyle F$} -- (-.6,0) arc (-180:0:.3cm) -- (0,.4) node[below]{$\scriptstyle G$};
\draw[thick, red] (.5,-1) node[above]{$\scriptstyle \overline{\,\cdot\,}$} -- (.5,.4) node[below]{$\scriptstyle \overline{\,\cdot\,}$};
}\,.
$$
\end{prop}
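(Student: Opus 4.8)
The plan is to verify everything by hand in the graphical calculus for $\Cat$, in the order: (i) unpack the definition of $\chi^G$; (ii) establish the two unit/counit compatibilities; (iii) deduce that $\chi^G$ is invertible; (iv) check conjugate-naturality; (v) check the involutive coherence. First I would read the displayed formula as the \emph{definition} of $\chi^G_b\colon G(\overline b)\to\overline{G(b)}$: it is the conjugate mate of $\chi^F$, obtained by bending the $F$-strand of $\chi^F$ around the unit and counit of $F\dashv G$ and inserting $\varphi^\cA$, $\varphi^\cB$ turnbacks so that the variances match (composites of two anti-linear functors being linear). Since $\chi^F$, $\varphi^\cA$, $\varphi^\cB$ are isomorphisms, $\chi^G$ is a natural transformation; I record the second displayed diagram as the candidate inverse $(\chi^G)^{-1}$.

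Next I would prove the two compatibility squares for the unit $\eta\colon\id_\cA\Rightarrow GF$ and counit $\varepsilon\colon FG\Rightarrow\id_\cB$. Substituting the definition of $\chi^G$ into one side, the black cup (resp.\ cap) it contributes meets the black cap (resp.\ cup) of $\eta$ (resp.\ $\varepsilon$), and a single zig-zag identity for $F\dashv G$ removes it; what remains is a red zig-zag for the adjoint equivalence $\overline{\,\cdot\,}\dashv\overline{\,\cdot\,}$, which is exactly the identity displayed just before Definition~\ref{defn:RealStructure} (equivalently, the coherence $\overline{\varphi_a}=\varphi_{\overline a}$), together with some applications of naturality of $\chi^F$, $\eta$, $\varepsilon$, and $\varphi$ to slide boxes along strands. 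These two identities say precisely that, with respect to the composite involutive structures on $GF$ and $FG$, the maps $\eta$ and $\varepsilon$ are involutive natural transformations.

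With the compatibilities available, invertibility of $\chi^G$ is a short computation: composing the displayed $\chi^G$ with the displayed $(\chi^G)^{-1}$, one cancels the $\chi^F$-crossing against the inverse crossing and then repeatedly applies the zig-zags for $F\dashv G$ and for $\overline{\,\cdot\,}\dashv\overline{\,\cdot\,}$ until only identity strands remain; the other composite is symmetric. (Alternatively, invertibility is automatic, since $\chi^G$ exhibits $\overline{\,\cdot\,}\circ G$ as a right adjoint transported from $F\dashv G$ along the equivalences $\overline{\,\cdot\,}$, and such a right adjoint is unique up to unique isomorphism.) Conjugate-naturality of $\chi^G$, i.e.\ $\overline{G(f)}\circ\chi^G_{b_1}=\chi^G_{b_2}\circ G(\overline f)$, then follows by pulling the box for $f$ through the diagram using conjugate-naturality of $\chi^F$ together with ordinary naturality of $\eta$, $\varepsilon$, and $\varphi^\cB$. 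Finally, the involutive coherence $\overline{\chi^G_b}\circ\chi^G_{\overline b}\circ G(\varphi^\cB_b)=\varphi^\cA_{G(b)}$ is obtained by drawing the left-hand side, importing two copies of the definition of $\chi^G$, and feeding in the involutive coherence of $\chi^F$; all black cups and caps then cancel by zig-zags, leaving $\varphi^\cA_{G(b)}$.

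The whole argument is conceptually routine — indeed it is an instance of Kelly's doctrinal adjunction, since involutive categories, involutive functors, and involutive natural transformations are the pseudoalgebras, strong pseudomorphisms, and algebra $2$-cells for the evident $2$-monad on $\Cat$ built from the $\Z/2$-action by the anti-linear ``bar'' endofunctor, so a reader willing to invoke that machinery may skip the pictures. The only genuine difficulty in the hands-on version is bookkeeping: one must keep straight the alternation of linear and anti-linear strands and keep the red $\varphi^\cA$/$\varphi^\cB$ turnbacks correctly oriented, precisely because $\overline{\,\cdot\,}$ is only an \emph{adjoint} equivalence and not a strict involution; we include the diagrams because the same manipulations recur in later sections.
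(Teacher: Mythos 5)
Your outline is correct and is exactly the routine graphical verification the paper intends — indeed the paper states only that ``the graphical proof \dots is straightforward and left to the reader,'' so your steps (unpacking $\chi^G$ as the mate of $\chi^F$ through $F\dashv G$ and the adjoint equivalence $\overline{\,\cdot\,}$, checking the unit/counit compatibilities by zig-zags, deducing invertibility from uniqueness of right adjoints, then verifying conjugate-naturality and the involutive coherence using that of $\chi^F$) supply precisely the omitted argument. The doctrinal-adjunction remark is also apt, with the one caveat you already address: that formalism by itself only yields a lax structure on $G$, and invertibility of $\chi^G$ comes from the fact that the mate is taken along equivalences on the $\overline{\,\cdot\,}$ strands as well, i.e.\ from your uniqueness-of-right-adjoints argument.
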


\begin{lem}
\label{lem:MateOfBarF}
Let $F: \cA\to \cB$ be an involutive functor
between involutive categories,
and let $G:\cB\to \cA$ be its right adjoint
(with involutive structure $\chi^G$ as in Proposition \ref{prop:AdjointInvolutive}).
Then, for all $f\in \cB(F(a)\to b)$, we have
\[
\overline{\mate(f)}=\chi^G_b\circ\mate(\overline{f}\circ \chi^F_a) \in \cA\big(\overline{a}\to \overline{G(b)}\big).
\]
Similarly, for all $g\in \cA(a\to G(b))$, we have
\[
\overline{\mate(g)}=\mate\big([\chi_b^G]^{-1}\circ \overline g\big)\circ[\chi_a^F]^{-1}\in \cB\big(\overline{F(a)}\to \overline{b}\big).
\]
\end{lem}
\begin{proof}
We only prove the first equality, as the other one is similar.
In the graphical calculus, we have
$$
\overline{f}
=
\tikzmath{
\begin{scope}
\clip[rounded corners=5pt] (-1,-.7) rectangle (.6,.7);
\fill[gray!30] (-.2,-.7) rectangle (.2,-.3);
\fill[gray!50] (-.2,-.7) -- (-.2,-.3) -- (0,.3) -- (0,.7) -- (-1,.7) -- (-1,-.7);
\end{scope}
\draw[thick, red] (-.7,-.7) -- (-.7,.7);
\draw (0,.3) -- (0,.7) node[above]{$\scriptstyle b$};
\draw (.2,-.3) -- (.2,-.7) node[below]{$\scriptstyle a$};
\draw (-.2,-.3) -- (-.2,-.7) node[below]{$\scriptstyle F$};
\roundNbox{fill=white}{(0,0)}{.3}{.2}{.2}{$f$}
}\,,
\qquad
\overline{\mate(f)}
=
\tikzmath{
\begin{scope}
\clip[rounded corners=5pt] (-1.4,-.7) rectangle (.6,.7);
\fill[gray!30] (-.2,-.3) arc (0:-180:.3cm) -- (-.8,.7) -- (-1.4,.7) -- (-1.4,-.7) -- (.2,-.7) -- (.2,-.3);
\fill[gray!50] (-.2,-.3) arc (0:-180:.3cm) -- (-.8,.7) -- (0,.7)-- (0,0);
\end{scope}
\draw[thick, red] (-1.1,-.7) -- (-1.1,.7);
\draw (0,.3) -- (0,.7) node[above]{$\scriptstyle b$};
\draw (.2,-.3) -- (.2,-.7) node[below]{$\scriptstyle a$};
\draw (-.2,-.3) node[right, xshift=-.1cm, yshift=-.2cm]{$\scriptstyle F$} arc (0:-180:.3cm) -- (-.8,.7) node[above]{$\scriptstyle G$};
\roundNbox{fill=white}{(0,0)}{.3}{.2}{.2}{$f$}
}\,,
\qquad
\text{and}
\qquad
\chi^G_b\circ\mate(\overline{f}\circ \chi^F_a)
=
\tikzmath{
\begin{scope}
\clip[rounded corners=5pt] (-1.5,-1) rectangle (.6,.7);
\fill[gray!30] (-.2,-.3) .. controls ++(270:.2cm) and ++(90:.2cm) .. (-.8,-.7) arc (0:-180:.2cm) -- (-1.2,.3) .. controls ++(90:.2cm) and ++(270:.2cm) .. (-.8,.7) -- (-1.5,.7) -- (-1.5,-1) -- (.2,-1) -- (.2,-.3);
\fill[gray!50] (-.2,-.3) .. controls ++(270:.2cm) and ++(90:.2cm) .. (-.8,-.7) arc (0:-180:.2cm) -- (-1.2,.3) .. controls ++(90:.2cm) and ++(270:.2cm) .. (-.8,.7) -- (0,.7)-- (0,0);
\end{scope}
\draw[thick, red] (-.2,-1) -- (-.2,-.7) .. controls ++(90:.2cm) and ++(270:.2cm) .. (-.8,-.3) -- (-.8,.3) .. controls ++(90:.2cm) and ++(270:.2cm) .. (-1.2,.7);
\draw (0,.3) -- (0,.7) node[above]{$\scriptstyle b$};
\draw (.2,-.3) -- (.2,-1) node[below]{$\scriptstyle a$};
\draw (-.2,-.3) node[right, xshift=-.1cm, yshift=-.2cm]{$\scriptstyle F$} .. controls ++(270:.2cm) and ++(90:.2cm) .. (-.8,-.7) arc (0:-180:.2cm) -- (-1.2,.3) .. controls ++(90:.2cm) and ++(270:.2cm) .. (-.8,.7) node[above]{$\scriptstyle G$};
\roundNbox{fill=white}{(0,0)}{.3}{.2}{.2}{$f$}
}\,.
$$
The result is an instance of the final statement in Proposition \ref{prop:AdjointInvolutive}.
\end{proof}

A bi-involutive category is an involutive category which is also a dagger category such that $\overline{\,\cdot\,}$ is a dagger functor and $\varphi$ is unitary.
A bi-involutive functor $F: \cA\to \cB$ between bi-involutive categories is an involutive functor $(F,\chi)$ such that $F$ is a dagger functor and $\chi$ is unitary.

\begin{cor}
\label{cor:UnitaryAdjointBi-involutive}
Let $F: \cA\to \cB$ be a dualizable functor between semisimple bi-involutive categories with unitary traces.
Then its unitary adjoint $G:=F^*$ is also bi-involutive, maning $\chi^G$ (as defined in Proposition \ref{prop:AdjointInvolutive}) is unitary.
\end{cor}
\begin{proof}
Let $\cC:=\cA\oplus\cB$, and identify $F,G$ with functors
$$
\begin{pmatrix}
0 &0 
\\
F & 0 
\end{pmatrix}
\,,\,
\begin{pmatrix}
0 &G 
\\
0 & 0
\end{pmatrix}
\,:\,\cC\to\cC.
$$
$\overline{\,\cdot\,}:\cA\to\cA$ and $\overline{\,\cdot\,}:\cB\to\cB$ assemble to a functor
$$
\begin{pmatrix}
 \overline{\,\cdot\,} & 0
\\
0 &\overline{\,\cdot\,}
\end{pmatrix}
\,:\,\cC\to\cC
$$
that equips $\cC$ with the structure of a bi-involutive category.

Working in the category of dualizable linear or antiliner $\dagger$-functors from $\cC$ to itself (for which unitary adjoint defines a unitary dual functor by Proposition~\ref{prop:UnitaryAdjointsGivesUDF}),
$\chi^G$ is the $2\pi$-rotation of $\chi^F$.
The result follows from the fact that $\chi^F$ is unitary,
and that the $2\pi$-rotation of a unitary under a unitary dual functor is again unitary.
\end{proof}

\subsection{Involutive lax monoidal functors}

An involutive monoidal category \cite{MR2861112} is a monoidal category $\cC$ equipped with an involutive structure $(\overline{\,\cdot\,},\varphi)$, and coherence natural isomorphisms $\nu_{a,b}: \overline{a}\otimes \overline{b}\to \overline{b\otimes a}$ and real structure $r: 1\to \overline{1}$ (equivalently, $r:1\rightharpoonup 1$) which satisfy 
\begin{itemize}
\item (associativity) $\nu_{a,c\otimes b}\circ (\id_{\overline{a}} \otimes \nu_{b,c}) = \nu_{b\otimes a, c} \circ (\nu_{a,b}\otimes \id_{\overline{c}})$
\item (unitality) $\nu_{1,a}\circ (r\otimes \id_{\overline{a}}) = \id_{\overline{a}}=\nu_{a,1}\circ (\id_{\overline{a}}\otimes r)$
\item (compatibility with $\varphi$) 
$\varphi_{a\otimes b} = \overline{\nu_{b,a}}\circ \nu_{\overline{a},\overline{b}}\circ (\varphi_a\otimes \varphi_b)$.
\end{itemize}
By \cite[\S3.5]{MR4133163},
a unitary multitensor category $\cC$ has a canonical involutive structure $\overline{\,\cdot\,}$. This involutive structure has the notable feature that for any choice of unitary dual functor $\vee$, the conjugate
$\overline{x}$ of an object $x$ is canonically unitarily isomorphic to its unitary dual $x^\vee$.

\begin{rem}
Note that the tensor product of a conjugate-linear morphism with an ordinary morphism is meaningless. However, if $f:a\rightharpoonup b$ and $g:a'\rightharpoonup b'$ are two conjugate-linear morphisms, then we may define 
$
f\otimes g
:a\otimes a'\rightharpoonup b'\otimes b
$
as the composite
\[
a\otimes a'
\xrightarrow{f\otimes g} 
\overline{b}\otimes \overline{b'}
\xrightarrow{\nu_{b,b'}}
\overline{b'\otimes b}.
\]
\end{rem}

Recall that a monoidal functor $F:\cC\to \cD$ between monoidal categories involves coherences $\iota^F:1_\cD\to F(1_\cC)$,
and $\mu_{a,b}^F:F(a)\otimes F(b)\to F(a\otimes b)$.

\begin{defn}\label{def: involutive (lax) functor}
Let $\cC$ and $\cD$ be involutive monoidal categories.
An involutive (lax) monoidal functor $(F,\mu,\iota,\chi): \cC \to \cD$ is an involutive functor $(F,\chi):\cC \to \cD$ such that $(F,\mu,\iota): \cC \to \cD$ is  (lax) monoidal, and such that the following additional conditions hold:
\begin{itemize}
\item
(unitality)
$\chi_{1_\cC} \circ F(r_\cC) \circ \iota = \overline{\iota} \circ r_{\cD}$.
\item
(monoidality)
$
\chi_{d\otimes c} \circ F(\nu_{c,d}) \circ \mu_{\overline{c},\overline{d}} 
= 
\overline{\mu_{d,c}}\circ \nu_{F(c),F(d)} \circ (\chi_{c}\otimes \chi_d)
$
\end{itemize}
\end{defn}

Recall that the right adjoint $G: \cD\to \cC$ of a monoidal functor $(F,\mu^F,\iota^F): \cC\to \cD$ is lax monoidal by \cite{MR0360749}.
Indeed, $\mu^G_{d_1,d_2}:G(d_1)\otimes G(d_2) \to G(d_1\otimes d_2)$ is the mate of
$(\varepsilon_{d_1}\otimes \varepsilon_{d_2})\circ (\mu^F_{G(d_1),G(d_2)})^{-1}$ 
under the adjunction
$$
\cC\big(G(d_1)\otimes G(d_2) \to G(d_1\otimes d_2)\big)
\,\cong\,
\cD\big(F(G(d_1)\otimes G(d_2)) \to d_1\otimes d_2\big),
$$
and $\iota^G: 1_\cC \to G(1_\cD)$ is the mate of $(\iota^F)^{-1}$ under the adjunction
$$
\cC\big(1_\cC \to G(1_\cD)\big)
\,\cong\,
\cD\big(F(1_\cC) \to 1_\cD\big).
$$

\begin{prop}
\label{prop:AdjInvolutiveLaxMonoidal}
Let $(F,\mu^F,\iota^F,\chi^F) : \cC \to \cD$ be an involutive monoidal functor between involutive monoidal categories.
Then its right adjoint $(G,\mu,\iota): \cD \to \cC$ is involutive lax monoidal, with 
$\iota^G$ and $\mu^G$ as above, and
$\chi^G$ as in Proposition \ref{prop:AdjointInvolutive}.
\end{prop}
\begin{proof}
We need to check the two conditions in Definition~\ref{def: involutive (lax) functor}.
To prove unitality, we will argue that
$\iota^G \circ r_{\cC}^{-1} = G(r_\cD^{-1}) \circ \chi_{1_\cD}^{-1}\circ \overline{\iota^G} : \overline{1_\cC}\to G(1_\cD)$.
Taking mates under the adjunction$$
\cC\big(\overline{1_\cC} \to G(1_\cD)\big)
\,\cong\,
\cD\big(F(\overline{1_\cC}) \to 1_\cD\big),
$$
the resulting morphisms
fit in the following pasting diagram:
$$
\begin{tikzcd}[column sep=3em]
F(\overline{1_\cC})
\arrow[d, "F(\overline{\iota^G})"]
\arrow[rr, "F(r_\cC^{-1})"]
\arrow[drr, "\chi^F_{1_\cC}"]
&&
F(1_\cC) 
\arrow[r, "F(\iota^G)"]
&
FG(1_\cD)
\arrow[dd, "\varepsilon_{1_\cD}"]
\\
F(\overline{G(1_\cD)})
\arrow[d, "F((\chi_{1_\cD}^G)^{-1})"]
\arrow[r, "\chi^F_{G(1_\cD)}"]
&
\overline{FG(1_\cD)}
\arrow[dr, swap, "\overline{\varepsilon_{1_\cD}}"]
&
\overline{F(1_\cC)}
\arrow[l, "\overline{F(\iota^G)}"]
\\
FG(\overline{1_\cD})
\arrow[rr, "\varepsilon_{\overline{1_\cD}}"]
&&
\overline{1_\cD}
\arrow[u,swap,"\overline{\iota^F}"]
\arrow[r,"r_\cD^{-1}"]
&
1_\cD
\arrow[uul,swap, "\iota^F"]
\end{tikzcd}
$$
Going right and then down is the mate of 
$\iota^G \circ r_{\cC}^{-1}$,
and going down and then right is the mate of 
$G(r_\cD^{-1}) \circ \chi_{1_\cD}^{-1}\circ \overline{\iota^G} $.
The triangles commute by the definition of $\iota^G$.
The bottom left quadrilateral commutes using the dfinition of $(\chi^G)^{-1}$.
And the middle pentagon is the unitality condition for $F$ in Definition~\ref{def: involutive (lax) functor}.

To prove monoidality, we argue that 
$
G(\nu^\cD_{d_1,d_2}) \circ \mu_{\overline{d_1}, \overline{d_2}} \circ (\chi^{-1}_{d_1}\otimes \chi^{-1}_{d_2})
=
\chi_{d_1\otimes d_2}^{-1} \circ \overline{\mu_{d_2,d_1}} \circ \nu^\cC_{G(d_1),G(d_2)}:\overline{G(d_1)}\otimes \overline{G(d_2)} \to G(\overline{d_2\otimes d_1})
$
Taking mates under the adjunction
$$
\cC\big(\overline{G(d_1)}\otimes \overline{G(d_2)} \to G(\overline{d_2\otimes d_1})\big)
\,\cong\,
\cD\big(F(\overline{G(d_1)}\otimes \overline{G(d_2)}) \to \overline{d_2\otimes d_1}\big),
$$
the resulting maps fit in the following pasting diagram:
$$
\begin{tikzpicture}[baseline= (a).base]
\node[scale=.8] (a) at (0,0){
\begin{tikzcd}[column sep=5.25em]
F(\overline{G(d_1)} \otimes \overline{G(d_2)})
\arrow[ddd,"F(\nu_{G(d_1),G(d_2)}^\cC)"]
\arrow[rr,"F((\chi^G_{d_1})^{-1}\otimes (\chi^G_{d_2})^{-1})"]
&&
F(G(\overline{d_1})\otimes G(\overline{d_2}))
\arrow[r,"F(\mu^G_{\overline{d_1},\overline{d_2}})"]
&
FG(\overline{d_1} \otimes \overline{d_2})
\arrow[dd,"\varepsilon_{\overline{d_1}\otimes \overline{d_2}}"]
\\
&
F(\overline{G(d_1)}) \otimes F(\overline{G(d_2)})
\arrow[ul, swap, "\mu^F_{\overline{G(d_1)},\overline{G(d_2)}}"]
\arrow{r}[yshift=4]{F((\chi^G_{d_1})^{-1})\otimes F((\chi^G_{d_2})^{-1})}
\arrow[dr,swap,"\chi^F_{G(d_1)}\otimes \chi^F_{G(d_2)}"]
&
FG(\overline{d_1}) \otimes FG(\overline{d_2})
\arrow[u, swap, "\mu^F_{G(\overline{d_1}),G(\overline{d_2})}"]
\arrow[dr,"\varepsilon_{\overline{d_1}}\otimes \varepsilon_{\overline{d_2}}"]
\\
&&
\overline{FG(d_1)} \otimes \overline{FG(d_2)}
\arrow[d,"\nu^\cD_{FG(d_1), FG(d_2)}"]
\arrow[r,"\overline{\varepsilon_{d_1}}\otimes \overline{\varepsilon_{d_2}}"]
&
\overline{d_1}\otimes \overline{d_2}
\arrow[ddd,"\nu^\cD_{d_1,d_2}"]
\\
F(\overline{G(d_2)\otimes G(d_1)})
\arrow[r,"\chi^F_{G(d_2)\otimes G(d_1)}"]
\arrow[d,"F(\overline{\mu^G_{d_2,d_1}})"]
&
\overline{F(G(d_2)\otimes G(d_1))}
\arrow[d,"\overline{F(\mu^G_{d_2,d_1})}"]
&
\overline{FG(d_2)\otimes FG(d_1)}
\arrow[l,"\overline{\mu^F_{G(d_2),G(d_1)}}"']
\arrow[ddr, "\overline{\varepsilon_{d_2}\otimes\varepsilon_{d_1}}"]
\\
F(\overline{G(d_2\otimes d_1)})
\arrow[r,"\chi^F_{G(d_2\otimes d_1)}"]
\arrow[d,"F((\chi^G_{d_1\otimes d_2})^{-1})"]
&
\overline{FG(d_2\otimes d_1)}
\arrow[drr,"\overline{\varepsilon_{d_2\otimes d_1}}"]
\\
F(G(\overline{d_1\otimes d_2}))
\arrow[rrr,"\varepsilon_{\overline{d_2\otimes d_1}}"]
&&&
\overline{d_2\otimes d_1}
\end{tikzcd}
};\end{tikzpicture}
$$
Going right and then down is the mate of 
$G(\nu^\cD_{d_1,d_2}) \circ \mu_{\overline{d_1}, \overline{d_2}} \circ (\chi^{-1}_{d_1}\otimes \chi^{-1}_{d_2})$,
while going down and then right is the mate of 
$\chi_{d_1\otimes d_2}^{-1} \circ \overline{\mu_{d_2,d_1}} \circ \nu^\cC_{G(d_1),G(d_2)}$.
The hexagon in the top left is the monoidality condition for $F$ in Definition~\ref{def: involutive (lax) functor}.
\end{proof}


\section{Unitary module tensor categories}
\label{sec:UMTC}

\subsection{Module tensor categories}
\label{sec:ModTens}

Let $\cV$ be a braided tensor category.
A module tensor category $\cC$ over $\cV$ is a tensor category $\cC$ equipped with an ``action'' of $\cV$ in the following sense:

\begin{defn}
\label{def: def of ModTC}
A \emph{module tensor category} over $\cV$ is a tensor category $\cC$ together with a braided tensor functor $\Phi^{\scriptscriptstyle \cZ}:\cV\to \cZ(\cC)$ from $\cV$ to the Drinfeld center of $\cC$. 
If $\cV$ and $\cC$ are pivotal and if $\Phi^{\scriptscriptstyle \cZ}$ is a pivotal functor, then we call $\cC$ a pivotal module tensor category.
\end{defn}

Let $\cC$ be a pivotal module tensor category over $\cV$, and let us write $\varphi_a: a\to a^{\vee\vee}$ for the pivotal structure of $\cC$.
A \emph{pointing} of $\cC$ is a choice of object $c\in \cC$ such that $c$ and $\Phi^{\scriptscriptstyle \cZ}(\cV)$ generate $\cC$ under the operations of tensor product, direct sum, and taking direct summands.
We also require that our chosen object $c$ comes equipped with a \emph{symmetric self-duality} isomorphism $r_c: c\to c^\vee$ satisfying $\ev_c\circ (r_c\otimes \id_c) = \ev_{c^\vee}\circ (\varphi_c\otimes r_c)$.\footnote{As explained in \cite[Section 3.2.1]{MR4528312}, this last requirement is not crucial.
We could deal with objects that are not self-dual at the cost of equipping all strands with co-orientations (and see \cite[Section 4.1]{MR4581741} for why co-orientations are preferable to orientations).}

\begin{construction}\label{constr: 3.2}
Let $(\cC,\Phi^{\scriptscriptstyle \cZ})$ be a pivotal module tensor over $\cV$.
Suppose that the functor $\Phi:=\Forget_\cZ \circ \Phi^{\scriptscriptstyle \cZ}: \cV\to \cC$ admits a right adjoint, denoted $\bTr_\cV$ (where $\Forget_\cZ: \cZ(\cC)\to \cC$ is the forgetful functor).
In \cite{MR3578212}, we showed that
$$
\bTr_\cV:\cC\to \cV
$$
has a canonical structure of a \emph{categorified trace}. It comes equipped with the following structure.
\begin{itemize}
\item 
The 
\emph{unit} $\eta_v: v\to \bTr_\cV(\Phi(v))$
and 
\emph{counit} $\varepsilon_x:\Phi(\bTr_\cV(x))\to x$ 
of the adjunction.
$$
\tikzmath{
	\plane{(-.2,-.8)}{2.4}{1.6}
	\draw[thick, red] (.8,-.1) -- (1.5,-.1);
	\CMbox{box}{(0,-.6)}{.8}{.8}{.4}{$\varepsilon_x$}
	\straightTubeWithString{(-.2,-.3)}{.1}{1.2}{thick, red}
	\node at (1.8,-.05) {$\scriptstyle x$};
}
$$
\item 
A \emph{multiplication map} 
$
\begin{tikzpicture}[baseline=-.3cm]
	\topPairOfPants{(-1,-1)}{}
	\draw[thick, red] (-.7,-1.1) .. controls ++(90:.8cm) and ++(270:.8cm) .. (-.1,.42);		
	\draw[thick, blue] (.7,-1.1) .. controls ++(90:.8cm) and ++(270:.8cm) .. (.1,.42);		
	\node at (-.7,-1.28) {$\scriptstyle x$};
	\node at (.7,-1.3) {$\scriptstyle y$};
\end{tikzpicture}
=\mu_{x,y}:\bTr_\cV(x)\otimes \bTr_\cV(y)\to \bTr_\cV(x\otimes y)$.
\item 
A \emph{unit map} 
$
\tikzmath{
	\coordinate (a1) at (0,0);
	\coordinate (b1) at (0,.4);
	\draw[thick] (a1) -- (b1);
	\draw[thick] ($ (a1) + (.6,0) $) -- ($ (b1) + (.6,0) $);
	\draw[thick] ($ (b1) + (.3,0) $) ellipse (.3 and .1);
	\draw[thick] (a1) arc (-180:0:.3cm);
}
=i:1_\cV\to\bTr_\cV(1_\cC)$.
\item 
A \emph{traciator} natural isomorphism 
$
\begin{tikzpicture}[baseline=-.1cm]
	\draw[thick] (-.3,-1) -- (-.3,1);
	\draw[thick] (.3,-1) -- (.3,1);
	\draw[thick] (0,1) ellipse (.3cm and .1cm);
	\halfDottedEllipse{(-.3,-1)}{.3}{.1}
	
	\draw[thick, red] (-.1,-1.1) .. controls ++(90:.8cm) and ++(270:.8cm) .. (.1,.9);		
	\draw[thick, blue] (.1,-1.1) .. controls ++(90:.2cm) and ++(225:.2cm) .. (.3,-.2);		
	\draw[thick, blue] (-.1,.9) .. controls ++(270:.2cm) and ++(45:.2cm) .. (-.3,.2);
	\draw[thick, blue, dotted] (-.3,.2) -- (.3,-.2);	
	\node at (-.13,-1.28) {$\scriptstyle x$};
	\node at (.13,-1.3) {$\scriptstyle y$};
\end{tikzpicture}
=\tau_{x,y}:\bTr_\cV(x\otimes y)\to\bTr_\cV(y\otimes x)$.
\end{itemize}
This structure satisfies various properties listed in \cite[\S4]{MR3578212}, many of which are summarized in \cite[\S5.1]{MR4528312}.
\end{construction}

\begin{defn}
A 1-morphism $(\cC_1,\Phi_1^{\scriptscriptstyle \cZ},\varphi^1,x_1) \to (\cC_2,\Phi_2^{\scriptscriptstyle \cZ},\varphi^2,x_2)$ 
of pivotal pointed module tensor categories is a pair $(G,\gamma)$ consisting of:
\begin{itemize}
\item
a pivotal tensor functor $G: \cC_1\to \cC_2$ such that $G(x_1)=x_2$ and $r_2 = \chi_{x_1}\circ G(r_1)$, where $r_i: x_i \to x_i^\vee$ is the symmetric self-duality, and $\chi$ is as in Definition~\ref{def.pivotalfunctor.chi}.
\item
an \emph{action coherence} monoidal natural isomorphism $\gamma: \Phi_2\Rightarrow G\circ \Phi_1$ satisfying the following compatibility with the half-braidings:
\[
\begin{tikzcd}
\Phi_2(v)\otimes G(c)
\arrow[r,"\gamma_v\otimes \id"]
\arrow[d,"e_{\Phi_2(v),G(c)}"]
&
G(\Phi_1(v))\otimes G(c)
\arrow[r,"\cong"]
&
G(\Phi_1(v)\otimes c)
\arrow[d,"G(e_{\Phi_1(v),c})"]
\\
G(c) \otimes \Phi_2(v)
\arrow[r,"\id\otimes\gamma_v"]
&
G(c)\otimes G(\Phi_1(v))
\arrow[r,"\cong"]
&
G(c\otimes \Phi_1(v))
\end{tikzcd}
\]
\end{itemize}
\end{defn}

Given two 1-morphisms $(G,\gamma^G),(H,\gamma^H)$ between pointed module tensor categories, by \cite[Lem.~3.6]{MR4528312}, there is at most one monoidal natural transformation $\kappa: (G,\gamma^G)\Rightarrow (H,\gamma^H)$ satisfying the following compatibility with the action coherence morphisms:
\[
\begin{tikzcd}
\Phi_2(v) 
\arrow[dr, "\gamma^G_v"]
\arrow[rr, "\gamma^H_v"]
&&
H(\Phi_1(v))
\\
&
G(\Phi_1(v))
\arrow[ur,"\kappa_{\Phi_1(v)}"]
\end{tikzcd}
\]
When such a $\kappa$ exists, it is necessarily invertible.
Hence the 2-category of pointed pivotal module tensor categories over $\cV$ is 1-truncated, i.e., equivalent to a 1-category.

\subsection{Unitary module tensor categories}
\label{sec:Unitary module tensor categories}

For the remainder of this section, we assume that $(\cV,\vee_\cV)$ is a braided unitary tensor category with a fixed unitary dual functor,
and let $\tr^\cV$ be the associated right pivotal trace. 

\begin{defn}
Let $\cV$ be as above.
A \emph{unitary module (multi)tensor category} over $\cV$ is a unitary (multi)tensor category $\cC$ with a unitary dual functor $\vee_\cC$, a faithful state $\psi_\cC$ on $\End_\cC(1_\cC)$, and
a  braided pivotal dagger tensor functor $\Phi^{\scriptscriptstyle \cZ} : \cV \to \cZ^\dag(\cC)$.
\end{defn}

We call a unitary module multitensor category \emph{spherical} if $\psi_\cC$ is a spherical state (Definition~\ref{def:spherical state}).
See Warning \ref{warn:TwoNotionsOfSphericality}.
Observe that, by Lemma~\ref{lem: C-->D and D spherical},
the existence of a non-zero spherical unitary $\cV$-module tensor category implies that $\cV$ is ribbon, i.e., that the unitary dual functor on $\cV$ is spherical.

The choices of $\vee_\cC$ and $\psi_\cC$ endow $\cC$ with a unitary trace
\begin{equation}\label{eq: heart}
\Tr^\cC:=\psi_\cC \circ \tr^\vee_R.
\end{equation}
Moreover, $(\cC, \Tr^\cC)$ is a pivotal module $\rm C^*$ category (aka unitary module category) \cite[\S3.6.2]{MR4598730} for the underlying unitary tensor category of $\cV$ (without the braiding). 
So unitary module tensor categories are, in particular, unitary module categories.

\begin{rem}
Suppose $(\cC,\vee_\cC,\psi_\cC,\Phi^{\scriptscriptstyle \cZ})$ is a unitary module multitensor category over $\cV$.
If $\Phi$ admits a right adjoint, then by Lemma \ref{lem: unitary right adjoints}, $\Phi$ admits a right unitary adjoint
$\bTr_\cV$ which is automatically bi-involutive lax monoidal by 
Proposition \ref{prop:AdjInvolutiveLaxMonoidal}
and
Corollary \ref{cor:UnitaryAdjointBi-involutive}.
Thus $\bTr_\cV$ comes equipped with canonical unitary isomorphisms $\{\chi^{\bTr_\cV}_c:\bTr_\cV(\overline{c})\to \overline{\bTr_\cV(c)}\}_{c\in\cC}$ satisfying the axioms in Definitions \ref{def: invol funct} and \ref{def: involutive (lax) functor}.
\end{rem}

We warn the reader that $\Tr^\cC$ is a trace in the sense of Definition~\ref{def: 2.1}, whereas $\bTr_\cV$ is a categorified trace as described in Construction~\ref{constr: 3.2}.

As before, a \emph{pointing} of $\cC$ is a choice of object $c\in \cC$ such that $c$ and $\Phi^{\scriptscriptstyle \cZ}(\cV)$ generate $\cC$ under the operations of tensor product, orthogonal direct sum, and taking orthogonal direct summands.
We also require that our symmetric self-duality $r_c: c\to c^\vee$ is unitary; this is also called a \emph{real structure} for $c$.

\begin{defn}
A 1-morphism $(G,\gamma): (\cC_1,\vee_1,\psi_1,\Phi_1^{\scriptscriptstyle \cZ},x_1) \to (\cC_2,\vee_2,\psi_2,\Phi_2^{\scriptscriptstyle \cZ},x_2)$ 
of pointed unitary module tensor categories over $\cV$ is a 1-morphism of underlying pointed pivotal module tensor categories such that
\begin{itemize}
\item 
$G$ is a $\dag$-tensor functor,
\item
the action coherence monoidal natural isomorphism $\gamma: \Phi_2 \Rightarrow G\circ \Phi_1$ is unitary and involutive:
$\forall v\in \cV,\,
\overline{\gamma_v} \circ \chi^{\Phi_2}_v = \chi^G_{\Phi_1(v)}\circ G(\chi^{\Phi_1}_v)\circ \gamma_{\overline{v}}:
\Phi_2(\overline{v}) \to \overline{G(\Phi_1(v))}$,
\item
$\psi_2 \circ G = \psi_1$ on $\End_{\cC_1}(1_{\cC_1})$.
\end{itemize}
\end{defn}

\subsection{The map \texorpdfstring{$i^\dag$}{idag} and a formula for \texorpdfstring{$\coev^\dag_{\bTr_\cV(c)}$}{coev dag of Tr(c)}}
\label{sec:idag}

In \S\ref{subsec:UAPA} below, we will define unitarity for an anchored planar algebra in terms of a certain pairing on $\cP[n]$ from the anchored planar algebra being equal to $\coev^\dag_{\cP[n]}$.

Fix a unitary $\cV$-module multitensor category $(\cC,\vee_\cC,\psi_\cC,\Phi^{\scriptscriptstyle \cZ})$ such that $\Phi: \cV \to \cC$ admits a right unitary adjoint $\bTr_\cV$.
In this section, we will study $i^\dag:\bTr_\cV(1_\cC)\to1_\cV$, and prove the important formula \eqref{eq:FormulaForEvTr} for $\coev^\dag_{\bTr_\cV(c)}$.
We represent $i^\dag$ diagrammatically by
$$
i^\dag =
\tikzmath{
\draw[thick] (-.5,-.5) -- (-.5,0) arc (180:0:.5cm) -- (.5,-.5);
\draw[thick] (-.5,-.5) arc (-180:0:.5 and .2);
\draw[thick, densely dotted] (-.5,-.5) arc (180:0:.5 and .2);
}\,
$$
In Lemma \ref{lem:idagAllowsIsotopy}, we will show that if $\psi_\cC$ is spherical, then all isotopies are allowed for strings on the capped tube:
$$
\begin{tikzpicture}[baseline=.9cm, xscale=-1]
	\coordinate (a1) at (0,.6);
	\coordinate (b1) at (0,2);
	\coordinate (c) at (.15,.2);
	\draw[thick] (a1) -- (b1);
	\draw[thick] ($ (a1) + (.6,0) $) -- ($ (b1) + (.6,0) $);
	\halfDottedEllipse{(0,.6)}{.3}{.1}
	\halfDottedEllipse{(0,2)}{.3}{.1}
	\draw[thick] (b1) arc (180:0:.3cm);
\end{tikzpicture}
\,\,\,\,
\text{,\,\,\, hence on all branching tubes of the form}
\quad\,\,\,
\begin{tikzpicture}[baseline=1.5cm]
	\coordinate (a1) at (0,0);
	\coordinate (a2) at (1.4,0);
	\coordinate (b1) at (0,1);
	\coordinate (b2) at (1.4,1);
	\coordinate (c1) at (.7,2.5);
	\draw[thick] (c1) arc (180:0:.3cm);
	\pairOfPants{(b1)}
\end{tikzpicture}
\quad
,
\quad
\begin{tikzpicture}[baseline=1.5cm]
	\coordinate (a1) at (0,0);
	\coordinate (a2) at (1.4,0);
	\coordinate (b1) at (0,1);
	\coordinate (b2) at (1.4,1);
	\coordinate (c1) at (.7,2.5);
	\draw[thick] (c1) arc (180:0:.3cm);
	\draw[thick] ($ (b1) + (-.4,0) $) .. controls ++(90:.8cm) and ++(270:.8cm) .. ($ (b1) + (.7,1.5) $);
	\draw[thick] ($ (b1) + (2.4,0) $) .. controls ++(90:.8cm) and ++(270:.8cm) .. ($ (b1) + (2,0) + (-.7,1.5) $);
	\draw[thick] ($ (b1) + (.2,0) $).. controls ++(90:.6cm) and ++(90:.6cm) .. ($ (b1) + (.7,0) $); 
	\draw[thick] ($ (b1) + (1.3,0) $).. controls ++(90:.6cm) and ++(90:.6cm) .. ($ (b1) + (1.8,0) $); 
	\halfDottedEllipse{($ (b1) + (.7,1.5) $)}{.3}{.1}
	\halfDottedEllipse{($ (b1) + (-.4,0) $)}{.3}{.1}
	\halfDottedEllipse{($ (b1) + (.7,0) $)}{.3}{.1}
	\halfDottedEllipse{($ (b1) + (1.8,0) $)}{.3}{.1}
\end{tikzpicture}
\quad
\ldots
$$

Consider the finite dimensional abelian $\rm C^*$-algebra $\cC(1_\cC\to 1_\cC)$. 
By unitary adjunction, we have an isomorphism
\begin{align}
\label{eq:IdentifyGroundC*Alg}
\cC(1_\cC\to 1_\cC)
\,&\cong\,\, 
\cV(1_\cV\to \bTr_\cV(1_\cC))
\\
\notag
f\quad 
&\!\mapsto\,\,\,
\tikzmath{
\draw[thick] (-.5,.7) -- (-.5,0) arc (-180:0:.5cm) -- (.5,.7);
\draw[thick] (0,.7) ellipse (.5 and .2);
\roundNbox{}{(0,0)}{.3}{0}{0}{$f$}
}
=\mate(f)=
\bTr_\cV(f)\circ i
\end{align}
hence the right hand side is also equipped with the structure of an abelian $\rm C^*$-algebra.
The multiplication and $*$-structure on the right hand side of \eqref{eq:IdentifyGroundC*Alg} are given by
\[
x \cdot y = \mu_{1_\cC,1_\cC}\circ(x\otimes y)
\qquad\quad
\text{and}
\qquad\quad
x^* = \big(\chi^{\bTr_\cV}_{1_\cC}\circ \bTr_\cV(r_\cC)\big)^{-1}\circ \overline{x}\circ r_\cV.
\]
To see that the isomorphism \eqref{eq:IdentifyGroundC*Alg} intertwines the two $*$-structures, i.e.,
\begin{align*}
&\bTr_{\cV}(f^\dag) \circ i 
=
(\chi_{1_{\cC}}^{\bTr_{\cV}} \circ \bTr_{\cV}(r_{\cC}))^{-1}\circ  \overline{\bTr_{\cV}(f) \circ i }  \circ r_{\cV},
&&
\text{equivalently}
\\
&\chi_{1_{\cC}}^{\bTr_{\cV}} \circ \bTr_{\cV}(r_{\cC})  \circ  \bTr_\cV(f^\dag) \circ i
=   
\overline{ \bTr_\cV(f) \circ i }   \circ r_\cV,
\end{align*}
we check that, since $f^\dag=\overline{f}^{\vee}=(r_\cC)^{-1}\circ \overline{f}\circ r_\cC$ on $\End_\cC(1_\cC)$ (as $r_\cC=\coev_1$), we have
\begin{align*}
\chi_{1_{\cC}}^{\bTr_{\cV}} \circ \bTr_{\cV}(r_{\cC})  \circ \bTr_{\cV}(f^\dagger) \circ i
&=
\chi_{1_{\cC}}^{\bTr_{\cV}} \circ \bTr_{\cV}(r_{\cC})  \circ \bTr_\cV(r_{\cC}^{-1}) \circ \bTr_\cV(\overline{f}) \circ \bTr_\cV(r_\cC)\circ i
\\&=
\chi_{1_{\cC}}^{\bTr_{\cV}} \circ \bTr_\cV(\overline{f}) \circ \bTr_\cV(r_\cC)\circ i
\\&=
\overline{\bTr_\cV(f)} \circ \chi_{1_{\cC}}^{\bTr_{\cV}} \circ \bTr_\cV(r_\cC)\circ i
\\&=
\overline{\bTr_\cV(f)} \circ \bar{i} \circ r_\cV.
\end{align*}
The third equality follows from involutivity of $\bTr_\cV$ (Definition~\ref{def: invol funct}),
and the final equality uses the unitality axiom 
$\chi_{1_\cC} \circ  \bTr(r_\cC) \circ i = \bar{i} \circ r_\cV$
from Definition~\ref{def: involutive (lax) functor},
which holds by Proposition \ref{prop:AdjInvolutiveLaxMonoidal}.

\begin{lem}
\label{lem:IdentifyStatesOnGroundC*Alg}
The state $\cV(1_\cV\to \bTr_\cV(1_\cC)) \to \End_\cV(1_\cV)\cong \bbC$ given by $x\mapsto i^\dag\circ x$
corresponds to $\psi_\cC:\End_\cC(1_\cC)\to\bbC$ under the isomorphism \eqref{eq:IdentifyGroundC*Alg}.
\end{lem}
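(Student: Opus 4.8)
The plan is to collapse the assertion to one inner-product computation. First, note that the isomorphism $\End_\cV(1_\cV)\cong\bbC$ appearing in the statement is the map $\Tr^\cV_{1_\cV}$ itself, since $\Tr^\cV_{1_\cV}(\id_{1_\cV})=\dim_R(1_\cV)=1$. Hence, recovering $\Tr^\cV$ from the inner product and using \eqref{eq:2HilbertSpaceEqualities BIS} in the form $(i^\dag\circ-)^\dag=i\circ-$, we get, for any $x\in\cV(1_\cV\to\bTr_\cV(1_\cC))$,
$$
i^\dag\circ x=\Tr^\cV_{1_\cV}(i^\dag\circ x)=\langle i^\dag\circ x,\,\id_{1_\cV}\rangle_{1_\cV\to 1_\cV}=\langle x,\,i\rangle_{1_\cV\to\bTr_\cV(1_\cC)}.
$$

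Next I would identify \eqref{eq:IdentifyGroundC*Alg} with the adjunction bijection. Let $\iota^\Phi\colon\Phi(1_\cV)\xrightarrow{\,\cong\,}1_\cC$ be the unit coherence isomorphism of the $\dag$-tensor functor $\Phi$; it is unitary. By \ref{Mate:LeftMateIdentity} in the form $\bTr_\cV(f)\circ\mate(\iota^\Phi)=\mate(f\circ\iota^\Phi)$, together with the fact that by construction the unit map $i$ of the categorified trace equals $\mate(\iota^\Phi)$, the map $f\mapsto\bTr_\cV(f)\circ i$ of \eqref{eq:IdentifyGroundC*Alg} is the composite of precomposition with $\iota^\Phi$ followed by the adjunction bijection $\mate\colon\cC(\Phi(1_\cV)\to 1_\cC)\to\cV(1_\cV\to\bTr_\cV(1_\cC))$. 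In particular, for the image $x=\bTr_\cV(f)\circ i=\mate(f\circ\iota^\Phi)$ of $f\in\End_\cC(1_\cC)$ and for $i=\mate(\iota^\Phi)$, unitarity of $\mate$ — which holds because $\bTr_\cV$ is the \emph{unitary} right adjoint of $\Phi$ (Definition \ref{def: anti+unitary adjunction}) — gives
$$
\langle x,\,i\rangle_{1_\cV\to\bTr_\cV(1_\cC)}=\langle f\circ\iota^\Phi,\,\iota^\Phi\rangle_{\Phi(1_\cV)\to 1_\cC}=\Tr^\cC_{1_\cC}\!\big((\iota^\Phi)^\dag\circ f\circ\iota^\Phi\big)=\Tr^\cC_{1_\cC}(f),
$$
the last equality by unitarity of $\iota^\Phi$ and cyclicity of $\Tr^\cC$.

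Finally I would observe that $\Tr^\cC_{1_\cC}=\psi_\cC\circ\tr^\vee_R$ restricts to $\psi_\cC$ on $\End_\cC(1_\cC)$, because the right pivotal trace $\tr^\vee_R$ restricts to the identity on $\End_\cC(1_\cC)$ (all the (co)evaluations for $1_\cC$ are unitors). Chaining the two displays gives $i^\dag\circ x=\psi_\cC(f)$, which is the assertion.

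The step that really carries the content, and where I expect the only subtlety, is recognizing that \eqref{eq:IdentifyGroundC*Alg} is \emph{unitary} as a map of trace Hilbert spaces — not merely the $*$-isomorphism of abelian C*-algebras already recorded. This is precisely the point at which one must use that $\bTr_\cV$ was chosen to be the \emph{unitary} adjoint of $\Phi$ and that $\Phi$ is a $\dag$-tensor functor (so that $\iota^\Phi$ is unitary); once this is in place, the rest is routine manipulation with \eqref{eq:2HilbertSpaceEqualities}, \ref{Mate:LeftMateIdentity}, and the trace property.
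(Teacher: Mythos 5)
Your proof is correct and follows essentially the same route as the paper's: both arguments reduce the claim to the chain $i^\dag\circ x=\langle x,i\rangle=\langle \mate(x),\mate(i)\rangle=\psi_\cC(\mate(x))$, using that the adjunction $\Phi\dashv^\dag\bTr_\cV$ is unitary and that $i$ is the mate of the identity of $1_\cC$. The only difference is that you spell out two identifications the paper leaves implicit (the unit coherence $\iota^\Phi\colon\Phi(1_\cV)\cong 1_\cC$, and the fact that $\Tr^\cC=\psi_\cC\circ\tr^\vee_R$ restricts to $\psi_\cC$ on $\End_\cC(1_\cC)$), which is a harmless and arguably welcome elaboration.
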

\begin{proof}
Fix $x\in \cV(1_\cV\to \bTr_\cV(1_\cC))$.
We then have
\[
\psi_\cC(\mate(x)) = \langle \mate(x),\id_{1_\cC}\rangle_{1_\cC\to 1_\cC} = \langle x, i\rangle_{1_\cV\to \bTr_\cV(1_\cC)} = i^\dag \circ x,
\]
where the first equality holds by combining \eqref{eq: heart} with the definition of $\langle\cdot\,,\cdot\rangle$ (Definition \ref{def: 2.1}), and the second equality is the
unitarity of the adjunction \eqref{eq:IdentifyGroundC*Alg}, using that $\mate(i)=\id_{1_\cC}$.
\end{proof}

We will use the following lemma to get our equality of pairings in Proposition \ref{prop:FormulaForEvTr} below.

\begin{lem}
\label{lem:TwoPairingsEqual}
Two maps $p,q: v\otimes \overline{v} \to 1_\cV$ are equal if and only if for all $u\in \cV$ and $f,g\in \cV(u\to v)$, 
\begin{equation}\label{eq: inner product equal?}
p\circ (f\otimes \overline{g}) \circ \coev_u = q\circ (f\otimes \overline{g}) \circ \coev_u.
\end{equation}
\end{lem}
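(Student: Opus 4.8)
The forward implication is trivial, so the plan is to prove the converse: assuming \eqref{eq: inner product equal?} for all $u,f,g$, deduce $p=q$. The key reduction I would make is that $p=q$ is equivalent to $p\circ h=q\circ h$ for $h$ ranging over any spanning set of $\cV(1_\cV\to v\otimes\overline v)$. This uses non-degeneracy of the composition pairing
\[
\cV(v\otimes\overline v\to1_\cV)\times\cV(1_\cV\to v\otimes\overline v)\longrightarrow\End_\cV(1_\cV)\cong\C,\qquad(r,h)\longmapsto r\circ h,
\]
which I would check by semisimplicity: decompose $v\otimes\overline v\cong\bigoplus_i V_i$ into simples; a morphism $r\colon v\otimes\overline v\to1_\cV$ automatically vanishes on summands $V_i\not\cong1_\cV$, and if $r\circ h=0$ for all $h$ it also vanishes on the summands $V_i\cong1_\cV$ (precompose with an isomorphism $1_\cV\xrightarrow{\ \cong\ }V_i\hookrightarrow v\otimes\overline v$), hence $r=0$. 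So the task reduces to showing that the morphisms $(f\otimes\overline g)\circ\coev_u$ span $\cV(1_\cV\to v\otimes\overline v)$, whence $(p-q)\circ h=0$ on a spanning set forces $p=q$.

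For the spanning step I would observe that even the special case $u=v$, $g=\id_v$ already does the job. Since $\overline v$, equipped with $\coev_v\colon1_\cV\to v\otimes\overline v$ and its accompanying evaluation $\ev_v\colon\overline v\otimes v\to1_\cV$, is duality data for $v$, the rigidity map $a\mapsto(a\otimes\id_{\overline v})\circ\coev_v$ is a bijection $\End_\cV(v)\xrightarrow{\ \cong\ }\cV(1_\cV\to v\otimes\overline v)$, with inverse $h\mapsto(\id_v\otimes\ev_v)\circ(h\otimes\id_v)$, the two composites being the identity by the zig-zag identities. Hence every $h\in\cV(1_\cV\to v\otimes\overline v)$ equals $(a\otimes\overline{\id_v})\circ\coev_v$ for a unique $a\in\End_\cV(v)$, which is precisely a morphism of the required form with $u=v$, $f=a$, $g=\id_v$. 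This completes the proof; the stated criterion, quantifying over all $u,f,g$, is then a formally weaker but more convenient reformulation.

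I do not anticipate a genuine obstacle here: the argument is bookkeeping once two structural facts are in place — that $(\coev_v,\ev_v)$ really is duality data for $\overline v$ (which holds because the canonical bi-involutive structure on $\cV$ identifies $\overline v$ unitarily with $v^\vee$, carrying the (co)evaluations to those of the unitary dual functor), and that $1_\cV$ is simple, so that $\End_\cV(1_\cV)\cong\C$ and the composition pairing above is non-degenerate; the latter is the one place it matters that $\cV$ is a unitary tensor, not merely multitensor, category. If one preferred to respect the quantifier over all $u$ directly, the same conclusion follows from semisimplicity by writing $\id_v$ as a sum of morphisms factoring through simple objects, at the cost of slightly more duality bookkeeping.
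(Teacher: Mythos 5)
Your proof is correct, and it takes a genuinely different route from the paper's. The paper restricts the hypothesis to simple $u\in\Irr(\cV)$, uses that an endomorphism of a simple object is determined by its closure with $\coev_u$ to upgrade the scalar equality \eqref{eq: inner product equal?} to the morphism equality $p\circ(f\otimes\overline{g})=q\circ(f\otimes\overline{g})$ for all $f,g:u\to v$, and then concludes $p=q$ from the (implicit) fact that the maps $f\otimes\overline{g}$, as $u$ ranges over simples, jointly resolve $\id_{v\otimes\overline{v}}$. You instead observe that the single instance $u=v$, $g=\id_v$ of the hypothesis already suffices, because the rigidity bijection $\End_\cV(v)\cong\cV(1_\cV\to v\otimes\overline{v})$, $a\mapsto(a\otimes\id_{\overline{v}})\circ\coev_v$, exhausts all of $\cV(1_\cV\to v\otimes\overline{v})$; the price is that you must separately verify non-degeneracy of the composition pairing $\cV(v\otimes\overline{v}\to 1_\cV)\times\cV(1_\cV\to v\otimes\overline{v})\to\C$, which you do correctly via the decomposition of $v\otimes\overline{v}$ into simples. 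Both arguments rest on the same structural inputs --- the unitary identification $\overline{v}\cong v^\vee$ making $(\ev_v,\coev_v)$ genuine duality data, semisimplicity, and simplicity of $1_\cV$ --- but yours makes the spanning step a one-line consequence of the zig-zag identities and shows the quantification over all $u$ is not needed, whereas the paper's version keeps the full quantifier and instead leans on the trace-detects-endomorphisms fact for simples.
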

\begin{proof}
The forward direction is trivial.
Considering only simple $u\in\Irr(\cV)$,
the equality \eqref{eq: inner product equal?} holds iff
$$
\tikzmath[xscale=-1]{
\draw (.5,-1.7) --node[right]{$\scriptstyle u$} (.5,-1.3);
\draw (.5,-.7) --node[right]{$\scriptstyle v$} (.5,-.3);
\draw (-.5,-.7) --node[left]{$\scriptstyle \overline{v}$} (-.5,-.3);
\draw (-.5,-1.3) node[right, yshift=0.2cm]{$\scriptstyle \overline{u}$} arc (0:-180:.3cm) --node[left]{$\scriptstyle u$} (-1.1,.5);
\roundNbox{fill=white}{(0,0)}{.3}{.5}{.5}{$p$}
\roundNbox{fill=white}{(.5,-1)}{.3}{0}{0}{$f$}
\roundNbox{fill=white}{(-.5,-1)}{.3}{0}{0}{$\overline{g}$}
}\;
=
\tikzmath[xscale=-1]{
\draw (.5,-1.7) --node[right]{$\scriptstyle u$} (.5,-1.3);
\draw (.5,-.7) --node[right]{$\scriptstyle v$} (.5,-.3);
\draw (-.5,-.7) --node[left]{$\scriptstyle \overline{v}$} (-.5,-.3);
\draw (-.5,-1.3) node[right, yshift=0.2cm]{$\scriptstyle \overline{u}$} arc (0:-180:.3cm) --node[left]{$\scriptstyle u$} (-1.1,.5);
\roundNbox{fill=white}{(0,0)}{.3}{.5}{.5}{$q$}
\roundNbox{fill=white}{(.5,-1)}{.3}{0}{0}{$f$}
\roundNbox{fill=white}{(-.5,-1)}{.3}{0}{0}{$\overline{g}$}
}
\qquad\qquad
\forall\,f,g:u\to v
$$
which holds true iff
\[
p\circ (f\otimes \overline{g}) = q\circ (f\otimes\overline{g})
\qquad\qquad
\forall\,f,g:u\to v.
\]
This is true for all $u$ iff $p=q$.
\end{proof}

\begin{prop}
\label{prop:FormulaForEvTr}
$
\coev^\dag_{\bTr_\cV(c)}
=
i^\dag\circ \bTr_\cV(\coev^\dag_c)\circ \mu_{c,\overline{c}} \circ (\id\otimes [\chi^{\bTr_\cV}_c]^{-1}).$
Equivalently,
\begin{equation}\label{eq:FormulaForEvTr}
\coev^\dag_{\bTr_\cV(c)}=
\begin{tikzpicture}[baseline=1.5cm]
	\coordinate (a1) at (0,0);
	\coordinate (a2) at (1.4,0);
	\coordinate (b1) at (0,1);
	\coordinate (b2) at (1.4,1);
	\coordinate (c1) at (.7,2.5);
	\draw[thick] (c1) arc (180:0:.3cm);
	\pairOfPants{(b1)}	
	\draw[thick, blue] ($ (b2) + (.3,-.1) $) node[below]{$\scriptstyle \overline{c}$} to[in=90,out=90, looseness=2.3]($ (b1) + (.3,-.1) $) node[below]{$\scriptstyle c$};
\end{tikzpicture}
\,\,,
\end{equation}
where we've suppressed the inverse of $\chi^{\bTr_\cV}_c$ from the diagram.
\end{prop}
\begin{proof}
Fix $v\in \cV$, $f,g: v\to \bTr_\cV(c)$, and consider the morphism
\begin{equation}
\label{eq:ToApplyidag}
\tikzmath{
	\coordinate (a1) at (0,0);
	\coordinate (a2) at (1.4,0);
	\coordinate (b1) at (0,1);
	\coordinate (b2) at (1.4,1);
	\coordinate (c1) at (.7,2.5);
	\topPairOfPants{(b1)}	
	\draw[thick, blue] ($ (b2) + (.3,-.1) $)  to[in=90,out=90, looseness=2.3] ($ (b1) + (.3,-.1) $) node[left, yshift=.45cm, xshift=.15cm]{$\scriptstyle c$};
  \draw[thick, red] (.3,.6) node[left, yshift=-.2cm]{$\scriptstyle v$} arc (-180:0:.7);
  \roundNbox{fill=white}{(.3,.9)}{.3}{.2}{.2}{$f$}
  \roundNbox{fill=white}{(1.7,.9)}{.3}{.2}{.2}{$\overline{g}$}
}
=
\bTr_\cV(\coev^\dag_c)\circ \mu_{c,\overline{c}} \circ (\id_{\bTr_\cV(c)}\otimes (\chi^{\bTr_\cV}_c)^{-1}) \circ (f\otimes \overline{g})\circ \coev_v.
\end{equation}
Its mate under \eqref{eq:IdentifyGroundC*Alg} is 
$$
\varepsilon_{1_\cC} \circ \Phi(\bTr_\cV(\coev^\dag_c))\circ  \Phi(\mu_{c,\overline{c}}) \circ \Phi\big((\id_{\bTr_\cV(c)}\otimes (\chi^{\bTr_\cV}_c)^{-1}) \circ (f\otimes \overline{g})\big)\circ \Phi(\coev_v).
$$
We claim that the above morphism is equal to
\begin{equation}
\label{eq:ToApplyPsiC}
\coev^\dag_c 
\circ \big({\mate(f)} \otimes \overline{\mate(g)}\big) \circ \coev_{\Phi(v)}.
\end{equation}
Indeed, in the diagrammatic notation of \cite{MR3578212}, we check: 
\begin{align*}
\mate(\eqref{eq:ToApplyidag})
&=\begin{tikzpicture}[baseline=.4cm, scale=.8]
	\plane{(-.4,-.8)}{4.3}{2.7}
	\CMbox{box}{(.2,-.4)}{1.2}{1.4}{.5}{$\varepsilon_{1_\cC}$}
	\fill[unshaded] (-1,1.7) -- (-3.3,1.7) -- (-2.25,.65) -- ++(.2,0) -- ++(.3,-.3) -- ++(-.2,0) -- ++(.8,-.8) -- ++(.5,.1) -- (-.5,.1) -- (0,.1) -- (0,1.1) -- ++(-1,0);
	\draw[thick, blue] (-2.9,1.2) to[out=0,in=180] (-.1,.7) -- (.08-.2,.7) arc(90:-90:.1);
	\draw[thick, blue] (-1.6,-.2) to[out=10,in=180] (-.15,.5) -- (.08-.2,.5);
	\draw[thick] (-3.1,1.56) to[out=0,in=180] (-.1,1);
	\draw[thick] (-1.8,-.6) to[out=5,in=180] (-.1,.2);
	\draw[thick] (-1.8,.2) to[out=10,in=2, looseness=2] (-3.1,.84);
	\draw[thick] (-3.1,1.2) circle (.2 and .36);
	\draw[thick] (-1.8,-.2) circle (.2 and .4);
	\draw[thick] (-.1,.2) arc (-90:90:.2 and .4);
	\draw[thick, red, knot] (-4.6,-.2) node[below, xshift=-.4cm]{$\scriptstyle \Phi(\overline v)$} arc (270:180:.7cm)
	node[left, xshift=0cm]{$\scriptstyle \Phi(\coev_v)$}
	arc (180:90:.7cm) node[above]{$\scriptstyle {\Phi(v)}$} -- (-3.8,1.2);
	\roundNbox{fill=white}{(-2.1,-.2)}{.5}{2}{0}{${\Phi{\textstyle(}\scriptstyle (\chi^{\bTr_\cV}_c)^{-1}\circ \overline{g}{\textstyle)}}$}
	\roundNbox{fill=white}{(-3.2,1.2)}{.45}{.4}{0}{$\Phi(f)$}
\end{tikzpicture}
\!\!
\\
&=\begin{tikzpicture}[baseline=.4cm, scale=.8]
	\plane{(-.4,-.8)}{4.3}{2.7}
	\draw[thick, blue] (1.4,0) -- node[below]{$\scriptstyle \overline{c}$} (2,0) arc (-90:90:.2cm) -- node[above]{$\scriptstyle c$} (1.4,.4);
	\CMbox{box}{(.2,-.4)}{1.2}{1.4}{.5}{$\varepsilon_{{c}\otimes \overline{c}}$}
	\fill[unshaded] (-1,1.7) -- (-3.3,1.7) -- (-2.25,.65) -- ++(.2,0) -- ++(.3,-.3) -- ++(-.2,0) -- ++(.8,-.8) -- ++(.5,.1) -- (-.5,.1) -- (0,.1) -- (0,1.1) -- ++(-1,0);
	\draw[thick, blue] (-2.9,1.2) to[out=0,in=180] (-.1,.7) -- (.08,.7);
	\draw[thick, blue] (-1.6,-.2) to[out=10,in=180] (-.15,.5) -- (.08,.5);
	\draw[thick] (-3.1,1.56) to[out=0,in=180] (-.1,1);
	\draw[thick] (-1.8,-.6) to[out=5,in=180] (-.1,.2);
	\draw[thick] (-1.8,.2) to[out=10,in=2, looseness=2] (-3.1,.84);
	\draw[thick] (-3.1,1.2) circle (.2 and .36);
	\draw[thick] (-1.8,-.2) circle (.2 and .4);
	\draw[thick] (-.1,.2) arc (-90:90:.2 and .4);
	\draw[thick, red, knot] (-5.5,-.2) node[below, xshift=-.4cm]{$\scriptstyle \overline{\Phi(v)}$} arc (270:180:.7cm)
	node[left, xshift=0cm]{$\scriptstyle \coev_{\Phi(v)}$}
	arc (180:90:.7cm) node[above]{$\scriptstyle {\Phi(v)}$} -- (-3.8,1.2);
	\roundNbox{fill=white}{(-2.1,-.2)}{.5}{2.9}{0}{${\scriptstyle \Phi{\textstyle(}(\chi^{\bTr_\cV}_c)^{\text{-}1}\circ \overline{g}{\textstyle)}\circ (\chi_v^\Phi)^{\text{-}1}}$}
	\roundNbox{fill=white}{(-3.2,1.2)}{.45}{.4}{0}{$\Phi(f)$}
\end{tikzpicture}
\!\!
\\
&\hspace{-1.1cm}\underset{\text{\scriptsize \cite[Lem.~4.6]{MR3578212}}}{=}
\begin{tikzpicture}[baseline=.65cm, scale=.8]
	\plane{(-4.2,-.8)}{7.8}{3}
	\draw[thick, blue] (.8,-.1) node[below, xshift=.2cm]{$\scriptstyle \overline{c}$} arc (-90:90:.7cm) -- node[above]{$\scriptstyle {c}$} (-.6,1.3);
	\CMbox{box}{(-1.4,.8)}{.8}{.8}{.4}{$\varepsilon_{c}$}
	\CMbox{box}{(0,-.6)}{.8}{.8}{.4}{$\varepsilon_{\overline{c}}$}
	\straightTubeWithString{(-1.6,1.1)}{.1}{1.2}{thick, blue}
	\straightTubeWithString{(-.2,-.3)}{.1}{1.2}{thick, blue}
	\draw[thick, red, knot] (-4.2,-.1) -- (-5.2,-.1) --node[below,xshift=-.3cm]{$\scriptstyle \overline{\Phi(v)}$} (-5.6,-.1) arc (270:90:.7cm) node[above, xshift=-.3cm]{$\scriptstyle {\Phi(v)}$} -- (-3.8,1.3);
	\roundNbox{fill=white}{(-1.8,-.1)}{.4}{3}{.1}{${\scriptstyle \Phi{\textstyle(}(\chi^{\bTr_\cV}_c)^{\text{-}1}\circ \overline{g}{\textstyle)}\circ (\chi_v^\Phi)^{\text{-}1}}$}
	\roundNbox{fill=white}{(-3.2,1.3)}{.4}{.4}{.1}{$\Phi(f)$}
\end{tikzpicture}
\\
&\hspace{-.5cm}\underset{\text{\scriptsize (Lem.~\ref{lem:MateOfBarF})}}{=}
\;\;
\begin{tikzpicture}[baseline=.65cm, scale=.8]
	\plane{(-1,-.8)}{5.6}{3}
	\draw[thick, blue] (1.8,-.1) node[below, xshift=.2cm]{$\scriptstyle \overline{c}$} arc (-90:90:.7cm) -- node[above]{$\scriptstyle c$} (.4,1.3);
	\CMbox{box}{(-1.4,.8)}{1.8}{.8}{.4}{$\scriptstyle \mate(f)$}
	\CMbox{box}{(0,-.6)}{1.8}{.8}{.4}{$\scriptstyle \overline{\mate(g)}$}
	\draw[thick, red, knot] (-.2,-.1) -- node[below, xshift=.1cm]{$\scriptstyle \overline{\Phi(v)}$} (-1.6,-.1) arc (270:90:.7cm) node[above, xshift=-.6cm, yshift=-.2cm]{$\scriptstyle \Phi(v)$};
\end{tikzpicture}
.
\end{align*}
Now, by Lemma \ref{lem:IdentifyStatesOnGroundC*Alg}, $i^\dag\circ \eqref{eq:ToApplyidag}=\psi_\cC(\eqref{eq:ToApplyPsiC})$. 
By \eqref{eq: heart} and the definition of $\langle\cdot\,,\cdot\rangle$,
this is equal to 
$$
\langle \mate(f),\mate(g)\rangle_{\cC(\Phi(v)\to c)}
=
\langle f, g\rangle_{\cV(v\to \bTr(c))}
=
\tr_\cV(fg^\dag)
=
\coev^\dag_{\bTr_\cV(c)}\circ (f\otimes \overline{g})\circ \coev_{v}.
$$ 
Hence 
$$
i^\dag\circ \eqref{eq:ToApplyidag} = \coev^\dag_{\bTr_\cV(c)}\circ (f\otimes \overline{g})\circ \coev_{v}
\qquad\qquad
\forall\,f,g:v\to \bTr_\cV(c).
$$
The result follows by Lemma \ref{lem:TwoPairingsEqual}.
\end{proof}

\begin{rem}
The problem of showing the pairing on the right hand side in the statement of Proposition \ref{prop:FormulaForEvTr} is non-degenerate was left open in \cite[Rem.~5.4]{MR3578212}.
As it is equal to $\coev^\dag_{\bTr_\cV(c)}$ (up to suppressing $\chi$), this open question is now resolved in the unitary setting.
\end{rem}

\begin{lem}
\label{lem:idagAllowsIsotopy}
If $\psi_\cC$ is spherical, 
then the following maps $\bTr_\cV(c\otimes \overline{c})\to 1_\cV$ are equal:
$$
\begin{tikzpicture}[baseline=.9cm, xscale=-1]
	\coordinate (a1) at (0,0);
	\coordinate (b1) at (0,2);
	\coordinate (c) at (.15,.2);
	\draw[thick] (a1) -- (b1);
	\draw[thick] ($ (a1) + (.6,0) $) -- ($ (b1) + (.6,0) $);
	\halfDottedEllipse{(0,0)}{.3}{.1}
	\halfDottedEllipse{(0,2)}{.3}{.1}
	\draw[thick] (b1) arc (180:0:.3cm);		
	\draw[thick, blue] (c)+(.15,1.2) arc (180:0:.08cm)  .. controls ++(270:.2cm) and ++(135:.2cm) .. ($ (c) + (.45,.6) $);	
	\draw[thick, blue] ($ (a1) + (.15,-.08) $) .. controls ++(270:.2cm) and ++(45:.2cm) .. ($ (c) + (-.15,.4) $);
	\draw[thick, blue, dotted] ($ (c) + (-.15,.4) $) -- ($ (c) + (.45,.6) $);	
	\draw[thick, blue] (c)++(.15,1.2) .. controls ++(270:.8cm) and ++(90:.8cm) .. ($ (a1) + (.45,-.08) $);	
\end{tikzpicture}
\,\,=\,\,
\begin{tikzpicture}[baseline=.9cm]
	\coordinate (a1) at (0,0);
	\coordinate (b1) at (0,2);
	\coordinate (c) at (.15,.4);
	\draw[thick] (a1) -- (b1);
	\draw[thick] ($ (a1) + (.6,0) $) -- ($ (b1) + (.6,0) $);
	\halfDottedEllipse{(0,0)}{.3}{.1}
	\halfDottedEllipse{(0,2)}{.3}{.1}
	\draw[thick] (b1) arc (180:0:.3cm);			
	\draw[thick, blue] ($ (a1) + (.15,-.08) $) -- ($ (a1) + (.15,1) $) arc (180:0:.15cm) -- ($ (a1) + (.45,-.08) $);	
\end{tikzpicture}
\,\,=\,\,
\begin{tikzpicture}[baseline=.9cm]
	\coordinate (a1) at (0,0);
	\coordinate (b1) at (0,2);
	\coordinate (c) at (.15,.2);
	\draw[thick] (a1) -- (b1);
	\draw[thick] ($ (a1) + (.6,0) $) -- ($ (b1) + (.6,0) $);
	\halfDottedEllipse{(0,0)}{.3}{.1}
	\halfDottedEllipse{(0,2)}{.3}{.1}
	\draw[thick] (b1) arc (180:0:.3cm);		
	\draw[thick, blue] (c)+(.15,1.2) arc (180:0:.08cm)  .. controls ++(270:.2cm) and ++(135:.2cm) .. ($ (c) + (.45,.6) $);	
	\draw[thick, blue] ($ (a1) + (.15,-.08) $) .. controls ++(270:.2cm) and ++(45:.2cm) .. ($ (c) + (-.15,.4) $);
	\draw[thick, blue, dotted] ($ (c) + (-.15,.4) $) -- ($ (c) + (.45,.6) $);	
	\draw[thick, blue] (c)++(.15,1.2) .. controls ++(270:.8cm) and ++(90:.8cm) .. ($ (a1) + (.45,-.08) $);	
\end{tikzpicture}\,.
$$
\end{lem}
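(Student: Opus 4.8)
The plan is to deduce the lemma from Proposition~\ref{prop:FormulaForEvTr} together with sphericality of $\psi_\cC$, by testing all three maps against arbitrary morphisms into $\bTr_\cV(c\otimes\overline c)$ and transporting the resulting computations into $\cC$ along the unitary adjunction $\Phi\dashv^\dag\bTr_\cV$, in the same way as in the proof of Proposition~\ref{prop:FormulaForEvTr}.

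First I would reduce to a pairing statement. Since $1_\cV$ is simple and hom spaces are finite dimensional, a morphism $g\colon\bTr_\cV(c\otimes\overline c)\to 1_\cV$ is determined by the linear functional $x\mapsto g\circ x$ on $\cV(1_\cV\to\bTr_\cV(c\otimes\overline c))$ (if $g\circ x=0$ for all $x$, then $g\circ g^\dag=0$, so $g=0$); and the unitary adjunction identifies $\cV(1_\cV\to\bTr_\cV(c\otimes\overline c))$ with $\cC(1_\cC\to c\otimes\overline c)$ via $x\leftrightarrow\mate(x)$. So it suffices to check that, for every $x\colon 1_\cV\to\bTr_\cV(c\otimes\overline c)$, the three composites $(\text{diagram})\circ x\in\End_\cV(1_\cV)\cong\bbC$ agree.

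Next I would evaluate each composite in $\cC$. Taking the mate of $(\text{diagram})\circ x$ under \eqref{eq:IdentifyGroundC*Alg} and simplifying with Lemma~\ref{lem:MateOfBarF}, the formula for the multiplication $\mu$ from \cite[Lem.~4.6]{MR3578212}, the definition of $\chi_{\Phi(c)}$, and the fact that $\Phi$ is a $\dagger$-functor --- the same manipulations as in the proof of Proposition~\ref{prop:FormulaForEvTr} --- turns each composite into a closed string diagram in $\cC$ built from $\mate(x)$, the counit $\varepsilon$, the half-braidings $e$ of $\Phi(\cV)$, and $\coev^\dag_c$; applying $i^\dag$ then amounts to applying $\psi_\cC$, by Lemma~\ref{lem:IdentifyStatesOnGroundC*Alg}. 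The three pictures differ only in how the blue $c$-string is drawn on the tube surface: in the middle picture it is closed off in front, and the resulting $\cC$-diagram closes $\mate(x)$ with a left pivotal trace, so $(\text{diagram }2)\circ x=\psi_\cC\big(\tr^\cC_L(\cdots)\big)$; in the first and third pictures the string is routed once around the back of the tube, and by the graphical calculus for strings on tubes and the traciator from \cite{MR3578212} this exchanges the left closure for a right closure, so $(\text{diagram }1)\circ x=(\text{diagram }3)\circ x=\psi_\cC\big(\tr^\cC_R(\cdots)\big)$ with the same expression $(\cdots)$. Sphericality of $\psi_\cC$ (Definition~\ref{def:spherical state}) gives $\psi_\cC\circ\tr^\cC_L=\psi_\cC\circ\tr^\cC_R$, so all three scalars coincide; since $x$ was arbitrary and the pairing is nondegenerate, the three maps are equal. (Equality of the first and third maps can alternatively be seen directly, since those two pictures are related by the left--right reflection of the whole capped tube, under which $i^\dag$ is symmetric and $\bTr_\cV(\coev^\dag_c)$ is symmetric up to the real structure $r_c$.)

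The step I expect to be the main obstacle is exactly the identification used in the middle step: carefully unwinding the graphical calculus of \cite{MR3578212} to confirm that ``routing the blue string once around the back of the tube'' corresponds, after passing to $\cC$, to inserting the pivotal structure $\varphi^\cC_c$ via the traciator --- that is, to replacing $\tr^\cC_L$ by $\tr^\cC_R$ --- while keeping the $\chi^{\bTr_\cV}$-corrections and the half-braidings bookkept correctly. Once that is pinned down, sphericality does the rest.
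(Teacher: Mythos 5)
Your proposal is correct and follows essentially the same route as the paper's proof: test against an arbitrary $h\colon 1_\cV\to\bTr_\cV(c\otimes\overline c)$, pass to $\cC$ by taking mates under the adjunction \eqref{eq:IdentifyGroundC*Alg}, identify post-composition with $i^\dag$ with applying $\psi_\cC$ via Lemma~\ref{lem:IdentifyStatesOnGroundC*Alg}, and use sphericality to trade the left closure of the $c$-string for the right closure. Your parenthetical observation that the first and third diagrams agree by isotopy alone, without sphericality, is exactly the remark the paper makes immediately after the lemma statement.
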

Note that the first and third tubes above represent the same morphism regardless of whether $\psi_\cC$ is spherical or not, because the following two pictures are isotopic:
\[
\begin{tikzpicture}[baseline=.9cm, xscale=-1]
	\coordinate (a1) at (0,0);
	\coordinate (b1) at (0,2);
	\coordinate (c) at (.15,.2);
	\draw[thick] (a1) -- (b1);
	\draw[thick] ($ (a1) + (.6,0) $) -- ($ (b1) + (.6,0) $);
	\halfDottedEllipse{(0,0)}{.3}{.1}
	\draw[thick] (.3,2) ellipse (.3 and .1);
	\draw[thick, blue] (c)+(.15,1.2) arc (180:0:.08cm)  .. controls ++(270:.2cm) and ++(135:.2cm) .. ($ (c) + (.45,.6) $);	
	\draw[thick, blue] ($ (a1) + (.15,-.08) $) .. controls ++(270:.2cm) and ++(45:.2cm) .. ($ (c) + (-.15,.4) $);
	\draw[thick, blue, dotted] ($ (c) + (-.15,.4) $) -- ($ (c) + (.45,.6) $);	
	\draw[thick, blue] (c)++(.15,1.2) .. controls ++(270:.8cm) and ++(90:.8cm) .. ($ (a1) + (.45,-.08) $);	
\end{tikzpicture}
\,\,=\,\,
\begin{tikzpicture}[baseline=.9cm]
	\coordinate (a1) at (0,0);
	\coordinate (b1) at (0,2);
	\coordinate (c) at (.15,.2);
	\draw[thick] (a1) -- (b1);
	\draw[thick] ($ (a1) + (.6,0) $) -- ($ (b1) + (.6,0) $);
	\halfDottedEllipse{(0,0)}{.3}{.1}
	\draw[thick] (.3,2) ellipse (.3 and .1);
	\draw[thick, blue] (c)+(.15,1.2) arc (180:0:.08cm)  .. controls ++(270:.2cm) and ++(135:.2cm) .. ($ (c) + (.45,.6) $);	
	\draw[thick, blue] ($ (a1) + (.15,-.08) $) .. controls ++(270:.2cm) and ++(45:.2cm) .. ($ (c) + (-.15,.4) $);
	\draw[thick, blue, dotted] ($ (c) + (-.15,.4) $) -- ($ (c) + (.45,.6) $);	
	\draw[thick, blue] (c)++(.15,1.2) .. controls ++(270:.8cm) and ++(90:.8cm) .. ($ (a1) + (.45,-.08) $);	
\end{tikzpicture}
\,.
\]
\begin{proof}
By the above discussion, it is enough to prove the first equality.
Two maps $f,g:v\to 1_\cV$ are equal if and only if $f\circ h = g\circ h$ for all $h: 1_\cV\to v$,
so we fix $h: 1_\cV \to \bTr_\cV(c\otimes \overline{c})$ and wish to show that
\begin{equation}
\label{eq:TwistTubesNoTopCap}
i^\dagger\circ
\begin{tikzpicture}[baseline=.9cm, xscale=-1]
	\coordinate (a1) at (0,0);
	\coordinate (b1) at (0,2);
	\coordinate (c) at (.15,.2);
	\draw[thick] (a1) -- (b1);
	\draw[thick] ($ (a1) + (.6,0) $) -- ($ (b1) + (.6,0) $);
	\draw[thick] (.3,2) ellipse (.3 and .1);
	\draw[thick, blue] (c)+(.15,1.2) arc (180:0:.08cm)  .. controls ++(270:.2cm) and ++(135:.2cm) .. ($ (c) + (.45,.6) $);	
	\draw[thick, blue] ($ (a1) + (.15,-.08) $) .. controls ++(270:.2cm) and ++(45:.2cm) .. ($ (c) + (-.15,.4) $);
	\draw[thick, blue, dotted] ($ (c) + (-.15,.4) $) -- ($ (c) + (.45,.6) $);	
	\draw[thick, blue] (c)++(.15,1.2) .. controls ++(270:.8cm) and ++(90:.8cm) .. ($ (a1) + (.45,-.08) $);
  \roundNbox{fill=white}{(.3,-.3)}{.3}{.2}{.2}{$h$}
\end{tikzpicture}
\quad=\quad
i^\dagger\circ
\begin{tikzpicture}[baseline=.9cm]
	\coordinate (a1) at (0,0);
	\coordinate (b1) at (0,2);
	\coordinate (c) at (.15,.4);
	\draw[thick] (a1) -- (b1);
	\draw[thick] ($ (a1) + (.6,0) $) -- ($ (b1) + (.6,0) $);
	\draw[thick] (.3,2) ellipse (.3 and .1);
	\draw[thick, blue] ($ (a1) + (.15,-.08) $) -- ($ (a1) + (.15,1) $) arc (180:0:.15cm) -- ($ (a1) + (.45,-.08) $);	
  \roundNbox{fill=white}{(.3,-.3)}{.3}{.2}{.2}{$h$}
\end{tikzpicture}
\end{equation}
By Lemma \ref{lem:IdentifyStatesOnGroundC*Alg}, this is the same as showing that 
\[
\left.\left.\psi_\cC\left(
\begin{tikzpicture}[baseline=.22cm, scale=.67]
	\plane{(0,-.9)}{4.2}{2.7}
	\CMbox{box}{(.23,-.4)}{1.5}{1.4}{.5}{$\,\varepsilon$}
	\fill[white] (-3,.1) rectangle (-1,1.1);
	\straightTubeNoString{(-.1,.2)}{.2}{3}
\pgftransformyshift{17}
\pgftransformyscale{1.3}
\pgftransformxshift{-4}
\pgftransformxscale{2}
\pgftransformrotate{-90}
	\draw[thick, blue] (-.15,-1.07) .. controls ++(90:.4cm) and ++(270:.4cm) .. (.1-.08,-.08);
	\draw[thick, blue] (.15,-1.07) .. controls ++(90:.2cm) and ++(225:.1cm) .. (.3,-.62);		
	\draw[thick, blue] (-.1-.03,-.08) .. controls ++(270:.2cm) and ++(45:.1cm) .. (-.3,-.52);
	\draw[thick, blue, dotted] (.3,-.62) -- (-.3,-.52);	
\pgftransformrotate{90}
\pgftransformxscale{1/2}
\pgftransformxshift{4}
\pgftransformyscale{1/1.3}
\pgftransformyshift{-17}
	\draw[thick] (-1.9,1) -- ++(-.3,0) arc (90:270:.4) -- ++(.3,0);
	\draw [blue,thick] (-.31,.571) arc (-90:90:.098);
	\roundNbox{fill=white}{(-3,.6)}{.6}{.2}{.2}{$\Phi(h)$}
\end{tikzpicture}\,\,\,
\right)
\,\,=\,\,
\psi_\cC\right(
\begin{tikzpicture}[baseline=.22cm, scale=.67]
	\plane{(0,-.9)}{4.2}{2.7}
	\CMbox{box}{(.23,-.4)}{1.5}{1.4}{.5}{$\,\varepsilon$}
	\fill[white] (-3,.1) rectangle (-1,1.1);
	\straightTubeNoString{(-.1,.2)}{.2}{3}
	\draw[thick, blue] (-2.2,.46) -- +(1.5,0)arc (-90:90:.15) -- +(-1.5,0);
	\roundNbox{fill=white}{(-3,.6)}{.6}{.2}{.2}{$\Phi(h)$}
\end{tikzpicture}\,\,\,
\right).
\]
The result follows since
\[
\begin{tikzpicture}[baseline=.22cm, scale=.67]
	\plane{(0,-.9)}{4.2}{2.7}
	\CMbox{box}{(.23,-.4)}{1.5}{1.4}{.5}{$\,\varepsilon$}
	\fill[white] (-3,.1) rectangle (-1,1.1);
	\straightTubeNoString{(-.1,.2)}{.2}{3}
\pgftransformyshift{17}
\pgftransformyscale{1.3}
\pgftransformxshift{-4}
\pgftransformxscale{2}
\pgftransformrotate{-90}
	\draw[thick, blue] (-.15,-1.07) .. controls ++(90:.4cm) and ++(270:.4cm) .. (.1-.08,-.08);
	\draw[thick, blue] (.15,-1.07) .. controls ++(90:.2cm) and ++(225:.1cm) .. (.3,-.62);		
	\draw[thick, blue] (-.1-.03,-.08) .. controls ++(270:.2cm) and ++(45:.1cm) .. (-.3,-.52);
	\draw[thick, blue, dotted] (.3,-.62) -- (-.3,-.52);	
\pgftransformrotate{90}
\pgftransformxscale{1/2}
\pgftransformxshift{4}
\pgftransformyscale{1/1.3}
\pgftransformyshift{-17}
	\draw[thick] (-1.9,1) -- ++(-.3,0) arc (90:270:.4) -- ++(.3,0);
	\draw [blue,thick] (-.31,.571) arc (-90:90:.098);
	\roundNbox{fill=white}{(-3,.6)}{.6}{.2}{.2}{$\Phi(h)$}
\end{tikzpicture}
\!\!\!\!\!\!\!=
\begin{tikzpicture}[baseline=.22cm, scale=.67]
	\plane{(0,-.9)}{4.2}{2.7}
	\CMbox{box}{(.23,-.4)}{1.5}{1.4}{.5}{$\,\varepsilon$}
	\fill[white] (-3,.1) rectangle (-1,1.1);
	\straightTubeNoString{(-.1,.2)}{.2}{3}
\pgftransformyshift{17}
\pgftransformyscale{1.3}
\pgftransformxshift{-4}
\pgftransformxscale{2}
\pgftransformrotate{-90}
	\draw[thick, blue] (-.15,-1.07) .. controls ++(90:.4cm) and ++(270:.4cm) .. (.1-.08,-.08);
	\draw[thick, blue] (.15,-1.07) .. controls ++(90:.2cm) and ++(225:.1cm) .. (.3,-.62);		
	\draw[thick, blue] (-.1-.03,-.08) .. controls ++(270:.2cm) and ++(45:.1cm) .. (-.3,-.52);
	\draw[thick, blue, dotted] (.3,-.62) -- (-.3,-.52);	
\pgftransformrotate{90}
\pgftransformxscale{1/2}
\pgftransformxshift{4}
\pgftransformyscale{1/1.3}
\pgftransformyshift{-17}
	\draw[thick] (-1.9,1) -- ++(-.3,0) arc (90:270:.4) -- ++(.3,0);
	\draw [blue,thick] (-.31,.571) -- (.1,.571);
	\draw [blue,thick] (-.31,.766) -- (.09,.766);
	\roundNbox{fill=white}{(-3,.6)}{.6}{.2}{.2}{$\Phi(h)$}
	\draw[thick, blue] (1.7,-.1) -- (2.4,-.1) arc (-90:90:.15cm) -- (1.7,.2);
\end{tikzpicture}
\!\!\!\!\!\!=
\begin{tikzpicture}[baseline=.133cm, scale=.87]
	\draw[thick, blue] (1.2,-.1) arc (90:-90:.25cm) -- (0,-.6) .. controls ++(180:1cm) and ++(180:1.2cm) .. (-.3,1.15) -- (.9+.05,1.15) .. controls ++(0:1cm) and ++(0:1.2cm) .. (1.2,.1);
	\CMbox{box}{(0,-.4)}{1.2}{1}{.4}{$\varepsilon$}
	\filldraw[line width=3, white] (-.25,0) arc (-90:90:.15 and .3) -- ++(-.8,0) arc (90:270:.3) -- cycle;
	\draw[thick] (-.25,0) arc (-90:90:.15 and .3) -- ++(-1,0) arc (90:270:.3) -- cycle;
	\draw [blue,thick] (-1.2,.4) -- (-.1,.4);
	\draw [blue,thick] (-1.2,.2) -- (-.1,.2);
	\plane{(-.5,-.8)}{4.2}{2.1}
	\roundNbox{fill=white}{(-1.7,.28)}{.47}{.15}{.15}{$\Phi(h)$}
\end{tikzpicture}
\]
and
$$
\psi_\cC\left(
\,\,\,
\begin{tikzpicture}[baseline=.133cm, scale=.87]
	\draw[thick, blue] (1.2,-.1) arc (90:-90:.25cm) -- (0,-.6) .. controls ++(180:1cm) and ++(180:1.2cm) .. (-.3,1.15) -- (.9+.05,1.15) .. controls ++(0:1cm) and ++(0:1.2cm) .. (1.2,.1);
	\CMbox{box}{(0,-.4)}{1.2}{1}{.4}{$\varepsilon$}
	\filldraw[line width=3, white] (-.25,0) arc (-90:90:.15 and .3) -- ++(-.8,0) arc (90:270:.3) -- cycle;
	\draw[thick] (-.25,0) arc (-90:90:.15 and .3) -- ++(-1,0) arc (90:270:.3) -- cycle;
	\draw [blue,thick] (-1.2,.4) -- (-.1,.4);
	\draw [blue,thick] (-1.2,.2) -- (-.1,.2);
	\plane{(-.5,-.8)}{4.2}{2.1}
	\roundNbox{fill=white}{(-1.7,.3)}{.4}{.15}{.15}{$\Phi(h)$}
\end{tikzpicture}
\,\,\,
\right)
\quad
\underset{\text{\scriptsize ($\psi_\cC$ spherical)}}{=}
\quad
\psi_\cC\left(
\begin{tikzpicture}[baseline=.1cm, scale=.9]
	\plane{(0,-.6)}{2.3}{1.6}
	\fill[white] (-1,.4) rectangle (0,.8);
	\draw[thick, blue] (.8,-.1) -- (1.2,-.1) arc (-90:90:.1cm) -- (.8,.1);
	\CMbox{box}{(.1,-.4)}{.85}{.8}{.4}{$\varepsilon$}
	\straightTubeWithCap{(-.1,0)}{.1}{.8}
	\draw[thick, blue] (-.7,.3) -- (0,.3);
	\draw[thick, blue] (-.7,.1) -- (0,.1);
	\roundNbox{fill=white}{(-1.2,.2)}{.4}{.15}{.15}{$\Phi(h)$}
\end{tikzpicture}
\,\,\,
\right).
$$
\end{proof}

\subsection{Unitarity of the traciator}
\label{sec:UnitaryTraciator}

Suppose $(\cC,\vee_\cC,\psi_\cC,\Phi^{\scriptscriptstyle \cZ})$ is a unitary module multitensor category over $\cV$ and $\bTr_\cV:\cC\to \cV$ is the right unitary adjoint of $\Phi: \cV\to \cC$.
Recall that each object $\Phi(v)$ comes with a unitary half-braiding $e_{\Phi(v),x}:\Phi(v)\otimes{x}\,\to\,{x}\otimes \Phi(v)$.
In this section, we explore an extra coherence between these unitary half-braidings and the involutive structure of the categorified trace.

\begin{lem}
\label{lem:HalfBraidingChiCoherence}
We have the following coherence between the half-braiding $e_{\Phi(v),\bullet}$ and $\chi^\Phi_v$:
$$
(\id_{\overline{b}}\otimes \chi^\Phi_v)
\circ
e_{\Phi(\overline{v}),\overline{b}}
=
\big[\nu_{b,\Phi(v)}^{-1}
\circ
\overline{e^{-1}_{\Phi(v),b}}
\circ
\nu_{\Phi(v),b}\big]
\circ
(\chi^\Phi_v\otimes \id_{\overline{b}}).
$$
\end{lem}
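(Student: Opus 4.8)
The plan is to deduce the identity from the fact, recorded just after the definition of a unitary module multitensor category, that $\Phi^{\scriptscriptstyle \cZ}\colon\cV\to\cZ^\dag(\cC)$ carries a canonical bi-involutive braided tensor structure (Remark~\ref{rem:PivotalFunctorCriterion}), together with the observation that $\Phi=\Forget_\cZ\circ\Phi^{\scriptscriptstyle \cZ}$ with $\Forget_\cZ\colon\cZ^\dag(\cC)\to\cC$ canonically bi-involutive lying over the canonical involutive structure of $\cC$. Consequently $\chi^\Phi_v$ is the morphism of $\cC$ underlying $\chi^{\Phi^{\scriptscriptstyle \cZ}}_v\colon\Phi^{\scriptscriptstyle \cZ}(\overline v)\to\overline{\Phi^{\scriptscriptstyle \cZ}(v)}$; in particular, $\chi^{\Phi^{\scriptscriptstyle \cZ}}_v$ being a morphism in the center means it intertwines the half-braiding of $\Phi^{\scriptscriptstyle \cZ}(\overline v)$, which is $e_{\Phi(\overline v),\bullet}$, with that of $\overline{\Phi^{\scriptscriptstyle \cZ}(v)}$. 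Evaluating this intertwining condition at the object $\overline b$ yields precisely the asserted equation, provided we identify the half-braiding of $\overline{\Phi^{\scriptscriptstyle \cZ}(v)}$ at $\overline b$ with $\nu_{b,\Phi(v)}^{-1}\circ\overline{e^{-1}_{\Phi(v),b}}\circ\nu_{\Phi(v),b}$.

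So the real content is that last identification. Writing $(X,\sigma_X):=\Phi^{\scriptscriptstyle \cZ}(v)$, so $\sigma_{X,\bullet}=e_{\Phi(v),\bullet}$, I would argue as follows. By the lemma producing a unitary dual functor on $\cZ^\dag(\cC)$ from one on $\cC$, and the canonical unitary identification $\overline Z\cong Z^\vee$ available in any unitary multitensor category (applied in $\cZ^\dag(\cC)$ and in $\cC$), the underlying object of $\overline{(X,\sigma_X)}$ is $\overline X$ and its half-braiding at $\overline b$ is the transport along $\overline X\cong X^\vee$ of $\sigma_{X^\vee,\overline b}=(\ev_X\otimes\id_{\overline b}\otimes\id_{X^\vee})\circ(\id_{X^\vee}\otimes\sigma_{X,\overline b}^{-1}\otimes\id_{X^\vee})\circ(\id_{X^\vee}\otimes\id_{\overline b}\otimes\coev_X)$. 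Using that $\sigma_{X,\bullet}$ is a half-braiding (naturality, monoidality, $\sigma_{X,1}=\id$), one slides the (co)evaluations of $\cV$-objects past $\sigma_{X,\bullet}^{-1}$ to rewrite this in terms of $\sigma_{X,b}$; then, using the zigzag identities and the fact that under $\overline a\cong a^\vee$ the involutive-monoidal coherence $\nu_{a,b}$ corresponds to the tensorator $\nu^\vee_{a,b}\colon a^\vee\otimes b^\vee\to(b\otimes a)^\vee$ of $\vee$, the expression collapses to $\nu_{b,X}^{-1}\circ\overline{\sigma_{X,b}^{-1}}\circ\nu_{X,b}$, i.e.\ the claimed formula with $X=\Phi(v)$.

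As an alternative that bypasses the center entirely, one can verify the identity directly in $\cC$ by substituting the defining formula for $\chi^\Phi_v$ (the mate of $\Phi(\ev_v)\circ\mu^\Phi$, conjugated by the canonical unitaries $\Phi(\overline v)\cong\Phi(v^\vee)$ and $\overline{\Phi(v)}\cong\Phi(v)^\vee$), expanding both sides in the graphical calculus of \cite{MR3578212}, and reducing via naturality and monoidality of $e_{\Phi(v),\bullet}$ and the zigzags for $\ev_v,\coev_v$. Either way, I expect the main obstacle to be bookkeeping rather than anything conceptual: pinning down the half-braiding on the bar object in $\cZ^\dag(\cC)$ exactly — in particular that $\overline{e^{-1}}$ rather than $\overline e$ occurs, which is forced by $\overline{\,\cdot\,}$ reversing the sense of half-braidings — and reconciling the two instances of $\nu$ with the chosen tensorator conventions.
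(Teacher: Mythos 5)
Your proposal is correct and follows essentially the same route as the paper: the paper's proof also reduces the identity to the fact that $\chi^\Phi_v$ is a morphism in $Z^\dag(\cC)$ (hence intertwines half-braidings), after checking that the bracketed conjugate $\nu_{b,\Phi(v)}^{-1}\circ\overline{e^{-1}_{\Phi(v),b}}\circ\nu_{\Phi(v),b}$ is exactly the half-braiding of $\overline{\Phi(v)}$, using unitarity of $e$ in the form $\overline{e^{-1}_{\Phi(v),b}}=e^{\vee}_{\Phi(v),b}$ and the explicit (co)evaluation formula for $\nu$. The only difference is ordering and the amount of detail you devote to the identification step, which the paper states rather tersely.
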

\begin{proof}
Letting $(\overline{\Phi(v)},e_{\overline{\Phi(v)},\bullet})$ be the conjugate of $(\Phi(v),e_{\Phi(v),\bullet})$ in $Z^\dagger(\cC)$, we have 
$e_{\overline{\Phi(v)},b}=\nu_{b,\Phi(v)}^{-1}
\circ
\overline{e^{-1}_{\Phi(v),b}}
\circ
\nu_{\Phi(v),b}$, so
the right hand side in the statement of the lemma simplifies to
$e_{\overline{\Phi(v)},b}\circ (\chi^\Phi_v \otimes \id_{\overline{b}})$.
(Note that since the half-braiding is unitary, we have $\overline{e^{-1}_{\Phi(v),b}}=e^\vee_{\Phi(v),b}$.)
The result holds since $\chi^\Phi_v: \Phi(\overline{v})\to \overline{\Phi(v)}$ is a morphism in $Z^\dag(\cC)$.
\end{proof}

The following lemma expresses the coherence from Lemma \ref{lem:HalfBraidingChiCoherence} in terms of traciators.

\begin{lem}
\label{lem:BarTraciatorCompatibility}
We have the following compatibility of the tracitator with the involutive structure of $\bTr_\cV$:
\begin{equation}\label{eq: octopus}
\overline{\tau_{x,y}}
\circ 
\chi_{x\otimes y} 
\circ 
\bTr_\cV(\nu_{y,x}) 
=
\chi_{y\otimes x} \circ \bTr_\cV(\nu_{x,y})\circ \tau_{\overline{x},\overline{y}}^{-1}
\end{equation}
\end{lem}
\begin{proof}
We prove this equality after talking inverses on both sides, and after taking mates under the adjunction
$$
\cV\big(\overline{\bTr_\cV(y\otimes x)} \to \bTr_\cV(\overline{y}\otimes \overline{x})\big)
\cong
\cC\big(\Phi(\overline{\bTr_\cV(y\otimes x)})\to \overline{y}\otimes \overline{x}\big).
$$
The mate of the inverse of the left hand side of \eqref{eq: octopus} is
\begin{align*}
\mate(
\bTr_\cV(\nu_{y,x}^{-1}) 
\circ 
\chi_{x\otimes y}^{-1}
\circ 
\overline{\tau_{x,y}^{-1}}
)
&=
\nu_{y,x}^{-1}
\circ
\mate(\chi_{x\otimes y}^{-1}
)
\circ
\Phi(\overline{\tau_{x,y}^{-1}})
\\&=
\nu_{y,x}^{-1}\circ\varepsilon_{\overline{x\otimes y}}\circ \Phi(\chi_{x\otimes y}^{-1})\circ \Phi(\overline{\tau_{x,y}^{-1}})
\\&=
\nu_{y,x}^{-1}
\circ
\overline{\varepsilon_{x\otimes y}}
\circ 
\chi^\Phi_{\bTr_\cV(x\otimes y)}
\circ
\Phi(\overline{\tau_{x,y}^{-1}})
\\&=
\nu_{y,x}^{-1}
\circ
\overline{\varepsilon_{x\otimes y}}
\circ
\overline{\Phi(\tau_{x,y}^{-1})}
\circ 
\chi^\Phi_{\bTr_\cV(y\otimes x)}
\\&=
\nu_{y,x}^{-1}
\circ 
\overline{\varepsilon_{x\otimes y}
\circ 
\Phi(\tau_{x,y}^{-1})}
\circ
\chi^\Phi_{\bTr_\cV(y\otimes x)}
\\&=
\nu_{y,x}^{-1}\circ \overline{\mate(\tau_{x,y}^{-1})} \circ \chi^\Phi_{\bTr_\cV(y\otimes x)}
\end{align*}
where the third equality holds by the last equation in Proposition~\ref{prop:AdjointInvolutive}.

The mate of the inverse of the right hand side of \eqref{eq: octopus} is 
$\mate(\tau_{\overline{x},\overline{y}}) \circ \Phi(\bTr_\cV(\nu_{x,y}^{-1}))\circ \Phi(\chi_{y\otimes x}^{-1})$.
Expanding, we get the map going right and then down in the pasting diagram below.
To save space, we omit tensor symbols and subscripts.
\[
\begin{tikzpicture}[baseline= (a).base]
\node[scale=.75] (a) at (0,0){
\begin{tikzcd}[column sep = 5.5em]
\Phi(\overline{\bTr_\cV(y\cdot x)})
\arrow[rr,"\Phi(\chi^{-1})"]
\arrow[d,"\chi^\Phi"]
\arrow[dr,"\id\cdot \ev^\dag"]
&&
\Phi(\bTr_\cV(\overline{y\cdot x}))
\arrow[r,"\Phi(\bTr_\cV(\nu^{-1}))"]
\arrow[d,"\id\cdot\ev^\dag"]
&
\Phi(\bTr_\cV(\overline{x}\cdot \overline{y}))
\arrow[d,"\id\cdot\ev^\dag"]
\\
\overline{\Phi(\bTr_\cV(y\cdot x))}
\arrow[d,"\id\cdot\ev^\dag"]
&
\Phi(\overline{\bTr_\cV(y\cdot x)}) \cdot \overline{y}\cdot y
\arrow[r,"\Phi(\chi^{-1})\cdot\id"]
\arrow[d,"e\cdot\id"]
\arrow[dl,"\chi^\Phi\cdot\id",swap]
&
\Phi(\bTr_\cV(\overline{y\cdot x})) \cdot \overline{y}\cdot y
\arrow[r,"\Phi(\bTr_\cV(\nu^{-1}))\cdot \id"]
\arrow[d,"e\cdot\id"]
&
\Phi(\bTr_\cV(\overline{x}\cdot \overline{y})) \cdot \overline{y}\cdot y
\arrow[d,"e\cdot\id"]
\\
\overline{\Phi(\bTr_\cV(y\cdot x))} \cdot \overline{y}\cdot y
\arrow[d,"\nu\cdot\id"]
&
\overline{y}\cdot \Phi(\overline{\bTr_\cV(y\cdot x)}) \cdot y
\arrow[r,"\id\cdot \Phi(\chi^{-1})\cdot\id"]
\arrow[dd,"\id\cdot \chi^\Phi \cdot \id"]
\arrow[ddr,phantom,"\text{\scriptsize{Prop.~\ref{prop:AdjointInvolutive}}}"]
&
\overline{y}\cdot \Phi(\bTr_\cV(\overline{y\cdot x})) \cdot y
\arrow[r,"\id\cdot \Phi(\bTr_\cV(\nu^{-1}))\cdot \id"]
\arrow[dd,"\id\cdot \varepsilon \cdot \id"]
&
\overline{y}\cdot \Phi(\bTr_\cV(\overline{x}\cdot \overline{y})) \cdot y
\arrow[d,"\id\cdot \varepsilon \cdot \id"]
\\
\overline{y\cdot \Phi(\bTr_\cV(y\cdot x))}\cdot y
\arrow[d,"\overline{e^{-1}} \cdot \id"]
&&&
\overline{y}\cdot\overline{x}\cdot\overline{y}\cdot y
\arrow[d,"\id\cdot \ev"]
\\
\overline{\Phi(\bTr_\cV(y\cdot x))\cdot y}\cdot y
\arrow[r,"\nu^{-1}\cdot \id"]
\arrow[uur,phantom,"\text{\scriptsize{Lem.~\ref{lem:HalfBraidingChiCoherence}}}"]
&
\overline{y}\cdot \overline{\Phi(\bTr_\cV(y\cdot x))}\cdot y
\arrow[r,"\id\cdot\overline{\varepsilon}\cdot \id"]
&
\overline{y}\cdot\overline{y\cdot x}\cdot y
\arrow[ur,"\id\cdot \nu^{-1}\cdot \id"]
&
\overline{y}\cdot\overline{x}
\end{tikzcd}
};
\end{tikzpicture}
\]
It remains to prove that the map going down and then right in the above pasting diagram is equal to $\nu_{y,x}^{-1}\circ \overline{\mate(\tau_{x,y}^{-1})} \circ \chi^\Phi_{\bTr_\cV(y\otimes x)}$.
Equivalently, we must show that
\begin{align*}
\overline{\mate(\tau_{x,y}^{-1})}
=
\nu_{y,x}
&\circ
(\id_{\overline{y}\otimes \overline{x}}\otimes \ev_y)
\circ
(\id_{\overline{y}}\otimes (\nu_{x,y}^{-1}\circ \overline{\varepsilon_{y\otimes x}})\otimes \id_{y})
\\&\circ
((\nu_{y, \Phi(\bTr_\cV(y\otimes x))}^{-1}
\circ
\overline{e^{-1}_{\Phi(\bTr_\cV(y\otimes x)),y}})\otimes \id_y)
\circ
(\id_{\overline{\Phi(\bTr_\cV(y\otimes x))}}\otimes \ev_y^\dag).
\end{align*}
By the coherences for $(\overline{\,\cdot\,},\nu)$, the above equality is equivalent to
\begin{align*}
\mate(\tau_{x,y}^{-1})
=
(\coev_y^\dag \otimes \id_{x\otimes y})
&\circ
(\id_{\overline{y}}\otimes \varepsilon_{y\otimes x}\otimes \id_y)
\\&\circ
(\id_{\overline{y}}\otimes e^{-1}_{\Phi(\bTr_\cV(y\otimes x)),y})
\circ
(\coev_y \otimes \id_{\Phi(\bTr_\cV(y\otimes x))}),
\end{align*}
which is exactly \cite[Lem.~4.14]{MR3578212}.
\end{proof}

In Jones' planar algebras \cite{MR4374438}, unitarity of the rotation plays a crucial
role. 
Under the equivalence between anchored planar algebras and pivotal module tensor categories,
the analog of rotation is given by the traciator.
The next result says that traciators, and thus rotations in an anchored planar algebra, are unitary assuming sphericality.

\begin{prop}
If the state $\psi_\cC$ is spherical, then the traciator $\tau_{a,b}$ is unitary.
\end{prop}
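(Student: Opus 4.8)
The plan is to exploit that $\tau_{a,b}$ is already known to be a natural isomorphism, so it suffices to prove it is an isometry, i.e. $\langle \tau_{a,b}\circ f,\, \tau_{a,b}\circ g\rangle = \langle f,g\rangle$ for all $v\in\cV$ and all $f,g\in\cV(v\to\bTr_\cV(a\otimes b))$; an invertible isometry in a $\mathrm{C}^*$-category is automatically unitary. The two inner products will be expanded using the identity $\langle f,g\rangle_{v\to\bTr_\cV(c)}=\tr_\cV(fg^\dag)=\coev^\dag_{\bTr_\cV(c)}\circ(f\otimes\overline g)\circ\coev_v$ recorded at the end of the proof of Proposition~\ref{prop:FormulaForEvTr}, and then $\coev^\dag_{\bTr_\cV(c)}$ itself will be replaced by the ``capped pair of pants'' furnished by Proposition~\ref{prop:FormulaForEvTr} (up to the coherence morphism $\chi^{\bTr_\cV}$). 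This turns each side of the desired equality into a picture of strings drawn on a genus-zero surface (a tube with a cap on top) in the strings-on-tubes calculus of \cite{MR3578212}.

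First I would unpack the left-hand side with $c=b\otimes a$. Using conjugate-functoriality, $\overline{\tau_{a,b}\circ g}=\overline{\tau_{a,b}}\circ\overline g$, and then Lemma~\ref{lem:BarTraciatorCompatibility}, which rewrites $\overline{\tau_{a,b}}$ in terms of $\tau_{\overline a,\overline b}^{-1}$ together with the coherence morphisms $\chi^{\bTr_\cV}$ and $\nu$; the stray half-braidings that appear when these coherences are moved past the objects $\Phi(\cdots)$ are absorbed using Lemma~\ref{lem:HalfBraidingChiCoherence}. After this bookkeeping the $\chi^{\bTr_\cV}$'s merge with the one already present in the Proposition~\ref{prop:FormulaForEvTr} formula and the $\nu$'s merge with the multiplication $\mu$ of the pair of pants, so the left-hand side becomes the capped pair of pants with $f$ on one leg and $\overline g$ on the other, but now carrying a ``traciator rotation'' of $a\otimes b$ on the $f$-leg and the oppositely-oriented rotation on the $\overline g$-leg.

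The decisive step is then to invoke sphericality of $\psi_\cC$ through Lemma~\ref{lem:idagAllowsIsotopy}: strings on a tube capped on top may be freely isotoped, so the traciator rotation on the $f$-leg can be dragged around the cap onto the $\overline g$-leg, where it cancels the oppositely-oriented rotation produced above. What remains is exactly the capped pair of pants with $f$ and $\overline g$ on its two legs, i.e. $\langle f,g\rangle_{v\to\bTr_\cV(a\otimes b)}$, which completes the argument. I expect the main obstacle to be precisely this combination of the second and third steps: carefully tracking the coherence data ($\chi^{\bTr_\cV}$, $\nu$, and the half-braidings) introduced by Lemma~\ref{lem:BarTraciatorCompatibility}, and checking that the spherical isotopy of Lemma~\ref{lem:idagAllowsIsotopy} genuinely identifies the two surface diagrams. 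Sphericality is essential here: without it the rotation cannot be moved past the cap, and $\tau_{a,b}$ is not expected to be unitary for a non-spherical state.
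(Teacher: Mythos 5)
Your proposal is correct and matches the paper's argument in all essentials: the paper likewise reduces unitarity to the identity $\tau^\dag\tau=\id$ (equivalent to your isometry formulation, since the inner product is $\coev^\dag\circ(f\otimes\overline g)\circ\coev$), rewrites $\tau^\dag=\overline{\tau}^\vee$ via Lemma~\ref{lem:BarTraciatorCompatibility}, converts $\coev^\dag_{\bTr_\cV(-)}$ into the capped pair of pants via Proposition~\ref{prop:FormulaForEvTr}, and cancels the two oppositely-oriented traciator twists by the spherical isotopy of Lemma~\ref{lem:idagAllowsIsotopy}.
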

\begin{proof}
Since $\tau_{a,b}$ is invertible, it suffices to show that $\tau_{a,b}^\dag \tau_{a,b} = \id_{\bTr_\cV(a\otimes b)}$.
Since $\tau^\dag = \overline{\tau}^\vee$, by Lemma \ref{lem:BarTraciatorCompatibility} (and suppressing various coherences) we have $\tau^\dagger=(\tau^{-1})^\vee$.
Thus:
\begin{align*}
\tau_{a,b}^\dag \tau_{a,b}
&=
\tikzmath{
\draw[thick] (-.3,-1) -- (-.3,1);
\draw[thick] (.3,-1) -- (.3,1);
\draw[thick] (1.2,-2) -- (1.2,1);
\draw[thick] (1.8,-2) -- (1.8,1);
\halfDottedEllipse{(1.2,-2)}{.3}{.1}
\draw[thick] (-1.2,2) -- (-1.2,-1);
\draw[thick] (-1.8,2) -- (-1.8,-1);
\draw[thick] (-1.5,2) ellipse (.3 and .1);
	\draw[thick, red] (.1,-1) .. controls ++(90:.8cm) and ++(270:.8cm) .. (-.1,1);		
	\draw[thick, blue] (-.1,-1) .. controls ++(90:.2cm) and ++(-45:.2cm) .. (-.3,-.2);		
	\draw[thick, blue] (.1,1) .. controls ++(270:.2cm) and ++(135:.2cm) .. (.3,.2);
	\draw[thick, blue, dotted] (-.3,-.2) -- (.3,.2);	
	\draw[thick, red] (1.4,-2.08) -- (1.4,-1) .. controls ++(90:.8cm) and ++(270:.8cm) .. (1.6,1);		
	\draw[thick, blue] (1.6,-2.08) -- (1.6,-1) .. controls ++(90:.2cm) and ++(225:.2cm) .. (1.8,-.2);		
	\draw[thick, blue] (1.4,1) .. controls ++(270:.2cm) and ++(45:.2cm) .. (1.2,.2);
	\draw[thick, blue, dotted] (1.2,.2) -- (1.8,-.2);	
  \draw[thick, blue] (-1.4,1.92) -- (-1.4,-1);		
	\draw[thick, red] (-1.6,1.92) -- (-1.6,-1);
\roundNbox{fill=white}{(.8,1.3)}{.35}{1}{.9}{$\coev^\dag_{\bTr_\cV(b\otimes a)}$}
\roundNbox{fill=white}{(-.8,-1.3)}{.35}{.9}{1}{$\ev^\dag_{\bTr_\cV(a\otimes b)}$}
}
\underset{\text{\scriptsize (Prop.~\ref{prop:FormulaForEvTr})}}{=}
\tikzmath{
\draw[thick] (-.3,-1) -- (-.3,1);
\draw[thick] (.3,-1) -- (.3,1);
\draw[thick] (1.1,-2) -- (1.1,1);
\draw[thick] (1.7,-2) -- (1.7,1);
\halfDottedEllipse{(1.1,-2)}{.3}{.1}
\draw[thick] (-1.2,2) -- (-1.2,-1);
\draw[thick] (-1.8,2) -- (-1.8,-1);
\draw[thick] (-1.5,2) ellipse (.3 and .1);
\draw[thick] (.4,2.5) arc (180:0:.3cm);
	\pairOfPants{(-.3,1)}	
	\draw[thick, red] (.1,-1) .. controls ++(90:.8cm) and ++(270:.8cm) .. (-.1,1);		
	\draw[thick, blue] (-.1,-1) .. controls ++(90:.2cm) and ++(-45:.2cm) .. (-.3,-.2);		
	\draw[thick, blue] (.1,1) .. controls ++(270:.2cm) and ++(135:.2cm) .. (.3,.2);
	\draw[thick, blue, dotted] (-.3,-.2) -- (.3,.2);	
	\draw[thick, red] (1.3,-2.08) -- (1.3,-1) .. controls ++(90:.8cm) and ++(270:.8cm) .. (1.5,1);		
	\draw[thick, blue] (1.5,-2.08) -- (1.5,-1) .. controls ++(90:.2cm) and ++(225:.2cm) .. (1.7,-.2);		
	\draw[thick, blue] (1.3,1) .. controls ++(270:.2cm) and ++(45:.2cm) .. (1.1,.2);
	\draw[thick, blue, dotted] (1.1,.2) -- (1.7,-.2);	
  \draw[thick, red] (-.1,1) .. controls ++(90:1.2cm) and ++(90:1.2cm) .. (1.5,1);
  \draw[thick, blue] (.1,1) .. controls ++(90:1cm) and ++(90:1cm) .. (1.3,1);
  \draw[thick, blue] (-1.4,1.92) -- (-1.4,-1);		
	\draw[thick, red] (-1.6,1.92) -- (-1.6,-1);
\roundNbox{fill=white}{(-.8,-1.3)}{.35}{.9}{1}{$\ev^\dag_{\bTr_\cV(a\otimes b)}$}
}
\displaybreak[1]\\&
\underset{\text{\scriptsize (Lem.~\ref{lem:idagAllowsIsotopy})}}{=}
\tikzmath{
\draw[thick] (-.3,-1) -- (-.3,1);
\draw[thick] (.3,-1) -- (.3,1);
\draw[thick] (1.1,-2) -- (1.1,1);
\draw[thick] (1.7,-2) -- (1.7,1);
\halfDottedEllipse{(1.1,-2)}{.3}{.1}
\draw[thick] (-1.2,2) -- (-1.2,-1);
\draw[thick] (-1.8,2) -- (-1.8,-1);
\draw[thick] (-1.5,2) ellipse (.3 and .1);
\draw[thick] (.4,2.5) arc (180:0:.3cm);
	\pairOfPants{(-.3,1)}	
  \draw[thick, blue] (-.1,1) .. controls ++(90:1.2cm) and ++(90:1.2cm) .. (1.5,1);
  \draw[thick, red] (.1,1) .. controls ++(90:1cm) and ++(90:1cm) .. (1.3,1);
  \draw[thick, red] (.1,-1) -- (.1,1);		
	\draw[thick, blue] (-.1,-1) -- (-.1,1);		
	\draw[thick, red] (1.3,-2.08) -- (1.3,1);
  \draw[thick, blue] (1.5,-2.08) -- (1.5,1);
  \draw[thick, blue] (-1.4,1.92) -- (-1.4,-1);		
	\draw[thick, red] (-1.6,1.92) -- (-1.6,-1);
\roundNbox{fill=white}{(-.8,-1.3)}{.35}{.9}{1}{$\ev^\dag_{\bTr_\cV(a\otimes b)}$}
}
\underset{\text{\scriptsize (Prop.~\ref{prop:FormulaForEvTr})}}{=}
\tikzmath{
\draw[thick] (-.3,-1) -- (-.3,1);
\draw[thick] (.3,-1) -- (.3,1);
\draw[thick] (1.2,-2) -- (1.2,1);
\draw[thick] (1.8,-2) -- (1.8,1);
\halfDottedEllipse{(1.2,-2)}{.3}{.1}
\draw[thick] (-1.2,2) -- (-1.2,-1);
\draw[thick] (-1.8,2) -- (-1.8,-1);
\draw[thick] (-1.5,2) ellipse (.3 and .1);
  \draw[thick, red] (.1,-1) -- (.1,1);		
	\draw[thick, blue] (-.1,-1) -- (-.1,1);		
	\draw[thick, red] (1.4,-2.08) -- (1.4,1);
  \draw[thick, blue] (1.6,-2.08) -- (1.6,1);
  \draw[thick, blue] (-1.4,1.92) -- (-1.4,-1);		
	\draw[thick, red] (-1.6,1.92) -- (-1.6,-1);
\roundNbox{fill=white}{(.8,1.3)}{.35}{1}{.9}{$\coev^\dag_{\bTr_\cV(a\otimes b)}$}
\roundNbox{fill=white}{(-.8,-1.3)}{.35}{.9}{1}{$\ev^\dag_{\bTr_\cV(a\otimes b)}$}
}
=
\tikzmath{
\halfDottedEllipse{(-.3,-2)}{.3}{.1}
\draw[thick] (-.3,2) -- (-.3,-2);
\draw[thick] (.3,2) -- (.3,-2);
\draw[thick] (0,2) ellipse (.3 and .1);
  \draw[thick, red] (-.1,1.92) -- (-.1,-2.08);		
	\draw[thick, blue] (.1,1.92) -- (.1,-2.08);
}\,\,.
\qedhere
\end{align*}
\end{proof}


\section{Unitary anchored planar algebras}
\label{sec:UAPA}

\subsection{Planar algebras}\label{sec: Planar algebras}

We present here an abridged introduction to planar tangles, the planar operad, and planar algebras, and refer the reader to \cite[\S2.1]{MR4528312} for an extended treatment.

An \emph{planar tangle} consists of a collection of (round) parametrized disc $D_1,\ldots,D_n$ inside some biger parametrised disc $D_0$, along with a collection of non-intersecting paths in $D_0\setminus(D_1\cup\ldots\cup D_k)$ called \emph{strings} that start and end at the boundary of one of the circles, or are themselves closed loops. 
Here, a disc $D_i$ is called \emph{parametrised} if $D_i=\varphi_i(\bbD)$ for some chosen affine linear maps $\varphi_i:\bbD\to D_i$ from the standard unit disc.
The points $\varphi_i(1)\in\partial D_i$ are called \emph{anchor points}, and the strands are required to not start or end on the anchor points.
Here is an example of a planar tangle, where we have marked the anchor points in red:
\[
\begin{tikzpicture}[baseline =-.1cm]
\pgftransformyscale{-1}
	\coordinate (a) at (0,0);
	\coordinate (b) at ($ (a) + (1.4,1) $);
	\coordinate (c) at ($ (a) + (.6,-.6) $);
	\coordinate (d) at ($ (a) + (-.6,.6) $);
	\coordinate (e) at ($ (a) + (-.8,-.6) $);
	\ncircle{}{(a)}{1.6}{89}{}
%
%
	\draw (60:1.6cm) arc (150:300:.4cm);
	\draw ($ (c) + (0,.4) $) arc (0:90:.8cm);
	\draw ($ (c) + (-.4,0) $) circle (.25cm);
	\draw ($ (d) + (0,.88) $) -- (d) -- ($ (d) + (-.88,0) $);
	\draw ($ (c) + (0,-.88) $) -- (c) -- ($ (c) + (.88,0) $);
	\draw (e) circle (.25cm);
	\ncircle{unshaded}{(d)}{.4}{235}{}
	\ncircle{unshaded}{(c)}{.4}{225+180}{}
	\node[blue] at (c) {\small 2};
	\node[blue] at (d) {\small 1};
\end{tikzpicture}
\]

There is a composition operation for planar tangles
$$
S\circ_i T := S \cup \phi_i(T)
$$
when the number of external string boundary points of $T$ agrees with the number of internal string boundary points in the $i$-th disk of $S$.
We shrink the tangle $T$, insert it into the $i$-th input disk of $S$ using the map $\phi_i$, and match up the boundary points of the strings.
For an explicit example, see \cite[Ex.~2.2]{MR4528312}. 

The collection of isotopy classes of planar tangles with the operation of tangle composition is called the \emph{planar operad}.
A \emph{planar algebra} is an algebra for this operad.
Unpacking, we have a vector space $\cP[n]$ for each $n\in \bbN_{\geq 0}$,
and each (isotopy class of) planar tangle $T$ gives a linear map 
$$
Z(T): \cP[n_1]\otimes \cdots \otimes \cP[n_k]\longrightarrow \cP[n_0].
$$
Here, $n_i$ for $1\leq i\leq k$ is the number of boundary points of strings on the $i$-th input disk of $T$, and $n_0$ is the number of boundary points of strings on the output disk of $T$.
For future convenience, we abbreviate these properties by saying $T$ has \emph{type} $(n_1,\dots, n_k; n_0)$.

Given planar tangles $S,T$ of types $(m_1,\dots,m_j;m_0), (n_1,\dots, n_k;n_0)$ respectively with $n_0=m_i$, composition of the linear maps $Z(S),Z(T)$ must be compatible with composition of planar tangles:
$$
Z(S\circ_i T)
=
Z(S)\circ (\id_{\cP[m_1]\otimes\ldots\otimes \cP[m_{i-1}]}\otimes Z(T)\otimes\id_{\cP[m_{i+1}]\otimes\ldots\otimes \cP[m_j]}).
$$
We also require that the identity tangle must act as the identity linear map.
Finally, letting $T^\sigma$ be the tangle obtained by renumbering the input disks of $T$ by some permutation $\sigma$,
we should have $Z(T^\sigma)=Z(T)\circ \sigma$ (where we also use $\sigma$ to denote the symmetric braiding of the vector spaces $\cP[n_i]$ corresponding to the permutation $\sigma$).

\begin{rem}
The notion of a planar algebra makes sense in any symmetric tensor category.
\end{rem}

\subsection{Anchored planar algebras}
\label{sec:APA}

We present here an abridged introduction to the anchored planar operad and anchored planar algebras, and we refer the reader to \cite[\S2.2]{MR4528312} for an extended treatment.
An \emph{anchored planar tangle} is a thing like this:
\[
\begin{tikzpicture}[baseline =-.1cm]
\pgftransformyscale{-1}
	\coordinate (a) at (0,0);
	\coordinate (b) at ($ (a) + (1.4,1) $);
	\coordinate (c) at ($ (a) + (.6,-.6) $);
	\coordinate (d) at ($ (a) + (-.6,.6) $);
	\coordinate (e) at ($ (a) + (-.8,-.6) $);
	\ncircle{}{(a)}{1.6}{89}{}
	\draw[thick, red] (d) .. controls ++(225:1.2cm) and ++(275:2.6cm) .. ($ (a) + (89:1.6) $);
	\draw[thick, red] ($ (a) + (89:1.6) $) to[out=-60, in=40] (.9,-.3);
	\draw (60:1.6cm) arc (150:300:.4cm);
	\draw ($ (c) + (0,.4) $) arc (0:90:.8cm);
	\draw ($ (c) + (-.4,0) $) circle (.25cm);
	\draw ($ (d) + (0,.88) $) -- (d) -- ($ (d) + (-.88,0) $);
	\draw ($ (c) + (0,-.88) $) -- (c) -- ($ (c) + (.88,0) $);
	\draw (e) circle (.25cm);
	\ncircle{unshaded}{(d)}{.4}{235}{}
	\ncircle{unshaded}{(c)}{.4}{225+180}{}
	\node[blue] at (c) {\small 2};
	\node[blue] at (d) {\small 1};
\end{tikzpicture}\,.
\]
It differs from a planar tangle in that each internal anchor point is connected to the external anchor point by a red \emph{anchor line} which stays in $D_0\setminus \{D_1,\ldots, D_k\}$.
These anchor lines are transparent to the strings, but they may not intersect each other.

Like before, we say an anchored planar tangle $T$ has \emph{type} $(n_1,\dots, n_k; n_0)$
if $T$ has $k$ input disks, the $i$-th input disk of $T$ meets $n_i$ string boundary points for $i\leq 1\leq k$, and the output disk of $T$ meets $n_0$ string boundary points.

The \emph{anchored planar operad} is the collection of isotopy classes of anchored planar tangles, with the operation of tangle composition.
Here, given anchored planar tangles $S,T$ of type $(m_1,\dots,m_j;m_0), (n_1,\dots, n_k;n_0)$ respectively with $n_0=m_i$, the composition operation $S\circ_i T$ differs from the previous operation in that we must also give new anchor lines.
We do so by replacing the $i$-th anchor line of $T$ by $k$ parallel lines, and composing each one of them with the $i$-th anchor line of $S$, 
as illustrated in the following picture:
\begin{equation}
\label{eq:AnchoredPlanarTangleComposition}
\begin{tikzpicture}[baseline =-.1cm]
\pgftransformyscale{-1}
	\coordinate (a) at (0,0);
	\coordinate (b) at ($ (a) + (1.4,1) $);
	\coordinate (c) at ($ (a) + (.6,-.6) $);
	\coordinate (d) at ($ (a) + (-.6,.6) $);
	\coordinate (e) at ($ (a) + (-.8,-.6) $);
	\ncircle{}{(a)}{1.6}{89}{}
	\draw[thick, red] (d) .. controls ++(225:1.2cm) and ++(275:2.6cm) .. ($ (a) + (89:1.6) $);
	\draw[thick, red] ($ (a) + (89:1.6) $) to[out=-60, in=40] (.9,-.3);
	\draw (60:1.6cm) arc (150:300:.4cm);
	\draw ($ (c) + (0,.4) $) arc (0:90:.8cm);
	\draw ($ (c) + (-.4,0) $) circle (.25cm);
	\draw ($ (d) + (0,.88) $) -- (d) -- ($ (d) + (-.88,0) $);
	\draw ($ (c) + (0,-.88) $) -- (c) -- ($ (c) + (.88,0) $);
	\draw (e) circle (.25cm);
	\ncircle{unshaded}{(d)}{.4}{235}{}
	\ncircle{unshaded}{(c)}{.4}{225+180}{}
	\node[blue] at (c) {\small 2};
	\node[blue] at (d) {\small 1};
\pgftransformyscale{-1}
\foreach \x/\s in {2.1/25, 3.1/22, 3.85/21, 4.55/20, 5.25/21, 5.9/23, 6.5/25, 7/26, 7.5/26, 8/27, 8.6/27}
{\draw[dotted, blue!50] ($(4,0)+(70+14.5*\x:1.615)$) to[bend right=\s] ($(.6,.6) + (130+14.5*\x:.415)$);}
\foreach \x/\s in {0/30,10/28}
{\draw[blue!50, very thin] ($(4,0)+(70+14.5*\x:1.615)$) to[bend right=\s] ($(.6,.6) + (130+14.5*\x:.415)$);}
\draw[blue!50, -stealth] (2.2,.855) to[bend right=7] (1.8,.865);
\pgftransformyscale{-1}
\pgftransformyshift{-1.6}
\pgftransformxshift{17}
\pgftransformscale{4.01}
\pgftransformrotate{46}
	\draw[very thick] (.6,-.6) circle (.4);
	\filldraw[red] (.6,-.6) + (225+178:.4) coordinate(CIRC1) circle (.012); 
	\draw[very thick] (.6,-.75) circle (.1);
	\filldraw[red] (.6,-.75) + (-40:.1) coordinate(CIRC2) circle (.012); 
	\draw[very thick] (.6,-.45) circle (.1);
	\filldraw[red] (.6,-.45) + (-40:.1) coordinate(CIRC3) circle (.012); 
	\draw[thick, red] (CIRC1) to[out=-150, in=-30] ++(180:.1) arc(56:275:.33) to[out=10, in=-30, looseness=1.6] (CIRC2);
	\draw[thick, red] (CIRC1) to[out=-110, in=-30, looseness=1.2] (CIRC3);
	\draw (.6,-1.005) -- (.6,-.85) (.6,-.2+.005) -- (.6,-.35);
	\draw (.68,-.69) to[in=180, out=45] (1.005,-.6);
	\draw (.275,-.839) to[in=180, out=35] (.5,-.75);
	\draw (.275,-.6+0.239) to[in=180, out=-35] (.5,-.45);
	\node[blue] at (.6,-.75) {\small 1};
	\node[blue] at (.6,-.45) {\small 2};
\end{tikzpicture}
=
\begin{tikzpicture}[baseline =-.1cm, yscale = -1]
	\coordinate (a) at (0,0);
	\coordinate (b) at ($ (a) + (1.4,1) $);
	\coordinate (c) at ($ (a) + (.6,-.6) $);
	\coordinate (d) at ($ (a) + (-.6,.6) $);
	\coordinate (e) at ($ (a) + (-.8,-.6) $);
	\ncircle{}{(a)}{1.6}{89}{}
	\draw[thick, red] (d) .. controls ++(225:1.2cm) and ++(275:2.6cm) .. ($ (a) + (89:1.6) $);
	\draw[thick, red] ($ (a) + (89:1.6) $) to[out=-60, in=40] (.9,-.3) coordinate(CIRC1);
	\draw[thick, red] ($ (a) + (89:1.6) $) to[out=-47, in=39] (.96,-.36) coordinate(CIRC1');
	\draw (60:1.6cm) arc (150:300:.4cm);
	\draw ($ (c) + (0,.4) $) arc (0:90:.8cm);
	\draw ($ (c) + (-.4,0) $) circle (.25cm);
	\draw ($ (d) + (0,.88) $) -- (d) -- ($ (d) + (-.88,0) $);
	\draw ($ (c) + (0,-.88) $) -- (c) -- ($ (c) + (.88,0) $);
	\draw (e) circle (.25cm);
	\ncircle{unshaded}{(d)}{.4}{235}{}
	\fill[white] (c) circle (.4);
	\ncircle{unshaded}{(.6,-.75)}{.1}{-45}{}
	\ncircle{unshaded}{(.6,-.45)}{.1}{-45}{}
	\draw (.6,-1.005) -- (.6,-.85) (.6,-.2+.005) -- (.6,-.35);
	\draw (.68,-.69) to[in=180, out=45] (1.005,-.6);
	\draw (.275,-.839) to[in=180, out=25] (.5,-.75);
	\draw (.275,-.6+0.239) to[in=180, out=-25] (.5,-.45);
	\node[blue, xshift=8, yshift=3] at (.6,-.75) {\small 2};
	\node[blue, xshift=-6, yshift=-9] at (.6,-.45) {\small 3};
	\node[blue] at (d) {\small 1};
	\path (.6,-.75) + (-40:.1) coordinate(CIRC2); 
	\path (.6,-.45) + (-40:.1) coordinate(CIRC3); 
	\draw[thick, red] (CIRC1) to[out=-150, in=-30] ++(180:.1) arc(59:275:.35) to[out=10, in=-30, looseness=1.6] (CIRC2);
	\draw[thick, red] (CIRC1') to[out=-140, in=-40] (CIRC3);
\end{tikzpicture}
\end{equation}

Recall from \cite[Appendix A.2]{MR3578212} that a braided pivotal tensor category is also rigid balanced.\footnote{It is in fact so in two ways.}
Hence our pivotal structure $\varphi^\cV$ induces a twist by
\begin{equation}
\label{eq:CanonicalUnitaryTwistFromPivotalStructure}
\theta_{v}
:= 
\begin{tikzpicture}[rotate=180, baseline=.35cm]
	\draw (0,-1.6) -- (0,.8);
  \coordinate (a) at (0,-1);
  \fill[unshaded] ($ (a) - (.3,.3) $) rectangle ($ (a) + (.1,.3) $);
	\draw ($(a) + (-.3,.2) $) arc (90:270:.2cm);
	\draw ($ (a) + (0,-.3) $)  .. controls ++(90:.2cm) and ++(0:.2cm) .. ($ (a) + (-.3,.2) $);
	\draw[super thick, white] ($ (a) + (0,.3) $)  .. controls ++(270:.2cm) and ++(0:.2cm) .. ($ (a) + (-.3,-.2) $);
	\draw ($ (a) + (0,.3) $)  .. controls ++(270:.2cm) and ++(0:.2cm) .. ($ (a) + (-.3,-.2) $);
	\roundNbox{unshaded}{(0,0)}{.35}{0}{0}{$\varphi_v$}	
	\node at (-.2+.4,.6) {\scriptsize{$v$}};
	\node at (-.2+.4,-1.4) {\scriptsize{$v$}};
	\node at (-.3+.6,-.55) {\scriptsize{$v^{\vee\vee}$}};
\end{tikzpicture}
=
(\id_v\otimes \ev_{v^{\vee}})
\circ
(\beta_{v^{\vee\vee},v}\otimes\id_{v^{\vee}})
\circ
(\id_{v^{\vee\vee}}\otimes \coev_v)
\circ
\varphi_v.
\end{equation}

\noindent
(By \cite[Prop.~A.4]{MR3578212}, a braided unitary tensor category $(\cV,\beta,\vee)$ is ribbon if and only if the $\varphi$ induced by $\vee$ is a spherical structure.)

\begin{defn}
\label{def: anchored planar algebra}
Let $(\cV,\beta,\varphi^\cV)$ be a braided pivotal tensor category.
An anchored planar algebra over $\cV$ is an algebra in $\cV$ over the anchored planar operad.
Unpacking, we have a sequence $\cP=(\cP[n])_{n\ge 0}$ of objects of $\cV$, along with operations
$$Z(T):\cP[n_1]\otimes\ldots\otimes \cP[n_k]\to \cP[n_0]$$
for every isotopy class of anchored planar tangle $T$ of type $(n_1,\ldots,n_k;n_0)$,
subject to the following axioms:
\begin{itemize}
\item
(identity) the identity anchored tangle acts as the identity morphism
\item
(composition) if $S$ and $T$ are anchored planar tangles of type $(m_1,\ldots,m_j;m_0)$ and $(n_1,\ldots,n_k;n_0)$, and if $n_0=m_i$, then
\begin{equation}\label{eq: composition of tangles}
Z(S\circ_i T)=Z(S)\circ (\id_{\cP[m_1]\otimes\ldots\otimes \cP[m_{i-1}]}\otimes Z(T)\otimes\id_{\cP[m_{i+1}]\otimes\ldots\otimes \cP[m_j]})
\end{equation}
\item
(anchor dependence) the following relations hold:
\begin{itemize}
\item
(braiding)\hspace{.7cm}
$
Z\left(
\begin{tikzpicture}[baseline = -.1cm]
	\draw (-.6,-.2) -- (-.6,1);
	\draw (0,1) -- (0,.6);
	\draw (.6,-.2) -- (.6,1);
	\draw[thick, red] (0,-.6) -- (0,-1);
	\draw[thick, red] (0,.2) arc (0:-90:.2cm) -- (-.6,0) arc (90:270:.4cm) -- (-.2,-.8) arc (90:0:.2cm);
	\roundNbox{}{(0,0)}{1}{.2}{.2}{}
	\roundNbox{unshaded}{(0,-.4)}{.2}{.6}{.6}{}
	\roundNbox{unshaded}{(0,.4)}{.2}{.2}{.2}{}
	\node at (-.8,.8) {\scriptsize{$i$}};
	\node at (-.2,.8) {\scriptsize{$j$}};
	\node at (.8,.8) {\scriptsize{$k$}};
	\fill[red] (0,.2) circle (.05)  (0,-.6) circle (.05)  (0,-1) circle (.05);
\end{tikzpicture}
\right)
=
Z
\left(
\begin{tikzpicture}[baseline = -.1cm]
	\draw (-.6,-.2) -- (-.6,1);
	\draw (0,1) -- (0,.6);
	\draw (.6,-.2) -- (.6,1);
	\draw[thick, red] (0,-.6) -- (0,-1);
	\draw[thick, red] (0,.2) arc (180:270:.2cm) -- (.6,0) arc (90:-90:.4cm) -- (.2,-.8) arc (90:180:.2cm);
	\roundNbox{}{(0,0)}{1}{.2}{.2}{}
	\roundNbox{unshaded}{(0,-.4)}{.2}{.6}{.6}{}
	\roundNbox{unshaded}{(0,.4)}{.2}{.2}{.2}{}
	\node at (-.8,.8) {\scriptsize{$i$}};
	\node at (-.2,.8) {\scriptsize{$j$}};
	\node at (.8,.8) {\scriptsize{$k$}};
	\fill[red] (0,.2) circle (.05)  (0,-.6) circle (.05)  (0,-1) circle (.05);
\end{tikzpicture}
\right)
\,
\circ \beta_{\cP[j],\cP[i+k]}
$
\item
(twist)\hspace{1.3cm}
$
Z\left(
\begin{tikzpicture}[baseline=-.1cm]
	\ncircle{unshaded}{(0,0)}{1}{270}{}
	\ncircle{unshaded}{(0,0)}{.3}{270}{}
	\draw (90:.3cm) -- (90:1cm);
	\draw[thick, red] (-90:.3cm) .. controls ++(270:.3cm) and ++(270:.5cm) .. (0:.5cm) .. controls ++(90:.8cm) and ++(90:.8cm) .. (180:.7cm) .. controls ++(270:.6cm) and ++(90:.4cm) .. (270:1cm);
	\node at (100:.8cm) {\scriptsize{$n$}};
\end{tikzpicture}
\right)
=\theta_{\cP[n]}
$.
\end{itemize}
\end{itemize}
(Here, a little number $n$ next to a string to indicates $n$ parallel strings.)
We call $\cP[n]$ the $n$\textsuperscript{th} \emph{box object} of the anchored planar algebra $\cP$.
\end{defn}

\begin{nota}
In the sections below, 
we use the notation of an anchored planar tangle $T$ inserted into a coupon to denote the map $Z(T)$ in $\cV$ afforded by an anchored planar algebra.
The strings in these diagrams are usually drawn horizontally for convenience, and we read them \emph{left to right}.
For example:
$$
\tikzmath{
	\coordinate (a) at (0,0);
	\pgfmathsetmacro{\boxWidth}{1};
	\roundNbox{unshaded}{(-2.3,.5)}{.3}{0}{0}{$g$}
	\roundNbox{unshaded}{(-2.3,-.5)}{.3}{0}{0}{$f$}
	\draw[rounded corners=5pt, very thick, unshaded] ($ (a) - (\boxWidth,\boxWidth) $) rectangle ($ (a) + (\boxWidth,\boxWidth) $);
	\draw ($ (a) + 5/6*(0,1) $) -- ($ (a) - 5/6*(0,\boxWidth) $);
	\draw[thick, red] ($ (a) + 1/3*(0,\boxWidth) - 1/5*(\boxWidth,0) $) -- ($ (a) - 2/3*(\boxWidth,0) $);
	\draw[thick, red] ($ (a) - 1/3*(0,\boxWidth) - 1/5*(\boxWidth,0) $) -- ($ (a) - 2/3*(\boxWidth,0) $);
	\draw[very thick] (a) ellipse ({2/3*\boxWidth} and {5/6*\boxWidth});
	\filldraw[very thick, unshaded] ($ (a) + 1/3*(0,\boxWidth) $) circle (1/5*\boxWidth);
	\filldraw[very thick, unshaded] ($ (a) - 1/3*(0,\boxWidth) $) circle (1/5*\boxWidth);
	\node at ($ (a) + (.2,0) $) {\scriptsize{$n$}};
	\node at ($ (a) + (.2,-.65) $) {\scriptsize{$k$}};
	\node at ($ (a) + (.2,.65) $) {\scriptsize{$p$}};
\draw (-2.3,-1.2) --node[left]{$\scriptstyle u$} (-2.3,-.8);
\draw (-2.3,-.2) --node[left]{$\scriptstyle v$} (-2.3,.2);
\draw (-2.3,1.2) --node[left]{$\scriptstyle w$} (-2.3,.8);
\draw (-1,.5) --node[above]{$\scriptstyle \cP[p+n]$} (-2,.5);
\draw (-1,-.5) --node[above]{$\scriptstyle \cP[n+k]$} (-2,-.5);
\draw (1,0) --node[above]{$\scriptstyle \cP[p+k]$} (2,0);
}
=
(\id_w\otimes Z(T))\circ(g\otimes \id_{\cP[n+k]})\circ f
$$
where $T$ is the tangle in the coupon.
\end{nota}

\begin{defn}
A morphism $H: \cP_1\to \cP_2$ of anchored planar algebras is 
a sequence of morphisms $(H[n]:\cP_1[n]\to \cP_2[n])_{n\geq 0}$ such that for every anchored planar tangle $T$ of type $(n_1,\dots, n_k;n_0)$,
$H[n_0]\circ Z(T) = Z(T)\circ (H[n_1]\otimes \cdots \otimes H[n_k])$.
\end{defn}

\subsection{Unitary anchored planar algebras}\label{subsec:UAPA}

Let $(\cV,\vee,\beta)$ be a unitary tensor category equipped with a chosen unitary dual functor $\vee$ and a 
unitary braiding $\beta = \{\beta_{u,v}: u\otimes v \to v\otimes u\}_{u,v\in \cV}$ satisfying the usual axioms.
Using the canonical pivotal structure \eqref{eq:CanonicalUnitaryPivotalStructure} induced by $\vee$, 
the formula \eqref{eq:CanonicalUnitaryTwistFromPivotalStructure} for the twist simplifies to
\[
\theta_{v}
= 
\begin{tikzpicture}[rotate=180, baseline=.35cm]
	\draw (0,-1.6) -- (0,.8);
  \coordinate (a) at (0,-1);
  \fill[unshaded] ($ (a) - (.3,.3) $) rectangle ($ (a) + (.1,.3) $);
	\draw ($(a) + (-.3,.2) $) arc (90:270:.2cm);
	\draw ($ (a) + (0,-.3) $)  .. controls ++(90:.2cm) and ++(0:.2cm) .. ($ (a) + (-.3,.2) $);
	\draw[super thick, white] ($ (a) + (0,.3) $)  .. controls ++(270:.2cm) and ++(0:.2cm) .. ($ (a) + (-.3,-.2) $);
	\draw ($ (a) + (0,.3) $)  .. controls ++(270:.2cm) and ++(0:.2cm) .. ($ (a) + (-.3,-.2) $);
	\roundNbox{unshaded}{(0,0)}{.35}{0}{0}{$\varphi_v$}	
	\node at (-.2+.4,.6) {\scriptsize{$v$}};
	\node at (-.2+.4,-1.4) {\scriptsize{$v$}};
	\node at (-.3+.6,-.55) {\scriptsize{$v^{\vee\vee}$}};
\end{tikzpicture}
\underset{\eqref{eq:CanonicalUnitaryPivotalStructure}}{=}
(\id_v \otimes \coev_v^\dag)
\circ
(\beta_{v,v}\otimes \id_{v^\vee})
\circ
(\id_v \otimes \coev_v).
\]



\begin{defn}\label{Def : unitary anchored planar algebra}
A \emph{unitary anchored planar algebra} in $\cV$ is a triple $(\cP,r,\psi_\cP)$ consisting of
an anchored planar algebra $\cP$ in $\cV$,
a $\dag$-\emph{structure} $r$, which is a family of real structures $r_n : \cP[n] \rightharpoonup \cP[n]$ for each $n\geq 0$,
and a morphism $\psi_\cP: \cP[0] \to 1_\cV$ called a \emph{faithful state}
satisfying 
$
\tikzmath{
\draw (.3,0) --node[above]{$\scriptstyle \cP[0]$} (1,0);
\draw[dotted] (1.6,0) --node[above]{$\scriptstyle 1_\cV$} (2.1,0);
\draw[dotted] (-.8,0) --node[above]{$\scriptstyle 1_\cV$} (-.3,0);
\roundNbox{unshaded}{(0,0)}{.3}{0}{0}{}
\draw[very thick] (0,0) circle (.2cm);
\filldraw[red] (-.2,0) circle (.05cm);
\roundNbox{unshaded}{(1.3,0)}{.3}{0}{0}{$\psi_\cP$}
}
=
\id_{1_\cV}
$ 
such that:
\begin{enumerate}[label=(P\arabic*)]
\item
\label{P:Reflection}
For every anchored planar tangle $T$ of type $(n_1,\dots, n_k; n_0)$,
$$ 
\overline{Z(T)}
\circ\nu^{(k)}\circ 
(r_{n_k}\otimes \cdots \otimes r_{n_1})
=
r_{n_0}\circ 
Z(\overline{T}),
$$
where $\overline{T}$ is the reflection of $T$, and
$\nu^{(k)}:
(\overline{\,\cdot\,}\otimes\ldots\otimes\overline{\,\cdot\,})\to \overline{(\,\cdot\,\otimes\cdots\otimes\,\cdot\,)}
$ is an appropriate composite of $\nu$'s.
When $k=0$, we have $\overline{Z(T)}\circ r_\cV = r_{k_0}\circ Z(\overline{T})$ where $r_\cV: 1_\cV \to \overline{1_\cV}$ is the real structure of $1_\cV$.
(Suppressing the $\dag$-structure $r$, this axiom reads $Z(\overline{T}) = \overline{Z(T)}$.)
\item
\label{P:PositiveDefinite}
For every $n\geq 0$, we have an equality of pairings
\begin{equation}
\label{eq:PlanarPairing}
\tikzmath{
	\coordinate (a) at (0,0);
	\pgfmathsetmacro{\boxWidth}{1};
	\draw (-3.2,-.5) -- (-1,-.5);
	\draw (-3.2,.5) -- (-1,.5);
	\draw (1,0) -- (2,0);
	\draw[dotted] (2,0) -- (3,0);
	\roundNbox{unshaded}{(-2.1,-.5)}{.3}{.1}{.1}{$r_n^{-1}$}
	\roundNbox{unshaded}{(2,0)}{.3}{0}{0}{$\psi_\cP$}
	\draw[rounded corners=5pt, very thick, unshaded] ($ (a) - (\boxWidth,\boxWidth) $) rectangle ($ (a) + (\boxWidth,\boxWidth) $);
	\draw ($ (a) + 1/3*(0,1) $) -- ($ (a) - 1/3*(0,\boxWidth) $);
	\draw[thick, red] ($ (a) + 1/3*(0,\boxWidth) - 1/5*(\boxWidth,0) $) -- ($ (a) - 2/3*(\boxWidth,0) $);
	\draw[thick, red] ($ (a) - 1/3*(0,\boxWidth) - 1/5*(\boxWidth,0) $) -- ($ (a) - 2/3*(\boxWidth,0) $);
	\draw[very thick] (a) ellipse ({2/3*\boxWidth} and {5/6*\boxWidth});
	\filldraw[very thick, unshaded] ($ (a) + 1/3*(0,\boxWidth) $) circle (1/5*\boxWidth);
	\filldraw[very thick, unshaded] ($ (a) - 1/3*(0,\boxWidth) $) circle (1/5*\boxWidth);
	\node at ($ (a) + (.2,0) $) {\scriptsize{$n$}};
	\node at (-2.85,-.3) {\scriptsize{$\overline{\cP[n]}$}};
	\node at (-1.35,-.3) {\scriptsize{$\cP[n]$}};
	\node at (-2,.7) {\scriptsize{$\cP[n]$}};
	\node at (1.35,.2) {\scriptsize{$\cP[0]$}};
	\node at (2.65,.2) {\scriptsize{$1_\cV$}};
}
=
\coev^\dag_{\cP[n]}.
\end{equation}
\end{enumerate}
If $(\cV,\vee,\beta)$ is moreover ribbon (so that $\vee$ is spherical), 
we say that a unitary anchored planar algebra $(\cP,r,\psi_\cP)$ is \emph{spherical} if 
$$
\tikzmath{
	\coordinate (a) at (0,0);
	\pgfmathsetmacro{\boxWidth}{1};
	\draw (-2,0) -- (-1,0);
	\draw (1,0) -- (2,0);
	\draw[dotted] (2,0) -- (3,0);
	\roundNbox{unshaded}{(2,0)}{.3}{0}{0}{$\psi_\cP$}
	\draw[rounded corners=5pt, very thick, unshaded] ($ (a) - (\boxWidth,\boxWidth) $) rectangle ($ (a) + (\boxWidth,\boxWidth) $);
	\draw[thick, red] (180:1/5*\boxWidth) -- (180:4/5*\boxWidth);
	\draw[very thick] (a) circle (4/5*\boxWidth);
	\draw[very thick] (a) circle (1/5*\boxWidth);
	\draw (90:1/5*\boxWidth) .. controls ++(90:.3cm) and ++(90:.5cm) .. (180:1/2*\boxWidth) .. controls ++(270:.5cm) and ++(270:.3cm) ..  (270:1/5*\boxWidth);
	\node at (1.35,.2) {\scriptsize{$\cP[0]$}};
	\node at (2.65,.2) {\scriptsize{$1_\cV$}};
	\node at (-1.5,.2) {\scriptsize{$\cP[2]$}};
}
=
\tikzmath{
	\coordinate (a) at (0,0);
	\pgfmathsetmacro{\boxWidth}{1};
	\draw (-2,0) -- (-1,0);
	\draw (1,0) -- (2,0);
	\draw[dotted] (2,0) -- (3,0);
	\roundNbox{unshaded}{(2,0)}{.3}{0}{0}{$\psi_\cP$}
	\draw[rounded corners=5pt, very thick, unshaded] ($ (a) - (\boxWidth,\boxWidth) $) rectangle ($ (a) + (\boxWidth,\boxWidth) $);
	\draw[thick, red] (180:1/5*\boxWidth) -- (180:4/5*\boxWidth);
	\draw[very thick] (a) circle (4/5*\boxWidth);
	\draw[very thick] (a) circle (1/5*\boxWidth);
	\draw (90:1/5*\boxWidth) .. controls ++(90:.3cm) and ++(90:.5cm) .. (0:1/2*\boxWidth) .. controls ++(270:.5cm) and ++(270:.3cm) ..  (270:1/5*\boxWidth);
	\node at (1.35,.2) {\scriptsize{$\cP[0]$}};
	\node at (2.65,.2) {\scriptsize{$1_\cV$}};
	\node at (-1.5,.2) {\scriptsize{$\cP[2]$}};
}\,.
$$
In this paper, we do not always assume our anchored planar algebras to be spherical.
\end{defn}

\begin{defn}
A morphism $H: (\cP_1,r^1,\psi_1) \to (\cP_2,r^2,\psi_2)$ of unitary anchored planar algebras is a morphism $H$ of anchored planar algebras satisfying
$\overline{H[n]}\circ r^1_n = r^2_n\circ H[n]$ for all $n\geq 0$,
and
$\psi_1 = \psi_2\circ H[0]$.
\end{defn}


When $\cV=\fdHilb$, the definition of unitary anchored planar algebra reduces to the definition of a unitary planar algebra (also known as a semisimple $\rm C^*$ planar algebra) from \cite{MR4598730}, but equipped with a faithful state. (The planar algebra of a bipartite graph \cite{MR1865703} is an example of a $\rm C^*$ planar algebra that is naturally equipped wiht a faithful state.)

The following lemma is straightfoward and left to the reader.

\begin{lem}
If a unitary anchored planar algebra is spherical, then for every $n\geq 0$ we have
\[
\tikzmath{
	\coordinate (a) at (0,0);
	\pgfmathsetmacro{\boxWidth}{1};
	\draw (-2,0) -- (-1,0);
	\draw (1,0) -- (2,0);
	\draw[dotted] (2,0) -- (3,0);
	\roundNbox{unshaded}{(2,0)}{.3}{0}{0}{$\psi_\cP$}
	\draw[rounded corners=5pt, very thick, unshaded] ($ (a) - (\boxWidth,\boxWidth) $) rectangle ($ (a) + (\boxWidth,\boxWidth) $);
	\draw[thick, red] (180:1/5*\boxWidth) -- (180:4/5*\boxWidth);
	\draw[very thick] (a) circle (4/5*\boxWidth);
	\draw[very thick] (a) circle (1/5*\boxWidth);
	\draw (90:1/5*\boxWidth) .. controls ++(90:.3cm) and ++(90:.5cm) .. (180:1/2*\boxWidth) .. controls ++(270:.5cm) and ++(270:.3cm) ..  (270:1/5*\boxWidth);
	\node at (1.35,.2) {\scriptsize{$\cP[0]$}};
	\node at (2.65,.2) {\scriptsize{$1_\cV$}};
	\node at (-1.5,.2) {\scriptsize{$\cP[2n]$}};
  \node at (-.6,-.2) {\scriptsize{$n$}};
}
=
\tikzmath{
	\coordinate (a) at (0,0);
	\pgfmathsetmacro{\boxWidth}{1};
	\draw (-2,0) -- (-1,0);
	\draw (1,0) -- (2,0);
	\draw[dotted] (2,0) -- (3,0);
	\roundNbox{unshaded}{(2,0)}{.3}{0}{0}{$\psi_\cP$}
	\draw[rounded corners=5pt, very thick, unshaded] ($ (a) - (\boxWidth,\boxWidth) $) rectangle ($ (a) + (\boxWidth,\boxWidth) $);
	\draw[thick, red] (180:1/5*\boxWidth) -- (180:4/5*\boxWidth);
	\draw[very thick] (a) circle (4/5*\boxWidth);
	\draw[very thick] (a) circle (1/5*\boxWidth);
	\draw (90:1/5*\boxWidth) .. controls ++(90:.3cm) and ++(90:.5cm) .. (0:1/2*\boxWidth) .. controls ++(270:.5cm) and ++(270:.3cm) ..  (270:1/5*\boxWidth);
	\node at (1.35,.2) {\scriptsize{$\cP[0]$}};
	\node at (2.65,.2) {\scriptsize{$1_\cV$}};
	\node at (-1.5,.2) {\scriptsize{$\cP[2n]$}};
  \node at (.65,0) {\scriptsize{$n$}};
}\,.
\]
\qed
\end{lem}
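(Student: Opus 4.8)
The plan is to prove the identity by induction on $n$. Write $T_n^{-}$ and $T_n^{+}\colon\cP[2n]\to\cP[0]$ for the two anchored planar tangles appearing on the left and on the right of the claimed equation: each joins the $2n$ boundary points of a single input disk by $n$ parallel, non-crossing strands running around the output disk, with this $n$-strand cable on the same side as the anchor line (for $T_n^{-}$) or on the opposite side (for $T_n^{+}$). The two diagrams in the statement are $\psi_\cP\circ Z(T_n^{-})$ and $\psi_\cP\circ Z(T_n^{+})$, so the case $n=1$ is exactly the definition of sphericality, and $n=0$ is vacuous.

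For the inductive step I would first isolate the outermost strand of the cable. Using the composition axiom \eqref{eq: composition of tangles} one factors $T_n^{-}=T_1^{-}\circ_1 D_n$, where $D_n\colon\cP[2n]\to\cP[2]$ joins $n-1$ of the cable's strands among themselves on the anchor side and routes the remaining two endpoints to a new output disk. Applying the $n=1$ axiom to the outer factor yields
\[
\psi_\cP\circ Z(T_n^{-})=\psi_\cP\circ Z(T_1^{-})\circ Z(D_n)=\psi_\cP\circ Z(T_1^{+})\circ Z(D_n)=\psi_\cP\circ Z(T_1^{+}\circ_1 D_n).
\]
Now $T_1^{+}\circ_1 D_n$ is the closed tangle with $n-1$ strands on the anchor side and one on the far side. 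Peeling off the innermost anchor-side strand as a cap $W_1\colon\cP[2n]\to\cP[2n-2]$ factors it as $U\circ_1 W_1$, where $U\colon\cP[2n-2]\to\cP[0]$ is the closed tangle with $n-2$ strands on the anchor side and one on the far side; but $U$ is obtained from $T_{n-1}^{-}$ by the very same outermost-strand flip, so the displayed computation at level $n-1$ and the induction hypothesis give $\psi_\cP\circ Z(U)=\psi_\cP\circ Z(T_{n-1}^{-})=\psi_\cP\circ Z(T_{n-1}^{+})$, whence $\psi_\cP\circ Z(T_n^{-})=\psi_\cP\circ Z(T_{n-1}^{+}\circ_1 W_1)$. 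The tangle $T_{n-1}^{+}\circ_1 W_1$ still carries one anchor-side strand, the innermost one, so it is not yet $T_n^{+}$; removing it amounts to pulling that strand around the anchor point, and one finishes by showing this move does not change the value after $\psi_\cP$ — using the twist and braiding anchor-dependence relations of Definition~\ref{def: anchored planar algebra} (and the ribbon-braid symmetry of the anchored planar operad) to replace the strand by one on the far side at the cost of $\theta$ and $\beta$ isomorphisms, and then checking that $\psi_\cP\circ Z(T_{n-1}^{+})$ absorbs these because the $n=1$ axiom already exhibits the closed two-strand diagram as twist-invariant.

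I expect the genuine difficulty to be exactly that last maneuver. The composition axiom together with the $n=1$ axiom only lets one flip the \emph{outermost} strand of the cable; flipping an interior strand is not realized by any composition of anchored tangles, so one must bring in the twist axiom $Z(\mathrm{curl})=\theta$ and the braiding relation and carefully track how the resulting $\theta_{\cP[2n]}$ and $\beta$ isomorphisms cancel when post-composed with $\psi_\cP$. An equivalent but perhaps cleaner organisation would be to reduce the entire statement to the claim that $\psi_\cP\circ Z(T_n^{-})$ is invariant under the twist $\theta_{\cP[2n]}$ (the two tangles differing by one unit of anchor-line winding around the input disk) and then prove that twist-invariance by the same inductive peeling, which concentrates all the work at the analogous point. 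Once the correct tangle decompositions are written down, the remaining step — identifying each composite anchored tangle up to isotopy — is routine.
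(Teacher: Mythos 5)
Your opening moves are right, and the chain
\[
\psi_\cP\circ Z(T_n^{-})=\psi_\cP\circ Z(T_1^{+}\circ_1 D_n)=\psi_\cP\circ Z(U\circ_1 W_1)=\psi_\cP\circ Z(T_{n-1}^{+}\circ_1 W_1)
\]
is correct as far as it goes. The genuine gap is the last step, and it is a gap of your own making: the claim that ``flipping an interior strand is not realized by any composition of anchored tangles'' is false for the tangle you have arrived at. In $T_{n-1}^{+}\circ_1 W_1$ the one remaining anchor-side strand is the \emph{only} strand on that side, so it is no longer interior to anything: this tangle factors as $T_1^{-}\circ_1 N$, where $N:\cP[2n]\to\cP[2]$ closes pairs $2,\dots,n$ on the far side near the input disk and routes the two endpoints of pair $1$ to the output disk; and $T_1^{+}\circ_1 N$ is exactly $T_n^{+}$, since pair $1$ then acquires the outermost far-side arc. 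So the step you flag as the ``genuine difficulty'' is just one more application of the $n=1$ axiom together with the composition axiom. The twist and braiding anchor-dependence relations are not needed anywhere: every anchor line in sight is a radial segment, those relations constrain anchor lines rather than strings (which pass through anchor lines transparently), and they cannot in any case substitute for sphericality, which is an independent axiom. Relatedly, your proposed reformulation is incorrect: $T_n^{-}$ and $T_n^{+}$ do not differ by a unit of anchor-line winding --- precomposing with the curl tangle changes the anchor line, not the strands --- so the lemma is not equivalent to twist-invariance of $\psi_\cP\circ Z(T_n^{-})$.

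Once this is seen, your recursion collapses to a telescoping argument using nothing but the $n=1$ axiom, applied $n$ times. For $0\le k\le n$ let $M_k$ be the closure in which pairs $1,\dots,k$ (counted from the anchor point) are closed by nested anchor-side arcs and pairs $k+1,\dots,n$ by nested far-side arcs, so that $M_n=T_n^{-}$ and $M_0=T_n^{+}$. Then $M_k=T_1^{-}\circ_1 N_k$ and $M_{k-1}=T_1^{+}\circ_1 N_k$, where $N_k$ routes pair $k$ to the output disk and closes every other pair as it is closed in both $M_k$ and $M_{k-1}$; hence $\psi_\cP\circ Z(M_k)=\psi_\cP\circ Z(M_{k-1})$ for every $k$, and chaining these gives the lemma with no induction hypothesis and no appeal to the anchor-dependence axioms.
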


\begin{prop}\label{prop: inside out reflection}
Let $(\cP,r,\psi_\cP)$ be a spherical unitary anchored planar algebra.
Let $A$ be an annular anchored planar tangle,
and denote the inside-out reflection by $A^\dag$.
Then $Z(A)^\dag=Z(A^\dag)$. 
\end{prop}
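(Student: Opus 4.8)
First I would reduce the statement to a purely diagrammatic identity. Since $\cV$ is a unitary tensor category with a unitary dual functor, for any morphism $T\colon\cP[m]\to\cP[n]$ the adjoint $T^\dag$ is the unique morphism $\cP[n]\to\cP[m]$ with
$$
\coev^\dag_{\cP[n]}\circ(T\otimes\id_{\overline{\cP[n]}})
\;=\;
\coev^\dag_{\cP[m]}\circ(\id_{\cP[m]}\otimes\overline{T^\dag}),
$$
this being $\langle Tx,y\rangle=\langle x,T^\dag y\rangle$ written through the nondegenerate pairing $\coev^\dag$ (the pairing used in the proof of Proposition~\ref{prop:FormulaForEvTr}; uniqueness is a special case of Lemma~\ref{lem:TwoPairingsEqual}). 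So, with $A$ of type $(n_1;n_0)$, it suffices to verify this equation with $T=Z(A)$ and $Z(A^\dag)$ in place of $T^\dag$. Let $C_n$ be the type $(n,n;0)$ anchored planar tangle appearing in \eqref{eq:PlanarPairing}, so that axiom~\ref{P:PositiveDefinite} reads $\coev^\dag_{\cP[n]}=\psi_\cP\circ Z(C_n)\circ(\id_{\cP[n]}\otimes r_n^{-1})$. Substituting this on both sides, using the composition axiom~\eqref{eq: composition of tangles} to absorb $A$ into the first input disk of $C_{n_0}$, and on the other side using axiom~\ref{P:Reflection} for $A^\dag$ in the form $r_{n_1}^{-1}\circ\overline{Z(A^\dag)}=Z(\overline{A^\dag})\circ r_{n_0}^{-1}$ to absorb $\overline{A^\dag}$ into the second input disk of $C_{n_1}$, both sides become $\psi_\cP\circ Z(-)\circ(\id_{\cP[n_1]}\otimes r_{n_0}^{-1})$. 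Hence the proposition reduces to the equality
$$
Z\big(C_{n_0}\circ_1 A\big)\;=\;Z\big(C_{n_1}\circ_2\overline{A^\dag}\big)\colon\ \cP[n_1]\otimes\cP[n_0]\longrightarrow\cP[0],
$$
the two input disks being numbered so that the $\cP[n_1]$-disk comes first.

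Next I would prove this identity geometrically. Both $C_{n_0}\circ_1 A$ and $C_{n_1}\circ_2\overline{A^\dag}$ are anchored planar tangles of type $(n_1,n_0;0)$: each closes one boundary circle of the annulus $A$ (respectively of $\overline{A^\dag}$) off by a system of nested arcs which is then re-opened at a fresh input disk. Reflecting the annulus through a concentric circle interchanges its two boundary circles and, since the double reflection $\overline{(\,\cdot\,)^\dag}$ is orientation-preserving, identifies the two underlying (unanchored) planar tangles. The only thing standing between this and an honest isotopy of \emph{anchored} planar tangles is that the closure arcs and the two anchor lines get dragged around the ``back'' of the capped tube; reconciling the two resulting configurations is precisely a spherical move. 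This is exactly where the hypothesis that $\psi_\cP$ is spherical enters: using the spherical relation built into the definition of a spherical unitary anchored planar algebra, the Lemma immediately preceding (the planar-algebra analogue of Lemma~\ref{lem:idagAllowsIsotopy}), and the ribbon-braid symmetry of $Z$, one checks that the two anchor-line/closure configurations give the same morphism after post-composition with $\psi_\cP\circ Z(C_{n_\bullet})$. Alternatively one could first reduce to a generating set of annular tangles (rotations, elementary cups and caps, re-anchorings), since inside-out reflection reverses composition; the rotation case again needs sphericality for the same ``strand around the back'' reason.

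The step I expect to be the main obstacle is this geometric reconciliation of the anchor lines: carefully tracking how the nested closure arcs and the two anchor lines transform under the inside-out isotopy, and isolating the precise instance of the spherical relation needed to match them up. Everything else is formal bookkeeping with the axioms.

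Finally, for ``in particular, rotation is unitary'': the inside-out reflection of the rotation-by-one annular tangle $\rho$ is the rotation-by-$(-1)$ tangle $\rho^{-1}$, and $Z(\rho)\circ Z(\rho^{-1})=Z(\rho\circ\rho^{-1})=\id$ shows $Z(\rho)$ is invertible, so $Z(\rho)^\dag=Z(\rho^\dag)=Z(\rho^{-1})=Z(\rho)^{-1}$ and $Z(\rho)$ is unitary.
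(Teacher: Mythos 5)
Your proposal is correct and takes essentially the same route as the paper: both characterize the adjoint through the nondegenerate pairing of \eqref{eq:PlanarPairing}, reduce the claim to ``pulling $A$ over'' from one input disk of the pairing tangle to the other, and justify that step by decomposing $A$ into generating annular tangles and invoking sphericality of the state. The only cosmetic difference is that you package the key step as a single tangle identity $Z(C_{n_0}\circ_1 A)=Z(C_{n_1}\circ_2\overline{A^\dag})$, whereas the paper states the corresponding equality of morphisms after post-composition with $\psi_\cP$ and pulls the generators over one at a time.
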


\begin{cor}
$Z\left(
\tikzmath{
	\draw [thick, red] (0,0) -- (180:1cm);
	\draw[] (70:.3cm) .. controls ++(70:.3cm) and ++(90:.5cm) .. (0:.5cm) .. controls ++(270:.8cm) and ++(270:.8cm) .. (180:.7cm) .. controls ++(90:.6cm) and ++(270:.4cm) .. (110:1cm);
	\draw (110:.3cm) .. controls ++(110:.3cm) and ++(270:.4cm) .. (70:1cm);
	\draw[very thick] (0,0) circle (1cm);
	\draw[unshaded, very thick] (0,0) circle (.3cm);
	\node at (-80:.8cm) {$\scriptstyle j$};
	\node at (60:.8cm) {$\scriptstyle i$};
}
\right)$
 is unitary.
 \end{cor}
\begin{proof}[Proof of Proposition \ref{prop: inside out reflection}]
Suppose $A$ has $m$ input strings and $n$ output strings.
Since $\psi_\cP$ is spherical, the morphism
\begin{align*}
\tikzmath{
	\coordinate (a) at (0,0);
	\pgfmathsetmacro{\boxWidth}{1};
	\draw (-5,-.5) -- (-1,-.5);
	\draw (-5,.5) -- (-1,.5);
	\draw (1,0) -- (2,0);
	\draw[dotted] (2,0) -- (3,0);
	\roundNbox{unshaded}{(-2,-.5)}{.3}{0}{0}{$\scriptstyle r_n^{-1}$}
	\roundNbox{unshaded}{(-3.6,-.5)}{.3}{.3}{.3}{$\overline{Z(A)}$}
	\roundNbox{unshaded}{(2,0)}{.3}{0}{0}{$\psi_\cP$}
	\draw[rounded corners=5pt, very thick, unshaded] ($ (a) - (\boxWidth,\boxWidth) $) rectangle ($ (a) + (\boxWidth,\boxWidth) $);
	\draw ($ (a) + 1/3*(0,1) $) -- ($ (a) - 1/3*(0,\boxWidth) $);
	\draw[thick, red] ($ (a) + 1/3*(0,\boxWidth) - 1/5*(\boxWidth,0) $) -- ($ (a) - 2/3*(\boxWidth,0) $);
	\draw[thick, red] ($ (a) - 1/3*(0,\boxWidth) - 1/5*(\boxWidth,0) $) -- ($ (a) - 2/3*(\boxWidth,0) $);
	\draw[very thick] (a) ellipse ({2/3*\boxWidth} and {5/6*\boxWidth});
	\filldraw[very thick, unshaded] ($ (a) + 1/3*(0,\boxWidth) $) circle (1/5*\boxWidth);
	\filldraw[very thick, unshaded] ($ (a) - 1/3*(0,\boxWidth) $) circle (1/5*\boxWidth);
	\node at ($ (a) + (.2,0) $) {\scriptsize{$n$}};
	\node at (-2.65,-.3) {\scriptsize{$\overline{\cP[n]}$}};
	\node at (-4.65,-.3) {\scriptsize{$\overline{\cP[m]}$}};
	\node at (-1.35,-.3) {\scriptsize{$\cP[n]$}};
	\node at (-2,.7) {\scriptsize{$\cP[n]$}};
	\node at (1.35,.2) {\scriptsize{$\cP[0]$}};
	\node at (2.65,.2) {\scriptsize{$1_\cV$}};
}
&=
\tikzmath{
	\coordinate (a) at (0,0);
	\pgfmathsetmacro{\boxWidth}{1};
	\draw (-4.8,-.5) -- (-1,-.5);
	\draw (-4.8,.5) -- (-1,.5);
	\draw (1,0) -- (2,0);
	\draw[dotted] (2,0) -- (3,0);
	\roundNbox{unshaded}{(-3.75,-.5)}{.3}{0}{0}{$\scriptstyle r_m^{-1}$}
	\roundNbox{unshaded}{(-2.2,-.5)}{.3}{.2}{.2}{$Z(\overline{A})$}
	\roundNbox{unshaded}{(2,0)}{.3}{0}{0}{$\psi_\cP$}
	\draw[rounded corners=5pt, very thick, unshaded] ($ (a) - (\boxWidth,\boxWidth) $) rectangle ($ (a) + (\boxWidth,\boxWidth) $);
	\draw ($ (a) + 1/3*(0,1) $) -- ($ (a) - 1/3*(0,\boxWidth) $);
	\draw[thick, red] ($ (a) + 1/3*(0,\boxWidth) - 1/5*(\boxWidth,0) $) -- ($ (a) - 2/3*(\boxWidth,0) $);
	\draw[thick, red] ($ (a) - 1/3*(0,\boxWidth) - 1/5*(\boxWidth,0) $) -- ($ (a) - 2/3*(\boxWidth,0) $);
	\draw[very thick] (a) ellipse ({2/3*\boxWidth} and {5/6*\boxWidth});
	\filldraw[very thick, unshaded] ($ (a) + 1/3*(0,\boxWidth) $) circle (1/5*\boxWidth);
	\filldraw[very thick, unshaded] ($ (a) - 1/3*(0,\boxWidth) $) circle (1/5*\boxWidth);
	\node at ($ (a) + (.2,0) $) {\scriptsize{$n$}};
	\node at (-3.1,-.3) {\scriptsize{$\cP[m]$}};
	\node at (-4.45,-.3) {\scriptsize{$\overline{\cP[m]}$}};
	\node at (-1.35,-.3) {\scriptsize{$\cP[n]$}};
	\node at (-2,.7) {\scriptsize{$\cP[n]$}};
	\node at (1.35,.2) {\scriptsize{$\cP[0]$}};
	\node at (2.65,.2) {\scriptsize{$1_\cV$}};
}
\intertext{is equal to}
\\&=
\tikzmath{
	\coordinate (a) at (0,0);
	\pgfmathsetmacro{\boxWidth}{1};
	\draw (-3.6,-.5) -- (-1,-.5);
	\draw (-3.6,.5) -- (-1,.5);
	\draw (1,0) -- (2,0);
	\draw[dotted] (2,0) -- (3,0);
	\roundNbox{unshaded}{(-2,-.5)}{.3}{0}{0}{$\scriptstyle r_m^{-1}$}
	\roundNbox{unshaded}{(-2.3,.5)}{.3}{.3}{.3}{$Z(A^\dag)$}
	\roundNbox{unshaded}{(2,0)}{.3}{0}{0}{$\psi_\cP$}
	\draw[rounded corners=5pt, very thick, unshaded] ($ (a) - (\boxWidth,\boxWidth) $) rectangle ($ (a) + (\boxWidth,\boxWidth) $);
	\draw ($ (a) + 1/3*(0,1) $) -- ($ (a) - 1/3*(0,\boxWidth) $);
	\draw[thick, red] ($ (a) + 1/3*(0,\boxWidth) - 1/5*(\boxWidth,0) $) -- ($ (a) - 2/3*(\boxWidth,0) $);
	\draw[thick, red] ($ (a) - 1/3*(0,\boxWidth) - 1/5*(\boxWidth,0) $) -- ($ (a) - 2/3*(\boxWidth,0) $);
	\draw[very thick] (a) ellipse ({2/3*\boxWidth} and {5/6*\boxWidth});
	\filldraw[very thick, unshaded] ($ (a) + 1/3*(0,\boxWidth) $) circle (1/5*\boxWidth);
	\filldraw[very thick, unshaded] ($ (a) - 1/3*(0,\boxWidth) $) circle (1/5*\boxWidth);
	\node at ($ (a) + (.2,0) $) {\scriptsize{$m$}};
	\node at (-2.9,-.3) {\scriptsize{$\overline{\cP[m]}$}};
	\node at (-1.35,-.3) {\scriptsize{$\cP[m]$}};
	\node at (-3.3,.7) {\scriptsize{$\cP[n]$}};
	\node at (-1.35,.7) {\scriptsize{$\cP[m]$}};
	\node at (1.35,.2) {\scriptsize{$\cP[0]$}};
	\node at (2.65,.2) {\scriptsize{$1_\cV$}};
}\,.
\end{align*}
Indeed, this can be checked directly using sphericality of $\psi_\cP$ by writing $A$ as a composite of generating annular anchored planar tangles (see \S\ref{sec:ModTensToAPA} below), and `pulling them over' one at a time.
By \eqref{eq:PlanarPairing}, the above equality simplifies (after a 90$^\circ$ rotation of the string diagrams) to
\[
\tikzmath[xscale=-1]{
\draw (0,.3) arc (180:0:.5cm) -- node[left]{$\scriptstyle \cP[n]$} (1,-.7);
\draw (0,-.3) -- node[right]{$\scriptstyle \cP[m]$} (0,-.7);
\roundNbox{fill=white}{(0,0)}{.3}{.3}{.3}{$\overline{Z(A)}$}
}
=
\tikzmath[xscale=-1]{
\draw (0,.3) arc (0:180:.5cm) -- node[right]{$\scriptstyle \cP[m]$} (-1,-.7);
\draw (0,-.3) -- node[left]{$\scriptstyle \cP[n]$} (0,-.7);
\roundNbox{fill=white}{(0,0)}{.3}{.3}{.3}{$Z(A^\dag)$}
}\,.
\]
Precomposing with $\id\otimes \ev_{\cP[m]}^\dag$, we get the desired equality
\[
Z(A)^\dag
=
\overline{Z(A)}^\vee
=
\tikzmath[xscale=-1]{
\draw (0,.3) arc (180:0:.5cm) -- node[left]{$\scriptstyle \cP[n]$} (1,-.7);
\draw (0,-.3) arc (0:-180:.5cm) -- node[right]{$\scriptstyle \cP[m]$} (-1,.7);
\roundNbox{fill=white}{(0,0)}{.3}{.3}{.3}{$\overline{Z(A)}$}
}
=
\tikzmath[xscale=-1]{
\draw (0,.3) arc (0:180:.5cm) arc (0:-180:.5cm) -- node[right]{$\scriptstyle \cP[m]$} (-2,1);
\draw (0,-.3) -- node[right]{$\scriptstyle \cP[n]$} (0,-.7);
\roundNbox{fill=white}{(0,0)}{.3}{.3}{.3}{$Z(A^\dag)$}
}
=
Z(A^\dag).
\qedhere
\]
\end{proof}



\section{Extending the equivalence}
\label{sec:Equivalence}

We now prove Theorem \ref{thm:MainUAPA}, extending the equivalence from \cite[Thm.~A]{MR4528312} to an equivalence
\[
\left\{\,\parbox{4.5cm}{\rm Unitary anchored planar algebras in $\cV$}\left\}
\,\,\,\,\cong\,\,
\left\{\,\parbox{6.5cm}{\rm Pointed unitary module multitensor categories over $\cV$}\,\right\}\right.\right.\!\!.
\]
Recall that, by definition (see \S\ref{sec:Unitary module tensor categories}), a unitary module multitensor category comes with a state $\psi_\cC$ and requires that the $\cV$-action $\Phi^{\scriptscriptstyle Z}:\cV\to Z^{\dag}(\cC)$ is pivotal.

\subsection{From unitary module tensor categories to unitary anchored planar algebras}
\label{sec:ModTensToAPA}

Let $(\cC,\Phi^{\scriptscriptstyle \cZ},\vee_\cC,\psi_\cC)$
be a pointed unitary module multitensor category over $\cV$, with real generator $(x,r_x)\in \cC$, 
and $\bTr_\cV:\cC\to \cV$ the unitary right adjoint of $\Phi= \Forget_\cZ\circ \Phi^{\scriptscriptstyle \cZ}:\cV\to \cC$.
We may then construct an ordinary anchored planar algebra, as in \cite[Thm.~5.1]{MR4528312}.
This anchored planar algebra is completely determined by the following information:
\begin{itemize}
\item
$\cP[n]:=\bTr_\cV(x^{\otimes n})$ 
\item 
$Z\left( 
\tikzmath{
\draw[very thick] (1,0) circle (.25cm);
\filldraw[red] (.75,0) circle (.05cm);
}\,
\right)
:=
\tikzmath{
	\coordinate (a1) at (0,0);
	\coordinate (b1) at (0,.4);
	\draw[thick] (a1) -- (b1);
	\draw[thick] ($ (a1) + (.6,0) $) -- ($ (b1) + (.6,0) $);
	\draw[thick] ($ (b1) + (.3,0) $) ellipse (.3 and .1);
	\draw[thick] (a1) arc (-180:0:.3cm);
}
=
i:1_\cV \to \cP[0]
$
\item
$Z\left( 
\tikzmath{
	\draw[thick, red] (-90:1cm) -- (-90:.25cm);
	\draw[very thick] (0,0) circle (1cm);
	\draw[very thick] (0,0) circle (.25cm);
	\draw (115:.25cm) .. controls ++(115:.5cm) and ++(65:.5cm) .. (65:.25cm);
	\draw (155:.25cm) -- (155:1cm);
	\draw (25:.25cm) -- (25:1cm);
\node at (-.63,.1) {$\scriptstyle i$};
\node at (.63,.08) {$\scriptstyle n-i$};
}\right)
:=
\tikzmath{
	\coordinate (a1) at (0,.1);
	\coordinate (b1) at (0,1.6);
	\coordinate (c) at (.35,.6);
	\draw[thick] (a1) -- (b1);
	\draw[thick] ($ (a1) + (1,0) $) -- ($ (b1) + (1,0) $);
	\halfDottedEllipse{(a1)}{.5}{.2}
	\draw[thick] ($ (b1) + (.5,0) $) ellipse (.5cm and .2cm);

	\draw[thick, orange] ($ (a1) + (.15,-.13) $) -- ($ (b1) + (.15,-.13) $);	
	\draw[thick, blue] ($ (a1) + (.35,-.18) $) -- (c) arc (180:0:.15cm) -- ($ (a1) + (.65,-.18) $);	
	\draw[thick, DarkGreen] ($ (a1) + (.85,-.13) $) -- ($ (b1) + (.85,-.13) $);	
\node[orange] at (-.3,.3) {$\scriptstyle x^{\otimes i}$};
\node[blue, scale=.97] at (.5,1) {$\scriptstyle x$};
\node[DarkGreen] at (1.55,.3) {$\scriptstyle x^{\otimes n-i}$};
}
$
\item
$Z\left( 
\tikzmath{
	\draw[thick, red] (-90:1cm) -- (-90:.25cm);
	\draw[very thick] (0,0) circle (1cm);
	\draw[very thick] (0,0) circle (.25cm);
	\draw (115:1cm) .. controls ++(-65:.4cm) and ++(245:.4cm) .. (65:1cm);
	\draw (155:.25cm) -- (155:1cm);
	\draw (25:.25cm) -- (25:1cm);
\node at (-.63,.1) {$\scriptstyle i$};
\node at (.63,.08) {$\scriptstyle n-i$};
}
\right)
:=
\tikzmath{
	\coordinate (a1) at (0,.1);
	\coordinate (b1) at (0,1.6);
	\coordinate (c) at (.35,.9);
	
	\draw[thick] (a1) -- (b1);
	\draw[thick] ($ (a1) + (1,0) $) -- ($ (b1) + (1,0) $);
	\halfDottedEllipse{(a1)}{.5}{.2}
	\draw[thick] ($ (b1) + (.5,0) $) ellipse (.5cm and .2cm);

	\draw[thick, orange] ($ (a1) + (.15,-.13) $) -- ($ (b1) + (.15,-.13) $);	
	\draw[thick, blue] ($ (b1) + (.35,-.18) $) -- (c) arc (-180:0:.15cm) -- ($ (b1) + (.65,-.18) $);	
	\draw[thick, DarkGreen] ($ (a1) + (.85,-.13) $) -- ($ (b1) + (.85,-.13) $);	
\node[orange] at (-.3,.3) {$\scriptstyle x^{\otimes i}$};
\node[blue, scale=.97] at (.5,.5) {$\scriptstyle x$};
\node[DarkGreen] at (1.55,.3) {$\scriptstyle x^{\otimes n-i}$};
}
$
\item
$
Z\left(
\tikzmath{
	\pgfmathsetmacro{\coord}{(2.5,0)};
	\pgfmathsetmacro{\siz}{1.2};
	\draw ($ (2.5,0) + 14/23*(0,\siz) - 1/3*(\siz,0) $) -- ($ (2.5,0) - 1/3*(\siz,0) $);
	\draw ($ (2.5,0) + 14/23*(0,\siz) + 1/3*(\siz,0) $) -- ($ (2.5,0) + 1/3*(\siz,0) $);
	\draw[thick, red] ($ (2.5,0) - 1/3*(\siz,0) - 1/5*(\siz,0) $) -- ($ (2.5,0) - 5/6*(\siz,0) $);
	\draw[thick, red] ($ (2.5,0) + 1/3*(\siz,0) - 1/5*(\siz,0) $) .. controls ++(180:.2cm) and ++(0:.2cm) .. ($ (2.5,0) - 1/3*(\siz,0) - 2/5*(0,\siz) $) .. controls ++(180:.2cm) and ++(0:.2cm) .. ($ (2.5,0) - 5/6*(\siz,0) $);
	\draw[very thick] (2.5,0) ellipse ( {5/6*\siz} and {2/3*\siz});
	\filldraw[very thick, unshaded] ($ (2.5,0) + 1/3*(\siz,0) $) circle (1/5*\siz);
	\filldraw[very thick, unshaded] ($ (2.5,0) - 1/3*(\siz,0) $) circle (1/5*\siz);
	\node at ($ (2.5,0) + (.2,0) - 1/3*(\siz,0) + 1/3*(0,\siz) $) {$\scriptstyle i$};
	\node at ($ (2.5,0) + (.2,0) + 1/3*(\siz,0) + 1/3*(0,\siz) $) {$\scriptstyle j$};
}
\right)
=
\tikzmath{
	\pgfmathsetmacro{\voffset}{.08};
	\pgfmathsetmacro{\hoffset}{.15};
	\pgfmathsetmacro{\hoffsetTop}{.12};
	\coordinate (b2) at (0,0);
	\coordinate (b3) at ($ (b2) + (1.4,0) $);
	\coordinate (c1) at ($ (b1) + (.7,1.5)$);
	\coordinate (c2) at ($ (b2) + (.7,1.5)$);
	\topPairOfPants{(b2)}{}
%
	\draw[thick, red] ($ (b2) + 2*(\hoffset,0) + (0,-\voffset)$) .. controls ++(90:.8cm) and ++(270:.8cm) .. ($ (c2) + (\hoffset,0) + (0,-\voffset) $);	
	\draw[thick, blue] ($ (b3) + 2*(\hoffset,0) + (0,-.1)$) .. controls ++(90:.8cm) and ++(270:.8cm) .. ($ (c2) + 3*(\hoffset,0) + (0,-\voffset) $);	
\node[red] at (.3,-.3) {$\scriptstyle x^{\otimes i}$};
\node[blue] at (1.7,-.3) {$\scriptstyle x^{\otimes j}$};
}
$
\item
$
Z\left(
\tikzmath{
	\draw [thick, red] (0,0) -- (180:1cm);
	\draw[] (70:.3cm) .. controls ++(70:.3cm) and ++(90:.5cm) .. (0:.5cm) .. controls ++(270:.8cm) and ++(270:.8cm) .. (180:.7cm) .. controls ++(90:.6cm) and ++(270:.4cm) .. (110:1cm);
	\draw (110:.3cm) .. controls ++(110:.3cm) and ++(270:.4cm) .. (70:1cm);
	\draw[very thick] (0,0) circle (1cm);
	\draw[unshaded, very thick] (0,0) circle (.3cm);
	\node at (-80:.8cm) {$\scriptstyle j$};
	\node at (60:.8cm) {$\scriptstyle i$};
}
\right)
=
\begin{tikzpicture}[baseline=-.1cm]
	\draw[thick] (-.3,-1) -- (-.3,1);
	\draw[thick] (.3,-1) -- (.3,1);
	\draw[thick] (0,1) ellipse (.3cm and .1cm);
	\halfDottedEllipse{(-.3,-1)}{.3}{.1}
	
	\draw[thick, red] (-.1,-1.1) .. controls ++(90:.8cm) and ++(270:.8cm) .. (.1,.9);		
	\draw[thick, blue] (.1,-1.1) .. controls ++(90:.2cm) and ++(225:.2cm) .. (.3,-.2);		
	\draw[thick, blue] (-.1,.9) .. controls ++(270:.2cm) and ++(45:.2cm) .. (-.3,.2);
	\draw[thick, blue, dotted] (-.3,.2) -- (.3,-.2);	
	\node[red] at (-.3,-1.28) {$\scriptstyle x^{\otimes i}$};
	\node[blue] at (.3,-1.3) {$\scriptstyle x^{\otimes j}$};
\end{tikzpicture}
$
\end{itemize}

\subsubsection{Adding the dagger structure}

It remains to construct the $\dag$-structure $r$ and the state $\psi_\cP$ on $\cP$, satisfying the conditions listed in Definition~\ref{Def : unitary anchored planar algebra}.

\begin{defn}
For $n\geq 0$, we define $r_n : \cP[n]\to \overline{\cP[n]}$ to be the composite
$$
\bTr_\cV(x^{\otimes n})
\xrightarrow{\bTr_\cV(r_x^{\otimes n})}
\bTr_\cV(\overline{x}^{\otimes n})
\xrightarrow{\bTr_\cV(\nu)}
\bTr_\cV(\overline{x^{\otimes n}})
\xrightarrow{\chi^{\bTr_\cV}_{x^{\otimes n}}}
\overline{\bTr_\cV(x^{\otimes n})}.
$$
\end{defn}

\begin{lem}
$\overline{r_n}\circ r_n = \varphi_{\cP[n]}$.
\end{lem}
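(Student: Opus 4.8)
The plan is to split the statement into two facts and combine them. First, the ``interior'' morphism $R_n := \nu^{(n)} \circ r_x^{\otimes n}\colon x^{\otimes n}\to\overline{x^{\otimes n}}$ is a real structure on $x^{\otimes n}$ in $\cC$ in the sense of Definition~\ref{defn:RealStructure}, i.e.\ $\overline{R_n}\circ R_n=\varphi^\cC_{x^{\otimes n}}$. Second, any bi-involutive functor carries real structures to real structures. Since $r_n=\chi^{\bTr_\cV}_{x^{\otimes n}}\circ\bTr_\cV(R_n)$ by functoriality of $\bTr_\cV$, and since $\bTr_\cV$ is bi-involutive by Proposition~\ref{prop:AdjInvolutiveLaxMonoidal} together with Corollary~\ref{cor:UnitaryAdjointBi-involutive}, applying the second fact to $F=\bTr_\cV$, $a=x^{\otimes n}$, $r=R_n$ yields
$$\overline{r_n} \circ r_n = \overline{\chi^{\bTr_\cV}_{x^{\otimes n}} \circ \bTr_\cV(R_n)} \circ \chi^{\bTr_\cV}_{x^{\otimes n}} \circ \bTr_\cV(R_n) = \varphi^\cV_{\bTr_\cV(x^{\otimes n})} = \varphi_{\cP[n]}.$$

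For the first fact I would argue by induction on $n$, the cases $n=0$ (where $R_0$ is the real structure $r_\cC$ of $1_\cC$) and $n=1$ (where $R_1=r_x$) holding by hypothesis. The inductive step reduces, via associativity of $\nu$, to the case of a tensor product of two real structures $r\colon a\rightharpoonup a$ and $s\colon b\rightharpoonup b$: naturality of $\nu$ gives $\overline{r\otimes s}\circ \nu_{a,b} = \nu_{\overline a,\overline b}\circ(\overline r\otimes \overline s)$, hence
$$\overline{\nu_{a,b}\circ(r\otimes s)}\circ \nu_{a,b}\circ(r\otimes s) = \overline{\nu_{a,b}}\circ \nu_{\overline a,\overline b}\circ\bigl((\overline r\circ r)\otimes(\overline s\circ s)\bigr) = \overline{\nu_{a,b}}\circ \nu_{\overline a,\overline b}\circ(\varphi^\cC_a\otimes \varphi^\cC_b),$$
which equals $\varphi^\cC_{a\otimes b}$ by the compatibility-with-$\varphi$ axiom of the involutive monoidal category $\cC$. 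This is the standard fact that real objects are closed under tensor product.

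For the second fact, let $F$ be an involutive functor with involutive structure $\chi$, and let $r\colon a\rightharpoonup a$ be a real structure. Using functoriality of $\overline{\,\cdot\,}$, then conjugate-naturality of $\chi$ applied to $r\colon a\to\overline a$, then functoriality of $F$ with $\overline r\circ r=\varphi^\cC_a$, and finally the involutive axiom $\overline{\chi_a}\circ\chi_{\overline a}\circ F(\varphi^\cC_a)=\varphi^\cV_{F(a)}$, one computes
$$\overline{\chi_a\circ F(r)}\circ\chi_a\circ F(r) = \overline{\chi_a}\circ\overline{F(r)}\circ\chi_a\circ F(r) = \overline{\chi_a}\circ\chi_{\overline a}\circ F(\overline r)\circ F(r) = \overline{\chi_a}\circ\chi_{\overline a}\circ F(\varphi^\cC_a) = \varphi^\cV_{F(a)},$$
so $\chi_a\circ F(r)$ is a real structure on $F(a)$.

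The only genuine bookkeeping is the induction in the first fact: keeping track of the order reversal in the iterated $\nu$'s and invoking the associativity coherence for $\nu$. This is routine, and all the conceptual content sits in the displayed $n=2$ identity and the formal manipulation with the involutive-functor axioms recorded in Section~\ref{sec:InvolutiveFunctors}; I do not anticipate a real obstacle.
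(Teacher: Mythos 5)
Your proof is correct and is essentially the paper's argument repackaged: the paper establishes the identity via a single commutative diagram whose ingredients are exactly the four facts you invoke (that $r_x$ is a real structure, the compatibility of $\nu$ with $\varphi$, conjugate-naturality of $\chi^{\bTr_\cV}$, and the involutivity axiom for $\bTr_\cV$), which is precisely your two-lemma decomposition into ``tensor products of real structures are real'' and ``involutive functors preserve real structures.'' The order-reversal bookkeeping you flag is indeed harmless here since all tensor factors equal $x$.
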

\begin{proof}
Going right and then down in the following commutative diagram is $\overline{r_n}\circ r_n$
$$
\begin{tikzcd}[column sep = 4em]
&
\bTr_\cV(\overline{x}^{\otimes n})
\arrow[d,"\bTr_\cV(\overline{r_x}^{\otimes n})"]
\arrow[r,"\bTr_\cV(\nu)"]
&
\bTr_\cV(\overline{x^{\otimes n}})
\arrow[r,"\chi^{\bTr_\cV}_{x^{\otimes n}}"]
\arrow[d,"\bTr_\cV(\overline{r_x^{\otimes n}})"]
&
\overline{\bTr_\cV(x^{\otimes n})}
\arrow[d,"\overline{\bTr_\cV(r_x^{\otimes n})}"]
\\
\bTr_\cV(x^{\otimes n})
\arrow[ur,"\bTr_\cV(r_x^{\otimes n})"]
\arrow[r,"\bTr_\cV(\varphi_{x}^{\otimes n})"]
\arrow[drr, "\bTr_\cV(\varphi_{x^{\otimes n}})" yshift=-5]
\arrow[ddrrr, bend right=15, "\varphi_{\bTr_\cV(x^{\otimes n})}" yshift=-5]
&
\bTr_\cV(\overline{\overline{x}}^{\otimes n})
\arrow[r,"\bTr_\cV(\nu)"]
&
\bTr_\cV(\overline{\overline{x}^{\otimes n}})
\arrow[r,"\chi^{\bTr_\cV}_{\overline{x}^{\otimes n}}"]
\arrow[d,"\bTr_\cV(\overline{\nu})"]
&
\overline{\bTr_\cV(\overline{x}^{\otimes n})}
\arrow[d,"\overline{\bTr_\cV(\nu)}"]
\\
&&
\bTr_\cV(\overline{\overline{x^{\otimes n}}})
\arrow[r,"\chi^{\bTr_\cV}_{\overline{x^{\otimes n}}}"]
\arrow[dr,phantom,"\text{\scriptsize{Prop.~\ref{prop:AdjointInvolutive}}}"]
&
\overline{\bTr_\cV(\overline{x^{\otimes n}})}
\arrow[d,"\overline{\chi^{\bTr_\cV}_{x^{\otimes n}}}"]
\\
&&&
\overline{\overline{\bTr_\cV(x^{\otimes n})}}
\end{tikzcd}
$$
This is equal to the curved arrow $\varphi_{\bTr_\cV(x^{\otimes n})}=\varphi_{\cP[n]}$.
\end{proof}

\begin{prop}
The pair $(\cP,r)$ satisfies \textup{\ref{P:Reflection}}:
$$
\overline{Z(T)}\circ \nu^{(k)} \circ (r_{n_k}\otimes \cdots \otimes r_{n_1}) = r_{n_0}\circ Z(\overline{T}).
$$
(When $k=0$, $\overline{Z(T)}\circ r_\cV = r_{n_0}\circ Z(\overline{T})$ where $r_\cV: 1_\cV \to \overline{1_\cV}$ is the real structure of $1_\cV$.)
\end{prop}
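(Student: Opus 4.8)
The plan is to reduce the statement to the generating anchored planar tangles, since the anchored planar operad is generated by a finite list of tangles (the unit $i$, the left/right capping/cup tangles, the multiplication "pair of pants," and the traciator), together with the composition axiom \eqref{eq: composition of tangles} and the anchor-dependence relations. First I would observe that both sides of \ref{P:Reflection} are compatible with composition of tangles: if the identity holds for $S$ and for $T$ (of suitable types), then it holds for $S\circ_i T$, because $Z(\overline{S\circ_i T}) = Z(\overline S \circ_i \overline T)$ (reflection commutes with composition of anchored tangles up to a relabeling that is absorbed by the coherences $\nu^{(k)}$), and because the involutive functor $\bTr_\cV$ — hence $\overline{Z(-)}$ — respects composition via the monoidality of $\chi^{\bTr_\cV}$ and the associativity/unitality coherences for $(\overline{\,\cdot\,},\nu)$. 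So it suffices to check \ref{P:Reflection} on each generator, which is a finite bookkeeping task.

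Next I would go generator by generator. For the unit tangle $i:1_\cV\to\bTr_\cV(1_\cC)$, the claim $\overline i\circ r_\cV = r_0 \circ \overline{Z(i)}$ is exactly the statement, proven in \S\ref{sec:idag}, that the isomorphism \eqref{eq:IdentifyGroundC*Alg} is a $*$-isomorphism (using Lemma~\ref{lem:MateOfBarF} and the identification of $r_\cC$ with $\coev_1$). For the cup/cap generators (insertion of a strand or closing off a pair of strands at $x$), the verification amounts to the compatibility of the (co)evaluation of $x$ with its real structure $r_x$ — i.e.\ that $r_x$ is a real structure and $x$ is symmetrically self-dual — pushed through $\bTr_\cV$, using Lemma~\ref{lem:MateOfBarF} to convert the mate-level statement in $\cC$ (where the generator is built from $\ev_x,\coev_x$ and half-braidings) into the required statement in $\cV$. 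For the multiplication generator, the relevant input is Lemma~\ref{lem:BarTraciatorCompatibility}'s "static" cousin: the compatibility $\chi^{\bTr_\cV}_{y\otimes x}\circ\bTr_\cV(\nu_{x,y})\circ(\text{swap on }\mu) = \overline{\mu_{y,x}}\circ\nu_{\bTr_\cV(x),\bTr_\cV(y)}\circ(\chi^{\bTr_\cV}_x\otimes\chi^{\bTr_\cV}_y)$, which is precisely the monoidality condition for the involutive lax monoidal functor $\bTr_\cV$ from Proposition~\ref{prop:AdjInvolutiveLaxMonoidal}, combined with the fact that $r_x^{\otimes(m+n)} = \nu\circ(r_x^{\otimes m}\otimes r_x^{\otimes n})$ up to the coherences. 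For the traciator generator, the statement is exactly Lemma~\ref{lem:BarTraciatorCompatibility}, transported along $r_x$.

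Finally I would handle the anchor-dependence relations (braiding and twist). Here the point is that $\overline{Z(T)}\circ\nu^{(k)}\circ(r_{n_k}\otimes\cdots)$ and $r_{n_0}\circ Z(\overline T)$ transform the \emph{same} way under the ribbon-braid-group action on anchor lines, because conjugating by the real structures $r_n$ turns the braiding $\beta$ into $\overline{\beta^{-1}}^{\,\vee}=\beta$ (unitarity of $\beta$ plus $r_x$ being a real structure) and the twist $\theta$ into $\overline{\theta^{-1}}^{\,\vee}=\theta$; reflection $T\mapsto\overline T$ reverses orientation and so inverts these, and the two effects cancel. I expect the main obstacle to be not any single generator but the \textbf{coherence bookkeeping}: getting the $\nu^{(k)}$'s, associators, and the iterated $\chi^{\bTr_\cV}$'s to line up correctly when passing from the generator-level identities to arbitrary tangles via composition — essentially verifying that the collection $(r_n)_n$ assembles into an involutive-monoidal-natural gadget so that the composition induction closes up cleanly. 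This is routine in principle given Propositions~\ref{prop:AdjointInvolutive} and~\ref{prop:AdjInvolutiveLaxMonoidal}, but it is where the real work of writing the proof lies.
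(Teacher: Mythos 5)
Your proposal matches the paper's proof essentially exactly: reduce to the generating tangles, then handle the unit via unitality and the multiplication via monoidality of the involutive lax monoidal structure on $\bTr_\cV$ (Proposition~\ref{prop:AdjInvolutiveLaxMonoidal}), the cap/cup generators via naturality of $\chi^{\bTr_\cV}$ and the real structure $r_x$, and the traciator via Lemma~\ref{lem:BarTraciatorCompatibility}. Your additional care about the anchor-dependence (braiding/twist) relations is a point the paper's proof leaves implicit, but it is correct and does not change the argument.
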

\begin{proof}
This essentially follows from the fact that $\bTr_\cV$ is involutive lax monoidal by Proposition \ref{prop:AdjInvolutiveLaxMonoidal}.
It is enough to check \ref{P:Reflection} on each of the following generating tangles:
$$
u=
\tikzmath{
\draw[very thick] (1,0) circle (.25cm);
\filldraw[red] (.75,0) circle (.05cm);
}
\quad
a_i=
\tikzmath{
	\draw[thick, red] (-90:1cm) -- (-90:.25cm);
	\draw[very thick] (0,0) circle (1cm);
	\draw[very thick] (0,0) circle (.25cm);
	\draw (115:.25cm) .. controls ++(115:.5cm) and ++(65:.5cm) .. (65:.25cm);
	\draw (155:.25cm) -- (155:1cm);
	\draw (25:.25cm) -- (25:1cm);
\node at (-.63,.1) {$\scriptstyle i$};
\node at (.63,.08) {$\scriptstyle n-i$};
}
\quad
a_i^\dag=
\tikzmath{
	\draw[thick, red] (-90:1cm) -- (-90:.25cm);
	\draw[very thick] (0,0) circle (1cm);
	\draw[very thick] (0,0) circle (.25cm);
	\draw (115:1cm) .. controls ++(-65:.4cm) and ++(245:.4cm) .. (65:1cm);
	\draw (155:.25cm) -- (155:1cm);
	\draw (25:.25cm) -- (25:1cm);
\node at (-.63,.1) {$\scriptstyle i$};
\node at (.63,.08) {$\scriptstyle n-i$};
}
\quad
m_{i,j}
=
\tikzmath{
	\pgfmathsetmacro{\coord}{(2.5,0)};
	\pgfmathsetmacro{\siz}{1.2};
	\draw ($ (2.5,0) + 14/23*(0,\siz) - 1/3*(\siz,0) $) -- ($ (2.5,0) - 1/3*(\siz,0) $);
	\draw ($ (2.5,0) + 14/23*(0,\siz) + 1/3*(\siz,0) $) -- ($ (2.5,0) + 1/3*(\siz,0) $);
	\draw[thick, red] ($ (2.5,0) - 1/3*(\siz,0) - 1/5*(\siz,0) $) -- ($ (2.5,0) - 5/6*(\siz,0) $);
	\draw[thick, red] ($ (2.5,0) + 1/3*(\siz,0) - 1/5*(\siz,0) $) .. controls ++(180:.2cm) and ++(0:.2cm) .. ($ (2.5,0) - 1/3*(\siz,0) - 2/5*(0,\siz) $) .. controls ++(180:.2cm) and ++(0:.2cm) .. ($ (2.5,0) - 5/6*(\siz,0) $);
	\draw[very thick] (2.5,0) ellipse ( {5/6*\siz} and {2/3*\siz});
	\filldraw[very thick, unshaded] ($ (2.5,0) + 1/3*(\siz,0) $) circle (1/5*\siz);
	\filldraw[very thick, unshaded] ($ (2.5,0) - 1/3*(\siz,0) $) circle (1/5*\siz);
	\node at ($ (2.5,0) + (.2,0) - 1/3*(\siz,0) + 1/3*(0,\siz) $) {$\scriptstyle i$};
	\node at ($ (2.5,0) + (.2,0) + 1/3*(\siz,0) + 1/3*(0,\siz) $) {$\scriptstyle j$};
}
\quad
t_{i,j}
=
\tikzmath{
	\draw [thick, red] (0,0) -- (180:1cm);
	\draw[] (70:.3cm) .. controls ++(70:.3cm) and ++(90:.5cm) .. (0:.5cm) .. controls ++(270:.8cm) and ++(270:.8cm) .. (180:.7cm) .. controls ++(90:.6cm) and ++(270:.4cm) .. (110:1cm);
	\draw (110:.3cm) .. controls ++(110:.3cm) and ++(270:.4cm) .. (70:1cm);
	\draw[very thick] (0,0) circle (1cm);
	\draw[unshaded, very thick] (0,0) circle (.3cm);
	\node at (-80:.8cm) {$\scriptstyle j$};
	\node at (60:.8cm) {$\scriptstyle i$};
}\,.
$$

First, since $\overline{u}=u$, by unitality,
$$
\overline{Z(u)}\circ r_\cV
=
\overline{i}
\circ
r_\cV
=
\chi^{\bTr_\cV}_1
\circ
\bTr_\cV(r_\cC)
\circ 
i
=
r_0\circ Z(\overline{u}).
$$
Since $\overline{m_{i,j}}=m_{j,i}$,
by monoidality,
\begin{align*}
\overline{Z(m_{i,j})}\circ \nu\circ (r_j\otimes r_i)
&=
\overline{\mu_{i,j}}\circ \nu\circ
\big(
(\chi^{\bTr_\cV}_{x^{\otimes j}}\circ \bTr_\cV(\nu)\circ \bTr_\cV(r_x^{\otimes j}))
\otimes
(\chi^{\bTr_\cV}_{x^{\otimes i}}\circ \bTr_\cV(\nu)\circ \bTr_\cV(r_x^{\otimes i}))
\big)
\\&=
\overline{\mu_{i,j}}\circ \nu\circ (\chi^{\bTr_\cV}_{x^{\otimes j}}\otimes \chi^{\bTr_\cV}_{x^{\otimes i}})
\circ
(\bTr_\cV(\nu)\otimes \bTr_\cV(\nu))
\circ
(\bTr_\cV(r_x^{\otimes j})\otimes \bTr_\cV(r_x^{\otimes i}))
\\&=
\chi^{\bTr_\cV}_{x^{\otimes i+j}}
\circ
\bTr_\cV(\nu)
\circ
\mu_{\overline{x^{\otimes j}},\overline{x^{\otimes i}}}
\circ
(\bTr_\cV(\nu)\otimes \bTr_\cV(\nu))
\circ
(\bTr_\cV(r_x^{\otimes j})\otimes \bTr_\cV(r_x^{\otimes i}))
\\&=
\chi^{\bTr_\cV}_{x^{\otimes i+j}}
\circ
\bTr_\cV(\nu)
\circ
\bTr_\cV(r_x^{\otimes i+j})
\circ
\mu_{j,i}
\\&= r_{i+j}\circ Z(\overline{m_{i,j}}).
\end{align*}
The third equality above uses the lax involutive property of $\bTr_\cV$, and the fourth equality uses naturality of $\mu$.
Since $\overline{a_i}=a_{n-i}$ (when $Z(a_i):\bTr_\cV(x^{\otimes n+2})\to\bTr_\cV(x^{\otimes n})$), we see
\begin{align*}
\overline{Z(a_i)} \circ r_{n+2}
&=
\overline{\bTr_\cV(\id_{x^{\otimes i}} \otimes \ev_x \otimes x^{\otimes n-i})}
\circ
\chi^{\bTr_\cV}_{x^{\otimes n+2}}
\circ
 \bTr_\cV(\nu)
\circ
\bTr_\cV(r_x^{\otimes n+2})
\\&=
\chi^{\bTr_\cV}_{x^{\otimes n}}
\circ
\bTr_\cV(\nu)
\circ
\bTr_\cV(r_x^{\otimes n})
\circ
\bTr_\cV(\id_{x^{\otimes n-i}} \otimes \ev_x \otimes x^{\otimes i})
\\&=
r_n\circ Z(a_{n-i}).
\end{align*}
The proof for the $a_i^\dag$ is similar and omitted.
(It also follows formally from the proof for $a_i$ as $\bTr_\cV$ is a dagger functor and $\chi^{\bTr_\cV}$, $r$, $\nu$, and $r_x$ are all unitary.)
Finally, for the tangles $t_{i,j}$, we have $\overline{t_{i,j}}=t_{i,j}^{-1}$, and by Lemma \ref{lem:BarTraciatorCompatibility},
\begin{align*}
\overline{Z(t_{i,j})}\circ r_n
&=
\overline{\tau_{i,j}}
\circ
\chi^{\bTr_\cV}_{x^{\otimes n}}
\circ
\bTr_\cV(\nu)
\circ 
\bTr_\cV(r_x^{\otimes n})
\\&=
\chi^{\bTr_\cV}_{x^{\otimes n}}
\circ
\bTr_\cV(\nu)
\circ
\tau^{-1}_{\overline{x}^{\otimes i},\overline{x}^{\otimes j}} 
\circ
\bTr_\cV(r_x^{\otimes n})
\\&=
\chi^{\bTr_\cV}_{x^{\otimes n}}
\circ
\bTr_\cV(\nu)
\circ
\bTr_\cV(r_x^{\otimes n})
\circ
\tau^{-1}_{x^{\otimes i},x^{\otimes j}} 
\\&=
r_n \circ Z(\overline{t_{i,j}}).
\qedhere
\end{align*}
\end{proof}

\begin{defn}
\label{defn:StateOnPIsIDag}
We define a state $\psi_\cP:=i^\dag: \cP[0] \to 1_\cV$ as in \S\ref{sec:idag}.
\end{defn}

\begin{prop}
The triple $(\cP,r,\psi_\cP)$ satisfies \textup{\ref{P:PositiveDefinite}}.
\end{prop}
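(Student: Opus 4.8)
The plan is to reduce axiom \ref{P:PositiveDefinite} to the formula for $\coev^\dag_{\bTr_\cV(c)}$ established in Proposition~\ref{prop:FormulaForEvTr}, applied to the object $c = x^{\otimes n}$. First I would write out the left-hand side of \eqref{eq:PlanarPairing} explicitly using the construction of $\cP$. The map $r_n^{-1}$ is, by definition, $\bTr_\cV(r_x^{\otimes n})^{-1}\circ\bTr_\cV(\nu)^{-1}\circ(\chi^{\bTr_\cV}_{x^{\otimes n}})^{-1}$, and the anchored planar tangle appearing in the pairing is the one whose associated morphism is $\bTr_\cV(\ev_{x^{\otimes n}}^\dag)\circ\mu_{x^{\otimes n},\overline{x^{\otimes n}}}$ — that is, the ``pair of pants with a cap, capped off by a nested evaluation on top'' tangle, which is precisely the morphism denoted in Proposition~\ref{prop:FormulaForEvTr} (up to suppressing $\chi$). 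Then $\psi_\cP = i^\dag$ by Definition~\ref{defn:StateOnPIsIDag}, so post-composing with $\psi_\cP$ matches the $i^\dag\circ\bTr_\cV(\coev^\dag_c)\circ\mu_{c,\overline c}\circ(\id\otimes(\chi^{\bTr_\cV}_c)^{-1})$ appearing there.

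The key step is to identify the coevaluation/evaluation bookkeeping. Axiom~\ref{P:PositiveDefinite} as stated uses $\coev^\dag_{\cP[n]}$ and a diagram built from $\coev_{\cP[n]}$, whereas Proposition~\ref{prop:FormulaForEvTr} is phrased in terms of $\coev^\dag_{\bTr_\cV(c)}$; these agree since $\cP[n]=\bTr_\cV(x^{\otimes n})$, but I need to check that the ``cap'' in the planar tangle, together with $r_n^{-1}$ on one leg, reproduces exactly the string-diagram on the right-hand side of Proposition~\ref{prop:FormulaForEvTr} — i.e. that the real structure $r_x$ and the isomorphisms $\nu$, $\chi^{\bTr_\cV}$ occurring in $r_n$ are precisely what is needed to turn the ``bare'' $\overline{c}$-leg of that diagram into the $\cP[n]$-leg with its dagger pairing. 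Concretely, the diagram $\begin{tikzpicture}[baseline=0] \end{tikzpicture}$ in Proposition~\ref{prop:FormulaForEvTr} feeds $\bTr_\cV(c)$ and $\bTr_\cV(\overline c)$ into $\mu$; applying $\bTr_\cV(r_x^{\otimes n})\circ\bTr_\cV(\nu)$ and remembering $(\chi^{\bTr_\cV}_c)^{-1}$ is suppressed, the $\overline c$-leg becomes the $\cP[n]$-leg composed with $r_n^{-1}$, which is exactly the left-hand side of \eqref{eq:PlanarPairing}.

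So the proof is essentially: expand both sides; observe that the planar tangle $Z(\text{-})$ appearing in \eqref{eq:PlanarPairing} is the ``pair of pants capped off'' tangle, so $\psi_\cP\circ Z(T)\circ(r_n^{-1}\otimes\id)$ equals $i^\dag\circ\bTr_\cV(\coev^\dag_c)\circ\mu_{c,\overline c}\circ(\id\otimes(\chi^{\bTr_\cV}_c)^{-1})\circ(\text{composite including }r_n^{-1})$; then invoke Proposition~\ref{prop:FormulaForEvTr} to rewrite this as $\coev^\dag_{\bTr_\cV(c)}=\coev^\dag_{\cP[n]}$. I expect the main obstacle to be the purely bookkeeping task of matching the anchor lines / red strings in the definition of the planar tangle $Z(T)$ to the string-diagram conventions of \cite{MR3578212} used in Proposition~\ref{prop:FormulaForEvTr}, and in particular tracking where $r_x$, $\nu$, and $\chi^{\bTr_\cV}$ enter so that $r_n^{-1}$ is correctly absorbed — but once that identification is made, the statement is immediate from Proposition~\ref{prop:FormulaForEvTr}. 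I would close by noting that positive-definiteness of the resulting pairing is automatic since $\coev^\dag_{\cP[n]}$ is the (nondegenerate, positive) counit pairing in the $\rm C^*$-category $\cV$.
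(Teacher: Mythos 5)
Your proposal is correct and is essentially the paper's own proof: the paper likewise unwinds the left-hand side of \eqref{eq:PlanarPairing} (suppressing $\chi^{\bTr_\cV}$ and $\nu$), recognizes it as the capped pair-of-pants diagram of Proposition~\ref{prop:FormulaForEvTr} with $c=x^{\otimes n}$, and concludes $\coev^\dag_{\bTr_\cV(x^{\otimes n})}$. (Minor slip: the tangle's morphism involves $\bTr_\cV(\coev^\dag_{x^{\otimes n}})$, not $\bTr_\cV(\ev^\dag_{x^{\otimes n}})$, as you in fact write correctly later; and the closing remark about positive-definiteness is unnecessary since the axiom is just the stated equality of pairings.)
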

\begin{proof}
Suppressing $\chi^{\bTr_\cV}$ and $\nu$,
\[
\tikzmath{
	\coordinate (a) at (0,0);
	\pgfmathsetmacro{\boxWidth}{1};
	\draw (-3,-.5) -- (-1,-.5);
	\draw (-3,.5) -- (-1,.5);
	\draw (1,0) -- (2,0);
	\draw[dotted] (2,0) -- (3,0);
	\roundNbox{unshaded}{(-2,-.5)}{.3}{0}{0}{$\scriptstyle r_n^{-1}$}
	\roundNbox{unshaded}{(2,0)}{.3}{0}{0}{$\psi_\cP$}
	\draw[rounded corners=5pt, very thick, unshaded] ($ (a) - (\boxWidth,\boxWidth) $) rectangle ($ (a) + (\boxWidth,\boxWidth) $);
	\draw ($ (a) + 1/3*(0,1) $) -- ($ (a) - 1/3*(0,\boxWidth) $);
	\draw[thick, red] ($ (a) + 1/3*(0,\boxWidth) - 1/5*(\boxWidth,0) $) -- ($ (a) - 2/3*(\boxWidth,0) $);
	\draw[thick, red] ($ (a) - 1/3*(0,\boxWidth) - 1/5*(\boxWidth,0) $) -- ($ (a) - 2/3*(\boxWidth,0) $);
	\draw[very thick] (a) ellipse ({2/3*\boxWidth} and {5/6*\boxWidth});
	\filldraw[very thick, unshaded] ($ (a) + 1/3*(0,\boxWidth) $) circle (1/5*\boxWidth);
	\filldraw[very thick, unshaded] ($ (a) - 1/3*(0,\boxWidth) $) circle (1/5*\boxWidth);
	\node at ($ (a) + (.2,0) $) {\scriptsize{$n$}};
	\node at (-2.65,-.3) {\scriptsize{$\overline{\cP[n]}$}};
	\node at (-1.35,-.3) {\scriptsize{$\cP[n]$}};
	\node at (-2,.7) {\scriptsize{$\cP[n]$}};
	\node at (1.35,.2) {\scriptsize{$\cP[0]$}};
	\node at (2.65,.2) {\scriptsize{$1_\cV$}};
}
=
\begin{tikzpicture}[baseline=1.5cm]
	\coordinate (a1) at (0,0);
	\coordinate (a2) at (1.4,0);
	\coordinate (b1) at (0,1);
	\coordinate (b2) at (1.4,1);
	\coordinate (c1) at (.7,2.5);
	\draw[thick] (c1) arc (180:0:.3cm);
	\pairOfPants{(b1)}	
	\draw[thick, blue] ($ (b2) + (.3,-.1) $) node[below]{$\overline{\scriptstyle x^{\otimes n}}$} to[in=90,out=90, looseness=2.3]($ (b1) + (.3,-.1) $) node[below]{$\scriptstyle x^{\otimes n}$};
\end{tikzpicture}
\underset{\text{\scriptsize (Prop.~\ref{prop:FormulaForEvTr})}}{=}
\coev^\dag_{\bTr_\cV(x^{\otimes n})}.
\qedhere
\]
\end{proof}

\begin{prop}
If $\psi_\cC$ is spherical,\footnote{The existence of a spherical state $\psi_\cC$ is only possible when $\cV$ is spherical by Lemma \ref{lem: C-->D and D spherical}.} so is $\psi_\cP$.
\end{prop}
\begin{proof}
If $\psi_\cC$ is spherical, then
\[
\tikzmath{
	\coordinate (a) at (0,0);
	\pgfmathsetmacro{\boxWidth}{1};
	\draw (-2,0) -- (-1,0);
	\draw (1,0) -- (2,0);
	\draw[dotted] (2,0) -- (3,0);
	\roundNbox{unshaded}{(2,0)}{.3}{0}{0}{$\psi_\cP$}
	\draw[rounded corners=5pt, very thick, unshaded] ($ (a) - (\boxWidth,\boxWidth) $) rectangle ($ (a) + (\boxWidth,\boxWidth) $);
	\draw[thick, red] (180:1/5*\boxWidth) -- (180:4/5*\boxWidth);
	\draw[very thick] (a) circle (4/5*\boxWidth);
	\draw[very thick] (a) circle (1/5*\boxWidth);
	\draw (90:1/5*\boxWidth) .. controls ++(90:.3cm) and ++(90:.5cm) .. (180:1/2*\boxWidth) .. controls ++(270:.5cm) and ++(270:.3cm) ..  (270:1/5*\boxWidth);
	\node at (1.35,.2) {\scriptsize{$\cP[0]$}};
	\node at (2.65,.2) {\scriptsize{$1_\cV$}};
	\node at (-1.5,.2) {\scriptsize{$\cP[2]$}};
}
=
\begin{tikzpicture}[baseline=.9cm, xscale=-1]
	\coordinate (a1) at (0,0);
	\coordinate (b1) at (0,2);
	\coordinate (c) at (.15,.2);
	\draw[thick] (a1) -- (b1);
	\draw[thick] ($ (a1) + (.6,0) $) -- ($ (b1) + (.6,0) $);
	\halfDottedEllipse{(0,0)}{.3}{.1}
	\halfDottedEllipse{(0,2)}{.3}{.1}
	\draw[thick] (b1) arc (180:0:.3cm);		
	\draw[thick, blue] (c)+(.15,1.2) arc (180:0:.08cm)  .. controls ++(270:.2cm) and ++(135:.2cm) .. ($ (c) + (.45,.6) $);	
	\draw[thick, blue] ($ (a1) + (.15,-.08) $) .. controls ++(270:.2cm) and ++(45:.2cm) .. ($ (c) + (-.15,.4) $);
	\draw[thick, blue, dotted] ($ (c) + (-.15,.4) $) -- ($ (c) + (.45,.6) $);	
	\draw[thick, blue] (c)++(.15,1.2) .. controls ++(270:.8cm) and ++(90:.8cm) .. ($ (a1) + (.45,-.08) $);	
\end{tikzpicture}
\underset{\text{(Lem.~\ref{lem:idagAllowsIsotopy})}}{=}
\begin{tikzpicture}[baseline=.9cm]
	\coordinate (a1) at (0,0);
	\coordinate (b1) at (0,2);
	\coordinate (c) at (.15,.4);
	\draw[thick] (a1) -- (b1);
	\draw[thick] ($ (a1) + (.6,0) $) -- ($ (b1) + (.6,0) $);
	\halfDottedEllipse{(0,0)}{.3}{.1}
	\halfDottedEllipse{(0,2)}{.3}{.1}
	\draw[thick] (b1) arc (180:0:.3cm);			
	\draw[thick, blue] ($ (a1) + (.15,-.08) $) -- ($ (a1) + (.15,1) $) arc (180:0:.15cm) -- ($ (a1) + (.45,-.08) $);	
\end{tikzpicture}
=
\tikzmath{
	\coordinate (a) at (0,0);
	\pgfmathsetmacro{\boxWidth}{1};
	\draw (-2,0) -- (-1,0);
	\draw (1,0) -- (2,0);
	\draw[dotted] (2,0) -- (3,0);
	\roundNbox{unshaded}{(2,0)}{.3}{0}{0}{$\psi_\cP$}
	\draw[rounded corners=5pt, very thick, unshaded] ($ (a) - (\boxWidth,\boxWidth) $) rectangle ($ (a) + (\boxWidth,\boxWidth) $);
	\draw[thick, red] (180:1/5*\boxWidth) -- (180:4/5*\boxWidth);
	\draw[very thick] (a) circle (4/5*\boxWidth);
	\draw[very thick] (a) circle (1/5*\boxWidth);
	\draw (90:1/5*\boxWidth) .. controls ++(90:.3cm) and ++(90:.5cm) .. (0:1/2*\boxWidth) .. controls ++(270:.5cm) and ++(270:.3cm) ..  (270:1/5*\boxWidth);
	\node at (1.35,.2) {\scriptsize{$\cP[0]$}};
	\node at (2.65,.2) {\scriptsize{$1_\cV$}};
	\node at (-1.5,.2) {\scriptsize{$\cP[2]$}};
}\,.
\qedhere
\]
\end{proof}

\subsubsection{Functoriality}
\label{sec:LambdaUnitaryExtension}

Suppose $G=(G,\gamma): (\cC_1,\Phi_1^{\scriptscriptstyle \cZ},\vee_1,\psi_1,x_1) \to (\cC_2,\Phi_2^{\scriptscriptstyle \cZ},\vee_2,\psi_2,x_2)$ is a 1-morphism of pointed unitary module multitensor categories over $\cV$.
(Recall that $\gamma$ is a family of unitary isomorphisms $\gamma_v: \Phi_2(v)\to G(\Phi_1(v))$.)

Let $(\cP_i,r^i,\psi_i)=\Lambda(\cC_i,\Phi_i^{\scriptscriptstyle \cZ},\vee_i,\psi_i,x_i)$ be the unitary anchored algebra constructed in the previous subsection for $i=1,2$.
In \cite[\S5.3]{MR4528312}, we obtained a map of ordinary anchored planar algebras $\Lambda(G): \cP_1\to \cP_2$
as follows.
\begin{itemize}
\item
First, for each $c\in \cC_1$, we define $\zeta_c : \bTr_\cV^1(c) \to \bTr_\cV^2(G(c))$ to be the mate of
$$
\Phi_2(\bTr_\cV^1(c)) \xrightarrow{\gamma_{\bTr_\cV^1(c)}} G(\Phi_1(\bTr_\cV^1(c))) \xrightarrow{G(\varepsilon_c^1)} G(c)
$$
under the unitary adjunction $\Phi_2 \dashv \bTr_\cV^2$.

\item
The map $\Lambda(G):\cP_1\to\cP_2$ associated to $G:(\cC_1, x_1)\to(\cC_2, x_2)$ is the sequence of morphisms
$\Lambda(G)[n]:\cP_1[n]\to\cP_2[n]$
given by
\[
\Lambda(G)[n]\,:\,\cP_1[n] 
= \bTr_\cV^1(x_1^{\otimes n}) 
\xrightarrow{\,\,\,\,\textstyle\zeta_{x_1^{\otimes n}}\,\,\,} 
\bTr_\cV^2(G(x_1^{\otimes n})) 
\xrightarrow{\cong} 
\bTr_\cV^2(x_2^{\otimes n}) 
= 
\cP_2[n].
\]
\end{itemize}

It remains to prove that $\Lambda(G)$ is compatible with $r_n$ and $\psi$, i.e., 
\begin{align}
\label{eq: Lambda and r}
\overline{\Lambda(G)[n]}\circ r_n^1 
&= 
r_n^2\circ \Lambda(G)[n]
&&
\forall\,n\geq 0
\\\text{and}\quad
\label{eq:Lambda0PsiCOmpatible}
\psi_1 &= \psi_2\circ \Lambda(G)[0].
\end{align}
Equation \eqref{eq: Lambda and r} is checked by the following commutative diagram.

\adjustbox{scale=.8,center}{
\hspace*{-1.4cm}
\begin{tikzcd}[column sep = 4em]
\bTr^1_\cV(x_1^{\otimes n})
\arrow[r,"\bTr^1_\cV(r_{x_1}^{\otimes n})"]
\arrow[d,"\zeta_{x_1^{\otimes n}}"]
&
\bTr^1_\cV(\overline{x_1}^{\otimes n})
\arrow[r,"\bTr^1_\cV(\nu)"]
\arrow[d,"\zeta_{\overline{x_1}^{\otimes n}}"]
&
\bTr^1_\cV(\overline{x_1^{\otimes n}})
\arrow[rr,"\chi^{\bTr^1_\cV}_{x_1^{\otimes n}}"]
\arrow[d,"\zeta_{\overline{x_1^{\otimes n}}}"]
&
\arrow[d,phantom,"\text{\scriptsize{(Lem.~\ref{lem:ZetaInvolutive})}}"]
&
\overline{\bTr^1_\cV(x_1^{\otimes n})}
\arrow[d,"\overline{\zeta_{x_1^{\otimes n}}}"]
\\
\bTr_\cV^2(G(x_1^{\otimes n}))
\arrow[r,"\bTr^2_\cV(G(r_{x_1}^{\otimes n}))"]
\arrow[d,"\cong"]
&
\bTr_\cV^2(G(\overline{x_1}^{\otimes n}))
\arrow[r,"\bTr^2_\cV(G(\nu))"]
\arrow[d,"\cong"]
&
\bTr_\cV^2(G(\overline{x_1^{\otimes n}})) 
\arrow[r,"\bTr^2_\cV(\chi^G_{x_1^{\otimes n}})"]
\arrow[dr,"\cong"]
&
\bTr_\cV^2(\overline{G(x_1^{\otimes n})}) 
\arrow[r,"\chi^{\bTr^2_\cV}_{G(x_1^{\otimes n})}"]
\arrow[d,"\cong"]
&
\overline{\bTr_\cV^2(G(x_1^{\otimes n}))} 
\arrow[d,"\cong"]
\\
\bTr_\cV^2(x_2^{\otimes n}) 
\arrow[r,"\bTr^2_\cV(r_{x_2}^{\otimes n})"]
&
\bTr_\cV^2(\overline{x_2}^{\otimes n}) 
\arrow[rr,"\bTr^2_\cV(\nu)"]
&&
\bTr_\cV^2(\overline{x_2^{\otimes n}}) 
\arrow[r,"\chi^{\bTr^2_\cV}_{x_2^{\otimes n}}"]
&
\overline{\bTr_\cV^2(x_2^{\otimes n})} 
\end{tikzcd}
}

\begin{lem}
\label{lem:ZetaInvolutive}
The natural transformation $\zeta : \bTr_\cV^1 \Rightarrow \bTr_\cV^2\circ G$ is involutive, i.e., for all $c\in \cC_1$,
$\overline{\zeta_c}\circ \chi^{\bTr_\cV^1}_{c} = \chi^{\bTr_\cV^2}_{G(c)}\circ \bTr_\cV^2(\chi^G_c) \circ \zeta_{\overline{c}}$.
\end{lem}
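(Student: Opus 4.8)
The plan is to reduce the claimed identity, via Lemma~\ref{lem:MateOfBarF} and the mate identities \ref{Mate:LeftMateIdentity}--\ref{Mate:RightMateIdentity}, to a short diagram chase in $\cC_2$. Since $\chi^{\bTr_\cV^2}_{G(c)}$ is invertible, the assertion is equivalent to the equality $(\chi^{\bTr_\cV^2}_{G(c)})^{-1}\circ \overline{\zeta_c}\circ \chi^{\bTr_\cV^1}_c = \bTr_\cV^2(\chi^G_c)\circ \zeta_{\overline c}$ of morphisms $\bTr_\cV^1(\overline c)\to \bTr_\cV^2(\overline{G(c)})$ in $\cV$. Recall that $\zeta_c=\mate\big(G(\varepsilon^1_c)\circ \gamma_{\bTr_\cV^1(c)}\big)$, the mate being taken under the unitary adjunction $\Phi_2\dashv \bTr_\cV^2$. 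Applying Lemma~\ref{lem:MateOfBarF} to this adjunction (with $\chi^{\bTr_\cV^2}$ the involutive structure of Proposition~\ref{prop:AdjointInvolutive}) identifies $(\chi^{\bTr_\cV^2}_{G(c)})^{-1}\circ \overline{\zeta_c}$ with $\mate\big(\overline{G(\varepsilon^1_c)}\circ \overline{\gamma_{\bTr_\cV^1(c)}}\circ \chi^{\Phi_2}_{\bTr_\cV^1(c)}\big)$; then \ref{Mate:RightMateIdentity} absorbs the pre-composition by $\chi^{\bTr_\cV^1}_c$ into the mate, and \ref{Mate:LeftMateIdentity} absorbs the post-composition by $\bTr_\cV^2(\chi^G_c)$ on the other side. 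Taking mates once more under $\Phi_2\dashv \bTr_\cV^2$, the displayed equality becomes the identity
\[
\overline{G(\varepsilon^1_c)}\circ \overline{\gamma_{\bTr_\cV^1(c)}}\circ \chi^{\Phi_2}_{\bTr_\cV^1(c)}\circ \Phi_2(\chi^{\bTr_\cV^1}_c)
\,=\,
\chi^G_c\circ G(\varepsilon^1_{\overline c})\circ \gamma_{\bTr_\cV^1(\overline c)}
\]
of morphisms $\Phi_2(\bTr_\cV^1(\overline c))\to \overline{G(c)}$ in $\cC_2$.

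I would then prove this $\cC_2$-identity by chasing the left-hand side. First, the involutivity of the action coherence $\gamma$ (with $v=\bTr_\cV^1(c)$) rewrites $\overline{\gamma_{\bTr_\cV^1(c)}}\circ \chi^{\Phi_2}_{\bTr_\cV^1(c)}$ as $\chi^G_{\Phi_1(\bTr_\cV^1(c))}\circ G(\chi^{\Phi_1}_{\bTr_\cV^1(c)})\circ \gamma_{\overline{\bTr_\cV^1(c)}}$. Next, naturality of $\gamma$ turns $\gamma_{\overline{\bTr_\cV^1(c)}}\circ \Phi_2(\chi^{\bTr_\cV^1}_c)$ into $G(\Phi_1(\chi^{\bTr_\cV^1}_c))\circ \gamma_{\bTr_\cV^1(\overline c)}$, and conjugate-naturality of $\chi^G$ applied to the morphism $\varepsilon^1_c$ turns $\overline{G(\varepsilon^1_c)}\circ \chi^G_{\Phi_1(\bTr_\cV^1(c))}$ into $\chi^G_c\circ G(\overline{\varepsilon^1_c})$. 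Collecting the $G(-)$ terms, the left-hand side becomes $\chi^G_c\circ G\big(\overline{\varepsilon^1_c}\circ \chi^{\Phi_1}_{\bTr_\cV^1(c)}\circ \Phi_1(\chi^{\bTr_\cV^1}_c)\big)\circ \gamma_{\bTr_\cV^1(\overline c)}$, and the identity now follows from the compatibility of the counit $\varepsilon^1$ of $\Phi_1\dashv \bTr_\cV^1$ with $\chi^{\Phi_1}$ and $\chi^{\bTr_\cV^1}$ recorded in the last display of Proposition~\ref{prop:AdjointInvolutive}, namely $\overline{\varepsilon^1_c}\circ \chi^{\Phi_1}_{\bTr_\cV^1(c)}\circ \Phi_1(\chi^{\bTr_\cV^1}_c)=\varepsilon^1_{\overline c}$.

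The computation is purely formal, so the only real difficulty is the bookkeeping: keeping the many coherence isomorphisms straight and applying each mate identity in the correct slot. I would therefore organize the write-up as the reduction above followed by a single commuting diagram in $\cC_2$, assembled from the naturality square of $\gamma$, the involutivity square of $\gamma$, a conjugate-naturality square for $\chi^G$, and the counit square of Proposition~\ref{prop:AdjointInvolutive}. Alternatively, since $\zeta$, the $\chi^{\bTr_\cV^i}$, and $\chi^G$ are all built out of units, counits, and the crossings $\chi^{\Phi_i}$ and $\chi^G$, the statement admits a short proof by an isotopy in the graphical calculus for $\Cat$ of \S\ref{sec:InvolutiveFunctors}, and I would include whichever version reads more cleanly.
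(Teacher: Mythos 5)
Your argument is correct and is essentially the paper's own proof: both reduce the identity to an equation of mates under the adjunction $\Phi_2\dashv \bTr^2_\cV$ and then verify it in $\cC_2$ using the involutivity axiom for $\gamma$, naturality of $\gamma$, conjugate-naturality of $\chi^G$ applied to $\varepsilon^1_c$, and the counit compatibility from Proposition~\ref{prop:AdjointInvolutive}. The only difference is presentational — you give a linear chain of rewrites seeded by Lemma~\ref{lem:MateOfBarF}, whereas the paper assembles the same cells into a single pasting diagram.
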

\begin{proof}
We prove the equivalent relation:
$$
(\chi^{\bTr_\cV^2}_{G(c)})^{-1}\circ \overline{\zeta_c} = \bTr_\cV^2(\chi^G_c) \circ \zeta_{\overline{c}}\circ (\chi^{\bTr_\cV^1}_{c})^{-1}.
$$
To do so, we take mates under the adjunction
$$
\cV(\overline{\bTr_\cV^1(c)}\to \bTr_\cV^2(\overline{G(c)}))
\cong
\cC_1(\Phi_2(\overline{\bTr_\cV^1(c)})\to \overline{G(c)}).
$$
In the commutative diagram below, the mate of the left hand side above is going down and then right, and the mate of the right hand side is going right and then down.
$$
\begin{tikzcd}[column sep = 4em]
\Phi_2(\overline{\bTr_\cV^1(c)})
\arrow[r,"\Phi_2(\chi^{\bTr_\cV^1}_c)^{-1}"]
\arrow[dr,"\gamma_{\overline{\bTr_\cV^1(c)}}"]
\arrow[ddr,swap,"\chi^{\Phi_2}_{\bTr_\cV^1(c)}"]
\arrow[ddd,"\Phi_2(\overline{\zeta_c})"]
&
\Phi_2(\bTr_\cV^1(\overline{c}))
\arrow[rr,"\gamma_{\bTr_\cV^1(\overline{c})}"]
&&
G(\Phi_1(\bTr_\cV^1(\overline{c}))) 
\arrow[d,"G(\varepsilon_{\overline{c}}^1)"]
\\
&
G(\Phi_1(\overline{\bTr_\cV^1(c)}))
\arrow[urr,"G(\Phi_1(\chi^{\bTr_\cV^1}_c))^{-1}"]
\arrow[r,swap,"G(\chi^{\Phi_1}_{\bTr_\cV^1(c)})"]
&
G(\overline{\Phi_1(\bTr_\cV^1(c))})
\arrow[r,"G(\overline{\varepsilon^1_c})"]
\arrow[d,"\chi^G_{\Phi_1(\bTr_\cV^1(c))}"]
&
G(\overline{c})
\arrow[dd,"\chi^G_c"]
\\
&
\overline{\Phi_2(\bTr^1_\cV(c))}
\arrow[r,"\overline{\gamma_{\bTr_\cV^1(c)}}"]
\arrow[dr,"\overline{\Phi_2(\zeta_c)}"]
&
\overline{G(\Phi_1(\bTr_\cV^1(c)))}
\arrow[dr,"\overline{G(\varepsilon^1_c)}"]
&
\\
\Phi_2(\overline{\bTr_\cV^2(G(c))})
\arrow[rr,"\chi^{\Phi_2}_{\bTr_\cV^2(G(c))}"]
&&
\overline{\Phi_2(\bTr_\cV^2(G(c)))}
\arrow[r,"\overline{\varepsilon^2_{G(c)}}"]
&
\overline{G(c)}
\end{tikzcd}
$$
The pentagon in the diagram above is the involutivity axiom for $\gamma$.
\end{proof}

Checking \eqref{eq:Lambda0PsiCOmpatible} is straightforward.
By Lemma \ref{lem:IdentifyStatesOnGroundC*Alg},
$\psi_j$ on $\End_{\cC_j}(1_{\cC_j})$ is identified with $i_j^\dag\circ -$ on $\Hom_\cV(1\to \cP_j[0])$ under the isomorphism \eqref{eq:IdentifyGroundC*Alg},
which is exactly $\psi_j$ on $\cP_j$ by Definition \ref{defn:StateOnPIsIDag}.
Since $\psi_1=\psi_2 \circ G$ on $\End_{\cC_1}(1_{\cC_1})$, the result follows.

\subsection{From unitary anchored planar algebras to unitary module tensor categories}

Given a unitary anchored planar algebra $(\cP,r,\psi_\cP)$ in $\cV$, we begin by constructing an ordinary pivotal module tensor category $(\cC,\Phi^{\scriptscriptstyle \cZ})$ as in \cite[\S6]{MR4528312}.
First, we construct a full subcategory $\cC_0$, and we obtain $\cC$ by taking the Cauchy completion.
\begin{itemize}
\item
Objects in $\cC_0$ are formal symbols $``\Phi(v)\otimes x^{\otimes n}"$ for $v\in \cV$ and $n\geq 0$.
\item
Hom spaces are defined by
$$
\cC_0(``\Phi(u)\otimes x^{\otimes k}" \to ``\Phi(v)\otimes x^{\otimes n}") := \cV(u \to v\otimes \cP[n+k]).
$$
Morphisms are represented graphically by
$
\tikzmath{
\roundNbox{fill=white}{(0,0)}{.3}{0}{0}{$f$}
\draw (0,-.7) --node[left]{$\scriptstyle u$} (0,-.3);
\draw (0,.7) --node[left]{$\scriptstyle v$} (0,.3);
\draw (.3,0) --node[above]{$\scriptstyle \cP[n+k]$} (1.3,0);
}\,.
$
\item
Composition is given by 
$$
\tikzmath{
\roundNbox{fill=white}{(0,0)}{.3}{0}{0}{$g$}
\draw (0,-.7) --node[left]{$\scriptstyle v$} (0,-.3);
\draw (0,.7) --node[left]{$\scriptstyle w$} (0,.3);
\draw (.3,0) --node[above]{$\scriptstyle \cP[p+n]$} (1.3,0);
}
\circ
\tikzmath{
\roundNbox{fill=white}{(0,0)}{.3}{0}{0}{$f$}
\draw (0,-.7) --node[left]{$\scriptstyle u$} (0,-.3);
\draw (0,.7) --node[left]{$\scriptstyle v$} (0,.3);
\draw (.3,0) --node[above]{$\scriptstyle \cP[n+k]$} (1.3,0);
}
:=
\tikzmath{
	\coordinate (a) at (0,0);
	\pgfmathsetmacro{\boxWidth}{1};
	\roundNbox{unshaded}{(-2.3,.5)}{.3}{0}{0}{$g$}
	\roundNbox{unshaded}{(-2.3,-.5)}{.3}{0}{0}{$f$}
	\draw[rounded corners=5pt, very thick, unshaded] ($ (a) - (\boxWidth,\boxWidth) $) rectangle ($ (a) + (\boxWidth,\boxWidth) $);
	\draw ($ (a) + 5/6*(0,1) $) -- ($ (a) - 5/6*(0,\boxWidth) $);
	\draw[thick, red] ($ (a) + 1/3*(0,\boxWidth) - 1/5*(\boxWidth,0) $) -- ($ (a) - 2/3*(\boxWidth,0) $);
	\draw[thick, red] ($ (a) - 1/3*(0,\boxWidth) - 1/5*(\boxWidth,0) $) -- ($ (a) - 2/3*(\boxWidth,0) $);
	\draw[very thick] (a) ellipse ({2/3*\boxWidth} and {5/6*\boxWidth});
	\filldraw[very thick, unshaded] ($ (a) + 1/3*(0,\boxWidth) $) circle (1/5*\boxWidth);
	\filldraw[very thick, unshaded] ($ (a) - 1/3*(0,\boxWidth) $) circle (1/5*\boxWidth);
	\node at ($ (a) + (.2,0) $) {\scriptsize{$n$}};
	\node at ($ (a) + (.2,-.65) $) {\scriptsize{$k$}};
	\node at ($ (a) + (.2,.65) $) {\scriptsize{$p$}};
\draw (-2.3,-1.2) --node[left]{$\scriptstyle u$} (-2.3,-.8);
\draw (-2.3,-.2) --node[left]{$\scriptstyle v$} (-2.3,.2);
\draw (-2.3,1.2) --node[left]{$\scriptstyle w$} (-2.3,.8);
\draw (-1,.5) --node[above]{$\scriptstyle \cP[p+n]$} (-2,.5);
\draw (-1,-.5) --node[above]{$\scriptstyle \cP[n+k]$} (-2,-.5);
\draw (1,0) --node[above]{$\scriptstyle \cP[p+k]$} (2,0);
}\,.
$$
\item
The adjoint functor pair $\Phi \dashv \bTr_\cV$ is given 
on objects by
$\Phi(u):=``\Phi(u)\otimes x^{\otimes 0}"$
and
$\bTr_\cV(``\Phi(v)\otimes x^{\otimes n}"):=v\otimes \cP[n]$
and given on morphisms by
$$
\hspace*{-1cm}
\Phi\left(
\tikzmath{
\draw (0,-.7) --node[left]{$\scriptstyle u$} (0,-.3);
\draw (0,.3) --node[left]{$\scriptstyle v$} (0,.7);
\roundNbox{unshaded}{(0,0)}{.3}{0}{0}{$g$}
}
\right)
:=
\tikzmath{
\draw (0,-.7) --node[left]{$\scriptstyle u$} (0,-.3);
\draw (0,.3) --node[left]{$\scriptstyle v$} (0,.7);
\draw (1.4,0) --node[above]{$\scriptstyle \cP[0]$} (2,0);
\roundNbox{unshaded}{(0,0)}{.3}{0}{0}{$g$}
\roundNbox{unshaded}{(1,0)}{.4}{0}{0}{}
\draw[very thick] (1,0) circle (.25cm);
\filldraw[red] (.75,0) circle (.05cm);
}
\qquad\qquad
\bTr_\cV\left(
\tikzmath{
\roundNbox{fill=white}{(0,0)}{.3}{0}{0}{$f$}
\draw (0,-.7) --node[left]{$\scriptstyle u$} (0,-.3);
\draw (0,.7) --node[left]{$\scriptstyle v$} (0,.3);
\draw (.3,0) --node[above]{$\scriptstyle \cP[n+k]$} (1.3,0);
}
\right)
:=
\tikzmath{
	\coordinate (a) at (0,0);
	\pgfmathsetmacro{\boxWidth}{1};
	\roundNbox{unshaded}{(-2.3,.5)}{.3}{0}{0}{$f$}
	\draw[rounded corners=5pt, very thick, unshaded] ($ (a) - (\boxWidth,\boxWidth) $) rectangle ($ (a) + (\boxWidth,\boxWidth) $);
	\draw ($ (a) + 5/6*(0,1) $) -- ($ (a) - 1/3*(0,\boxWidth) $);
	\draw[thick, red] ($ (a) + 1/3*(0,\boxWidth) - 1/5*(\boxWidth,0) $) -- ($ (a) - 2/3*(\boxWidth,0) $);
	\draw[thick, red] ($ (a) - 1/3*(0,\boxWidth) - 1/5*(\boxWidth,0) $) -- ($ (a) - 2/3*(\boxWidth,0) $);
	\draw[very thick] (a) ellipse ({2/3*\boxWidth} and {5/6*\boxWidth});
	\filldraw[very thick, unshaded] ($ (a) + 1/3*(0,\boxWidth) $) circle (1/5*\boxWidth);
	\filldraw[very thick, unshaded] ($ (a) - 1/3*(0,\boxWidth) $) circle (1/5*\boxWidth);
	\node at ($ (a) + (.2,0) $) {\scriptsize{$k$}};
	\node at ($ (a) + (.2,.65) $) {\scriptsize{$n$}};
\draw (-2.3,-1.2) --node[left]{$\scriptstyle u$} (-2.3,.2);
\draw (-2.3,1.2) --node[left]{$\scriptstyle v$} (-2.3,.8);
\draw (-1,.5) --node[above]{$\scriptstyle \cP[n+k]$} (-2,.5);
\draw (-1,-.5) arc (90:180:.5cm) --node[left]{$\scriptstyle \cP[k]$} (-1.5,-1.2);
\draw (1,0) --node[above]{$\scriptstyle \cP[n]$} (2,0);
}\,.
$$
Moreover, the identity map
$$
\cC\big(\Phi(u)\to ``\Phi(v)\otimes x^{\otimes n}"\big)
=
\cV\big(u\to v\otimes \cP[n]\big)
=
\cV\big(u\to \bTr_\cV(``\Phi(v)\otimes x^{\otimes n}")\big)
$$
witnesses the adjunction $\Phi \dashv \bTr_\cV$.
\item
Tensor product is given by
$$
\tikzmath{
\roundNbox{fill=white}{(0,0)}{.3}{0}{0}{$f$}
\draw (0,-.7) --node[left]{$\scriptstyle u$} (0,-.3);
\draw (0,.7) --node[left]{$\scriptstyle v$} (0,.3);
\draw (.3,0) --node[above]{$\scriptstyle \cP[n+k]$} (1.3,0);
}
\otimes
\tikzmath{
\roundNbox{fill=white}{(0,0)}{.3}{0}{0}{$g$}
\draw (0,-.7) --node[left]{$\scriptstyle w$} (0,-.3);
\draw (0,.7) --node[left]{$\scriptstyle x$} (0,.3);
\draw (.3,0) --node[above]{$\scriptstyle \cP[q+p]$} (1.3,0);
}
:=
\begin{tikzpicture}[baseline=-.6cm]
	\draw (0,0) -- (2,0);
	\draw (-1,-1) -- (2,-1);
	\draw (4.2,-.5) -- (6.8,-.5);
	\draw (-1,.8) -- (-1,-1.8);
	\draw[knot] (0,.8) -- (0,-1.8);
	\roundNbox{unshaded}{(-1,-1)}{.3}{0}{0}{$f$}
	\roundNbox{unshaded}{(0,0)}{.3}{0}{0}{$g$}
	\tensor{(3.2,-.5)}{1.2}{k}{n}{p}{q}
	\node at (1.2,.2) {\scriptsize{$\cP[q{+}p]$}};
	\node at (1.2,-.8) {\scriptsize{$\cP[n{+}k]$}};
	\node at (5.6,-.3) {$\scriptstyle \cP[q+n+p+k]$};
	\node at (-1.2,-1.6) {\scriptsize{$u$}};
	\node at (-.2,-1.6) {\scriptsize{$w$}};
	\node at (-1.2,.6) {\scriptsize{$v$}};
	\node at (-.2,.6) {\scriptsize{$x$}};
\end{tikzpicture}
$$
and the tensor unit is given by $``\Phi(1)\otimes x^{\otimes 0}"$.
The associators and unitors are inhereted from those of $\cV$, i.e.,
$$
\alpha_{
``\Phi(u)\otimes x^{\otimes i}"
,
``\Phi(v)\otimes x^{\otimes j}"
,
``\Phi(w)\otimes x^{\otimes k}"
}
:=
\tikzmath{
\draw (0,-.7) --node[left]{$\scriptstyle (u\otimes v)\otimes w$} (0,-.3);
\draw (0,.3) --node[left]{$\scriptstyle u\otimes(v\otimes w)$} (0,.7);
\draw (1.4,0) --node[above]{$\scriptstyle \cP[2r]$} (2.2,0);
\roundNbox{unshaded}{(0,0)}{.3}{0}{0}{$\alpha$}
\roundNbox{unshaded}{(1,0)}{.4}{0}{0}{}
\draw[very thick] (1,0) circle (.25cm);
\draw (1,-.25) --node[right,xshift=-.1cm]{$\scriptstyle r$} (1,.25);
\filldraw[red] (.75,0) circle (.05cm);
}
\qquad\qquad
r:=i+j+k,
$$
and similarly for the unitors $\lambda,\rho$.
\item
$\cC_0$ is rigid with duals given by $(``\Phi(v)\otimes x^{\otimes n}")^\vee:=``\Phi(v^\vee)\otimes x^{\otimes n}"$,
and evaluation and coevaluation given by
$$
\ev_{``\Phi(v)\otimes x^n"}
=
\begin{tikzpicture}[baseline=-.1cm]
	\draw (-.4,-.8) node[above, yshift=-12, xshift=1]{$\scriptstyle v^\vee$} -- (-.4,-.4) arc (180:0:.2cm) -- (0,-.8)node[above, yshift=-12]{$\scriptstyle v$};
	\draw (1.3,0) -- (2.3,0);
	\node[xshift=1] at (-.2,0) {\scriptsize{$\ev_v$}};
	\evaluationMap{(.8,0)}{.5}{n}
	\node at (1.8,.2) {\scriptsize{$\cP[2n]$}};
\end{tikzpicture}
\qquad\qquad
\coev_{``\Phi(v)\otimes x^n"}
=
\begin{tikzpicture}[baseline=-.1cm]
	\draw (-.4,.8)node[above]{$\scriptstyle v$} -- (-.4,.4) arc (-180:0:.2cm) -- (0,.8)node[above, xshift=2]{$\scriptstyle v^\vee$};
	\draw (1.3,0) -- (2.3,0);
	\node at (-.2,0) {\scriptsize{$\coev_v$}};
	\coevaluationMap{(.8,0)}{.5}{n}
	\node at (1.8,.2) {\scriptsize{$\cP[2n]$}};
\end{tikzpicture}\,.
$$
Note here that the evaluation and coevaluation in $\cV$ come from our chosen unitary dual functor of $\cV$.
These choices of duals endow $\cC_0$ with a dual functor $\vee_\cC$.
\item
The pivotal structure $\varphi^\cC:``\Phi(v)\otimes x^{\otimes n}"\to (``\Phi(v)\otimes x^{\otimes n}")^{\vee\vee}$ is given by
$$
\begin{tikzpicture}[baseline=-.1cm]
	\draw (0,-1) -- (0,1);
	\roundNbox{unshaded}{(0,0)}{.4}{0}{0}{$\varphi_v$}
	\draw (1.4,0) -- (2.4,0);
	\node at (-.2,-.8) {\scriptsize{$v$}};
	\node at (-.3,.8) {\scriptsize{$v^{\vee\vee}$}};
	\identityMap{(1,0)}{.4}{n\,\,\,}
	\node at (1.9,.2) {\scriptsize{$\cP[2n]$}};
\end{tikzpicture}
=
\begin{tikzpicture}[baseline=-.1cm]
	\draw (-.6,-1) -- (-.6,-.3);
	\draw (-.2,-.3) arc (-180:0:0.3cm) -- (.4,1);
	\roundNbox{unshaded}{(-.4,0)}{.3}{.3}{.3}{$\coev_v^\dag$}
	\draw (1.4,0) -- (2.4,0);
	\node at (-.8,-.8) {\scriptsize{$v$}};
	\node at (-.3,-.6) {\scriptsize{$v^{\vee}$}};
	\node at (.1,.8) {\scriptsize{$v^{\vee\vee}$}};
	\identityMap{(1,0)}{.4}{n\,\,\,}
	\node at (1.9,.2) {\scriptsize{$\cP[2n]$}};
\end{tikzpicture}
\,.
$$
The equality above comes from the fact that $\varphi_v$ is the canonical unitary pivotal structure of $\cV$ coming from $\vee$.
\item
The generator is $x=``\Phi(1_\cV)\otimes x"$.
Since we identify $1_\cV^\vee$ with $1_\cV$, we may identify $x^\vee=x$.
The symmetric self duality $r_x:x\to x^\vee$ is the identity map.
\end{itemize}

\subsubsection{Adding the dagger structure}

It remains to perform the following tasks:
\begin{enumerate}[label=(C\arabic*)]
\item 
\label{C:Dagger}
construct a dagger structure on $\cC$ making it a unitary multitensor category
\item
\label{C:State}
construct a faithful state $\psi_\cC$ on $\End_\cC(1_\cC)$,
\item
\label{C:UDF}
check that $\vee_\cC$ is a unitary dual functor and that 
the canonical unitary pivotal structure induced by $\vee_\cC$ is $\varphi^\cC$.
\item
\label{C:Real}
check that $r_x:x\to x^\vee$ is a real structure.
\item
\label{C:Adjunction}
check that the adjunction $\Phi \dashv \bTr_\cV$ is unitary.
\end{enumerate}


\begin{defn}
We define a dagger structure on $\cC_0$ by
\begin{equation}
\label{eq:DaggerOnC0}
\left(
\tikzmath{
\roundNbox{fill=white}{(0,0)}{.3}{0}{0}{$f$}
\draw (0,-.7) --node[left]{$\scriptstyle u$} (0,-.3);
\draw (0,.7) --node[left]{$\scriptstyle v$} (0,.3);
\draw (.3,0) --node[above]{$\scriptstyle \cP[n+k]$} (1.3,0);
}
\right)^*
=
\tikzmath{
\roundNbox{fill=white}{(0,0)}{.3}{0}{0}{$f^*$}
\draw (0,-.7) --node[left]{$\scriptstyle v$} (0,-.3);
\draw (0,.7) --node[left]{$\scriptstyle u$} (0,.3);
\draw (.3,0) --node[above]{$\scriptstyle \cP[n+k]$} (1.3,0);
}
:=
\tikzmath{
\roundNbox{fill=white}{(0,0)}{.3}{.1}{.1}{$f^\dag$}
\roundNbox{fill=white}{(1,0)}{.3}{.2}{.2}{$\scriptstyle r_{n+k}^{-1}$}
\draw (-.2,-.7) --node[left]{$\scriptstyle v$} (-.2,-.3);
\draw (0,.7) --node[left]{$\scriptstyle u$} (0,.3);
\draw (.2,-.3) 
arc (-180:0:.4cm) node[right, yshift=-.2cm]{$\scriptstyle \overline{\cP[n+k]}$};
\draw (1,.3) arc (180:90:.3cm) -- (1.6,.6) node[right]{$\scriptstyle \cP[k+n]$};
}\,.
\end{equation}
It is straightforward to verify that $f^{**}=f$ for all morphisms $f$.
To check the remainder of involutivity, we see that $*$ on the composite
$$
``\Phi(u)\otimes x^{\otimes k}"
\xrightarrow{f}
``\Phi(v)\otimes x^{\otimes n}"
\xrightarrow{g}
``\Phi(w)\otimes x^{\otimes p}"
$$
is given by
\begin{align*}
\left(
\tikzmath{
\roundNbox{fill=white}{(0,0)}{.3}{0}{0}{$g$}
\draw (0,-.7) --node[left]{$\scriptstyle v$} (0,-.3);
\draw (0,.7) --node[left]{$\scriptstyle w$} (0,.3);
\draw (.3,0) --node[above]{$\scriptstyle \cP[p+n]$} (1.3,0);
}
\circ
\tikzmath{
\roundNbox{fill=white}{(0,0)}{.3}{0}{0}{$f$}
\draw (0,-.7) --node[left]{$\scriptstyle u$} (0,-.3);
\draw (0,.7) --node[left]{$\scriptstyle v$} (0,.3);
\draw (.3,0) --node[above]{$\scriptstyle \cP[n+k]$} (1.3,0);
}
\right)^*
&=
\tikzmath{
	\draw (.4,-.3) arc (-180:-90:.2cm) --node[above]{$\scriptstyle \overline{\cP[p+n]}$} (2,-.5);
	\draw (.2,-1.3) arc (-180:-90:.2cm) --node[above]{$\scriptstyle \overline{\cP[n+k]}$} (2,-1.5);
	\draw (4,-1) --node[above]{$\scriptstyle \overline{\cP[p+k]}$} (5.4,-1);
	\draw (6,-1) --node[above]{$\scriptstyle \cP[p+k]$} (7.4,-1);
	\draw (.2,.3) --node[left]{$\scriptstyle w$} (.2,.8);
	\draw (0,-.7) --node[left]{$\scriptstyle v$} (0,-.3);
	\draw (-.2,-1.3) --node[left]{$\scriptstyle u$} (-.2,-1.8);
	\roundNbox{unshaded}{(.2,0)}{.3}{.1}{.1}{$f^\dag$}
	\roundNbox{unshaded}{(0,-1)}{.3}{.1}{.1}{$g^\dag$}
	\roundNbox{unshaded}{(5.7,-1)}{.3}{0}{0}{$\scriptstyle r^{-1}$}
  \coordinate (a) at (3,-1);
	\pgfmathsetmacro{\boxWidth}{1};
	\draw[rounded corners=5pt, very thick, unshaded] ($ (a) - (\boxWidth,\boxWidth) $) rectangle ($ (a) + (\boxWidth,\boxWidth) + (0,.3) $);
	\draw ($ (a) + 5/6*(0,1) $) -- ($ (a) - 5/6*(0,\boxWidth) $);
	\draw[thick, red] ($ (a) + 1/3*(0,\boxWidth) - 1/5*(\boxWidth,0) $) -- ($ (a) - 2/3*(\boxWidth,0) $);
	\draw[thick, red] ($ (a) - 1/3*(0,\boxWidth) - 1/5*(\boxWidth,0) $) -- ($ (a) - 2/3*(\boxWidth,0) $);
	\draw[very thick] (a) ellipse ({2/3*\boxWidth} and {5/6*\boxWidth});
	\filldraw[very thick, unshaded] ($ (a) + 1/3*(0,\boxWidth) $) circle (1/5*\boxWidth);
	\filldraw[very thick, unshaded] ($ (a) - 1/3*(0,\boxWidth) $) circle (1/5*\boxWidth);
	\node at ($ (a) + (.2,0) $) {\scriptsize{$n$}};
	\node at ($ (a) + (.2,-.65) $) {\scriptsize{$k$}};
	\node at ($ (a) + (.2,.65) $) {\scriptsize{$p$}};
	\draw (2.2,0) -- (3.8,0);
}
\displaybreak[1]\\&=
\tikzmath{
	\coordinate (a) at (0,0);
	\pgfmathsetmacro{\boxWidth}{1};
	\roundNbox{unshaded}{(-2.3,.5)}{.3}{0}{0}{$f^*$}
	\roundNbox{unshaded}{(-2.3,-.5)}{.3}{0}{0}{$g^*$}
	\draw[rounded corners=5pt, very thick, unshaded] ($ (a) - (\boxWidth,\boxWidth) $) rectangle ($ (a) + (\boxWidth,\boxWidth) $);
	\draw ($ (a) + 5/6*(0,1) $) -- ($ (a) - 5/6*(0,\boxWidth) $);
	\draw[thick, red] ($ (a) + 1/3*(0,\boxWidth) - 1/5*(\boxWidth,0) $) -- ($ (a) - 2/3*(\boxWidth,0) $);
	\draw[thick, red] ($ (a) - 1/3*(0,\boxWidth) - 1/5*(\boxWidth,0) $) -- ($ (a) - 2/3*(\boxWidth,0) $);
	\draw[very thick] (a) ellipse ({2/3*\boxWidth} and {5/6*\boxWidth});
	\filldraw[very thick, unshaded] ($ (a) + 1/3*(0,\boxWidth) $) circle (1/5*\boxWidth);
	\filldraw[very thick, unshaded] ($ (a) - 1/3*(0,\boxWidth) $) circle (1/5*\boxWidth);
	\node at ($ (a) + (.2,0) $) {\scriptsize{$n$}};
	\node at ($ (a) + (.2,-.65) $) {\scriptsize{$p$}};
	\node at ($ (a) + (.2,.65) $) {\scriptsize{$k$}};
\draw (-2.3,-1.2) --node[left]{$\scriptstyle w$} (-2.3,-.8);
\draw (-2.3,-.2) --node[left]{$\scriptstyle v$} (-2.3,.2);
\draw (-2.3,1.2) --node[left]{$\scriptstyle u$} (-2.3,.8);
\draw (-1,.5) --node[above]{$\scriptstyle \cP[n+k]$} (-2,.5);
\draw (-1,-.5) --node[above]{$\scriptstyle \cP[p+n]$} (-2,-.5);
\draw (1,0) --node[above]{$\scriptstyle \cP[k+p]$} (2,0);
}\,,
\end{align*}
which is exactly the composite of $f^*$ and $g^*$.
We now observe that
\begin{align*}
\left(
\tikzmath{
\roundNbox{fill=white}{(0,0)}{.3}{0}{0}{$f$}
\draw (0,-.7) --node[left]{$\scriptstyle u$} (0,-.3);
\draw (0,.7) --node[left]{$\scriptstyle v$} (0,.3);
\draw (.3,0) --node[above]{$\scriptstyle \cP[n+k]$} (1.3,0);
}
\otimes
\tikzmath{
\roundNbox{fill=white}{(0,0)}{.3}{0}{0}{$g$}
\draw (0,-.7) --node[left]{$\scriptstyle w$} (0,-.3);
\draw (0,.7) --node[left]{$\scriptstyle x$} (0,.3);
\draw (.3,0) --node[above]{$\scriptstyle \cP[q+p]$} (1.3,0);
}
\right)^*
&=
\tikzmath{
	\draw (-.8,-.3) arc (-180:-90:.2cm) --node[above]{$\scriptstyle \overline{\cP[n+k]}$} (2,-.5);
	\draw (.2,-1.3) arc (-180:-90:.2cm) --node[above]{$\scriptstyle \overline{\cP[q+p]}$} (2,-1.5);
	\draw (4.4,-1) --node[above]{$\scriptstyle \overline{\cP[q+n+p+k]}$} (6.4,-1);
	\draw (7,-1) --node[above]{$\scriptstyle \cP[q+n+p+k]$} (8.8,-1);
	\draw (-1,.3) -- (-1,.8);
	\draw (-1.2,-1.8) -- (-1.2,-.3);
	\draw[knot] (0,.8) -- (0,-.7);
	\draw (-.2,-1.3) -- (-.2,-1.8);
	\roundNbox{unshaded}{(-1,0)}{.3}{.1}{.1}{$f^\dag$}
	\roundNbox{unshaded}{(0,-1)}{.3}{.1}{.1}{$g^\dag$}
	\roundNbox{unshaded}{(6.7,-1)}{.3}{0}{0}{$\scriptstyle r^{-1}$}
	\tensor{(3.2,-1)}{1.2}{k}{n}{p}{q}
	\draw (2.2,0) -- (4.2,0);
	\node at (-1.4,-1.6) {\scriptsize{$v$}};
	\node at (-.4,-1.6) {\scriptsize{$x$}};
	\node at (-1.2,.6) {\scriptsize{$u$}};
	\node at (-.2,.6) {\scriptsize{$w$}};}
\displaybreak[1]\\&=
\tikzmath[yscale=-1]{
	\draw (0,0) --node[above]{$\scriptstyle \cP[q+p]$} (2,0);
	\draw (-1,-1) --node[above, xshift=.4cm]{$\scriptstyle \cP[n+k]$} (2,-1);
	\draw (4.2,-.5) --node[above]{$\scriptstyle \cP[q+n+p+k]$} (6.8,-.5);
	\draw (-1,.8) -- (-1,-1.8);
	\draw[knot] (0,.8) -- (0,-1.8);
	\roundNbox{unshaded}{(-1,-1)}{.3}{0}{0}{$f^*$}
	\roundNbox{unshaded}{(0,0)}{.3}{0}{0}{$g^*$}
	\tensor{(3.2,-.5)}{1.2}{k}{n}{p}{q}
	\node at (-1.2,-1.6) {\scriptsize{$u$}};
	\node at (-.2,-1.6) {\scriptsize{$w$}};
	\node at (-1.2,.6) {\scriptsize{$v$}};
	\node at (-.2,.6) {\scriptsize{$x$}};
}
\displaybreak[1]\\&=
\tikzmath{
	\draw (0,0) -- (2,0);
	\draw (-1,-1) -- (2,-1);
	\draw (4.2,-.5) -- (6.8,-.5);
	\draw (-1,.8) -- (-1,-1.8);
	\draw[knot] (0,.8) -- (0,-1.8);
	\roundNbox{unshaded}{(-1,-1)}{.3}{0}{0}{$f^*$}
	\roundNbox{unshaded}{(0,0)}{.3}{0}{0}{$g^*$}
	\tensor{(3.2,-.5)}{1.2}{k}{n}{p}{q}
	\node at (1.2,.2) {\scriptsize{$\cP[q{+}p]$}};
	\node at (1.2,-.8) {\scriptsize{$\cP[n{+}k]$}};
	\node at (5.6,-.3) {$\scriptstyle \cP[q+n+p+k]$};
	\node at (-1.2,-1.6) {\scriptsize{$v$}};
	\node at (-.2,-1.6) {\scriptsize{$x$}};
	\node at (-1.2,.6) {\scriptsize{$u$}};
	\node at (-.2,.6) {\scriptsize{$w$}};
}
\end{align*}
where we use the braiding axiom of an anchored planar algebra in the final equality.
Finally, the associators and unitors are visibly unitary.
\end{defn}

\begin{lem}
\label{lem:LinkingAlgebraState}
For all $f\in \cC_0(``\Phi(u)\otimes x^{\otimes k}" \to ``\Phi(v)\otimes x^{\otimes n}")$,
$$
\tikzmath{
	\coordinate (a) at (0,0);
	\pgfmathsetmacro{\boxWidth}{1};
	\draw (-2.5,0) -- (-1,0);
	\draw (1,0) -- (2,0);
	\draw[dotted] (2,0) -- (3,0);
	\draw[knot] (-2.8,.3) node[left, yshift=.2cm, xshift=.1cm]{$\scriptstyle v$} arc (180:0:.4cm) --node[right, yshift=-.4cm]{$\scriptstyle v^\vee$} (-2,-.3) arc (0:-180:.4cm) node[left, yshift=-.2cm, xshift=.1cm]{$\scriptstyle v$};
	\roundNbox{unshaded}{(-2.8,0)}{.3}{.1}{.1}{$ff^*$}
	\roundNbox{unshaded}{(2,0)}{.3}{0}{0}{$\psi_\cP$}
	\draw[rounded corners=5pt, very thick, unshaded] ($ (a) - (\boxWidth,\boxWidth) $) rectangle ($ (a) + (\boxWidth,\boxWidth) $);
	\draw[thick, red] (180:1/5*\boxWidth) -- (180:4/5*\boxWidth);
	\draw[very thick] (a) circle (4/5*\boxWidth);
	\draw[very thick] (a) circle (1/5*\boxWidth);
	\draw (90:1/5*\boxWidth) .. controls ++(90:.3cm) and ++(90:.5cm) .. (0:1/2*\boxWidth) .. controls ++(270:.5cm) and ++(270:.3cm) ..  (270:1/5*\boxWidth);
	\node at (1.35,.2) {\scriptsize{$\cP[0]$}};
	\node at (2.65,.2) {\scriptsize{$1_\cV$}};
	\node at (-1.4,.2) {\scriptsize{$\cP[2k]$}};
	\node at (.65,0) {\scriptsize{$k$}};
}
\geq 0
$$
with equality if and only if $f=0$.
\end{lem}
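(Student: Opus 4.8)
The diagram on the left-hand side of the lemma is a scalar in $\End_\cV(1_\cV)\cong\bbC$; we must show it is a nonnegative real number, vanishing exactly when $f=0$. First I would recognize the shape of this diagram: it is precisely the pairing from \eqref{eq:PlanarPairing} applied to the endomorphism $ff^*$ of $``\Phi(v)\otimes x^{\otimes n}"$, or rather a variant involving the loop $\ev^\dagger,\ev$ in $\cV$ together with $\bTr_\cV$. Concretely, the outer structure — the $\ev_v$-type cup and cap in $\cV$ together with the annular tangle closing up $\cP[2k]$ and applying $\psi_\cP$ — is the recipe for computing $\langle g,g'\rangle$ in the $\rm C^*$-category $\cC_0$, specialized to $g=g'=f^*$ (or $f$). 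So the first key step is to establish the identity
\[
\tikzmath{
	\coordinate (a) at (0,0);
	\pgfmathsetmacro{\boxWidth}{1};
	\draw (-2.5,0) -- (-1,0);
	\draw (1,0) -- (2,0);
	\draw[dotted] (2,0) -- (3,0);
	\draw[knot] (-2.8,.3) node[left, yshift=.2cm, xshift=.1cm]{$\scriptstyle v$} arc (180:0:.4cm) --node[right, yshift=-.4cm]{$\scriptstyle v^\vee$} (-2,-.3) arc (0:-180:.4cm) node[left, yshift=-.2cm, xshift=.1cm]{$\scriptstyle v$};
	\roundNbox{unshaded}{(-2.8,0)}{.3}{.1}{.1}{$ff^*$}
	\roundNbox{unshaded}{(2,0)}{.3}{0}{0}{$\psi_\cP$}
	\draw[rounded corners=5pt, very thick, unshaded] ($ (a) - (\boxWidth,\boxWidth) $) rectangle ($ (a) + (\boxWidth,\boxWidth) $);
	\draw[thick, red] (180:1/5*\boxWidth) -- (180:4/5*\boxWidth);
	\draw[very thick] (a) circle (4/5*\boxWidth);
	\draw[very thick] (a) circle (1/5*\boxWidth);
	\draw (90:1/5*\boxWidth) .. controls ++(90:.3cm) and ++(90:.5cm) .. (0:1/2*\boxWidth) .. controls ++(270:.5cm) and ++(270:.3cm) ..  (270:1/5*\boxWidth);
	\node at (1.35,.2) {\scriptsize{$\cP[0]$}};
	\node at (2.65,.2) {\scriptsize{$1_\cV$}};
	\node at (-1.4,.2) {\scriptsize{$\cP[2k]$}};
	\node at (.65,0) {\scriptsize{$k$}};
}
\,=\,
\Tr^\cV_{v\otimes\cP[k]}\big(\bTr_\cV(ff^*)\big)\,\,\text{(up to the identification $\End_\cV(1_\cV)\cong\bbC$),}
\]
where I am writing $\bTr_\cV$ for the functor $\bTr_\cV:\cC_0\to\cV$ constructed above. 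I would verify this by unwinding the definitions of $\bTr_\cV$ on morphisms, of the annular tangle $Z$ that closes off $\cP[2k]$, and of $\psi_\cP$; the axiom \eqref{eq:PlanarPairing} (relating the planar pairing to $\coev^\dag_{\cP[n]}$) together with Lemma~\ref{lem: Tr as a loop} identifies the trace-loop in $\cV$ with $\Tr^\cV$.

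**Positivity.** Once the quantity is exhibited as $\Tr^\cV_{w}(\bTr_\cV(ff^*))$ with $w:=v\otimes\cP[k]$, positivity is almost immediate: $\bTr_\cV$ is a dagger functor (by the earlier analysis, since $\Phi\dashv\bTr_\cV$ and the adjunction is unitary — item \ref{C:Adjunction}, which we may assume — or more elementarily because one checks directly from \eqref{eq:DaggerOnC0} and the definition of $\bTr_\cV$ on morphisms that $\bTr_\cV(g^*)=\bTr_\cV(g)^\dagger$), so $\bTr_\cV(ff^*)=\bTr_\cV(f)\bTr_\cV(f)^\dagger$ is a positive element of $\End_\cV(w)$, and $\Tr^\cV$ is by definition a \emph{unitary} (hence positive, faithful) trace on the semisimple $\rm C^*$-category $\cV$: $\Tr^\cV_w(a a^\dagger)=\langle a,a\rangle_{w'\to w}\ge 0$. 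Alternatively, avoiding the dagger-functoriality of $\bTr_\cV$, I can rewrite the whole diagram one step earlier as $\langle f^*, f^*\rangle$ computed directly inside $\cC_0$ using the formula for composition and the dagger \eqref{eq:DaggerOnC0}, and then invoke positivity of the $\rm C^*$-structure on $\cC_0$ — but note that positivity of that $\rm C^*$-structure is itself exactly what axiom \ref{P:PositiveDefinite} is designed to guarantee via \eqref{eq:PlanarPairing}, so in practice the cleanest route is the trace-loop identification above, which invokes \eqref{eq:PlanarPairing} explicitly.

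**Faithfulness.** For the equality-iff-zero clause: $\Tr^\cV_w(\bTr_\cV(f)\bTr_\cV(f)^\dagger)=0$ forces $\bTr_\cV(f)=0$ by faithfulness of the unitary trace (positive-definiteness of $\langle-,-\rangle$), so it remains to show $\bTr_\cV(f)=0\implies f=0$. This is where $\psi_\cP$ being a \emph{faithful} state enters, and is the step I expect to require the most care. The point is that the map $f\mapsto\bTr_\cV(f)$ on $\cC_0(``\Phi(u)\otimes x^{\otimes k}"\to``\Phi(v)\otimes x^{\otimes n}")=\cV(u\to v\otimes\cP[n+k])$ is, after unwinding, essentially "close off with a cap on the left" — i.e. precompose/postcompose with $\coev$'s — which is manifestly injective on Hom spaces in the rigid category $\cV$ (closing a diagram with a cap and cup is invertible by the zig-zag identities). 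Actually one sees directly: the recipe for $\bTr_\cV$ on morphisms merely bends the $\cP[k]$ strand around, an operation inverted by bending it back, so $\bTr_\cV$ is faithful on $\cC_0$, and hence $f=0$. The genuinely subtle input is that the \emph{pairing} \eqref{eq:PlanarPairing} is an equality with $\coev^\dagger_{\cP[n]}$ (not merely proportional), which is the content of \ref{P:PositiveDefinite}; combined with faithfulness of $\psi_\cP$ and of $\Tr^\cV$, this gives both positivity and non-degeneracy. I would close by remarking that this lemma is the key technical step toward \ref{C:Dagger}: it shows the sesquilinear forms on $\cC_0$ coming from \eqref{eq:DaggerOnC0} are positive definite, so $\cC_0$ (and its Cauchy completion $\cC$) is a $\rm C^*$-category.
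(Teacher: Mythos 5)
Your overall strategy is the same as the paper's: unfold the closed diagram, use axiom \ref{P:PositiveDefinite} in the form \eqref{eq:PlanarPairing} together with the definition \eqref{eq:DaggerOnC0} of $*$ to trade the planar pairing (closing tangle plus $\psi_\cP$) for the inner product of $\cV$, and conclude from positive-definiteness of the canonical unitary trace on $\cV$; the faithfulness clause then reduces, as you say, to injectivity of strand-bending. The paper executes exactly this: the composition tangle for $ff^*$ followed by the closing annulus and $\psi_\cP$ collapses to the pairing of \eqref{eq:PlanarPairing}, which by \ref{P:PositiveDefinite} and \eqref{eq:DaggerOnC0} becomes the $\cV$-trace closure of $f\circ f^\dag$, i.e.\ $\langle f,f\rangle_{\cV}\geq 0$ with equality iff $f=0$. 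You are also right that this lemma is the engine behind \ref{C:Dagger}.

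The genuine problem is in your preferred route, at the factorization $\bTr_\cV(ff^*)=\bTr_\cV(f)\,\bTr_\cV(f)^\dag$. This presupposes that $\bTr_\cV:\cC_0\to\cV$ intertwines the freshly defined $*$ of \eqref{eq:DaggerOnC0} with the dagger of $\cV$ — but that is tantamount to the unitarity of the adjunction $\Phi\dashv\bTr_\cV$, which is item \ref{C:Adjunction} and is established \emph{after} (and by means of) the present lemma; assuming it here is circular. The ``direct check'' you defer to is not innocent either: computing $\bTr_\cV(f)^\dag$ requires the adjoint of the bending tangle, $Z(T)^\dag$, and in the non-spherical setting (which this lemma must cover) one does not have $Z(T)^\dag=Z(T^\dag)$ for free — Proposition \ref{prop: inside out reflection} assumes sphericality, and the general statement again rests on \eqref{eq:PlanarPairing}. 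The fix is to do what the paper does and never apply $\bTr_\cV$ to morphisms: stay with the closed scalar diagram, substitute \eqref{eq:PlanarPairing} and \eqref{eq:DaggerOnC0} directly, and land on the $\cV$-inner product of $f$ with itself. At that point positivity and non-degeneracy are immediate, and no separate appeal to faithfulness of $\psi_\cP$ is needed — it is already packaged into \ref{P:PositiveDefinite}.
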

\begin{proof}
Expanding the quantity in question, we have
\begin{align*}
\tikzmath{
	\coordinate (a) at (0,0);
	\coordinate (b) at (3,0);
	\pgfmathsetmacro{\boxWidth}{1};
\draw (-2.8,-.2) --node[left]{$\scriptstyle u$} (-2.8,.2);
\draw (-1,.5) --node[above]{$\scriptstyle \cP[k+n]$} (-2.2,.5) -- (-2.5,.5);
\draw (-1,-.5) --node[above]{$\scriptstyle \cP[n+k]$} (-2.2,-.5) -- (-2.5,-.5);
\draw (1,0) --node[above]{$\scriptstyle \cP[2k]$} (2,0);
\draw[knot] (-2.8,.8) node[left, yshift=.2cm, xshift=.1cm]{$\scriptstyle v$} arc (180:0:.3cm) -- (-2.2,-.8) node[right]{$\scriptstyle v^\vee$} arc (0:-180:.3cm) node[left, yshift=-.2cm, xshift=.1cm]{$\scriptstyle v$};
	\roundNbox{unshaded}{(-2.8,.5)}{.3}{0}{0}{$f$}
	\roundNbox{unshaded}{(-2.8,-.5)}{.3}{0}{0}{$f^*$}
	\roundNbox{unshaded}{($ (b) + (2,0) $)}{.3}{0}{0}{$\psi_\cP$}
	\draw[rounded corners=5pt, very thick, unshaded] ($ (a) - (\boxWidth,\boxWidth) $) rectangle ($ (a) + (\boxWidth,\boxWidth) $);
	\draw ($ (a) + 5/6*(0,1) $) -- ($ (a) - 5/6*(0,\boxWidth) $);
	\draw[thick, red] ($ (a) + 1/3*(0,\boxWidth) - 1/5*(\boxWidth,0) $) -- ($ (a) - 2/3*(\boxWidth,0) $);
	\draw[thick, red] ($ (a) - 1/3*(0,\boxWidth) - 1/5*(\boxWidth,0) $) -- ($ (a) - 2/3*(\boxWidth,0) $);
	\draw[very thick] (a) ellipse ({2/3*\boxWidth} and {5/6*\boxWidth});
	\filldraw[very thick, unshaded] ($ (a) + 1/3*(0,\boxWidth) $) circle (1/5*\boxWidth);
	\filldraw[very thick, unshaded] ($ (a) - 1/3*(0,\boxWidth) $) circle (1/5*\boxWidth);
	\node at ($ (a) + (.2,0) $) {\scriptsize{$n$}};
	\node at ($ (a) + (.2,-.65) $) {\scriptsize{$k$}};
	\node at ($ (a) + (.2,.65) $) {\scriptsize{$k$}};
	\draw[rounded corners=5pt, very thick, unshaded] ($ (b) - (\boxWidth,\boxWidth) $) rectangle ($ (b) + (\boxWidth,\boxWidth) $);
	\draw[thick, red] ($ (b) + (180:1/5*\boxWidth) $) -- ($ (b) + (180:4/5*\boxWidth) $);
	\draw[very thick] (b) circle (4/5*\boxWidth);
	\draw[very thick] (b) circle (1/5*\boxWidth);
	\draw ($ (b) + (90:1/5*\boxWidth) $) .. controls ++(90:.3cm) and ++(90:.5cm) .. ($ (b) + (0:1/2*\boxWidth) $) .. controls ++(270:.5cm) and ++(270:.3cm) ..  ($ (b) + (270:1/5*\boxWidth) $);
	\node at ($ (b) + (.65,0) $) {\scriptsize{$k$}};
\draw[densely dotted] (5.3,0) --node[above]{$\scriptstyle 1_\cV$}  (6,0);
\draw (4,0) --node[above]{$\scriptstyle \cP[0]$}  (4.7,0);
}
&=
\tikzmath{
	\coordinate (a) at (0,0);
	\pgfmathsetmacro{\boxWidth}{1};
\draw (-2.8,-.2) --node[left]{$\scriptstyle u$} (-2.8,.2);
\draw (-1,.5) --node[above]{$\scriptstyle \cP[k+n]$} (-2.2,.5) -- (-2.5,.5);
\draw (-1,-.5) --node[above]{$\scriptstyle \cP[n+k]$} (-2.2,-.5) -- (-2.5,-.5);
\draw (1,0) -- 
(2,0);
\draw[knot] (-2.8,.8) node[left, yshift=.2cm, xshift=.1cm]{$\scriptstyle v$} arc (180:0:.3cm) -- (-2.2,-.8) node[right]{$\scriptstyle v^\vee$} arc (0:-180:.3cm) node[left, yshift=-.2cm, xshift=.1cm]{$\scriptstyle v$};
	\roundNbox{unshaded}{(-2.8,.5)}{.3}{0}{0}{$f$}
	\roundNbox{unshaded}{(-2.8,-.5)}{.3}{0}{0}{$f^*$}
	\roundNbox{unshaded}{($ (a) + (2,0) $)}{.3}{0}{0}{$\psi_\cP$}
	\draw[rounded corners=5pt, very thick, unshaded] ($ (a) - (\boxWidth,\boxWidth) $) rectangle ($ (a) + (\boxWidth,\boxWidth) $);
	\draw ($ (a) + 1/3*(0,1) $) -- ($ (a) - 1/3*(0,\boxWidth) $);
	\draw[thick, red] ($ (a) + 1/3*(0,\boxWidth) - 1/5*(\boxWidth,0) $) -- ($ (a) - 2/3*(\boxWidth,0) $);
	\draw[thick, red] ($ (a) - 1/3*(0,\boxWidth) - 1/5*(\boxWidth,0) $) -- ($ (a) - 2/3*(\boxWidth,0) $);
	\draw[very thick] (a) ellipse ({2/3*\boxWidth} and {5/6*\boxWidth});
	\filldraw[very thick, unshaded] ($ (a) + 1/3*(0,\boxWidth) $) circle (1/5*\boxWidth);
	\filldraw[very thick, unshaded] ($ (a) - 1/3*(0,\boxWidth) $) circle (1/5*\boxWidth);
	\node at ($ (a) + (.3,0) $) {$\scriptstyle n+k$};
\draw[densely dotted] (2.3,0) --node[above]{$\scriptstyle 1_\cV$}  (3,0);
\draw (1,0) --node[above]{$\scriptstyle \cP[0]$}  (1.7,0);
}
\\&\underset{(\ref{eq:PlanarPairing},\ref{eq:DaggerOnC0})}{=}
\tikzmath{
\roundNbox{unshaded}{(0,.5)}{.3}{.1}{.1}{$f$}
\roundNbox{unshaded}{(0,-.5)}{.3}{.1}{.1}{$f^\dag$}
\draw (-.2,.8) node[left, yshift=.2cm]{$\scriptstyle v$} 
.. controls ++(90:.7cm) and ++(90:.7cm) .. (2,.8) --node[right]{$\scriptstyle v^\vee$}(2,-.8) 
.. controls ++(270:.7cm) and ++(270:.7cm) ..
(-.2,-.8) node[left, yshift=-.2cm]{$\scriptstyle v$};
\draw (.2,.8) node[right, yshift=.15cm, xshift=.15cm]{$\scriptstyle \cP[n]$} 
.. controls ++(90:.5cm) and ++(90:.5cm) .. 
(1.6,.8) --node[left,xshift=.1cm]{$\scriptstyle \cP[n]^\vee$} (1.6,-.8)
.. controls ++(270:.5cm) and ++(270:.5cm) .. (.2,-.8) node[right, yshift=-.15cm, xshift=.15cm]{$\scriptstyle \cP[n]$};
\draw (0,-.2) --node[left]{$\scriptstyle u$} (0,.2);
}
\geq 0
\end{align*}
with equality if and only if $f=0$.
\end{proof}

\begin{prop}[\ref{C:Dagger}]
\label{prop:C0isC*}
The dagger structure \eqref{eq:DaggerOnC0} on $\cC_0$ is $\rm C^*$.
\end{prop}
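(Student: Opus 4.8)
The plan is to show that $\cC_0$, which we already know from the preceding discussion to be a $\dagger$-category (the involution $*$ of \eqref{eq:DaggerOnC0} is a contravariant conjugate-linear involution compatible with composition) with finite-dimensional hom spaces (since $\cC_0(``\Phi(u)\otimes x^{\otimes k}"\to``\Phi(v)\otimes x^{\otimes n}")=\cV(u\to v\otimes\cP[n+k])$ and hom spaces in $\cV$ are finite-dimensional), admits $C^*$-norms. It suffices to prove that for every finite set of objects $c_1,\dots,c_m$ of $\cC_0$ the \emph{linking algebra} $A:=\bigoplus_{i,j}\cC_0(c_i\to c_j)$, with matrix multiplication and the involution induced by \eqref{eq:DaggerOnC0}, is a finite-dimensional $C^*$-algebra. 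Granting this, the norm on a hom space $\cC_0(a\to b)$ is the one it inherits as a corner of any such $A$ with $a,b$ among the $c_i$; this is well-defined because an injective $*$-homomorphism between $C^*$-algebras is isometric and the inclusion of a smaller linking algebra into a larger one, by padding with zeros, is such a map, and the $C^*$-category axioms then hold since they hold inside each $A$. (Cauchy completion preserves the $C^*$ structure, which then also dispatches \ref{C:Dagger} for $\cC$.)

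To see that each linking algebra $A$ is $C^*$, I would invoke the standard fact that a unital finite-dimensional complex $*$-algebra admitting a \emph{faithful positive functional} $\omega$ — i.e.\ $\omega(a^*a)\ge 0$ for all $a$, with equality only for $a=0$ — is a $C^*$-algebra: the GNS construction realizes $A$ as a finite-dimensional, hence norm-closed, $*$-subalgebra of $B(H)$, where $H$ is the Hilbert space $(A,\langle a,b\rangle:=\omega(b^*a))$ and $A$ acts by left multiplication, the representation being faithful since $\pi(a)\widehat{1}=\widehat{a}$. The functional I would use on $A$ comes from Lemma~\ref{lem:LinkingAlgebraState}: every object of $\cC_0$ has the form $``\Phi(v)\otimes x^{\otimes n}"$, so for each $c_i$ that lemma provides a linear functional $\omega_{c_i}$ on $\End_{\cC_0}(c_i)$ (the ``close up with $\psi_\cP$'' operation) satisfying $\omega_{c_i}(ff^*)\ge 0$ for every $f$ with codomain $c_i$, with equality only if $f=0$. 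I set $\omega:=\bigoplus_i\omega_{c_i}$ on the diagonal blocks of $A$ and $0$ on off-diagonal blocks.

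The remaining check is that this $\omega$ is faithful and positive, which is immediate: for $T=(T_{ij})\in A$ we have $(T^*T)_{jj}=\sum_i T_{ij}^*T_{ij}$, so $\omega(T^*T)=\sum_{i,j}\omega_{c_j}(T_{ij}^*T_{ij})$, and each summand equals $\omega_{c_j}(ff^*)$ with $f:=T_{ij}^*\in\cC_0(c_j\to c_i)$; by Lemma~\ref{lem:LinkingAlgebraState} this is $\ge 0$ with equality iff $T_{ij}^*=0$, i.e.\ iff $T_{ij}=0$. Hence $\omega(T^*T)\ge 0$ with equality iff $T=0$, as required. In this sense the entire analytic content of the proposition has already been packaged into Lemma~\ref{lem:LinkingAlgebraState} (via \eqref{eq:PlanarPairing} and the faithfulness of $\psi_\cP$); the only thing left is bookkeeping. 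The main potential obstacle is therefore organizational rather than mathematical: one must make sure the corner norms on hom spaces assemble into a single, mutually compatible $C^*$-category structure — this is handled by the isometry of injective $*$-homomorphisms between $C^*$-algebras, comparing any two finite object sets to their union — and that these norms are preserved under the Cauchy completion, which is a standard fact about $C^*$-categories.
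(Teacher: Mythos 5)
Your argument for the $\mathrm{C}^*$-property is essentially the paper's: the paper likewise forms the linking algebra (of a pair of objects rather than an arbitrary finite family, which makes no difference) and exhibits the faithful positive functional built by closing up the diagonal entries with $\psi_\cP$, with positivity and faithfulness supplied exactly by Lemma~\ref{lem:LinkingAlgebraState}; the GNS/corner-compatibility bookkeeping you spell out is left implicit there. So for the literal sentence ``the dagger structure is $\mathrm{C}^*$'' your proof is fine.

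However, the proposition is tagged as accomplishing task~\ref{C:Dagger}, namely making $\cC_0$ (hence $\cC$) a \emph{unitary multitensor category}, and your proposal omits the monoidal half of that check: one must verify that the involution \eqref{eq:DaggerOnC0} is compatible with the tensor product, $(f\otimes g)^* = f^*\otimes g^*$, and that the associators and unitors are unitary. The first of these is not formal: computing $(f\otimes g)^*$ from the definition produces the reflected tensor tangle with the anchor lines attached in the ``wrong'' order, and the paper needs the braiding axiom of the anchored planar algebra (together with axiom~\ref{P:Reflection} for the tensor tangle) to rewrite this as $f^*\otimes g^*$. Since this is where the braided structure of $\cV$ actually enters the proof, it should not be skipped; as written, your argument only establishes that the underlying dagger category of $\cC_0$ is $\mathrm{C}^*$, not that $\otimes$ is a dagger functor.
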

\begin{proof}
To show $\cC_0$ is $\rm C^*$, we prove that given arbitrary $``\Phi(u)\otimes x^{\otimes k}"$ and $``\Phi(v)\otimes x^{\otimes n}"$ in $\cC_0$, the linking algebra
$$
L:=
\begin{pmatrix}
\cC_0(``\Phi(u)\otimes x^{\otimes k}" \to ``\Phi(u)\otimes x^{\otimes k}")
&
\cC_0(``\Phi(v)\otimes x^{\otimes n}" \to ``\Phi(u)\otimes x^{\otimes k}")
\\
\cC_0(``\Phi(u)\otimes x^{\otimes k}" \to ``\Phi(v)\otimes x^{\otimes n}")
&
\cC_0(``\Phi(v)\otimes x^{\otimes n}" \to ``\Phi(v)\otimes x^{\otimes n}")
\end{pmatrix}
$$
is a $\rm C^*$-algebra.
To do this, we show that the map $\phi:L\to \bbC$ given by
$$
\phi
\begin{pmatrix}
a & b
\\
c & d
\end{pmatrix}
:=
\tikzmath{
	\coordinate (a) at (0,0);
	\pgfmathsetmacro{\boxWidth}{1};
	\draw (-2.1,0) -- (-1,0);
	\draw (1,0) -- (2,0);
	\draw[dotted] (2,0) -- (3,0);
	\draw[knot] (-2.4,.3) node[left, yshift=.2cm, xshift=.1cm]{$\scriptstyle u$} arc (180:0:.3cm) --node[right, yshift=-.3cm]{$\scriptstyle u^\vee$} (-1.8,-.3) arc (0:-180:.3cm) node[left, yshift=-.2cm, xshift=.1cm]{$\scriptstyle u$};
	\roundNbox{unshaded}{(-2.4,0)}{.3}{0}{0}{$a$}
	\roundNbox{unshaded}{(2,0)}{.3}{0}{0}{$\psi_\cP$}
	\draw[rounded corners=5pt, very thick, unshaded] ($ (a) - (\boxWidth,\boxWidth) $) rectangle ($ (a) + (\boxWidth,\boxWidth) $);
	\draw[thick, red] (180:1/5*\boxWidth) -- (180:4/5*\boxWidth);
	\draw[very thick] (a) circle (4/5*\boxWidth);
	\draw[very thick] (a) circle (1/5*\boxWidth);
	\draw (90:1/5*\boxWidth) .. controls ++(90:.3cm) and ++(90:.5cm) .. (0:1/2*\boxWidth) .. controls ++(270:.5cm) and ++(270:.3cm) ..  (270:1/5*\boxWidth);
	\node at (1.35,.2) {\scriptsize{$\cP[0]$}};
	\node at (2.65,.2) {\scriptsize{$1_\cV$}};
	\node at (-1.4,.2) {\scriptsize{$\cP[2k]$}};
	\node at (.65,0) {\scriptsize{$k$}};
}
+
\tikzmath{
	\coordinate (a) at (0,0);
	\pgfmathsetmacro{\boxWidth}{1};
	\draw (-2.1,0) -- (-1,0);
	\draw (1,0) -- (2,0);
	\draw[dotted] (2,0) -- (3,0);
	\draw[knot] (-2.4,.3) node[left, yshift=.2cm, xshift=.1cm]{$\scriptstyle v$} arc (180:0:.3cm) --node[right, yshift=-.3cm]{$\scriptstyle v^\vee$} (-1.8,-.3) arc (0:-180:.3cm) node[left, yshift=-.2cm, xshift=.1cm]{$\scriptstyle v$};
	\roundNbox{unshaded}{(-2.4,0)}{.3}{0}{0}{$d$}
	\roundNbox{unshaded}{(2,0)}{.3}{0}{0}{$\psi_\cP$}
	\draw[rounded corners=5pt, very thick, unshaded] ($ (a) - (\boxWidth,\boxWidth) $) rectangle ($ (a) + (\boxWidth,\boxWidth) $);
	\draw[thick, red] (180:1/5*\boxWidth) -- (180:4/5*\boxWidth);
	\draw[very thick] (a) circle (4/5*\boxWidth);
	\draw[very thick] (a) circle (1/5*\boxWidth);
	\draw (90:1/5*\boxWidth) .. controls ++(90:.3cm) and ++(90:.5cm) .. (0:1/2*\boxWidth) .. controls ++(270:.5cm) and ++(270:.3cm) ..  (270:1/5*\boxWidth);
	\node at (1.35,.2) {\scriptsize{$\cP[0]$}};
	\node at (2.65,.2) {\scriptsize{$1_\cV$}};
	\node at (-1.4,.2) {\scriptsize{$\cP[2k]$}};
	\node at (.65,0) {\scriptsize{$k$}};
}
$$
is a faithful positive linear functional, i.e., $\phi(M^*M)\geq0$ with equality if and only if $M=0$.
Indeed, $\phi(M^*M)$ is a sum of four terms, each being positive by
Lemma \ref{lem:LinkingAlgebraState}, and zero iff each term is individually zero.
\end{proof}

\begin{rem}
Observe that the unitary Cauchy completion of $\cC_0$ is the same as the ordinary Cauchy completion of $\cC_0$ \cite[Rem.~3.6]{MR4598730}.
In light of Proposition \ref{prop:C0isC*}, $\cC$ is a unitary multitensor category.
\end{rem}

\begin{cor}[\ref{C:State}]
The map $\psi_\cC$ on $\End_{\cC_0}(1_\cC)$ given by
$$
\psi_\cC\left(
\tikzmath{
\roundNbox{fill=white}{(0,0)}{.3}{0}{0}{$f$}
\draw[densely dotted] (0,-.7) --node[left]{$\scriptstyle 1_\cV$} (0,-.3);
\draw[densely dotted] (0,.7) --node[left]{$\scriptstyle 1_\cV$} (0,.3);
\draw (.3,0) --node[above]{$\scriptstyle \cP[0]$} (1.3,0);
}
\right)
:=
\tikzmath{
\roundNbox{fill=white}{(0,0)}{.3}{0}{0}{$f$}
\roundNbox{fill=white}{(1.2,0)}{.3}{0}{0}{$\psi_\cP$}
\draw[densely dotted] (0,-.7) --node[left]{$\scriptstyle 1_\cV$} (0,-.3);
\draw[densely dotted] (0,.7) --node[left]{$\scriptstyle 1_\cV$} (0,.3);
\draw (.3,0) --node[above]{$\scriptstyle \cP[0]$} (.9,0);
\draw[densely dotted] (1.5,0) --node[above]{$\scriptstyle 1_\cV$} (2,0);
}
$$
is a faithful state.
\end{cor}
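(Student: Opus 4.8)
The statement is essentially a specialization of Lemma~\ref{lem:LinkingAlgebraState}, so the plan is short. First I would unwind the identifications. Since $1_\cC=``\Phi(1_\cV)\otimes x^{\otimes 0}"$ and passing to the Cauchy completion does not change this hom space,
\[
\End_\cC(1_\cC)=\End_{\cC_0}(1_\cC)=\cV\big(1_\cV\to 1_\cV\otimes\cP[0]\big)\cong\cV\big(1_\cV\to\cP[0]\big),
\]
which is a (commutative, by Eckmann--Hilton) finite dimensional $\rm C^*$-algebra by Proposition~\ref{prop:C0isC*}. Under this identification $\psi_\cC$ is the linear map $f\mapsto\psi_\cP\circ f$, which is manifestly $\bbC$-linear; it then remains to check that it is unital, positive, and faithful.

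For unitality, I would note that $\id_{1_\cC}$ corresponds under the identification above to the $0$-box $i=Z(\text{unit circle tangle})$: this is exactly how $1_\cC$ and the unit of the adjunction $\Phi\dashv\bTr_\cV$ were set up in the construction of $\cC_0$ (the unit $\eta_u:u\to\bTr_\cV(\Phi(u))=u\otimes\cP[0]$ is $\id_u\otimes i$, and $i$ is a two-sided unit for $\mu_{0,0}$). The normalization condition in the definition of the faithful state $\psi_\cP$ is precisely $\psi_\cP\circ i=\id_{1_\cV}$, so $\psi_\cC(\id_{1_\cC})=1$.

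For positivity and faithfulness, I would apply Lemma~\ref{lem:LinkingAlgebraState} with $u=v=1_\cV$ and $k=n=0$. In that degenerate case the $v$-coloured loop and the $\cP[2k]=\cP[0]$-coloured annular (``trace'') tangle appearing in the lemma are both identities, so the quantity there reduces to $\psi_\cP$ applied to the composite of $f$ and $f^{*}$ in $\cC_0$, i.e. to $\psi_\cC(f^{*}f)$ (using commutativity of $\End_\cC(1_\cC)$, or replacing $f$ by $f^{*}$ and invoking $f^{**}=f$ from \eqref{eq:DaggerOnC0}). The lemma then yields $\psi_\cC(f^{*}f)\geq 0$ with equality if and only if $f=0$. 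Together with linearity and unitality, this shows $\psi_\cC$ is a faithful state.

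The only points needing care---and hence the ``main obstacle,'' such as it is---are bookkeeping: confirming that $\id_{1_\cC}$ is genuinely the morphism $i$ under the identification $\End_{\cC_0}(1_\cC)\cong\cV(1_\cV\to\cP[0])$ (so that the $\psi_\cP$-normalization axiom applies), and confirming that the $v$-loop and the annular trace tangle of Lemma~\ref{lem:LinkingAlgebraState} really do degenerate to identities once every object involved is the monoidal unit. Both are immediate from the explicit formulas defining $\cC_0$, the adjunction $\Phi\dashv\bTr_\cV$, and the dagger structure \eqref{eq:DaggerOnC0}, so no genuinely new computation is required.
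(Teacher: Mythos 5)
Your proposal is correct and follows exactly the paper's own argument: unitality is the normalization axiom $\psi_\cP\circ i=\id_{1_\cV}$ for the state of the unitary anchored planar algebra, and positivity plus faithfulness come from Lemma~\ref{lem:LinkingAlgebraState} specialized to $u=v=1_\cV$ and $k=n=0$. The extra bookkeeping you flag (identifying $\id_{1_\cC}$ with $i$ and noting the tangles degenerate to identities) is exactly what the paper leaves implicit, so there is nothing to add.
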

\begin{proof}
Apply Lemma \ref{lem:LinkingAlgebraState} to the case $u=v=1_\cV$ and $k=n=0$ to see that $\psi_\cC$ is positive and faithful.
Finally, the condition $\psi_\cC(\id_{\cC})=1$ is exactly 
$
\tikzmath{
\draw (.3,0) --node[above]{$\scriptstyle \cP[0]$} (1,0);
\draw[dotted] (1.6,0) --node[above]{$\scriptstyle 1_\cV$} (2.1,0);
\draw[dotted] (-.8,0) --node[above]{$\scriptstyle 1_\cV$} (-.3,0);
\roundNbox{unshaded}{(0,0)}{.3}{0}{0}{}
\draw[very thick] (0,0) circle (.2cm);
\filldraw[red] (-.2,0) circle (.05cm);
\roundNbox{unshaded}{(1.3,0)}{.3}{0}{0}{$\psi_\cP$}
}
=
\id_{1_\cV}
$.
\end{proof}

\begin{prop}[\ref{C:UDF}]
The dual functor $\vee_\cC$ is unitary.
Moreover, the pivotal structure $\varphi^\cC$ is the canonical unitary pivotal structure coming from $\vee_\cC$.
\end{prop}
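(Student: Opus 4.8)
The plan is to verify the two conditions in the definition of a unitary dual functor — that $\vee_\cC$ is a dagger functor, i.e. $f^{\vee\dag}=f^{\dag\vee}$ for every morphism $f$, and that the tensorator $\nu_{a,b}\colon a^\vee\otimes b^\vee\to (b\otimes a)^\vee$ is unitary — and then to identify $\varphi^\cC$ with the canonical unitary pivotal structure \eqref{eq:CanonicalUnitaryPivotalStructure} attached to $\vee_\cC$. By the remark preceding this proposition the unitary Cauchy completion of $\cC_0$ agrees with its ordinary one, and duals, the $\dag$-structure, and the tensor product all extend from $\cC_0$, so it suffices to work in $\cC_0$.

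The heart of the matter is to compute the $\cC_0$-adjoints $\ev^\dag_{``\Phi(v)\otimes x^{\otimes n}"}$ and $\coev^\dag_{``\Phi(v)\otimes x^{\otimes n}"}$ from the formula \eqref{eq:DaggerOnC0}. Each of $\ev_{``\Phi(v)\otimes x^{\otimes n}"}$ and $\coev_{``\Phi(v)\otimes x^{\otimes n}"}$ is given by the $\cV$-(co)evaluation of $v$ together with $Z$ of the anchored planar tangle consisting of $n$ nested arcs; applying the $\dag$ of \eqref{eq:DaggerOnC0} replaces these $\cV$-(co)evaluations by their $\cV$-adjoints and inserts the real structures $r_{2n}$. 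I would then use axiom \ref{P:Reflection} — which relates $Z$ of a reflected tangle to $\overline{Z(\cdot)}$ up to the $r$'s, and in particular sends the nested-cup tangle to the nested-cap tangle — together with the fact that $r_x=\id_x$ is a real structure, to see that the $r$'s telescope; axiom \ref{P:PositiveDefinite} (equation \eqref{eq:PlanarPairing}), which expresses $\coev^\dag_{\cP[m]}$ via the planar pairing twisted by $r_m^{-1}$, is what governs the $\cP$-strand bookkeeping, exactly as in the proof of Proposition \ref{prop:C0isC*}. The outcome is that $\ev^\dag_{``\Phi(v)\otimes x^{\otimes n}"}$ and $\coev^\dag_{``\Phi(v)\otimes x^{\otimes n}"}$ are expressed purely in terms of $\ev^{\cV\dag}_v$, $\coev^{\cV\dag}_v$ and $Z$ of nested-arc tangles, in the same shape as the original $\ev$ and $\coev$.

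Granting these formulas, all three claims follow without further conceptual input. Expanding $f^{\vee\dag}$ and $f^{\dag\vee}$ via $\ev$, $\coev$ and the formulas above reduces the identity $f^{\vee\dag}=f^{\dag\vee}$ in $\cC_0$ to the identity $g^{\vee\dag}=g^{\dag\vee}$ in $\cV$ (valid since $\vee_\cV$ is a unitary dual functor) together with the identity-tangle and braiding axioms of $\cP$; unitarity of $\nu_{a,b}$ is visible once it is written down as essentially $\nu^\cV_{u,v}$ bundled with an identity tangle (using the braiding axiom of $\cP$ to reorder the $\cP$-strands), with the coherence diagrams for $\nu$ inherited from $\cV$, and $\nu^\cV$ is unitary because $\vee_\cV$ is; and substituting the formula for $\coev^\dag_{``\Phi(v)\otimes x^{\otimes n}"}$ into \eqref{eq:CanonicalUnitaryPivotalStructure}, using that $\varphi_v$ is the canonical unitary pivotal structure of $\cV$ induced by $\vee_\cV$, reproduces exactly the second displayed formula defining $\varphi^\cC$ — so $\varphi^\cC$ is the canonical pivotal structure of $\vee_\cC$, and in particular it is unitary, being built from the unitary $\varphi_v$ and an identity tangle. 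The main obstacle is entirely bookkeeping: correctly threading the real-structure maps $r_n$ and the nested-arc tangles through \eqref{eq:DaggerOnC0} and checking that axioms \ref{P:Reflection} and \ref{P:PositiveDefinite} make them cancel; there is no hidden difficulty once the formulas for $\ev^\dag$ and $\coev^\dag$ are in hand.
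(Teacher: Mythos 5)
Your strategy is sound but it is a genuinely different, and considerably longer, route than the one the paper takes. The paper does not verify the two clauses of the definition of a unitary dual functor ($f^{\vee\dag}=f^{\dag\vee}$ and unitarity of $\nu$) at all: it invokes the criterion of \cite[Lem.~7.5]{MR2767048} and \cite[Prop.~3.9]{MR4133163}, by which it suffices to check the \emph{single} identity $\varphi^\cC_c=(\coev_c^\dag\otimes \id_{c^{\vee\vee}})\circ(\id_c\otimes\coev_{c^\vee})$; this is then established by one computation of $\coev^\dag_c$ from \eqref{eq:DaggerOnC0}, which collapses the inserted $r_{2n}^{-1}$ and arcs back to the identity tangle. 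That computation is the common core of both arguments (and is essentially your third step), so what your approach buys is self-containedness — no appeal to the external criterion — at the price of two extra verifications that the paper avoids entirely. Two cautions on those extra verifications. First, the check $f^{\vee\dag}=f^{\dag\vee}$ is where the real work of the direct route lives, and it needs more than "the identity-tangle and braiding axioms": writing out $f^\vee$ produces caps and cups on the $\cP$-strands whose interaction with the dagger of \eqref{eq:DaggerOnC0} is governed by \eqref{eq:PlanarPairing}, exactly as in Proposition \ref{prop:C0isC*}; and you must take care not to implicitly use unitarity of one-sided rotation, which by Proposition \ref{prop: inside out reflection} is only available in the spherical case, whereas this proposition makes no sphericality assumption (the two-sided combination occurring in $f^\vee$ is what saves you). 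Second, a small conflation: the telescoping of real structures in the $\coev^\dag$ computation is driven by the $r_n:\cP[n]\rightharpoonup\cP[n]$ of the unitary anchored planar algebra via \eqref{eq:PlanarPairing}, not by the self-duality $r_x=\id_x$ of the generator, which plays no role here. With those points attended to, your decomposition does prove the statement.
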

\begin{proof}
By \cite[Lem.~7.5]{MR2767048} (see also \cite[Prop.~3.9]{MR4133163}), it suffices to check that 
for $c:=``\Phi(v)\otimes x^{n}"$, 
the pivotal structure $\varphi^\cC_c$ above is given by 
$(\coev_c^\dag\otimes \id_{c^{\vee\vee}})\circ (\id_c\otimes \coev_{c^\vee})$.
This is verified as follows.
First, we observe that
\[
\coev_{``\Phi(v)\otimes x^n"}^\dag
=\,\,
\left(
\begin{tikzpicture}[baseline=-.1cm]
	\draw (-.4,.8)node[above]{$\scriptstyle v$} -- (-.4,.4) arc (-180:0:.2cm) -- (0,.8)node[above, xshift=2]{$\scriptstyle v^\vee$};
	\draw (1.3,0) -- (2.3,0);
	\node at (-.2,0) {\scriptsize{$\coev_v$}};
	\coevaluationMap{(.8,0)}{.5}{n}
	\node at (1.8,.2) {\scriptsize{$\cP[2n]$}};
\end{tikzpicture}
\right)^\dag
=
\begin{tikzpicture}[baseline=-.1cm]
	\draw (-.6,-1) --node[left]{\scriptsize{$v$}} (-.6,-.3);
	\draw (-.2,-.3) --node[right]{\scriptsize{$v^{\vee}$}} (-.2,-1);
	\roundNbox{unshaded}{(-.4,0)}{.3}{.3}{.3}{$\coev_v^\dag$}
	\roundNbox{unshaded}{(2,-.1)}{.3}{.1}{.1}{$r_{2n}^{-1}$}
	\draw (1,-.4) arc (-180:0:.5cm);
  \draw (2,.2) arc (180:90:.3cm) --node[above]{\scriptsize{$\cP[2n]$}}(2.6,.5);
	\identityMap{(1,0)}{.4}{n\,\,\,}
  \node at (1.5,.5) {$\scriptstyle\dag$};
\end{tikzpicture}
=
\begin{tikzpicture}[baseline=-.1cm]
	\draw (-.6,-1) --node[left]{\scriptsize{$v$}} (-.6,-.3);
	\draw (-.2,-.3) --node[right]{\scriptsize{$v^{\vee}$}} (-.2,-1);
	\roundNbox{unshaded}{(-.4,0)}{.3}{.3}{.3}{$\coev_v^\dag$}
  \draw (1.4,0) --node[above]{\scriptsize{$\cP[2n]$}} (2.2,0);
	\identityMap{(1,0)}{.4}{n\,\,\,}
\end{tikzpicture}
\,.
\]
We then calculate with $c=``\Phi(v)\otimes x^{n}"$
that $(\coev_c^\dag\otimes \id_{c^{\vee\vee}})\circ (\id_c\otimes \coev_{c^\vee})$ is given by
\[
\begin{tikzpicture}[baseline=-.1cm]
  \draw (0,.6) -- (5,.6);
	\draw (1.2,-.6) -- (5,-.6);
  \draw (-1.4,-1.2) --node[left]{$\scriptstyle v$} (-1.4,.3);
  \draw[knot] (-1,.3) --node[right]{$\scriptstyle v^\vee$} (-1,-.6) .. controls ++(270:.6cm) and ++(270:.6cm) .. (.6,-.6) --node[right]{$\scriptstyle v^{\vee\vee}$} (.6,1.2);
	\draw (5.2,0) -- (5.6,0);
	\roundNbox{unshaded}{(-1.2,.6)}{.3}{.3}{.3}{$\coev_v^\dag$}
	\evaluationMap{(0,.6)}{.4}{n}
	\coevaluationMap{(1.2,-.6)}{.4}{n}
	\tensorLeftIdEv{(2.4,.6)}{.4}{}{}
	\tensorRightIdCoev{(2.4,-.6)}{.4}{}{}
	\multiplication{(4.2,0)}{1}{n}{3n}{n}
\end{tikzpicture}
\,=
\begin{tikzpicture}[baseline=-.1cm]
	\draw (-.6,-1) -- (-.6,-.3);
	\draw (-.2,-.3) arc (-180:0:0.3cm) -- (.4,1);
	\roundNbox{unshaded}{(-.4,0)}{.3}{.3}{.3}{$\coev_v^\dag$}
	\draw (1.4,0) -- (2.4,0);
	\node at (-.8,-.8) {\scriptsize{$v$}};
	\node at (-.3,-.6) {\scriptsize{$v^{\vee}$}};
	\node at (.1,.8) {\scriptsize{$v^{\vee\vee}$}};
	\identityMap{(1,0)}{.4}{n\,\,\,}
	\node at (1.9,.2) {\scriptsize{$\cP[2n]$}};
\end{tikzpicture}
\,=
\begin{tikzpicture}[baseline=-.1cm]
	\draw (0,-1) -- (0,1);
	\roundNbox{unshaded}{(0,0)}{.4}{0}{0}{$\varphi_v$}
	\draw (1.4,0) -- (2.4,0);
	\node at (-.2,-.8) {\scriptsize{$v$}};
	\node at (-.3,.8) {\scriptsize{$v^{\vee\vee}$}};
	\identityMap{(1,0)}{.4}{n\,\,\,}
	\node at (1.9,.2) {\scriptsize{$\cP[2n]$}};
\end{tikzpicture}
\,.
\]
This completes the proof.
\end{proof}

\begin{cor}[\ref{C:Real}]
The symmetric self-duality $r_x: x\to x^\vee$ is unitary, i.e., $(x,r_x)$ is real.
\end{cor}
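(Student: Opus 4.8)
The plan is to observe that this is essentially formal given the construction. First I would recall that $x = ``\Phi(1_\cV)\otimes x"$ and $x^\vee = ``\Phi(1_\cV^\vee)\otimes x"$, and that under the identification $1_\cV^\vee = 1_\cV$ the symmetric self-duality $r_x\colon x\to x^\vee$ is literally $\id_x$. Next I would invoke Proposition~\ref{prop:C0isC*}: since $\cC_0$---hence its unitary Cauchy completion $\cC$---is a $\rm C^*$-category, the operation $*$ of \eqref{eq:DaggerOnC0} is a bona fide dagger structure, in particular a contravariant functor that is the identity on objects, so $r_x^* = \id_x^* = \id_x$. As $r_x = \id_x$ is its own inverse, this gives $r_x^* = r_x^{-1}$; that is, $r_x$ is unitary, which is exactly the statement that $(x, r_x)$ is real (cf.\ the definition of a unitary pointing and \cite[\S3.5]{MR4133163}).

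The only point requiring a moment's care is the compatibility of the identification $x^\vee = x$ with $*$. When $\cV$ is strict this is vacuous. In general the identification is implemented by the canonical isomorphism $1_\cV^\vee \cong 1_\cV$, which is unitary because it is assembled from the evaluation and coevaluation of the chosen unitary dual functor on $\cV$; since all of the duality data, associators, and unitors on $\cC_0$ are inherited from those of $\cV$, this isomorphism transports the dagger structure on $x^\vee$ to that on $x$, and the argument above applies unchanged. I do not expect a genuine obstacle here---this is a two-line corollary of Proposition~\ref{prop:C0isC*}---but should a more explicit verification be desired, one can instead unwind \eqref{eq:DaggerOnC0} for $f = r_x = \id_x$ (namely $Z$ applied to the identity annular tangle on a single strand, so $n = k = 1$) and use axiom~\ref{P:PositiveDefinite} to rewrite the resulting $\coev^\dag$ together with the planar-isotopy relations of $\cP$ to cancel the cups produced by $r_2$, recovering $r_x^* = r_x$ by hand.
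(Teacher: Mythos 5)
Your proposal is correct and follows the same route as the paper: the paper's proof likewise observes that $r_x=\id_x$ under the identification $1_\cV^\vee=1_\cV$, and that this identification is unitary, so $r_x$ is too. Your extra remarks about the $\rm C^*$-structure from Proposition~\ref{prop:C0isC*} and the optional explicit unwinding of \eqref{eq:DaggerOnC0} are elaborations of the same two-line argument rather than a different method.
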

\begin{proof}
We used the identification between $1_\cV^\vee = 1_\cV$ to identify $x=x^\vee$ and $r_x = \id_x$, but the former identification is unitary, so the latter is too.
\end{proof}

Recall from Lemma \ref{lem: C-->D and D spherical} that 
sphericality of the unitary dual functor $\vee_\cV$
is a necessary condition for 
the state $\psi_\cC$ to be spherical.

\begin{prop}
If $\vee_\cV$ is a spherical unitary dual functor on $\cV$ and $(\cP,j,\psi_\cP)$ is spherical, then so is $\psi_\cC$.
\end{prop}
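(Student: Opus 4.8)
The plan is to verify directly that $\psi_\cC$ satisfies the defining condition of a spherical state, namely $\psi_\cC\circ\tr_L^\cC(f)=\psi_\cC\circ\tr_R^\cC(f)$ for every endomorphism $f$ in $\cC$, after reducing to the building blocks of $\cC_0$ and unwinding everything into the graphical calculi of $\cP$ and of $\cV$. Recall that $\psi_\cC$ on $\End_\cC(1_\cC)=\cV(1_\cV\to\cP[0])$ is simply post-composition with $\psi_\cP$.

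\emph{Reduction.} Every object of $\cC$ is an orthogonal summand of a finite orthogonal direct sum of objects $c=``\Phi(v)\otimes x^{\otimes n}"$; since the categorical left and right traces are additive and cyclic, so that $\tr_L^\cC(f)=\tr_L^\cC(\iota f\pi)$ for a summand inclusion/projection and $\tr_L^\cC(g)=\sum_i\tr_L^\cC(g_{ii})$ for a block-matrix endomorphism of $\bigoplus_i c_i$ (and likewise for $\tr_R^\cC$), it suffices to prove
\[
\psi_\cC\big(\tr_L^\cC(f)\big)=\psi_\cC\big(\tr_R^\cC(f)\big)\qquad\text{for all }f\in\End_\cC(``\Phi(v)\otimes x^{\otimes n}")=\cV\big(v\to v\otimes\cP[2n]\big).
\]

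\emph{Computing the two traces.} Using the formulas for $\ev_c$, $\coev_c$ and the (already identified, see \ref{C:UDF}) canonical pivotal structure $\varphi^\cC_c$ from the construction of $\cC_0$ — together with the formula for $\coev^\dag_{``\Phi(v)\otimes x^{n}"}$ obtained inside the proof of \ref{C:UDF} — I would rewrite both $\tr_L^\cC(f)$ and $\tr_R^\cC(f)$, which lie in $\End_\cC(1_\cC)=\cV(1_\cV\to\cP[0])$, as composites $Z(A)\circ g$, where $g:1_\cV\to\cP[2n]$ is obtained from $f$ by closing off the $v$-strand in $\cV$ (via $\ev_v$, $\coev_v$, $\varphi^\cV_v$) and $A$ is an annular anchored planar tangle closing the resulting $2n$ strands into $\cP[0]$. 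In $\tr_L^\cC(f)$ the whole bundle $\Phi(v)\otimes x^{\otimes n}$ is closed on one side; in $\tr_R^\cC(f)$ it is closed on the other, so the $\Phi(v)$-strand gets dragged around the $n$ copies of $x$, which — by the half-braiding of $\Phi(v)$ together with the ribbon structure of the anchored planar operad and the twist axiom — expresses $\tr_R^\cC(f)$ as $\tr_L^\cC$ of the same data twisted by the balancing $\theta_v$ on the $v$-strand and by the associated twist/traciator on the $2n$ strands of $\cP[2n]$.

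\emph{Applying the two hypotheses.} Now post-compose with $\psi_\cC=\psi_\cP\circ(-)$. On the $\cV$-side, sphericality of $\vee_\cV$ (equivalently, $\cV$ ribbon) makes the two orientations of the $v$-loop agree, absorbing $\theta_v$. On the $\cP$-side, the defining sphericality relation of $(\cP,r,\psi_\cP)$ in its $2n$-strand form — the Lemma immediately preceding Proposition~\ref{prop: inside out reflection}, together with Lemma~\ref{lem:idagAllowsIsotopy} for sliding closures past the top cap and Proposition~\ref{prop: inside out reflection} / Lemma~\ref{lem:IdentifyStatesOnGroundC*Alg} for the identification $\psi_\cP=i^\dag$ — shows that after composing with $\psi_\cP$ the left- and right-handed closures of the $2n$ strands into $\cP[0]$ coincide. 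Combining the two yields $\psi_\cC\circ\tr_L^\cC(f)=\psi_\cC\circ\tr_R^\cC(f)$, hence $\psi_\cC$ is spherical.

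\emph{Main obstacle.} The delicate point is the middle step: one must check that reversing the side of the closure on the composite object $``\Phi(v)\otimes x^{\otimes n}"$ factors \emph{cleanly} into the $v$-twist and the $\cP$-rotation with no leftover cross term, i.e. that the half-braiding of $\Phi(v)$ past the $x$-strands and the twist axiom of anchored planar tangles are organized precisely so that the two independent hypotheses suffice. Equivalently, one must verify that $\tr_L^\cC$ and $\tr_R^\cC$ on $\Phi(v)\otimes x^{\otimes n}$ differ only by $\theta_v$ and the traciator, both of which $\psi_\cP$ renders invisible; once the diagram is organized so that the $v$-loop and the $\cP$-closure are disentangled, the remainder is routine.
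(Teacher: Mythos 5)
Your proposal is correct and follows essentially the same route as the paper: for $f\in\End_{\cC_0}(``\Phi(u)\otimes x^{\otimes n}")$ both traces are computed explicitly in the graphical calculus, each factoring into a closure of the $u$-loop in $\cV$ next to a closure of the $2n$ strands of $\cP[2n]$, after which sphericality of $\vee_\cV$ and of $\psi_\cP$ handle the two closures independently. The ``main obstacle'' you flag resolves more simply than your middle step suggests — the paper never invokes the twist axiom, the half-braiding, or the traciator here, since the $u$-loop just slides off the $\cP[2n]$ strand by naturality of the braiding in $\cV$, so the two closures disentangle on the nose with no residual $\theta_v$ or rotation to absorb.
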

\begin{proof}
For $f\in \End_{\cC_0}(``\Phi(u)\otimes x^{\otimes n}")$,
its left and right traces in $\cC$ are given by
\begin{align*}
\tr_L^\cC(f)
&=
\tikzmath{
	\draw (2,.4) -- (6.4,.4);
	\draw (1,-.7) --node[below]{$\scriptstyle \cP[2n]$} (4,-.7);
	\draw (1,1.5) --node[above]{$\scriptstyle \cP[2n]$} (4,1.5);
	\draw (.7,.6) --node[above, yshift=-.1cm]{$\scriptstyle \cP[2n]$} (1.6,.6);
	\draw (-.3,.1) -- (.7,.1) --node[above, yshift=-.1cm]{$\scriptstyle \cP[2n]$} (1.6,.1);
	\draw[knot] (.4,.9) --node[left]{$\scriptstyle u$}  (.4,1.3) .. controls ++(90:.5cm) and ++(90:.5cm) .. (-1.1,1.3) --node[left]{$\scriptstyle u^\vee$}  (-1.1,-.5) .. controls ++(270:.5cm) and ++(270:.5cm) .. (.4,-.5) --node[left]{$\scriptstyle u$}  (.4,.3);
	\roundNbox{fill=white}{(.4,.6)}{.3}{0}{0}{$f$}
	\identityMap{(-.6,.1)}{.3}{n\,\,\,\,}
	\evaluationMap{(1,1.5)}{.4}{n}
	\coevaluationMap{(1,-.7)}{.4}{n}
	\tensor{(2.4,.4)}{.8}{n}{n}{n}{n}
	\coordinate (a) at (4,-1);
	\coordinate (b) at ($ (a) + (.2,1.4) $);
	\pgfmathsetmacro{\width}{.2}
	\pgfmathsetmacro{\height}{.6}
	\coordinate (c) at ($ (a) + (\width,\height) $);
	\draw[rounded corners=5pt, very thick, unshaded] (a) rectangle ($ (a) + (1.8,2.8) $);
	\draw[thick, red] ($ (a) + (.7,.7) $) -- (b);
	\draw[thick, red] ($ (a) + (.7,1.4) $) -- (b);
	\draw[thick, red] ($ (a) + (.7,2.1) $) -- (b);
	\draw[very thick] ($ (a) + (.9,1.4) $) ellipse (.7 and 1.3);
	\draw ($ (a) + (.9,.7) $) -- ($ (a) + (.9, 2.1) $);
	\filldraw[very thick, unshaded] ($ (a) + (.9,.7) $) circle (.2cm);
	\filldraw[very thick, unshaded] ($ (a) + (.9,1.4) $) circle (.2cm);
	\filldraw[very thick, unshaded] ($ (a) + (.9,2.1) $) circle (.2cm);
	\node at ($(a) + (1.1,1.05)$) {\scriptsize{$2n$}};
	\node at ($(a) + (1.1,1.75)$) {\scriptsize{$2n$}};
	\node at (6.2,.6) {\scriptsize{$\cP[0]$}};
	\node at (3.6,.6) {\scriptsize{$\cP[4n]$}};
}
=
\tikzmath{
	\coordinate (a) at (0,0);
	\pgfmathsetmacro{\boxWidth}{1};
	\draw (-2,0) -- (-1,0);
	\draw (1,0) -- (2,0);
	\draw (-2.1,.3) node[right, yshift=.2cm, xshift=-.15cm]{$\scriptstyle u$} arc (0:180:.3cm) --node[left]{$\scriptstyle u^\vee$} (-2.7,-.3) arc (-180:0:.3cm) node[right, yshift=-.2cm, xshift=-.15cm]{$\scriptstyle u$};
	\roundNbox{unshaded}{(-2.1,0)}{.3}{0}{0}{$f$}
	\draw[rounded corners=5pt, very thick, unshaded] ($ (a) - (\boxWidth,\boxWidth) $) rectangle ($ (a) + (\boxWidth,\boxWidth) $);
	\draw[thick, red] (180:1/5*\boxWidth) -- (180:4/5*\boxWidth);
	\draw[very thick] (a) circle (4/5*\boxWidth);
	\draw[very thick] (a) circle (1/5*\boxWidth);
	\draw (90:1/5*\boxWidth) .. controls ++(90:.3cm) and ++(90:.5cm) .. (180:1/2*\boxWidth) .. controls ++(270:.5cm) and ++(270:.3cm) ..  (270:1/5*\boxWidth);
	\node at (1.35,.2) {\scriptsize{$\cP[0]$}};
	\node at (-1.4,.2) {\scriptsize{$\cP[2n]$}};
	\node at (-.6,-.2) {\scriptsize{$n$}};
}
\displaybreak[1]\\
\tr^\cC_R(f)
&=
\tikzmath{
	\draw (2,.4) -- (6.4,.4);
	\draw (.5,-.7) --node[below]{$\scriptstyle \cP[2n]$} (4,-.7);
	\draw (.5,1.5) --node[above]{$\scriptstyle \cP[2n]$} (4,1.5);
	\draw (-.8,.6) -- (.7,.6) --node[above, yshift=-.1cm]{$\scriptstyle \cP[2n]$} (1.6,.6);
	\draw (.7,.1) --node[above, yshift=-.1cm]{$\scriptstyle \cP[2n]$} (1.6,.1);
	\draw[knot] (-1.1,.9) --node[left]{$\scriptstyle u$}  (-1.1,1.3) .. controls ++(90:.5cm) and ++(90:.5cm) .. (-.1,1.3) node[left]{$\scriptstyle u^\vee$} --  (-.1,-.5) .. controls ++(270:.5cm) and ++(270:.5cm) .. (-1.1,-.5) --node[left]{$\scriptstyle u$}  (-1.1,.3);
	\roundNbox{fill=white}{(-1.1,.6)}{.3}{0}{0}{$f$}
	\identityMap{(.4,.1)}{.3}{n\,\,\,}
	\evaluationMap{(.5,1.5)}{.4}{n}
	\coevaluationMap{(.5,-.7)}{.4}{n}
	\tensor{(2.4,.4)}{.8}{n}{n}{n}{n}
	\coordinate (a) at (4,-1);
	\coordinate (b) at ($ (a) + (.2,1.4) $);
	\pgfmathsetmacro{\width}{.2}
	\pgfmathsetmacro{\height}{.6}
	\coordinate (c) at ($ (a) + (\width,\height) $);
	\draw[rounded corners=5pt, very thick, unshaded] (a) rectangle ($ (a) + (1.8,2.8) $);
	\draw[thick, red] ($ (a) + (.7,.7) $) -- (b);
	\draw[thick, red] ($ (a) + (.7,1.4) $) -- (b);
	\draw[thick, red] ($ (a) + (.7,2.1) $) -- (b);
	\draw[very thick] ($ (a) + (.9,1.4) $) ellipse (.7 and 1.3);
	\draw ($ (a) + (.9,.7) $) -- ($ (a) + (.9, 2.1) $);
	\filldraw[very thick, unshaded] ($ (a) + (.9,.7) $) circle (.2cm);
	\filldraw[very thick, unshaded] ($ (a) + (.9,1.4) $) circle (.2cm);
	\filldraw[very thick, unshaded] ($ (a) + (.9,2.1) $) circle (.2cm);
	\node at ($(a) + (1.1,1.05)$) {\scriptsize{$2n$}};
	\node at ($(a) + (1.1,1.75)$) {\scriptsize{$2n$}};
	\node at (6.2,.6) {\scriptsize{$\cP[0]$}};
	\node at (3.6,.6) {\scriptsize{$\cP[4n]$}};
}
=
\tikzmath{
	\coordinate (a) at (0,0);
	\pgfmathsetmacro{\boxWidth}{1};
	\draw (-2.4,0) -- (-1,0);
	\draw (1,0) -- (2,0);
	\draw[knot] (-2.7,.3) node[left, yshift=.2cm]{$\scriptstyle u$} arc (180:0:.3cm) --node[right, yshift=-.2cm]{$\scriptstyle u^\vee$} (-2.1,-.3) arc (0:-180:.3cm) node[left, yshift=-.2cm]{$\scriptstyle u$};
	\roundNbox{unshaded}{(-2.7,0)}{.3}{0}{0}{$f$}
	\draw[rounded corners=5pt, very thick, unshaded] ($ (a) - (\boxWidth,\boxWidth) $) rectangle ($ (a) + (\boxWidth,\boxWidth) $);
	\draw[thick, red] (180:1/5*\boxWidth) -- (180:4/5*\boxWidth);
	\draw[very thick] (a) circle (4/5*\boxWidth);
	\draw[very thick] (a) circle (1/5*\boxWidth);
	\draw (90:1/5*\boxWidth) .. controls ++(90:.3cm) and ++(90:.5cm) .. (0:1/2*\boxWidth) .. controls ++(270:.5cm) and ++(270:.3cm) ..  (270:1/5*\boxWidth);
	\node at (1.35,.2) {\scriptsize{$\cP[0]$}};
	\node at (-1.4,.2) {\scriptsize{$\cP[2n]$}};
	\node at (.65,0) {\scriptsize{$n$}};
}
\end{align*}
Now since $\vee_\cV$ and $\psi_\cP$ are spherical, we see that
$$
\tikzmath{
	\coordinate (a) at (0,0);
	\pgfmathsetmacro{\boxWidth}{1};
	\draw (-2,0) -- (-1,0);
	\draw (1,0) -- (2,0);
	\draw[dotted] (2,0) -- (3,0);
	\draw (-2.1,.3) node[right, yshift=.2cm, xshift=-.15cm]{$\scriptstyle u$} arc (0:180:.3cm) --node[left]{$\scriptstyle u^\vee$} (-2.7,-.3) arc (-180:0:.3cm) node[right, yshift=-.2cm, xshift=-.15cm]{$\scriptstyle u$};
	\roundNbox{unshaded}{(-2.1,0)}{.3}{0}{0}{$f$}
	\roundNbox{unshaded}{(2,0)}{.3}{0}{0}{$\psi_\cP$}
	\draw[rounded corners=5pt, very thick, unshaded] ($ (a) - (\boxWidth,\boxWidth) $) rectangle ($ (a) + (\boxWidth,\boxWidth) $);
	\draw[thick, red] (180:1/5*\boxWidth) -- (180:4/5*\boxWidth);
	\draw[very thick] (a) circle (4/5*\boxWidth);
	\draw[very thick] (a) circle (1/5*\boxWidth);
	\draw (90:1/5*\boxWidth) .. controls ++(90:.3cm) and ++(90:.5cm) .. (180:1/2*\boxWidth) .. controls ++(270:.5cm) and ++(270:.3cm) ..  (270:1/5*\boxWidth);
	\node at (1.35,.2) {\scriptsize{$\cP[0]$}};
	\node at (2.65,.2) {\scriptsize{$1_\cV$}};
	\node at (-1.4,.2) {\scriptsize{$\cP[2n]$}};
	\node at (-.6,-.2) {\scriptsize{$n$}};
}
=
\tikzmath{
	\coordinate (a) at (0,0);
	\pgfmathsetmacro{\boxWidth}{1};
	\draw (-2.4,0) -- (-1,0);
	\draw (1,0) -- (2,0);
	\draw[dotted] (2,0) -- (3,0);
	\draw[knot] (-2.7,.3) node[left, yshift=.2cm]{$\scriptstyle u$} arc (180:0:.3cm) --node[right, yshift=-.2cm]{$\scriptstyle u^\vee$} (-2.1,-.3) arc (0:-180:.3cm) node[left, yshift=-.2cm]{$\scriptstyle u$};
	\roundNbox{unshaded}{(-2.7,0)}{.3}{0}{0}{$f$}
	\roundNbox{unshaded}{(2,0)}{.3}{0}{0}{$\psi_\cP$}
	\draw[rounded corners=5pt, very thick, unshaded] ($ (a) - (\boxWidth,\boxWidth) $) rectangle ($ (a) + (\boxWidth,\boxWidth) $);
	\draw[thick, red] (180:1/5*\boxWidth) -- (180:4/5*\boxWidth);
	\draw[very thick] (a) circle (4/5*\boxWidth);
	\draw[very thick] (a) circle (1/5*\boxWidth);
	\draw (90:1/5*\boxWidth) .. controls ++(90:.3cm) and ++(90:.5cm) .. (0:1/2*\boxWidth) .. controls ++(270:.5cm) and ++(270:.3cm) ..  (270:1/5*\boxWidth);
	\node at (1.35,.2) {\scriptsize{$\cP[0]$}};
	\node at (2.65,.2) {\scriptsize{$1_\cV$}};
	\node at (-1.4,.2) {\scriptsize{$\cP[2k]$}};
	\node at (.65,0) {\scriptsize{$k$}};
}
$$
and thus
$\psi_\cC(\tr^\cC_L(f))=\psi_\cC(\tr^\cC_R(f))$.
\end{proof}

Now that $\cC_0$ is endowed with a unitary dual functor and faithful state, we get a unitary trace on $\End_{\cC_0}(``\Phi(v)\otimes x^{\otimes n}")$ by $\psi_\cC\circ \tr^\cC_R$.

\begin{prop}[\ref{C:Adjunction}]
The adjunction $\Phi\dashv \bTr_\cV$ is unitary.
In particular, $\Phi$ and $\bTr_\cV$ are dagger functors.
\end{prop}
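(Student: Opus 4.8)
The plan is to verify that the natural isomorphism witnessing $\Phi \dashv \bTr_\cV$ — namely the identity map
\[
\cC\big(\Phi(u)\to ``\Phi(v)\otimes x^{\otimes n}"\big)
=
\cV\big(u\to v\otimes \cP[n]\big)
=
\cV\big(u\to \bTr_\cV(``\Phi(v)\otimes x^{\otimes n}")\big)
\]
is an isometry with respect to the inner products coming from the unitary traces $\psi_\cC\circ\tr^\cC_R$ on $\cC_0$ and $\tr^\cV_R$ on $\cV$. Once this is established, Definition~\ref{def: anti+unitary adjunction} applies directly; the ``dagger functor'' clause then follows immediately from the Proposition just after that definition (an (anti)unitary adjunction has $F,G$ automatically dagger functors). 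So the entire content is a single inner-product computation.

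First I would fix $u, v, n$, write $f,g \in \cV(u\to v\otimes\cP[n])$, and regard them as morphisms $\Phi(u)\to ``\Phi(v)\otimes x^{\otimes n}"$ in $\cC_0$ via the graphical dictionary (the morphism is $f$ drawn with a $\cP[n]$-strand emerging to the right). I then expand $\langle f,g\rangle_{\cC_0} = \psi_\cC\circ\tr^\cC_R(g^*\circ f)$ using: the explicit formula \eqref{eq:DaggerOnC0} for $g^*$ (involving $g^\dag$, $r_n^{-1}$, and a cap on the $\cP[n]$-strands), the composition rule in $\cC_0$ (which uses $Z$ of the multiplication tangle $m_{n,n}$), and the formula for $\tr^\cC_R$ derived in the proof of the sphericality proposition above, which expresses $\tr^\cC_R(h)$ for $h\in\End_{\cC_0}(``\Phi(v)\otimes x^{\otimes n}")$ as $v$ closed off by $\ev_v,\ev_v^\dag$ and the $\cP[2n]$-strands closed off by a half-braided cap-cup through the ``trace tangle'' $t_{n,n}$ composed with $i^\dag=\psi_\cP$. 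Concretely, the upshot should be that $\psi_\cC\circ\tr^\cC_R(g^*f)$ becomes $\psi_\cP$ applied to the element of $\cP[0]$ obtained by taking $g^\dag\circ f$ (a morphism $v\otimes\cP[n]\to v\otimes\cP[n]$ in $\cV$, really), tracing out $v$ with $\ev_v,\ev_v^\dag$, and capping the $\cP[n]\otimes\overline{\cP[n]}$ strands with the pairing of axiom \ref{P:PositiveDefinite}. By \eqref{eq:PlanarPairing}, that capping is exactly $\coev^\dag_{\cP[n]}$, so the whole expression collapses to $\tr^\cV_R(v\otimes\cP[n])$ evaluated on $g^\dag f$ — i.e.\ $\langle f,g\rangle_{\cV}$, where the right-hand inner product is taken in $\cV(u\to v\otimes\cP[n]) = \cV(u\to\bTr_\cV(``\Phi(v)\otimes x^{\otimes n}"))$ with its unitary trace. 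That is the desired isometry.

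The main obstacle I expect is bookkeeping: making sure the half-braidings $e_{\Phi(\cdot),x}$ introduced by the tensor/trace structure of $\cC_0$ cancel correctly, and that the combination of $r_n^{-1}$ from the $*$-formula with the $\chi^{\bTr_\cV}$ and $\nu$ hidden in the identifications matches the $\overline{\cP[n]}$ that appears in \eqref{eq:PlanarPairing}. This is purely a matter of carefully composing the generating tangles (the $a_i, m_{i,j}, t_{i,j}$ whose $Z$-values were pinned down in \S\ref{sec:ModTensToAPA}), using the anchored-planar-algebra axioms — particularly the braiding relation, which was exactly what made $*$ antimultiplicative in Proposition~\ref{prop:C0isC*} — and isotoping strings on tubes; sphericality of $\psi_\cP$ is not needed here (only \ref{P:PositiveDefinite}). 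I would organize the computation as a short chain of graphical equalities, citing \eqref{eq:PlanarPairing} for the one substantive step and leaving the tangle-manipulation as routine, in the same spirit as the preceding propositions in this subsection.

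Finally, having exhibited the unitary adjunction, I conclude via the Proposition following Definition~\ref{def: anti+unitary adjunction} that $\Phi$ and $\bTr_\cV$ are dagger functors, completing task \ref{C:Adjunction} and with it the construction of the functor from unitary anchored planar algebras to pointed unitary module multitensor categories.
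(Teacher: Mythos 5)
Your proposal matches the paper's proof essentially step for step: the paper likewise verifies that the identity map witnessing the adjunction is an isometry by expanding $\langle f,g\rangle_\cC=(\psi_\cC\circ\tr^\cC_R)(g^*\circ f)$ via the dagger formula \eqref{eq:DaggerOnC0} and collapsing the resulting pairing with \eqref{eq:PlanarPairing} to obtain $\langle f,g\rangle_\cV$, exactly as in the computation of Lemma~\ref{lem:LinkingAlgebraState}. Your observation that sphericality is not needed and that the dagger-functor clause follows from the proposition after Definition~\ref{def: anti+unitary adjunction} is also consistent with the paper.
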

\begin{proof}
We show the two inner products on $\cV\big(u\to v\otimes \cP[n]\big)$ coming from $\cC$ and $\cV$ agree:
\begin{align*}
\langle f,g\rangle_\cC
&=
(\psi_\cC\circ \tr^\cC_R)(g^*\circ f)
=
\tikzmath{
	\coordinate (a) at (0,0);
	\pgfmathsetmacro{\boxWidth}{1};
\draw (-2.8,-.2) --node[left]{$\scriptstyle u$} (-2.8,.2);
\draw (-1,.5) --node[above]{$\scriptstyle \cP[n]$} (-2.2,.5) -- (-2.5,.5);
\draw (-1,-.5) --node[above]{$\scriptstyle \cP[n]$} (-2.2,-.5) -- (-2.5,-.5);
\draw (1,0) -- 
(2,0);
\draw[knot] (-2.8,.8) node[left, yshift=.2cm, xshift=.1cm]{$\scriptstyle v$} arc (180:0:.3cm) -- (-2.2,-.8) node[right]{$\scriptstyle v^\vee$} arc (0:-180:.3cm) node[left, yshift=-.2cm, xshift=.1cm]{$\scriptstyle v$};
	\roundNbox{unshaded}{(-2.8,.5)}{.3}{0}{0}{$f$}
	\roundNbox{unshaded}{(-2.8,-.5)}{.3}{0}{0}{$g^*$}
	\roundNbox{unshaded}{($ (a) + (2,0) $)}{.3}{0}{0}{$\psi_\cP$}
	\draw[rounded corners=5pt, very thick, unshaded] ($ (a) - (\boxWidth,\boxWidth) $) rectangle ($ (a) + (\boxWidth,\boxWidth) $);
	\draw ($ (a) + 1/3*(0,1) $) -- ($ (a) - 1/3*(0,\boxWidth) $);
	\draw[thick, red] ($ (a) + 1/3*(0,\boxWidth) - 1/5*(\boxWidth,0) $) -- ($ (a) - 2/3*(\boxWidth,0) $);
	\draw[thick, red] ($ (a) - 1/3*(0,\boxWidth) - 1/5*(\boxWidth,0) $) -- ($ (a) - 2/3*(\boxWidth,0) $);
	\draw[very thick] (a) ellipse ({2/3*\boxWidth} and {5/6*\boxWidth});
	\filldraw[very thick, unshaded] ($ (a) + 1/3*(0,\boxWidth) $) circle (1/5*\boxWidth);
	\filldraw[very thick, unshaded] ($ (a) - 1/3*(0,\boxWidth) $) circle (1/5*\boxWidth);
	\node at ($ (a) + (.3,0) $) {$\scriptstyle n$};
\draw[densely dotted] (2.3,0) --node[above]{$\scriptstyle 1_\cV$}  (3,0);
\draw (1,0) --node[above]{$\scriptstyle \cP[0]$}  (1.7,0);
}
\\&\underset{(\ref{eq:PlanarPairing},\ref{eq:DaggerOnC0})}{=}
\tikzmath{
\roundNbox{unshaded}{(0,.5)}{.3}{.1}{.1}{$f$}
\roundNbox{unshaded}{(0,-.5)}{.3}{.1}{.1}{$g^\dag$}
\draw (-.2,.8) node[left, yshift=.2cm]{$\scriptstyle v$} 
.. controls ++(90:.7cm) and ++(90:.7cm) .. (2,.8) --node[right]{$\scriptstyle v^\vee$}(2,-.8) 
.. controls ++(270:.7cm) and ++(270:.7cm) ..
(-.2,-.8) node[left, yshift=-.2cm]{$\scriptstyle v$};
\draw (.2,.8) node[right, yshift=.15cm, xshift=.15cm]{$\scriptstyle \cP[n]$} 
.. controls ++(90:.5cm) and ++(90:.5cm) .. 
(1.6,.8) --node[left,xshift=.1cm]{$\scriptstyle \cP[n]^\vee$} (1.6,-.8)
.. controls ++(270:.5cm) and ++(270:.5cm) .. (.2,-.8) node[right, yshift=-.15cm, xshift=.15cm]{$\scriptstyle \cP[n]$};
\draw (0,-.2) --node[left]{$\scriptstyle u$} (0,.2);
}
=
\langle f,g\rangle_\cV
\end{align*}
as in the proof of Lemma \ref{lem:LinkingAlgebraState}.
\end{proof}

\subsubsection{Functoriality}
\label{sec:DeltaUnitaryExtension}

We now suppose $H: (\cP_1,r^1,\psi_1)\to (\cP_2,r^2,\psi_2)$ is a map of unitary anchored planar algebras, and
let $(\cC_i,\Phi_i^{\scriptscriptstyle \cZ},\vee_i,\psi_i,x_i)=\Delta(\cP_i)$ for $i=1,2$ be the pointed unitary module multitensor categories constructed in the last subsection.
In \cite[\S6.6]{MR4528312}, we constructed a strict map of ordinary pointed pivotal module tensor categories $\Delta(H): \cC_1\to \cC_2$ by
$$
\Delta(H)\left(
\tikzmath{
\roundNbox{fill=white}{(0,0)}{.3}{0}{0}{$f$}
\draw (0,-.7) --node[left]{$\scriptstyle u$} (0,-.3);
\draw (0,.7) --node[left]{$\scriptstyle v$} (0,.3);
\draw (.3,0) --node[above]{$\scriptstyle \cP_1[n+k]$} (1.5,0);
}
\right)
:=
\tikzmath{
\roundNbox{fill=white}{(0,0)}{.3}{0}{0}{$f$}
\roundNbox{fill=white}{(1.8,0)}{.3}{0}{0}{$H$}
\draw (0,-.7) --node[left]{$\scriptstyle u$} (0,-.3);
\draw (0,.7) --node[left]{$\scriptstyle v$} (0,.3);
\draw (.3,0) --node[above]{$\scriptstyle \cP_1[n+k]$} (1.5,0);
\draw (2.1,0) --node[above]{$\scriptstyle \cP_2[n+k]$} (3.3,0);
}
\qquad\qquad
\forall
\,
f: ``\Phi(u)\otimes x^{\otimes k}"
\to ``\Phi(v)\otimes x^{\otimes n}"
$$
where the action-coherence morphism $\gamma^{\Delta(H)}: \Phi_2\Rightarrow \Delta(H)\circ \Phi_1$ is the identity.

It remains to prove that:
\begin{itemize}
\item 
$\Delta(H)$ is a $\dag$-tensor functor,
\item
the identity action coherence monoidal natural isomorphism $\id: \Phi_2 \Rightarrow \Delta(H)\circ \Phi_1$ is involutive,
i.e.,
for all $v\in \cV$,
$\chi^{\Phi_2}_v = \chi^{\Delta(H)}_{\Phi_1(v)}\circ \Delta(H)(\chi^{\Phi_1}_v):
\Phi_2(\overline{v}) \to \overline{\Delta(H)(\Phi_1(v))}$,
and
\item
$\psi_2 \circ \Delta(H) = \psi_1$ on $\End_{\cC_1}(1_{\cC_1})$.
\end{itemize}

For the first condition, since $\Delta(H)$ is strict, it suffices to check it is a $\dag$-functor.
Indeed,
\begin{align*}
\Delta(H)\left(
\tikzmath{
\roundNbox{fill=white}{(0,0)}{.3}{0}{0}{$f^*$}
\draw (0,-.7) --node[left]{$\scriptstyle v$} (0,-.3);
\draw (0,.7) --node[left]{$\scriptstyle u$} (0,.3);
\draw (.3,0) --node[above]{$\scriptstyle \cP_1[k+n]$} (1.5,0);
}
\right)
:&=
\tikzmath{
\roundNbox{fill=white}{(0,0)}{.3}{0}{0}{$f^*$}
\roundNbox{fill=white}{(1.8,0)}{.3}{0}{0}{$H$}
\draw (0,-.7) --node[left]{$\scriptstyle v$} (0,-.3);
\draw (0,.7) --node[left]{$\scriptstyle u$} (0,.3);
\draw (.3,0) --node[above]{$\scriptstyle \cP_1[k+n]$} (1.5,0);
\draw (2.1,0) --node[above]{$\scriptstyle \cP_2[k+n]$} (3.3,0);
}
=
\tikzmath{
\roundNbox{fill=white}{(0,0)}{.3}{.1}{.1}{$f^\dag$}
\roundNbox{fill=white}{(2.9,.6)}{.3}{0}{0}{$H$}
\roundNbox{fill=white}{(1.2,0)}{.3}{.35}{.35}{$\scriptstyle (r^1_{n+k})^{-1}$}
\draw (-.2,-.9) --node[left]{$\scriptstyle v$} (-.2,-.3);
\draw (0,.7) --node[left]{$\scriptstyle u$} (0,.3);
\draw (.2,-.3) arc (-180:0:.5cm) node[right, yshift=-.2cm]{$\scriptstyle \overline{\cP_1[n+k]}$};
\draw (1.2,.3) arc (180:90:.3cm) --node[above, xshift=-.2cm]{$\scriptstyle \cP_1[k+n]$}  (2.6,.6) ;
\draw (3.2,.6) --node[above]{$\scriptstyle \cP_1[k+n]$} (4.4,.6);
}
\\&=
\tikzmath{
\roundNbox{fill=white}{(0,0)}{.3}{.1}{.1}{$f^\dag$}
\roundNbox{fill=white}{(2.2,-.7)}{.3}{0}{0}{$\overline{H}$}
\roundNbox{fill=white}{(3,0)}{.3}{.35}{.35}{$\scriptstyle (r^2_{n+k})^{-1}$}
\draw (-.2,-1) --node[left]{$\scriptstyle v$} (-.2,-.3);
\draw (0,.7) --node[left]{$\scriptstyle u$} (0,.3);
\draw (.2,-.3) arc (180:270:.4cm) --node[above]{$\scriptstyle \overline{\cP_1[n+k]}$} (1.9,-.7);
\draw (3,.3) arc (180:90:.3cm) --node[above, xshift=-.2cm]{$\scriptstyle \cP_2[k+n]$}  (4.4,.6) ;
\draw (2.5,-.7) arc (-90:0:.4cm) node[right, yshift=-.2cm]{$\scriptstyle \overline{\cP_2[n+k]}$};
}
=
\tikzmath{
\roundNbox{fill=white}{(0,0)}{.3}{.3}{.3}{$f^\dag$}
\roundNbox{fill=white}{(.4,-1)}{.3}{0}{0}{$H^\dag$}
\roundNbox{fill=white}{(2.5,-.5)}{.3}{.35}{.35}{$\scriptstyle (r^2_{n+k})^{-1}$}
\draw (-.2,-1.8) --node[left]{$\scriptstyle v$} (-.2,-.3);
\draw (0,.7) --node[left]{$\scriptstyle u$} (0,.3);
\draw (.4,-.3) --node[right]{$\scriptstyle \cP_1[n+k]$} (.4,-.7);
\draw (.4,-1.3) arc (180:270:.3cm) --node[above]{$\scriptstyle \overline{\cP_2[n+k]}$} (2.2,-1.6) arc (-90:0:.3cm) -- (2.5,-.8);
\draw (2.5,-.2) arc (180:90:.3cm) --node[above, xshift=-.2cm]{$\scriptstyle \cP_2[k+n]$}  (3.4,.1) ;
}
=
\Delta(H)(f)^*.
\end{align*}

For the second condition, 
since these are the canonical coherences $\chi$ associated to a \emph{strict} pivotal $\dag$-tensor functor, they are all identities, and the result holds trivially.

Finally, for the third condition,
for all $f\in \End_{\cC_1}(1_{\cC_1})$,
$$
(\psi_2\circ \Delta(H))(f)
=
\tikzmath{
\roundNbox{fill=white}{(0,0)}{.3}{0}{0}{$f$}
\roundNbox{fill=white}{(1.4,0)}{.3}{0}{0}{$H$}
\roundNbox{fill=white}{(2.8,0)}{.3}{0}{0}{$\psi_2$}
\draw[densely dotted] (0,-.7) --node[left]{$\scriptstyle 1_\cV$} (0,-.3);
\draw[densely dotted] (0,.7) --node[left]{$\scriptstyle 1_\cV$} (0,.3);
\draw (.3,0) --node[above]{$\scriptstyle \cP_1[0]$} (1.1,0);
\draw (1.7,0) --node[above]{$\scriptstyle \cP_2[0]$} (2.5,0);
\draw[densely dotted] (3.1,0) --node[above]{$\scriptstyle 1_\cV$} (3.6,0);
}=
\tikzmath{
\roundNbox{fill=white}{(0,0)}{.3}{0}{0}{$f$}
\roundNbox{fill=white}{(1.4,0)}{.3}{0}{0}{$\psi_1$}
\draw[densely dotted] (0,-.7) --node[left]{$\scriptstyle 1_\cV$} (0,-.3);
\draw[densely dotted] (0,.7) --node[left]{$\scriptstyle 1_\cV$} (0,.3);
\draw (.3,0) --node[above]{$\scriptstyle \cP_1[0]$} (1.1,0);
\draw[densely dotted] (1.7,0) --node[above]{$\scriptstyle 1_\cV$} (2.2,0);
}
=
\psi_1(f).
$$

\subsection{Equivalence}

Fix a braided unitary tensor category $\cV$ with a choice of unitary dual functor $\vee_\cV$.
For this section, we use the following notation.
\begin{itemize}
\item
$\APA$ is the 1-category of anchored planar algebras over $\cV$,
and $\UAPA$ is the 1-subcategory of unitary anchored planar algebras over $\cV$.
\item
$\ModTens$ is the 1-truncation of the 2-category of pointed pivotal module tensor categories over $\cV$,
and 
$\UModTens$ is the 1-truncation of the 2-category of pointed unitary module multitensor categories over $\cV$.
\end{itemize}

In \cite{MR4528312}, we constructed functors $\Delta: \APA \to \ModTens$ and $\Lambda: \ModTens \to \APA$.
In \S\ref{sec:DeltaUnitaryExtension} above, we showed $\Delta$ extends to a functor $\UAPA\to \UModTens$, 
and 
in \S\ref{sec:LambdaUnitaryExtension} above, we showed $\Lambda$ extends to a functor $\UModTens\to \UAPA$.
Moreover, these extensions are clearly compatible with the forgetful functors which forget our extra unitary structure.
\[
\begin{tikzcd}
\UAPA
\arrow[d,"\Forget"]
\arrow[rr, bend left=10,"\Delta"]
&&
\UModTens
\arrow[d,"\Forget"]
\arrow[ll, bend left=10,"\Lambda"]
\\
\APA
\arrow[rr, bend left=10,"\Delta"]
&&
\ModTens
\arrow[ll, bend left=10,"\Lambda"]
\end{tikzcd}
\]

In \cite[\S7]{MR4528312}, we showed that $\Lambda\circ \Delta=\id_{\APA}$, and it is immediate that $\Lambda\circ \Delta=\id_{\UAPA}$ upstairs.
We also constructed a natural isomorphism $\Psi: \Delta\circ\Lambda \Rightarrow \id_{\ModTens}$ by
$$
\Psi(f)=
\begin{tikzpicture}[baseline=0cm, scale=.8]
	\plane{(-.4,-.9)}{2.8}{1.8}
	\node at (-2.4,1.7) {\scriptsize{$u$}};
	\node at (1,1.7) {\scriptsize{$v$}};
	\draw[thick, red] (-2.8,1.5) -- (-1.2,1.5);
	\draw[thick, orange] (-1.2,1.5) -- (2,1.5);
	\coordinate (a) at (0,-.5);
	\draw ($ (a) + (.8,.4) $)  -- ($ (a) + (1.6,.4) $) ;
	\draw ($ (a) + (.8,.2) $)  arc (90:-90:.2cm) -- ($ (a) + (-.6,-.2) $) ;
	\node at ($ (a) + (1.1,.6) $)  {\scriptsize{$j$}};
	\node at ($ (a) + (1.2,0) $)  {\scriptsize{$i$}};
	\CMbox{box}{(a)}{.8}{.8}{.4}{$\varepsilon$}
	\curvedTubeNoString{($ (a) + (-.3,.4) $)}{1}{0}{$f$}
\end{tikzpicture}
\qquad\qquad
\begin{aligned}
f&\in\cC'\big(``\Phi(u)\otimes x^{\otimes i}" \to ``\Phi(v)\otimes x^{\otimes j}"\big)
\\
&\hspace{1cm}=\cV\big(u\to v\otimes \cP[i+j]\big).
\end{aligned}
$$
where $\cC'=\Lambda(\Delta(\cC))$, which is the unitary Cauchy completion of $\cC'_0$ (recall $\cC_0'$ has objects $``\Phi(v)\otimes x^{\otimes n}"$ for $v\in \cV$).

It remains to show that $\Psi$ lifts to a unitary natural isomorphism upstairs.
To do so, it is enough (in view of the fact that $\Psi$ is an isomorphism) to show that $f\mapsto \Psi(f)$ is norm-preserving, i.e., 
$$
\big\|f\big\|_{\cV(u\to v\otimes \cP[i+j])} = \big\|\Psi(f)\big\|_{\cC(\Phi(u)\otimes x^{\otimes i} \to \Phi(v)\otimes x^{\otimes j})}.
$$

By replacing $f$ by $\ev_v\circ (1_{\overline v}\otimes f)$ (i.e. folding the $v$ strand to the left), we may assume without loss of generality that $v=1_\cV$.
Similarly, by replacing 
$f\in\cC'\big(``\Phi(u)\otimes x^{\otimes i}" \to ``\Phi(v)\otimes x^{\otimes j}"\big)$
by the corresponding morphism in $\cC'\big(``\Phi(u)\otimes x^{\otimes 0}" \to ``\Phi(v)\otimes x^{\otimes i+j}"\big)$, we may assume without loss of generality that $i=0$.
We may now proceed as in the proof of Proposition \ref{prop:FormulaForEvTr} to show:
\begin{align*}
\left\|
\begin{tikzpicture}[baseline=0cm, scale=.8]
	\plane{(-.4,-.9)}{2.8}{1.8}
	\node[red] at (-2.7,.3) {\scriptsize{$\Phi(u)$}};
	\draw[thick, red] (-3,.1) -- (-1.2,.1);
	\coordinate (a) at (0,-.5);
	\draw ($ (a) + (.8,.4) $)  -- ($ (a) + (1.6,.4) $) ;
	\node at ($ (a) + (1.1,.6) $)  {\scriptsize{$j$}};
	\CMbox{box}{(a)}{.8}{.8}{.4}{$\varepsilon$}
	\straightTubeWithString{($ (a) + (-.3,.4) $)}{.1}{1.5}{}
	\roundNbox{fill=white}{($ (a) + (-1.6,.6) $)}{.4}{.2}{.2}{$\Phi(f)$}
\end{tikzpicture}
\right\|^2
&=
\begin{tikzpicture}[baseline=.65cm, scale=.8]
	\plane{(-1,-.8)}{5.6}{3}
	\draw (1.8,-.1) node[below, xshift=.2cm]{$\scriptstyle j$} arc (-90:90:.7cm) -- node[above]{$\scriptstyle j$} (.4,1.3);
	\CMbox{box}{(-1.4,.8)}{1.8}{.8}{.4}{$\scriptstyle \mate(f)$}
	\CMbox{box}{(0,-.6)}{1.8}{.8}{.4}{$\scriptstyle \overline{\mate(f)}$}
	\draw[thick, red, knot] (-.2,-.1) -- node[below,xshift=.1cm]{$\scriptstyle \overline{\Phi(u)}$} (-1.6,-.1) arc (270:90:.7cm) node[above, xshift=-.6cm, yshift=-.2cm]{$\scriptstyle \Phi(u)$};
\end{tikzpicture}
\\&=
\|\mate(f)\|^2_{\cC(\Phi(u)\to x^{\otimes j})}
\\&=
\|f\|^2_{\cV(u\to \Phi(x^{\otimes j}))}
\end{align*}
as desired.



\bibliographystyle{alpha}
{\footnotesize{
 \IfFileExists{../../bibliography/bibliography.bib}%
{\bibliography{../../bibliography/bibliography}}
{\bibliography{../../../../../../Documents/bibliography/bibliography}}
}}

\end{document}